\newlist{enumerateb}{enumerate}{1}
\setlist[enumerateb]{label=(\arabic*$b$),ref=\arabic*$b$}
\newtheorem{thm}{Theorem}[section]
\newtheorem*{thm*}{Theorem}
\newtheorem{corollary}[thm]{Corollary}
\newtheorem{lemma}[thm]{Lemma}
\newtheorem{prop}[thm]{Proposition}
\newtheorem*{prop*}{Proposition}
\newtheorem{proposition}[thm]{Proposition}
\newtheorem*{conj*}{Conjecture}
\newtheorem*{defn*}{Definition}
\theoremstyle{definition}
\newtheorem{rem}[thm]{\textbf{Remark}}
\newtheorem*{rmk*}{Remark}
\newtheorem*{fact*}{Fact}
\theoremstyle{proof}
\newcommand{\Hess}{\text{Hess}}
\newcommand{\vol}{\textrm{Vol}}
\newcommand{\norm}[1]{\left\Vert#1\right\Vert}
\newcommand{\snorm}[1]{\Vert#1\Vert}
\newcommand{\set}[1]{\left\{#1\right\}}
\newcommand{\brac}[1]{\left(#1\right)}
\newcommand{\scalar}[1]{\left \langle #1 \right \rangle}
\newcommand{\sscalar}[1]{\langle #1 \rangle}
\newcommand{\R}{\mathbb{R}}
\newcommand{\EE}{\mathcal{E}}
\newcommand{\II}{\textrm{II}}
\newcommand{\Id}{\text{Id}}
\newcommand{\M}{\mathcal{M}}
\newcommand{\F}{\mathcal{F}}
\renewcommand{\H}{\mathcal{H}}
\newcommand{\eps}{\epsilon}
\newcommand{\K}{\mathcal{K}}
\renewcommand{\S}{\mathcal{S}}
\newcommand{\lin}{\text{lin}}
\newcommand{\Leb}{\mathfrak{m}}
\newcommand{\Det}{\text{\rm Det}}
\newcommand{\Ric}{\text{\rm Ric}}
\newcommand{\RR}{\text{\rm R}}
\newcommand{\tr}{\text{\rm tr}}
\newcommand{\D}{\mathcal{D}}
\renewcommand{\S}{\mathbb{S}}
\renewcommand{\div}{\text{\rm div}}
\newcommand{\grad}{\text{\rm grad}}
\newcommand{\ee}{{\mathsf e}}
\newcommand{\n}{\mathfrak{n}}
\def\inup{\rotatebox[origin=c]{90}{$\in$}}
\newlength{\defbaselineskip}
\numberwithin{equation}{section}
\begin{document}

\title{Centro-Affine Differential Geometry\\and the Log-Minkowski Problem}\date{}

\author{Emanuel Milman\textsuperscript{1}}

\footnotetext[1]{Department of Mathematics, Technion - Israel
Institute of Technology, Haifa 32000, Israel. 
The research leading to these results is part of a project that has received funding from the European Research Council (ERC) under the European Union's Horizon 2020 research and innovation programme (grant agreement No 637851). Email: emilman@tx.technion.ac.il.}

\begingroup    \renewcommand{\thefootnote}{}    \footnotetext{2020 Mathematics Subject Classification: 52A40, 52A23, 35P15, 58J50.}
    \footnotetext{Keywords: uniqueness in $L^p$-Minkowski problem, log-Brunn--Minkowski inequality, centro-affine differential geometry, Hilbert--Brunn--Minkowski operator.}
\endgroup

\maketitle

\begin{abstract}
We interpret the log-Brunn--Minkowski conjecture of B\"or\"oczky--Lutwak--Yang--Zhang as a spectral problem in centro-affine differential geometry. In particular, we show that the Hilbert--Brunn--Minkowski operator coincides with the centro-affine Laplacian, thus obtaining a new avenue for tackling the conjecture using insights from affine differential geometry. 
As every strongly convex hypersurface in $\R^n$ is a centro-affine unit-sphere, it has constant centro-affine Ricci curvature equal to $n-2$, in stark contrast to the standard weighted Ricci curvature of the associated metric-measure space, which will in general be negative. In particular, we may use the classical argument of Lichnerowicz and a centro-affine Bochner formula to give a new proof of the Brunn--Minkowski inequality. 
For origin-symmetric convex bodies enjoying fairly generous curvature pinching bounds (improving with dimension), we are able to show global uniqueness in the $L^p$- and log-Minkowski problems, as well as the corresponding global $L^p$- and log-Minkowski conjectured inequalities. 
 As a consequence, we resolve the \emph{isomorphic} version of the log-Minkowski problem: for any origin-symmetric convex body $\bar K$ in $\R^n$, there exists an origin-symmetric convex body $K$ with $\bar K \subset K \subset 8 \bar K$, so that $K$ satisfies the log-Minkowski conjectured inequality, and so that $K$ is uniquely determined by its cone-volume measure $V_K$. If $\bar K$ is not extremely far from a Euclidean ball to begin with, 
  an analogous \emph{isometric} result, where $8$ is replaced by $1+\eps$, is obtained as well. 
\end{abstract}

\section{Introduction}

A central question in contemporary Brunn--Minkowski theory is that of existence and uniqueness in the $L^p$-Minkowski problem for $p \in (-\infty,1)$: given a finite non-negative Borel measure $\mu$ on the Euclidean unit-sphere $\S^* = S^{n-1}$, determine conditions on $\mu$ which ensure the existence and/or  uniqueness of a convex body $K$ in $\R^n$ so that:
\begin{equation} \label{eq:intro-Lp-Minkowski}
S_p K := h_K^{1-p} S_K  = \mu  . 
\end{equation}
Here $h_K$ and $S_K$ denote the support function and surface-area measure of $K$, respectively -- we refer to Section \ref{sec:prelim} for standard missing definitions. When $h_K \in C^2(\S^*)$, 
\[
S_K = \det(D^2 h_K) \Leb,
\]
where $\Leb$ is the induced Lebesgue measure on $\S^*$, $D^2 h_K = {}^{\S^*} \nabla^2 h_K + h_K \delta^{\S^*}$ and ${}^{\S^*} \nabla$ is the Levi-Civita connection on $\S^*$ with its standard Riemannian metric $\delta^{\S^*}$. Consequently, (\ref{eq:intro-Lp-Minkowski}) is a Monge--Amp\`ere-type equation. 
It describes self-similar solutions to the (anisotropic) $\alpha$-power-of-Gauss-curvature flow for $\alpha = \frac{1}{1-p}$ \cite{Andrews-GaussCurvatureFlowForCurves,Andrews-FateOfWornStones,Andrews-PowerOfGaussCurvatureFlow,AndrewsGuanNi-PowerOfGaussCurvatureFlow,BCD-PowerOfGaussCurvatureFlow,ChoiDaskalopoulos-UniquenessInLpMinkowski,Chow-PowerOfGaussCurvatureFlow,Urbas-PositivePowersOfGaussCurvatureFlow,Urbas-NegativePowersOfGaussCurvatureFlow}.

The case $p=1$ above corresponds to the classical Minkowski problem of finding a convex body with prescribed surface-area measure; when $\mu$ is not concentrated on any hemisphere and its barycenter is at the origin, existence and uniqueness (up to translation) of $K$ were established by 
Minkowski, Alexandrov and Fenchel--Jessen (see \cite{Schneider-Book-2ndEd}), and regularity of $K$ was studied by Lewy \cite{Lewy-RegularityInMinkowskiProblem}, Nirenberg \cite{Nirenberg-WeylAndMinkowskiProblems}, Cheng--Yau \cite{ChengYau-RegularityInMinkowskiProblem}, Pogorelov \cite{Pogorelov-MinkowskiProblemBook}, Caffarelli \cite{CaffarelliHigherHolderRegularity,Caffarelli-StrictConvexity} and many others. 
The extension to general $p$ was put forth and publicized by E.~Lutwak \cite{Lutwak-Firey-Sums} as an $L^p$-analog of the Minkowski problem for the $L^p$ surface-area measure $S_p K = h_K^{1-p} S_K$ which he introduced. 
Existence and uniqueness in the class of origin-symmetric convex bodies, when the measure $\mu$ is even and not concentrated in a hemisphere, was established for $n \neq p > 1$ by Lutwak \cite{Lutwak-Firey-Sums} and for $p = n$ by Lutwak--Yang--Zhang \cite{LYZ-LpMinkowskiProblem}. A key tool in the range $p \geq 1$ is the prolific $L^p$-Brunn--Minkowski theory, initiated by Lutwak \cite{Lutwak-Firey-Sums,Lutwak-Firey-Sums-II} following Firey \cite{Firey-Sums},
and developed by Lutwak--Yang--Zhang (e.g. \cite{LYZ-LpAffineIsoperimetricInqs,LYZ-SharpAffineLpSobolevInqs,LYZ-LpJohnEllipsoids}) and others, which extends the classical $p=1$ case. Further existence, uniqueness and regularity results in the range $p > 1$ under various assumptions on $\mu$ were obtained in \cite{ChouWang-LpMinkowski,GuanLin-Unpublished,HuangLu-RegularityInLpMinkowski,HLYZ-DiscreteLpMinkowski,LutwakOliker-RegularityInLpMinkowski,Zhu-ContinuityInLpMinkowski}.

The case $p < 1$ turns out to be more challenging because of the lack of an appropriate $L^p$-Brunn--Minkowski theory. Existence, (non-)uniqueness and regularity under various conditions on $\mu$ were studied by numerous authors when $p<1$ (from either side of the critical exponent $p=-n$), especially after the important work by Chou--Wang \cite{ChouWang-LpMinkowski}, see e.g. \cite{BBC-SmoothnessOfLpMinkowski,BBCY-SubcriticalLpMinkowski,BHZ-DiscreteLogMinkowski,BoroczkyHenk-ConeVolumeMeasure,ChenEtAl-LocalToGlobalForLogBM,ChenLiZhu-LpMongeAmpere,ChenLiZhu-logMinkowski,HeLiWang-MultipleSupercriticalLpMinkowski, JianLuZhu-UnconditionalCriticalLpMinkowski,LuWang-CriticalAndSupercriticalLpMinkowski,EMilman-Isospectral-HBM,StancuDiscreteLogBMInPlane,Stancu-UniquenessInDiscretePlanarL0Minkowski,Stancu-NecessaryCondInDiscretePlanarL0Minkowski,Zhu-logMinkowskiForPolytopes,Zhu-CentroAffineMinkowskiForPolytopes,Zhu-LpMinkowskiForPolytopes,Zhu-LpMinkowskiForPolytopesAndNegativeP}. 
The case $p=0$ is of particular importance as it corresponds to the \emph{log-Minkowski problem} for the cone-volume measure
\[
V_K := \frac{1}{n} h_K S_K = \frac{1}{n} S_0 K ,
\]
described next. Note that $V_K$ is obtained as the push-forward of the cone-measure on $\partial K$ onto $\S^*$ via the Gauss map, and that the total mass of $V_K$ is $V(K)$, the volume of $K$. 
Being a self-similar solution to the isotropic Gauss curvature flow, the case $p=0$ and $\mu = \Leb$ of (\ref{eq:intro-Lp-Minkowski}) describes the ultimate fate of a worn stone in a model proposed by Firey \cite{Firey-ShapesOfWornStones} and further studied in \cite{Andrews-FateOfWornStones,AndrewsGuanNi-PowerOfGaussCurvatureFlow,BCD-PowerOfGaussCurvatureFlow,ChoiDaskalopoulos-UniquenessInLpMinkowski,Kolesnikov-OTOnSphere}. 
      
 \medskip
 
Let  $\K$ denote the collection of convex bodies in $\R^n$ containing the origin in their interior, and let $\K_e$ denote the subset of origin-symmetric elements. 
In \cite{BLYZ-logMinkowskiProblem}, B\"or\"oczky--Lutwak--Yang--Zhang showed that an \emph{even} measure $\mu$ is the cone-volume measure $V_K$ of an \emph{origin-symmetric} convex body $K \in \K_e$ if and only if it satisfies a certain subspace concentration condition, thereby completely resolving the existence part of the \emph{even} log-Minkowski problem. 
As put forth by B\"or\"oczky--Lutwak--Yang--Zhang in their influential work \cite{BLYZ-logMinkowskiProblem,BLYZ-logBMInPlane} and further developed in \cite{KolesnikovEMilman-LocalLpBM}, the uniqueness question is intimately related to the validity of a conjectured $L^0$- (or log-)Brunn--Minkowski inequality for origin-symmetric convex bodies $K,L \in \K_e$, which would constitute a remarkable strengthening of the classical $p=1$ case. The restriction to origin-symmetric bodies is natural, and necessitated by the fact that no $L^p$-Brunn--Minkowski inequality nor uniqueness in the $L^p$-Minkowski problem can hold for general convex bodies when $p < 1$ \cite{Andrews-ClassificationOfLimitingShapesOfIsotropicCurveFlows,ChenLiZhu-LpMongeAmpere,ChenLiZhu-logMinkowski,ChouWang-LpMinkowski,HeLiWang-MultipleSupercriticalLpMinkowski,JianLuWang-NonUniquenessInSubcriticalLpMinkowski,KolesnikovEMilman-LocalLpBM,Li-NonUniquenessInCriticalLpMinkowskiProblem,LLL-NonUniquenessInDualLpMinkowskiProblem,EMilman-Isospectral-HBM,Stancu-UniquenessInDiscretePlanarL0Minkowski}.

\smallskip

The following equivalence may be shown by following the arguments of \cite{BLYZ-logMinkowskiProblem,BLYZ-logBMInPlane} (see Subsection \ref{subsec:A} for a more general statement and further details). We denote by $\K^{2,\alpha}_{+,e}$ the subset of $\K_e$ having $C^{2,\alpha}$-smooth boundary and strictly positive curvature, and 
refer to Section \ref{sec:prelim} for the definition of the $L^p$-Firey--Minkowski sum $(1-\lambda) \cdot K+_p \lambda \cdot L$. 

\begin{thm}[after B\"or\"oczky--Lutwak--Yang--Zhang] \label{thm:intro-equiv}
The following statements are equivalent for any fixed $p \in (-n,1)$:
\begin{enumerate}
\item \label{it:main1}
For any $q \in (p,1)$, uniqueness holds in the even $L^q$-Minkowski problem for any $K \in \K^{2,\alpha}_{+,e}$:
\begin{equation} \label{eq:intro-equiv-uniqueness}
\forall L \in \K_e \;\;, \; \; S_q L = S_q K \; \Rightarrow \; L = K . 
\end{equation}
\item \label{it:main2}
The even $L^p$-Brunn--Minkowski inequality holds:
\begin{equation} \label{eq:intro-equiv-BM}
\forall \lambda \in [0,1] \;\;\; \forall K,L \in \K_e \;\;\; V((1-\lambda) \cdot K+_p \lambda \cdot L) \geq \brac{(1-\lambda) V(K)^{\frac{p}{n}} + \lambda V(L)^{\frac{p}{n}}}^{\frac{n}{p}} . 
\end{equation}
The case $p=0$, called the even log-Brunn--Minkowski inequality, is interpreted in the limiting sense as:
\[
 V((1-\lambda) \cdot K+_0 \lambda \cdot L) \geq V(K)^{1-\lambda} V(L)^{\lambda} .
 \]
\item \label{it:main3}
The even $L^p$-Minkowski inequality holds:
\begin{equation} \label{eq:intro-equiv-M}
\forall K,L \in \K_e \;\;\; \frac{1}{p} \int_{\S^*} h_L^{p} dS_p K  \geq \frac{n}{p} V(K)^{1-\frac{p}{n}} V(L)^{\frac{p}{n}} .
\end{equation}
The case $p=0$, called the even log-Minkowski inequality, is interpreted in the limiting sense as:
\[
\frac{1}{V(K)} \int_{\S^*} \log \frac{h_L}{h_K} dV_K \geq  \frac{1}{n} \log \frac{V(L)}{V(K)} .
\]
\end{enumerate}
\end{thm}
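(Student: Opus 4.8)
The plan is to reduce to \emph{per-exponent} statements. For a fixed $r\in(-n,1)$ write $(2_r)$, $(3_r)$ for the even $L^r$-Brunn--Minkowski and $L^r$-Minkowski inequalities, and $(1_r)$ for uniqueness in the even $L^r$-Minkowski problem with a base in $\K^{2,\alpha}_{+,e}$ tested against all of $\K_e$; thus (\ref{it:main2}) is $(2_p)$, (\ref{it:main3}) is $(3_p)$, and (\ref{it:main1}) asserts $(1_q)$ for every $q\in(p,1)$. I would prove: (i) $(2_r)\Leftrightarrow (3_r)$ for each $r$; (ii) $(3_r)\Rightarrow (1_r)$ for each $r$; and (iii) $(1_q)$ for all $q\in(p,1)$ implies $(3_p)$. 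Together with the monotonicity of the even $L^r$-Minkowski inequality in $r$ established in \cite{KolesnikovEMilman-LocalLpBM} — which promotes $(3_p)$ to $(3_q)$ for every $q\in(p,1)$ — these close the circle $(\ref{it:main1})\Rightarrow(\ref{it:main3})\Leftrightarrow(\ref{it:main2})\Rightarrow(\ref{it:main1})$.

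For (i) and (ii) I would work along the $L^r$-Firey--Minkowski path $K_\lambda:=(1-\lambda)\cdot K+_r\lambda\cdot L$, for which $h_{K_\lambda}^r=(1-\lambda)h_K^r+\lambda h_L^r$ (and $\log h_{K_\lambda}=(1-\lambda)\log h_K+\lambda\log h_L$ when $r=0$). The first-variation identity $\frac{d}{d\lambda}V(K_\lambda)=\int_{\S^*}\partial_\lambda h_{K_\lambda}\,dS_{K_\lambda}$ together with the Euler relation $\int_{\S^*}h_K^r\,dS_rK=nV(K)$ gives $\frac{d}{d\lambda}\big|_{0}V(K_\lambda)=\frac1r\brac{\int_{\S^*}h_L^r\,dS_rK-nV(K)}$, with the obvious modification for $r=0$. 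Differentiating the convexity assertion of $(2_r)$ — concavity of $V(K_\lambda)^{r/n}$ for $r\in(0,1)$, of $\log V(K_\lambda)$ for $r=0$, convexity of $V(K_\lambda)^{r/n}$ for $r<0$ — at $\lambda=0$ yields $(3_r)$; conversely, applying $(3_r)$ with base $K_\lambda$ against the test bodies $K$ and $L$, and likewise on every subinterval of $[0,1]$ (on which $K_\lambda$ is again an $L^r$-path between its endpoints), recovers the convexity, i.e. $(2_r)$. This proves (i). For (ii): if $S_rL=S_rK$ with $K\in\K^{2,\alpha}_{+,e}$ and $L\in\K_e$, then the Euler relation and $S_rL=S_rK$ give $\int_{\S^*}h_L^r\,dS_rK=\int_{\S^*}h_L^r\,dS_rL=nV(L)$ and $\int_{\S^*}h_K^r\,dS_rL=nV(K)$; plugging these into $(3_r)$ with base $K$ and with base $L$ forces $V(K)=V(L)$ and equality in both instances, whence $L=K$ by the equality characterisation of $(3_r)$ for origin-symmetric bodies of equal volume — transparent here since the base is smooth and positively curved, and emerging from the same argument that proves $(3_r)$.

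For (iii), fix $q\in(p,1)$; by density of $\K^{2,\alpha}_{+,e}$ in $\K_e$ and continuity of the functionals, it suffices to prove $(3_q)$ for a base $K\in\K^{2,\alpha}_{+,e}$. Consider the dilation-invariant functional $G(M):=\brac{\int_{\S^*}h_M^q\,dS_qK}V(M)^{-q/n}$ on $\K_e$ (replaced by $M\mapsto\frac1{V(K)}\int_{\S^*}\log h_M\,dV_K-\frac1n\log V(M)$ when $q=0$), whose stationary points are the bodies $M$ with $S_qM=c\,S_qK$ for some constant $c>0$, and which is extremised over $\K_e$ at $M=K$ precisely when $(3_q)$ holds. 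The crux is the behaviour of $G$ as $M$ degenerates: after normalising the diameter, a degenerating origin-symmetric sequence Hausdorff-converges to a lower-dimensional origin-symmetric convex set, whose support function is still positive a.e. on $\S^*$, so (since $S_qK$ is comparable to Lebesgue measure on $\S^*$, $K$ being smooth and positively curved) $\int_{\S^*}h_{M_k}^q\,dS_qK$ stays bounded away from $0$ while $V(M_k)\to0$; hence $G(M_k)\to\infty$ when $q\ge0$ (so $G$ is coercive modulo dilations), while for $q<0$ the same estimate, using $q>-n$ to control the integrable blow-up of $h_{M_k}^q$ near the degenerate directions, gives $G(M_k)\to0$ with $G$ bounded above. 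In either case $G$ attains its extremum (a minimum if $q\ge0$, a maximum if $q<0$) at some $M_0\in\K_e$, which we normalise by $V(M_0)=V(K)$; its Euler--Lagrange equation $S_qM_0=c\,S_qK$ and standard regularity for the resulting Monge--Amp\`ere equation (smooth positive data) give $M_0\in\K^{2,\alpha}_{+,e}$ and $S_qM_0=S_q(c^{1/(n-q)}K)$. By $(1_q)$, $M_0$ is a dilate of $K$, hence $M_0=K$; dilation-invariance then forces $G$ to be extremised over $\K_e$ at $M=K$, which unwinds to $(3_q)$. Letting $q\downarrow p$ with $S_qK\to S_pK$ in total variation (and all remaining quantities converging) passes $(3_q)$ to $(3_p)=(\ref{it:main3})$.

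The main obstacle is the compactness in (iii) — ruling out degeneration of the extremal body $M_0$ — which relies essentially on origin-symmetry; no such argument can survive for general convex bodies, consistent with the known failure of uniqueness in the $L^p$-Minkowski problem and of the $L^p$-Brunn--Minkowski inequality for $p<1$ in that generality. The accompanying technical point is making the Euler--Lagrange computation rigorous: legitimising the admissible variations, or equivalently invoking existence and $C^{2,\alpha}$-regularity for the $L^q$-Minkowski problem with smooth positive data, requires the (standard but nontrivial) Monge--Amp\`ere regularity theory. A final subtlety is matching the open range $(p,1)$ in (\ref{it:main1}) with the single exponent $p$ in (\ref{it:main2})--(\ref{it:main3}): this is handled by the limit $q\downarrow p$ in one direction and by the monotonicity in $r$ of \cite{KolesnikovEMilman-LocalLpBM} in the other.
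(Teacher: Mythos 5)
Your proposal is correct in substance, but it closes the cycle differently from the paper on the key direction ``inequality $\Rightarrow$ uniqueness''. Your step (iii) is essentially the paper's route for (\ref{it:main1}) $\Rightarrow$ (\ref{it:main3}): existence of an extremizer of the Chou--Wang functional (Theorem \ref{thm:CW}), the Euler--Lagrange equation, Monge--Amp\`ere regularity, then $q\searrow p$; and your (i) re-derives the B\"or\"oczky--Lutwak--Yang--Zhang equivalence (\ref{it:main2}) $\Leftrightarrow$ (\ref{it:main3}) which the paper simply cites. The genuine divergence is (\ref{it:main3}) $\Rightarrow$ (\ref{it:main1}): you promote $(3_p)$ to $(3_q)$ for $q\in(p,1)$ by Jensen, obtain the equality characterization at the strictly larger exponent from equality in the Jensen step (legitimate since $V_K$ has a positive continuous density when the base is in $\K^{2,\alpha}_{+,e}$), and then run Lutwak's two-sided argument with bases $K$ and $L$. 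Both ingredients appear in the paper --- the Jensen promotion with equality cases is exactly the proof of (\ref{it:main3}) $\Rightarrow$ (\ref{it:main3b}) in Theorem \ref{thm:equiv-full}, and the Lutwak argument is recorded there only ``for completeness'' as (\ref{it:main3b}) $\Rightarrow$ (\ref{it:main1}) --- but the paper closes the cycle instead via (\ref{it:main2}) $\Rightarrow$ (\ref{it:main4}) $\Rightarrow$ (\ref{it:main1}), i.e.\ through the spectral estimate $\lambda_{1,e}(-\Delta_K)\geq n-p$, local uniqueness from \cite{KolesnikovEMilman-LocalLpBM}, Brendle--Choi--Daskalopoulos uniqueness for the ball (Theorem \ref{thm:Acta}), and the Chen--Huang--Li--Liu continuity method (Theorem \ref{thm:local-to-global}). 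Your route is more elementary and avoids those external theorems for the three-statement equivalence of Theorem \ref{thm:intro-equiv}; what the paper's heavier route buys is the additional equivalence with the local/spectral statement (\ref{it:main4}) and the ``moreover'' refinement for path-connected families $\F$ in Theorem \ref{thm:equiv-full}, which is what the paper's main theorems actually use --- your argument does not yield those.

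Two points to tighten. First, for $r<1$ the relation $h_{K_\lambda}^r=(1-\lambda)h_K^r+\lambda h_L^r$ holds only as a pointwise inequality ($K_\lambda$ being a Wulff shape), with equality $S_{K_\lambda}$-almost everywhere; the base-$K_\lambda$ step in your (i) needs exactly this a.e.\ statement, so it should be invoked explicitly. Second, the ``equality characterisation of $(3_r)$'' used in your (ii) is not part of statement (\ref{it:main3}) and is not available at the exponent $p$ itself; it must be extracted, as above, from equality in Jensen at an exponent $q$ strictly above $p$ --- which is precisely why (\ref{it:main1}) is formulated for $q\in(p,1)$ --- so your (ii) should be stated and used only for such $q$, which is all your cycle requires.
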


Using Jensen's inequality in formulation (\ref{eq:intro-equiv-M}) (or (\ref{eq:intro-equiv-BM})), it is immediate to check that the above (equivalent) statements become stronger as $p$ decreases (i.e. that their validity for $p_1$ implies their validity for $p_2$ whenever $p_1 < p_2 < 1$).

\begin{conj*}[B\"or\"oczky--Lutwak--Yang--Zhang, ``Even log-Brunn--Minkowski Conjecture"]
Any (and hence all) of the above statements hold for origin-symmetric convex bodies in the ``logarithmic case" $p=0$ (and hence for all $p \in [0,1)$ as well).  
\end{conj*}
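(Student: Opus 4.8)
\medskip

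\noindent\textbf{A programme towards the conjecture.}
Since the three formulations are equivalent (Theorem~\ref{thm:intro-equiv}), the plan is to attack the most flexible one, the even log-Minkowski inequality, through its infinitesimal form. Combining the local-to-global principle for the log-Brunn--Minkowski inequality (connectedness of $\K^{2,\alpha}_{+,e}$ together with a continuity/degree argument, as developed by Kolesnikov--Milman and in the local-to-global theorem of Chen--Huang--Li--Liu) with a routine approximation, it suffices to prove the \emph{local} even log-Minkowski inequality at every $K\in\K^{2,\alpha}_{+,e}$: that the second variation of $t\mapsto\log V$ along the family $\log h_t=\log h_K+t\,\phi$ is non-positive for every even $\phi\in C^\infty(\S^*)$. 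Passing to the associated quadratic form, and using the normalisation in which the Hilbert--Brunn--Minkowski operator $\mathcal L_K$ equals the centro-affine Laplacian (and, for $K$ a Euclidean ball, the spherical Laplacian), this is equivalent to the sharp spectral-gap estimate
\[
\scalar{-\mathcal L_K\phi,\phi}_{L^2(V_K)}\ \geq\ n\,\scalar{\phi,\phi}_{L^2(V_K)}\qquad\text{for all even }\phi\in C^\infty(\S^*)\text{ with }\scalar{\phi,1}_{L^2(V_K)}=0 .
\]
(For general $p<1$ the critical constant on the right is $n-p$; the restriction to even $\phi$ is indispensable, since the linear functions realise the strictly smaller value $n-1$ and destroy any such estimate for non-symmetric bodies.) Equivalently: the first non-zero eigenvalue of $-\mathcal L_K$ \emph{on the subspace of even functions} is at least $n$.

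\medskip

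\noindent\textbf{What the centro-affine structure gives for free.}
By the two identifications established in this paper, $\mathcal L_K$ transported to $\partial K$ is the centro-affine Laplacian $\Delta_{ca}$, self-adjoint with respect to the centro-affine Riemannian volume (whose push-forward to $\S^*$ is $V_K$), and $(\partial K,g_{ca})$ is a centro-affine unit-sphere, so $\Ric_{ca}\equiv(n-2)\,g_{ca}$. Let $\phi$ be an eigenfunction of $-\Delta_{ca}$ attaining the bottom $\lambda$ of the even spectrum. Integrating the centro-affine Bochner identity against $\phi$ and invoking the pointwise bound $\snorm{\nabla^2_{ca}\phi}^2\geq\frac{1}{n-1}(\Delta_{ca}\phi)^2$ yields, precisely as in the classical Lichnerowicz argument,
\[
\lambda\int_{\partial K}\abs{\nabla_{ca}\phi}^2\ =\ \int_{\partial K}\snorm{\nabla^2_{ca}\phi}^2+(n-2)\int_{\partial K}\abs{\nabla_{ca}\phi}^2\ \geq\ \Big(\tfrac{\lambda^2}{n-1}+n-2\Big)\int_{\partial K}\abs{\nabla_{ca}\phi}^2 ,
\]
hence $\lambda\geq n-1$. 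This is already a proof of the Brunn--Minkowski inequality, obtained with no recourse to symmetry; the entire remaining task is the \emph{single-unit improvement} from $n-1$ to $n$, available only because $\phi$ --- and with it the whole metric-measure structure --- is invariant under the centro-affine antipodal isometry $x\mapsto-x$.

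\medskip

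\noindent\textbf{The crux, and the main obstacle.}
Upgrading the Lichnerowicz bound from $n-1$ to $n$ on even functions is the full content of the conjecture and is, at present, open. One clean observation is available: if $\lambda=n-1$ were attained by an even $\phi$, the equality case of Bochner's inequality forces $\nabla^2_{ca}\phi=-\phi\,g_{ca}$, so by Obata's theorem $(\partial K,g_{ca})$ is the round sphere and $\phi$ a first eigenfunction; but a first eigenfunction of the round sphere is \emph{odd} under its unique free involution, contradicting evenness --- so in fact $\lambda>n-1$ strictly for every $K\in\K_e$. What is missing is the \emph{quantitative} version pushing the deficit all the way to $1$. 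Three avenues seem natural, each requiring genuine new input: (i) a sharp almost-rigidity/stability refinement of Obata's theorem adapted to the antipodally-symmetric centro-affine sphere; (ii) an improved integral identity exploiting more than the Ricci bound --- namely that $\partial K$ is a centro-affine \emph{sphere} (apolarity/Tchebychev relations, the centro-affine difference tensor), producing a further non-negative term in the Bochner balance that vanishes on ellipsoids and is released precisely once evenness removes the ``linear'' directions; (iii) a folding/localisation estimate treating even functions as living on the quotient $\partial K/\{\pm\}$, combining $\Ric_{ca}\geq n-2$ with control of the centro-affine geometry of that quotient (a centro-affine analogue of the passage from $\S^*$ to $\R\mathrm{P}^{n-1}$), in the spirit of Li--Yau/Zhong--Yang or of a one-dimensional localisation of the quadratic form. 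The decisive difficulty shared by all three is that the centro-affine metric of a general origin-symmetric body is neither round nor homogeneous, while the conjectured extremisers (polytopes, in the limiting sense) are as far from round as possible --- and the potentially degenerate directions of any identity as in (ii) can line up exactly with the low even eigenfunctions, so that a careless estimate recovers only the Brunn--Minkowski threshold. The curvature-pinching theorems of this paper are exactly the realisation of avenue (i) under the additional hypothesis that $(\partial K,g_{ca})$ is uniformly close to round; dispensing with that hypothesis --- i.e. controlling, or structurally circumventing, the centro-affine cubic form for \emph{all} origin-symmetric bodies --- is the obstacle that remains to complete the programme.
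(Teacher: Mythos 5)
The statement you were asked to address is a \emph{conjecture}: the paper does not prove it, and explicitly records that it remains open for $n\geq 3$. Your proposal, to its credit, does not pretend otherwise --- it reduces the conjecture to the even spectral-gap estimate $\lambda_{1,e}(-\Delta_K)\geq n$, reproduces the Lichnerowicz-via-centro-affine-Bochner proof of $\lambda_1(-\Delta_K)=n-1$, and openly states that the improvement from $n-1$ to $n$ on even functions is the missing step. This matches the paper's own viewpoint almost exactly (the reduction via the local-to-global principles of Chen--Huang--Li--Liu and Putterman, the identification of $\Delta_K$ with the centro-affine Laplacian, the constant Ricci curvature $n-2$, and formulation (4) of Theorem \ref{thm:equivalences} as the place where a factor of $2$ must be gained from evenness). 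So there is no complete proof here, but the gap you leave open is precisely the gap the paper leaves open; what the paper adds beyond your outline is the verification of the key inequality under explicit curvature pinching (Theorem \ref{thm:curvature-implies-local-BM}) and the resulting isomorphic resolution.

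One step in your text is genuinely wrong, even though your conclusion survives. In analyzing the equality case $\lambda=n-1$ you deduce $\Hess^*_K\phi=-\phi\, g_K$ and then invoke Obata's theorem to conclude that $(\partial K,g_K)$ is the round sphere. Obata's theorem concerns the Hessian of the \emph{Levi-Civita} connection of $g_K$; here $\Hess^*_K$ is taken with respect to the conjugate connection $(\nabla_K)^*$, which is not metric for $g_K$ unless the cubic form vanishes. Indeed, for \emph{every} $K\in\K^\infty_+$ --- round or not --- the functions $\scalar{x^*_K,\xi}$ satisfy $\Hess^*_K\scalar{x^*_K,\xi}=-\scalar{x^*_K,\xi}\,g_K$ (Proposition \ref{prop:first-eigenfuncs}), so the existence of such solutions certainly does not force $g_K$ to be round. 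The correct argument (Theorem \ref{thm:eigenspace}) is to solve the equation directly on $\S^*$: it forces $\bar D^2(\phi h_K)=0$, hence $\phi h_K$ is linear and $\phi=\scalar{x^*_K,\xi}$, which is odd when $K$ is origin-symmetric. This yields the same strict inequality $\lambda_{1,e}(-\Delta_K)>n-1$ without any rigidity statement about the metric, and it also explains why your avenue (i) (an Obata-type almost-rigidity for $g_K$) is unlikely to be the right mechanism: the relevant equality case carries no information about roundness of $g_K$ at all.
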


A confirmation of this conjecture
would constitute a dramatic improvement over the classical Brunn--Minkowski theory for the subfamily of origin-symmetric convex bodies, which had gone unnoticed for over a century. 
The importance of this conjecture to the Brunn--Minkowski theory for general measures has been further expounded in several subsequent works \cite{LMNZ-BMforMeasures,HKL-LogBMForSubsets,Kolesnikov-OTOnSphere,KolesnikovLivshyts-ParticularFunctionsInLogBM,Saroglou-logBM1,Saroglou-logBM2};
see below for additional information and partial results. The even log-Brunn--Minkowski conjecture (a.k.a. log-Minkowski conjecture) is known to hold in the plane \cite{BLYZ-logBMInPlane} (see also \cite{GageLogBMInPlane,StancuDiscreteLogBMInPlane,MaLogBMInPlane,XiLeng-DarAndLogBMInPlane,Putterman-LocalToGlobalForLpBM}), but remains open in general for $n \geq 3$.

\smallskip

It is easy to show that (\ref{eq:intro-equiv-BM}) or (\ref{eq:intro-equiv-M}) are false for any $p < 0$ (see e.g. \cite{KolesnikovEMilman-LocalLpBM}).  
Moreover, uniqueness in (\ref{eq:intro-equiv-uniqueness}) does not hold for general $K,L \in \K_e$ and $q=0$, as may be verified by testing two different centered parallelepipeds with appropriately chosen parallel facets. The latter example is known to be the only exception to uniqueness in the log-Minkowski problem in the plane \cite{BLYZ-logBMInPlane}, but in higher dimension there are additional conjectured (non-smooth) cases of equality in (\ref{eq:intro-equiv-uniqueness}) when $q=0$ \cite{BoroczkyDe-StableLogBMWithSymmetries,BoroczkyKalantz-LogBMWithSymmetries}. In particular, when $K \in \K^{2,\alpha}_{+,e}$, the (somewhat stronger) conjecture is that  (\ref{eq:intro-equiv-uniqueness}) should hold not only for $q \in (0,1)$ but for $q=0$ as well -- we will refer to this as the ``uniqueness in the even log-Minkowski problem" conjecture.

\subsection{Main Results}

We now turn to describe the main results of this work. As our first main result, we obtain the following uniqueness result for the even $L^p$-Minkowski problem, with corresponding even $L^p$-Minkowski inequality, under a fairly generous curvature bound assumption on $K$. We denote 
by $\II^{\partial K}$ the second fundamental form on $\partial K \subset \R^n$. 

\begin{thm} \label{thm:main-Lp-Minkowski}
Let $K \in \K^{2,\alpha}_{+,e}$ have a centro-affine image $\tilde K$ so that the following curvature pinching bounds hold:
\begin{equation} \label{eq:main-strong-curvature-bounds}
\frac{1}{R} |X|^2 \leq \II^{\partial \tilde K}(X,X) \leq \frac{1}{r} |X|^2 \;\;\; \forall X \in T \partial \tilde K ,
 \end{equation}
for some $R \geq r > 0$; in other words, all radii of curvature of $\tilde K$ are bounded between $r$ and $R$. Then for any:
\begin{equation} \label{eq:main-rR-cond}
3 - \frac{n-1}{2} \frac{r^2}{R^2} < p < 1 ,
\end{equation}
the even $L^p$-Minkowski problem for $K$ has a unique solution:
\begin{equation} \label{eq:main-Lp-Minkowski-Uniqueness}
\forall L \in \K_e \;\;, \; \;  S_p L = S_p K  \; \Rightarrow \; L = K ,
\end{equation}
and the even $L^p$-Minkowski inequality holds:
\begin{equation} \label{eq:main-Lp-Minkowski-Inq}
\forall L \in \K_e  \;\;\; \frac{1}{p} \int_{\S^*} h_L^{p} dS_p K  \geq \frac{n}{p} V(K)^{1-\frac{p}{n}} V(L)^{\frac{p}{n}}  ,
\end{equation}
with equality if and only if $L = c K$ for some $c > 0$. 
\end{thm}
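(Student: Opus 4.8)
The plan is to interpret Theorem~\ref{thm:main-Lp-Minkowski} as a spectral gap estimate for the Hilbert--Brunn--Minkowski operator of $K$ and to establish it by the centro-affine Bochner--Lichnerowicz method announced in the abstract. Since the even $L^p$-Minkowski problem, the cone-volume measure $V_K$, the Hilbert--Brunn--Minkowski operator and the centro-affine structure are all $GL(n)$-equivariant in a mutually compatible way, replacing $K$ by the centro-affine image $\tilde K$ affects neither (\ref{eq:main-Lp-Minkowski-Uniqueness}) nor (\ref{eq:main-Lp-Minkowski-Inq}), so we may assume $K$ itself obeys (\ref{eq:main-strong-curvature-bounds}); then a ball of radius $r$ rolls freely inside $K$ and $K$ rolls inside a ball of radius $R$, whence $rB\subseteq K\subseteq RB$ and the pointwise bounds $r\le h_K\le R$, $r^{n-1}\le\det D^2 h_K\le R^{n-1}$ on $\S^*$. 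Following the analysis underlying Theorem~\ref{thm:intro-equiv}, both (\ref{eq:main-Lp-Minkowski-Inq}) for the fixed body $K$ and the uniqueness (\ref{eq:main-Lp-Minkowski-Uniqueness}) reduce to the infinitesimal $L^p$-Minkowski inequality at $K$: the non-negativity, along even perturbations $\delta h=h_K z$ of the support function, of the second variation at $L=K$ of $L\mapsto\frac1p\int_{\S^*}h_L^p\,dS_p K-\frac np V(K)^{1-p/n}V(L)^{p/n}$. Writing $L_K$ for the Hilbert--Brunn--Minkowski operator --- self-adjoint with respect to $V_K$ and annihilating the constants --- this amounts to a Poincar\'e-type inequality
\[
\int_{\S^*} z\,(-L_K z)\,dV_K \;\ge\; \tau(p)\,\Big(\int_{\S^*} z^2\,dV_K - \tfrac{1}{V(K)}\Big(\int_{\S^*} z\,dV_K\Big)^2\Big)
\]
for every even $z$ --- up to lower-order terms kept under control by the pinching --- with $\tau$ an explicit strictly decreasing function of $p$; equivalently, the first nonzero \emph{even} eigenvalue of $-L_K$ must be at least $\tau(p)$. (The linear functions, realizing a much smaller eigenvalue of $-L_K$ and corresponding to translations, are not competitors, as translating a centered body leaves $\K_e$ --- which is exactly what makes the even problem accessible.)

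The heart of the argument is this spectral lower bound, and here I would invoke the structural theorem of the paper: $L_K$ \emph{is} the centro-affine Laplacian of $\partial K$, which, being strongly convex, is a centro-affine unit-sphere of constant centro-affine Ricci curvature $n-2$. Applying the centro-affine Bochner formula to an even eigenfunction $f$, $-L_K f=\lambda f$, and integrating against $dV_K$ gives an identity of the form
\[
\int_{\S^*}\big|\nabla^2_{\mathrm{ca}} f\big|^2\,dV_K \;+\; (n-2)\int_{\S^*}\big|\nabla_{\mathrm{ca}} f\big|^2\,dV_K \;+\; \mathcal{E}_K(f)\;=\;\lambda^2\int_{\S^*} f^2\,dV_K ,
\]
where $\mathcal{E}_K(f)$ collects the contributions of the centro-affine cubic (Fubini--Pick) form and of the density of $V_K$ relative to the centro-affine volume. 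Bounding $\big|\nabla^2_{\mathrm{ca}} f\big|^2\ge\tfrac1{n-1}(L_K f)^2=\tfrac{\lambda^2}{n-1}f^2$ by Cauchy--Schwarz as in Lichnerowicz's proof, one reduces the spectral estimate to controlling $\mathcal{E}_K(f)$. This is the delicate point: the centro-affine cubic form is third-order in $h_K$ and is \emph{not} bounded pointwise by the $C^2$-pinching (\ref{eq:main-strong-curvature-bounds}); I would therefore estimate $\mathcal{E}_K(f)$ only after integration by parts --- exploiting that $\partial K$, being a centro-affine unit-sphere, carries an integral identity for the total centro-affine Pick form --- so that only the $C^2$-quantities $h_K$ and $D^2 h_K$, hence only the ratio $r/R$, together with the constant curvature $n-2$, enter the estimate. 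The book-keeping should then make the resulting lower bound $\lambda\ge\sigma(n,r/R)$ meet $\tau(p)\le\sigma(n,r/R)$ precisely under hypothesis (\ref{eq:main-rR-cond}), with the constant $3$ arising from the Lichnerowicz exponent and the worst case of the lower-order weight, and the factor $\tfrac{n-1}{2}$ from the trace inequality in dimension $n-1$.

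It remains to pass to the global statements. The inequality (\ref{eq:main-Lp-Minkowski-Inq}) follows from the infinitesimal inequality at $K$ via the local-to-global mechanism developed around Theorem~\ref{thm:intro-equiv}, the strict margin in (\ref{eq:main-rR-cond}) furnishing the room it needs, and its equality case $L=cK$ is read off from the same discussion. For (\ref{eq:main-Lp-Minkowski-Uniqueness}): a competitor $L$ with $S_p L=S_p K$ solves a Monge--Amp\`ere equation whose right-hand side is smooth, strictly positive and pinched in terms of $r,R$, so by Caffarelli-type regularity and Chou--Wang-type a~priori estimates $L\in\K^{2,\alpha}_{+,e}$ and $L$ itself carries curvature pinching controlled by $r,R,p$, tight enough that (\ref{eq:main-rR-cond}) persists and (\ref{eq:main-Lp-Minkowski-Inq}) applies to $L$ as well; testing (\ref{eq:main-Lp-Minkowski-Inq}) for $K$ against $L$ and for $L$ against $K$ --- using $\int_{\S^*}h_K^p\,dS_p K=nV(K)$ and $S_p K=S_p L$ --- forces $V(K)=V(L)$, and the equality characterization then yields $L=cK$, hence $c=1$. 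The main obstacle is the spectral step: extracting and estimating the cubic-form error $\mathcal{E}_K$, a third-order quantity not controlled pointwise by the hypothesis, via integration by parts and the centro-affine unit-sphere structure, and doing so sharply enough to reproduce exactly the threshold (\ref{eq:main-rR-cond}); a secondary difficulty is ensuring that every competitor $L$ indeed inherits pinching tight enough for (\ref{eq:main-rR-cond}) to survive, and carrying out the local-to-global passage for a single fixed body.
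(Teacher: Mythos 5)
Your overall architecture (reduce to a spectral gap for the Hilbert--Brunn--Minkowski operator, identify it with the centro-affine Laplacian, use the constant centro-affine Ricci curvature $n-2$, then pass local-to-global) matches the paper, but the two steps you yourself flag as the "heart" and the "obstacle" are, as proposed, not workable, and they are not what the paper does. First, the spectral step: the centro-affine Bochner formula with respect to $\nu_K$ is \emph{exact} -- since $\nabla_K\nu_K=0$ and $\nu_K$ coincides (up to a constant) with $V_K$, one has $\int(\Delta_K f)^2d\nu_K-\int\norm{\Hess^*_Kf}_{g_K}^2d\nu_K=(n-2)\int|\grad_{g_K}f|^2d\nu_K$ with no cubic-form or density error term $\mathcal{E}_K(f)$ whatsoever; so there is nothing for your proposed "integration by parts against the total Pick form" to control. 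More seriously, applying this identity to an even eigenfunction and using Cauchy--Schwarz $\norm{\Hess^*_Kf}^2\ge\frac{1}{n-1}(\Delta_Kf)^2$ reproduces only Lichnerowicz's bound $\lambda_1\ge n-1$, i.e.\ the $p=1$ case: your scheme contains no mechanism by which evenness or the pinching improves the constant. The paper's actual mechanism is a Cordero-Erausquin--Fradelizi--Maurey-type argument: for even $f$ one applies the already-established Poincar\'e inequality $\lambda_1(-\Delta_K)=n-1$ to the \emph{odd} functions $F^k=(\grad_{g_K}f)(\scalar{\ee^k,x})$, sums over $k$, and uses the structure equation $\partial_a(x^k_if^i)=-x^kf_a+x^k_ig^{ip}(\Hess^*_K)_{ap}f$ together with the comparisons $\frac1B\delta\le g_K\le\frac1A\delta$ and $|x|\le R$ to bound the resulting terms; this is precisely where the threshold $2-\frac{\frac{n-1}{2}A-R^2}{B}$, hence $3-\frac{n-1}{2}\frac{r^2}{R^2}$, comes from (Theorem \ref{thm:curvature-implies-local-BM} via formulation (\ref{it:formulation4}) of Theorem \ref{thm:equivalences}).

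Second, the global step. Your route to uniqueness assumes that any competitor $L$ with $S_pL=S_pK$ inherits curvature pinching "tight enough that (\ref{eq:main-rR-cond}) persists"; there is no justification for this (regularity theory gives $L\in\K^{2,\alpha}_{+,e}$, but no quantitative pinching comparable to $K$'s), and the known non-uniqueness phenomena for $p<0$ show such inheritance cannot be taken for granted. Likewise, the local-to-global principle of Chen--Huang--Li--Liu/Putterman cannot be invoked for the single body $K$: it needs the local estimate along a whole family. The paper resolves this by constructing the explicit path $K_t=T_t(\tilde K_t)$ with $h_{\tilde K_t}=(1-t)+t\,h_{\tilde K}$ joining $B_2^n$ to $K$, proving that the pinching constants do not deteriorate along the path (a monotonicity argument for $((1-t)+tp)((1-t)+tq)$), so that the local $L^{p_0}$-estimate and hence local uniqueness hold for every $K_t$, and then transporting the global uniqueness known for the ball (Brendle--Choi--Daskalopoulos) along the path by the continuity method. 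Finally, the direction of your last step is reversed relative to the paper: the even $L^p$-Minkowski inequality (\ref{eq:main-Lp-Minkowski-Inq}) for $K$, with its equality cases, is \emph{deduced from} the uniqueness via the variational argument (a global minimizer of $F_{\mu,p}$ exists and satisfies $S_pL=c\,S_pK$, so uniqueness forces it to be a dilate of $K$), not obtained independently from the infinitesimal inequality at $K$ alone.
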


The term ``centro-affine image" above is synonymous with ``(non-degenerate) linear image", but we prefer to use the former to emphasize the centro-affine differential geometry underlying our results. Of course, the case $p=0$ above is interpreted in the limiting sense as follows:

\begin{corollary} \label{cor:main-log-Minkowski}
With the same conditions as above, whenever
\[
\frac{R^2}{r^2}  < \frac{n-1}{6},
\]
 the even log-Minkowski problem for $K$ has a unique solution:
\begin{equation} \label{eq:main-log-Minkowski-Uniqueness}
\forall L \in \K_e \;\;, \; \; V_L = V_K  \; \Rightarrow \; L = K ,
\end{equation}
 and the even log-Minkowski inequality holds:
 \begin{equation} \label{eq:main-log-Minkowski-Inq}
 \forall L \in \K_e \;\;\; \frac{1}{V(K)} \int_{\S^*} \log \frac{h_L}{h_K} dV_K \geq  \frac{1}{n} \log \frac{V(L)}{V(K)} ,
 \end{equation}
 with equality if and only if $L = c K$ for some $c > 0$. 
\end{corollary}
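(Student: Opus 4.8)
The plan is to deduce Corollary~\ref{cor:main-log-Minkowski} from Theorem~\ref{thm:main-Lp-Minkowski} by a limiting argument in the exponent $p \to 0^+$, so almost no new work is needed beyond a careful passage to the limit. First I would observe that the hypothesis $R^2/r^2 < (n-1)/6$ is precisely the statement that the threshold $3 - \frac{n-1}{2}\frac{r^2}{R^2}$ in \eqref{eq:main-rR-cond} is strictly negative; indeed $3 - \frac{n-1}{2}\frac{r^2}{R^2} < 0 \iff \frac{n-1}{2}\frac{r^2}{R^2} > 3 \iff \frac{R^2}{r^2} < \frac{n-1}{6}$. Hence under the assumption of the corollary, the interval of admissible exponents in Theorem~\ref{thm:main-Lp-Minkowski} is $(3 - \frac{n-1}{2}\frac{r^2}{R^2},\,1) \supset [0,1)$, so in particular \emph{every} $p \in (0,1)$ is admissible, and Theorem~\ref{thm:main-Lp-Minkowski} gives both uniqueness \eqref{eq:main-Lp-Minkowski-Uniqueness} and the inequality \eqref{eq:main-Lp-Minkowski-Inq} for all such $p$ simultaneously.

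For the inequality \eqref{eq:main-log-Minkowski-Inq}: fix $L \in \K_e$ and apply \eqref{eq:main-Lp-Minkowski-Inq} for $p \in (0,1)$, which reads $\frac{1}{p}\int_{\S^*} h_L^p \, dS_p K \geq \frac{n}{p} V(K)^{1-p/n} V(L)^{p/n}$. Writing $dS_p K = h_K^{1-p}\, dS_K = n h_K^{-p}\, dV_K$ and dividing by $n V(K)$, this becomes $\frac{1}{p}\cdot\frac{1}{V(K)}\int_{\S^*}\brac{\frac{h_L}{h_K}}^p dV_K \geq \frac{1}{p}\brac{\frac{V(L)}{V(K)}}^{p/n}$, i.e. $\frac{1}{p}\brac{\frac{1}{V(K)}\int_{\S^*}\brac{\frac{h_L}{h_K}}^p dV_K - 1} \geq \frac{1}{p}\brac{\brac{\frac{V(L)}{V(K)}}^{p/n} - 1}$. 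Now let $p \to 0^+$: since $V_K$ is a finite measure and $h_L/h_K$ is bounded above and below by positive constants on the compact $\S^*$ (both support functions are continuous and strictly positive), dominated convergence gives $\frac{1}{p}\brac{\frac{1}{V(K)}\int (h_L/h_K)^p dV_K - 1} \to \frac{1}{V(K)}\int \log\frac{h_L}{h_K}\, dV_K$, while the right side converges to $\frac{1}{n}\log\frac{V(L)}{V(K)}$. This yields \eqref{eq:main-log-Minkowski-Inq}. For the equality characterization, if equality holds in the log-inequality for some $L$, I would argue that equality must then hold (or be approached in a controlled way) in \eqref{eq:main-Lp-Minkowski-Inq} for $p$ near $0$, or more directly invoke the strict monotonicity afforded by Jensen as in the discussion after Theorem~\ref{thm:intro-equiv}: the function $p \mapsto \frac{1}{p}\brac{\frac{1}{V(K)}\int(h_L/h_K)^p dV_K - 1} - \frac{1}{p}\brac{(V(L)/V(K))^{p/n}-1}$ is nonnegative on $(0,1)$ with limit the log-deficit at $p=0^+$, and one checks that a zero limit forces $h_L/h_K$ to be $V_K$-a.e. constant, hence (by continuity and $\supp V_K = \S^*$ for $K \in \K^{2,\alpha}_{+,e}$) $h_L \equiv c\, h_K$, i.e. $L = cK$.

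For uniqueness \eqref{eq:main-log-Minkowski-Uniqueness}: suppose $L \in \K_e$ satisfies $V_L = V_K$. Then in particular $V(L) = V_K(\S^*) = V(K)$. Applying the already-proven log-Minkowski inequality \eqref{eq:main-log-Minkowski-Inq} with the roles of $K$ and $L$ played symmetrically --- note $V_L = V_K$ lets us write the integral against either measure --- gives $\frac{1}{V(K)}\int \log\frac{h_L}{h_K}\, dV_K \geq \frac{1}{n}\log\frac{V(L)}{V(K)} = 0$ and $\frac{1}{V(L)}\int \log\frac{h_K}{h_L}\, dV_L \geq 0$; since $V_K = V_L$ and $V(K) = V(L)$, these two say $\int\log\frac{h_L}{h_K}\, dV_K \geq 0 \geq \int\log\frac{h_L}{h_K}\, dV_K$, forcing equality in \eqref{eq:main-log-Minkowski-Inq}, hence by the equality case $L = cK$; then $V(L) = V(K)$ gives $c^n V(K) = V(K)$ so $c=1$ and $L = K$. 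Alternatively, and perhaps more cleanly, I would simply run the limiting argument directly on the uniqueness statement \eqref{eq:main-Lp-Minkowski-Uniqueness}: given $V_L = V_K$, this does not immediately say $S_p L = S_p K$ for $p > 0$, so the inequality route above is the more robust one and is what I would write up.

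The main obstacle is essentially cosmetic rather than substantive: one must be careful that the limiting procedure is uniform enough to transfer the \emph{equality characterization}, not just the inequality, across $p \to 0^+$. The inequality itself is a soft consequence of dominated convergence, but pinning down that the only equality cases are dilates requires either the symmetric-roles trick above (which is self-contained given \eqref{eq:main-log-Minkowski-Inq} and $\supp V_K = \S^*$), or a quantitative lower bound on the $L^p$-deficit as $p \to 0$. I expect the paper to take the former, lighter route. No genuinely new geometric or spectral input is needed here --- all the real work (the centro-affine Bochner identity, the curvature pinching, the spectral gap) has already been absorbed into Theorem~\ref{thm:main-Lp-Minkowski}.
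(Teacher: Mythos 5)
Your reduction is aimed in the right direction, and the observation that $R^2/r^2<\frac{n-1}{6}$ is exactly the condition making the threshold $3-\frac{n-1}{2}\frac{r^2}{R^2}$ in \eqref{eq:main-rR-cond} negative is correct. But note that this means the exponent $0$ lies \emph{strictly inside} the admissible interval, and this is how the paper proceeds: no limit $p\to 0^+$ is taken. The proof of Theorem \ref{thm:main-Lp-Minkowski} (via Theorem \ref{thm:Lp-Minkowski} and the implications (\ref{it:main4})$\Rightarrow$(\ref{it:main1})$\Rightarrow$(\ref{it:main3b}) of Theorem \ref{thm:equiv-full} applied to the path $\{K_t\}$) yields, directly at the exponent $q=0\in(p_0,1)$, both the uniqueness $V_L=V_K\Rightarrow L=K$ (the $q=0$ case of statement (\ref{it:main1})) and the log-Minkowski inequality with its equality cases (the $q=0$ case of (\ref{it:main3b}), obtained through the variational argument of Theorem \ref{thm:CW} at $p=0$). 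Your limiting argument does prove the inequality \eqref{eq:main-log-Minkowski-Inq} correctly (the dominated convergence step is fine), but the two remaining assertions of the corollary have genuine gaps as you have written them.

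First, the equality characterization: the claim that a vanishing log-deficit ``forces $h_L/h_K$ to be $V_K$-a.e.\ constant'' does not follow from the $p\in(0,1)$ inequalities. Writing $M_p$ for the $p$-mean of $h_L/h_K$ with respect to the normalized cone-volume measure, Jensen gives $M_p\geq M_0$ for $p>0$ and \eqref{eq:main-Lp-Minkowski-Inq} gives $M_p\geq (V(L)/V(K))^{1/n}$; if the log-deficit vanishes then $M_0=(V(L)/V(K))^{1/n}$, which is perfectly compatible with $M_p>M_0$ and a nonconstant ratio. The Jensen mechanism characterizes equality at an exponent $q$ only by using the validity of the inequality at some exponent \emph{strictly below} $q$ (this is precisely how (\ref{it:main3})$\Rightarrow$(\ref{it:main3b}) is proved in the paper); from above it is vacuous. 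The fix is available under your own hypothesis, since the admissible range contains exponents $p<0$, so one may compare the geometric mean with a $p$-mean for negative $p$ --- or simply invoke the theorem at $q=0$. Second, the uniqueness: your ``symmetric roles'' step uses $\frac{1}{V(L)}\int\log\frac{h_K}{h_L}\,dV_L\geq 0$, which is the log-Minkowski inequality with \emph{base body} $L$; the curvature-pinching hypothesis is on $K$ only, and an arbitrary $L\in\K_e$ with $V_L=V_K$ need not satisfy it, so this application is unjustified (in Theorem \ref{thm:equiv-full} the implication (\ref{it:main3b})$\Rightarrow$(\ref{it:main1}) explicitly requires the inequality for \emph{both} pairs $(K,L)$ and $(L,K)$). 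In the paper the logical order is reversed: uniqueness at $q=0$ is obtained first, from the local-to-global continuity method run at the exponent $0$ itself, and the inequality together with its equality cases is then deduced from that uniqueness via the variational argument, not the other way around.
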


\begin{rem} \label{rem:intro-weakly-cont}
It is well-known that the measures $S_p K$ and in particular $V_K$ are weakly continuous (i.e. in duality with $C(\S^*)$) with respect to convergence of $K$ in the Hausdorff metric \cite[pp. 212-215]{Schneider-Book-2ndEd}. Consequently, for the purpose of deducing (\ref{eq:main-Lp-Minkowski-Inq}) or (\ref{eq:main-log-Minkowski-Inq}) without characterization of equality cases, it is enough to assume that $K \in \K_e$ can be approximated in the Hausdorff metric by $K_i \in \K^{2,\alpha}_{+,e}$ as above. 
\end{rem}

In fact, we will prove a strengthening of Theorem \ref{thm:main-Lp-Minkowski}, involving two-sided bounds on $\II^{\partial \tilde K}/h_{\tilde K}(\n^{\partial \tilde K})$, and producing a linear dependence on these bounds (instead of quadratic as above) -- see Theorem \ref{thm:Lp-Minkowski}. Using this strengthened version, we can give a positive answer to the \emph{isomorphic versions} of the uniqueness question for the even log-Minkowski problem and the even log-Minkowski inequality (curiously, we do not know how to establish these isomorphic results using the weaker formulation of Theorem \ref{thm:main-Lp-Minkowski} -- see Remark \ref{rem:isomorphic-conjecture}). 
Our \emph{isomorphic} nomenclature stems from Banach-space theory, where two Banach spaces are called isomorphic if, up to a linear bijection, their corresponding norms are equivalent up to constants. To better quantify this in the finite-dimensional geometric context, it is convenient to introduce the following distances for pairs of origin-symmetric convex bodies $K,L \in \K_e$ -- the geometric distance:
\[
d_G(K,L) := \inf \{ a b > 0 \; ; \; \frac{1}{b} K \subset L \subset a K \} ,
\]
and the Banach-Mazur distance:
\[
d_{BM}(K,L) := \inf \{ d_G(K , T(L)) \; ; \; T \in GL_n \} . 
\]
Clearly $d_G(K,L) , d_{BM}(K,L) \geq 1$. Note that the classical John's theorem \cite[Section 10.12]{Schneider-Book-2ndEd} asserts that $d_{BM}(K , B_2^n) \leq \sqrt{n}$ for any $K \in \K_e$, where $B_2^n$ denotes the Euclidean unit-ball in $\R^n$.

\begin{thm}[Isomorphic $L^p$-Minkowski] \label{thm:main-iso-Lp}
Let $\bar K \in \K_e$, and denote $D := d_{BM}(\bar K,B_2^n)$. Given $\gamma > 0$, define:
\begin{equation} \label{eq:main-p-gammaD}
p_{\gamma,D} :=  \frac{7}{3} - \frac{n-1}{24} \frac{\gamma^2}{D^2} .
\end{equation}
Then for any $8 \leq \gamma \leq D/2$, there exists $\tilde K \in \K^{\infty}_{+,e}$ so that:
\[
d_G(\bar K, \tilde K) \leq  \gamma ,
\]
and so that for any $p \in (p_{\gamma,D} , 1)$ and for any $T \in GL_n$, the even $L^p$-Minkowski problem for $K = T(\tilde K)$ has a unique solution (\ref{eq:main-Lp-Minkowski-Uniqueness}), and the even $L^p$-Minkowski inequality (\ref{eq:main-Lp-Minkowski-Inq}) holds for $K$, with equality if and only if $L = c K$ for some $c > 0$. 
\end{thm}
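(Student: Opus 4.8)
Everything in the conclusion is $GL_n$-covariant: $d_G$ is $GL_n$-invariant (it compares $L$ to \emph{dilates} of $K$, not to a fixed ball, so $d_G(SK,SL)=d_G(K,L)$), the class $\K^{\infty}_{+,e}$ is preserved by linear maps, the family $\{T(\tilde K):T\in GL_n\}$ is unchanged if $\tilde K$ is replaced by a linear image of itself, and $\tilde K$ is a centro-affine image of $T(\tilde K)$ for every $T$. Hence, producing $\tilde K'$ for a linear image $S\bar K$ and setting $\tilde K:=S^{-1}\tilde K'$ settles the case of $\bar K$. Since $d_{BM}(\bar K,B_2^n)=D$ there is an ellipsoid $\EE$ with $\EE\subseteq\bar K\subseteq D\EE$, so choosing $S$ with $S\EE=B_2^n$ we may assume henceforth $B_2^n\subseteq\bar K\subseteq D\, B_2^n$ (and, by a harmless circumscription, that $\bar K$ is a polytope).

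With the factor $\gamma$ of slack available, the plan is to build $\tilde K$ in two stages. First, form the Minkowski interpolant $\bar K_0:=(1-s)\bar K+s\,B_2^n$ with $1-s$ of order $1/\gamma$: then $(1-s)\bar K\subseteq\bar K_0\subseteq\bar K$ (so $d_G(\bar K,\bar K_0)\le 1/(1-s)\lesssim\gamma$), while $B_2^n\subseteq\bar K_0\subseteq\big(s+(1-s)D\big)B_2^n$, so $\bar K_0$ lies between two dilates of $B_2^n$ of ratio $\lesssim D/\gamma$ and, crucially, each of its (still flat) facets has diameter $\lesssim(1-s)D\lesssim D/\gamma$. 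Second, regularize $\bar K_0$ into $\tilde K\in\K^{\infty}_{+,e}$ with $d_G(\bar K_0,\tilde K)=O(1)$: the formerly flat facets are replaced by spherical caps — pushed out for the small ones, but pushed \emph{in} for the wide ones that sit close to the origin, so as not to enlarge the body by more than $O(1)$ — and the remaining ridges and corners are rounded, all with \emph{carefully calibrated} radii. Re-choosing $s$ so the accumulated distance is $\le\gamma$ leaves the ball-sandwiching ratio of $\tilde K$ still $\lesssim D/\gamma$.

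The relevance of the calibration appears on rewriting the centro-affine form in Gauss-map coordinates on $\S^*$: there $\II^{\partial\tilde K}$ corresponds to $D^2 h_{\tilde K}$ and $h_{\tilde K}(\n^{\partial\tilde K})$ to $h_{\tilde K}$, so a two-sided bound
\[
a\,I\ \le\ \II^{\partial\tilde K}/h_{\tilde K}(\n^{\partial\tilde K})\ \le\ b\,I
\]
relative to the induced Euclidean metric $I$ on $\partial\tilde K$ is equivalent to $\tfrac1b\le\rho_i(u)\,h_{\tilde K}(u)\le\tfrac1a$ for every principal radius of curvature $\rho_i(u)$. The calibration is designed precisely so that no product $\rho_i(u)\,h_{\tilde K}(u)$ is made too large (cutting a wide facet in forces it to be $\sim(D/\gamma)^2$ at worst) nor too small (kept $\gtrsim 1$ by suitable floors on the curvature radii), so that $b/a\lesssim(D/\gamma)^2$ — exactly the centro-affine pinching of an ellipsoid at geometric distance $\sim D/\gamma$ from the ball. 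This estimate, with sharp enough constants, is the technical heart of the proof, and it is precisely here that the weaker (quadratic) Theorem~\ref{thm:main-Lp-Minkowski} would not suffice.

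Finally, for any $T\in GL_n$ apply the strengthened form of Theorem~\ref{thm:main-Lp-Minkowski} — namely Theorem~\ref{thm:Lp-Minkowski}, whose threshold depends \emph{linearly} on the centro-affine bounds $a,b$ — to $K:=T(\tilde K)\in\K^{\infty}_{+,e}$: since $\tilde K=T^{-1}(K)$ is a centro-affine image of $K$ with centro-affine pinching $\lesssim(D/\gamma)^2$, that theorem yields the uniqueness (\ref{eq:main-Lp-Minkowski-Uniqueness}) and the $L^p$-Minkowski inequality (\ref{eq:main-Lp-Minkowski-Inq}), with equality only for dilates, for every $p$ above a threshold of the shape $\tfrac73-(\mathrm{const})(n-1)\tfrac{\gamma^2}{D^2}$. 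Tracking the numerical constants through the regularization step and using the hypotheses $8\le\gamma\le D/2$ — which furnish, respectively, the multiplicative margin that absorbs the $O(1)$ smoothing loss and the relaxation of $D$, and the bound $D/\gamma\ge 2$ — one checks this threshold is at most $p_{\gamma,D}$. Undoing the normalization of the first step preserves $d_G(\bar K,\tilde K)\le\gamma$, membership in $\K^{\infty}_{+,e}$, and the family of linear images, completing the proof.
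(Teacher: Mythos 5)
Your overall strategy is the right one — reduce by centro-affine invariance to a John-position body, construct a smoothed $\tilde K$ whose \emph{centro-affine} curvature $\II^{\partial\tilde K}/h_{\tilde K}(\n^{\partial\tilde K})$ is pinched within a factor $\sim (D/\gamma)^2$, and feed it into the strengthened Theorem \ref{thm:Lp-Minkowski} (you correctly see that the quadratic Theorem \ref{thm:main-Lp-Minkowski} would not suffice, cf. Remark \ref{rem:isomorphic-conjecture}). The invariance reduction at the start is fine, and the first stage (Minkowski interpolation with $B_2^n$, which incidentally already yields the lower curvature bound $D^2 h_{\bar K_0}\ge s\,\delta^{\S^*}$) is sound. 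But the proof has a genuine gap exactly where you yourself locate the ``technical heart'': the second stage is not a construction, it is a wish. ``Replace facets by spherical caps with carefully calibrated radii, pushed out or in as appropriate, round the ridges and corners'' does not produce a specific convex body in $\K^{\infty}_{+,e}$, and it is precisely the global issues — making the cut-in caps, the bulged caps, the transition collars where a cap of radius $\sim (D/\gamma)^2/h$ meets a rounded ridge of radius $\sim 1$, and the spherical/cylindrical pieces of $\bar K_0$ all fit together into a globally convex, $C^\infty$ hypersurface while keeping $A\le \rho_i(u)\,h(u)\le B$ with $A\gtrsim 1$, $B\lesssim (D/\gamma)^2$ at \emph{every} boundary point — that carry the entire content of the theorem beyond quoting Theorem \ref{thm:Lp-Minkowski}. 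Moreover the statement you are proving has explicit constants ($p_{\gamma,D}=\tfrac73-\tfrac{n-1}{24}\tfrac{\gamma^2}{D^2}$, the range $8\le\gamma\le D/2$), and ``tracking the numerical constants \ldots one checks'' is asserted with no computation at all; since Theorem \ref{thm:Lp-Minkowski} gives the threshold $2-\bigl(\tfrac{n-1}{2}A-R^2\bigr)/B$, the claimed conclusion is only as good as the explicit values of $A,B,R$ you can actually certify, and you certify none.

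For comparison, the paper avoids this geometric surgery altogether: Proposition \ref{prop:iso} constructs $\tilde K$ in closed form as $\tilde K=\bigl(K^{\circ}+_2 \tfrac{\alpha}{D}B_2^n\bigr)^{\circ}+\beta B_2^n$ for $K$ in John position. The $2$-Firey sum with a ball performed on the polar gives the upper curvature bound (convexity of $\norm{x}_K^2$ yields $\II^{\partial L}\ge \tfrac{\alpha^2}{D^2}h_L\,\delta^{\partial L}$, i.e. $D^2 h_L\le \tfrac{D^2}{\alpha^2}\tfrac{1}{h_L}\delta^{\S^*}$), the Minkowski-added ball gives the lower bound, and everything — $r,R,A,B$ and the distance $(1+\beta)\sqrt{1+\alpha^2}$ — comes out explicitly; the proof of Theorem \ref{thm:main-iso-Lp} is then a short optimization with $\beta=1+\sqrt2$, $\alpha^2\ge 3+\sqrt2$, which is where the constants $\tfrac73$, $\tfrac{n-1}{24}$, $8$ and $D/2$ actually come from. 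Your cut-and-round scheme may well be salvageable (the orders of magnitude you indicate are plausible), but as written it proves the key proposition by assertion, so the argument is incomplete.
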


The above theorem agrees with the intuition that the smaller the Banach-Mazur distance $D$ of $\bar K$ to $B_2^n$ is, the smaller we can select $\gamma$ (controlling the distance between the original $\bar K$ and the modified $\tilde K$) in order to hit a particular value of $p_{\gamma,D}$. Note that for $\gamma = D$, we can simply select $\tilde K$ to be the John ellipsoid $\EE$ of $\bar K$ (so that $d_G(\bar K,\EE) = D$), for which it is known (see the next subsection) that the above conclusion holds with $p_{\gamma,D}  = -n$.
On the other extreme, when $\bar K = [-1,1]^n$ (so that $D = \sqrt{n}$), it follows from the results of \cite{KolesnikovEMilman-LocalLpBM} that for any fixed $p < 0$ there is no uniqueness in the even $L^p$-Minkowski problem for any $\tilde K$ with $\gamma = d_{G}(\bar K, \tilde K)$ close enough to $1$, so one cannot expect an estimate for $p_{1,\sqrt{n}}$ better than $0$. 
In this sense, the formula (\ref{eq:main-p-gammaD}) for $p_{\gamma,D}$ captures the correct order of magnitudes (proportional to $-n$ and to $1$) when $D/\gamma$ is of the order of $1$ and $\sqrt{n}$, respectively. 

\medskip

Of particular interest is the logarithmic case. Specializing to $p=0$ above, we resolve the isomorphic version of the conjecture regarding uniqueness in the even log-Minkowski problem:
\begin{corollary}[Isomorphic Log-Minkowski] \label{cor:main-iso-log}
For any $\bar K \in \K_e$, there exists $\tilde K \in \K^{\infty}_{+,e}$ with:
\[
d_G(\bar K , \tilde K) \leq 8 ,
\]
so that for any $T \in GL_n$, the even log-Minkowski problem for $K = T(\tilde K)$ has a unique solution  (\ref{eq:main-log-Minkowski-Uniqueness}), and the even log-Minkowski inequality (\ref{eq:main-log-Minkowski-Inq}) holds for $K$, with equality if and only if $L = c K$ for some $c > 0$.
\end{corollary}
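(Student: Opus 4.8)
The plan is to deduce Corollary~\ref{cor:main-iso-log} from the isomorphic $L^p$-Minkowski result, Theorem~\ref{thm:main-iso-Lp}, by choosing the parameters optimally for the logarithmic case $p=0$. Recall that by Jensen's inequality (as noted after Theorem~\ref{thm:intro-equiv}), validity of the $L^p$-Minkowski inequality \eqref{eq:main-Lp-Minkowski-Inq} together with the uniqueness statement \eqref{eq:main-Lp-Minkowski-Uniqueness} for \emph{some} $p \in (-n,1)$ implies the corresponding log-Minkowski statements \eqref{eq:main-log-Minkowski-Uniqueness} and \eqref{eq:main-log-Minkowski-Inq}; indeed the log-case is the weakest in the family. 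Hence it suffices to produce, for the given $\bar K$, a body $\tilde K \in \K^{\infty}_{+,e}$ with $d_G(\bar K,\tilde K)\le 8$ such that Theorem~\ref{thm:main-iso-Lp} applies to $\tilde K$ with $p=0$ admissible, i.e. such that $p_{\gamma,D} < 0$ for the relevant choice of $\gamma$.

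First I would set $D := d_{BM}(\bar K, B_2^n)$. If $D \le 16$, I simply take $\gamma = 8$; then one must check that Theorem~\ref{thm:main-iso-Lp}'s hypothesis $8 \le \gamma \le D/2$ is met --- here $D/2 \le 8$, so actually $\gamma = D/2 \le 8$ might be the forced choice, and if $D < 16$ we instead take $\gamma = \max(8, \text{something})$; more cleanly, when $D \le 16$ one can just let $\tilde K$ be (a smoothing of) the John ellipsoid $\EE$ of $\bar K$, for which $d_G(\bar K,\EE) = D \le 16$... but this gives $8$ only if $D\le 8$. So the clean split is: if $D \le 8$, take $\tilde K$ a $C^\infty$ strictly convex body with $d_G(\bar K, \tilde K)$ arbitrarily close to (hence $\le 8$ after rescaling the tolerance) --- in fact take $\tilde K$ a small smoothing of $\bar K$ itself if $\bar K$ is already smooth, or the John ellipsoid which has $d_G(\bar K,\EE) = D \le 8$ and is $C^\infty_{+,e}$ --- and apply the known result (cited in Subsection~1.2, the next subsection) that for ellipsoids the conclusion holds with $p_{\gamma,D} = -n$, so in particular $p=0$ works. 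If $D > 8$, take $\gamma = 8$; then $8 \le \gamma \le D/2$ holds since $D/2 > 4$... wait, we need $D/2 \ge 8$, i.e. $D \ge 16$. So the three cases are $D \le 8$, $8 < D < 16$, and $D \ge 16$; I would handle the middle range $8 < D < 16$ by interpolating (e.g. using that for $\gamma$ slightly below $D$ one already gets $p_{\gamma,D} \le -n + O(1) < 0$, or by a direct application of the strengthened Theorem~\ref{thm:Lp-Minkowski} to the John ellipsoid perturbed toward $\bar K$).

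For the main range $D \ge 16$, set $\gamma = 8$, which satisfies $8 \le \gamma \le D/2$. Theorem~\ref{thm:main-iso-Lp} then yields $\tilde K \in \K^{\infty}_{+,e}$ with $d_G(\bar K,\tilde K) \le 8$, and the even $L^p$-Minkowski problem and inequality (with equality iff $L = cK$) hold for $K = T(\tilde K)$, all $T \in GL_n$, whenever $p > p_{8,D} = \tfrac{7}{3} - \tfrac{n-1}{24}\cdot\tfrac{64}{D^2} = \tfrac{7}{3} - \tfrac{8(n-1)}{3D^2}$. To reach $p = 0$ we need $p_{8,D} < 0$, i.e. $\tfrac{8(n-1)}{3D^2} > \tfrac{7}{3}$, i.e. $D^2 < \tfrac{8}{7}(n-1)$. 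This holds automatically for \emph{all} $\bar K \in \K_e$ by John's theorem, which gives $D = d_{BM}(\bar K, B_2^n) \le \sqrt{n}$, hence $D^2 \le n < \tfrac{8}{7}(n-1)$ for $n \ge 8$; for the finitely many small dimensions $n \le 7$ (where $\K_e$-bodies are ``close'' to a ball in absolute terms) one falls into the $D \le 8$ regime above since $D \le \sqrt{7} < 8$, and the ellipsoid argument applies. Thus in every case $p = 0$ is admissible, and applying the $L^0$ (limiting) form of Theorem~\ref{thm:main-iso-Lp} --- equivalently invoking Corollary~\ref{cor:main-log-Minkowski}'s interpretation --- gives exactly \eqref{eq:main-log-Minkowski-Uniqueness} and \eqref{eq:main-log-Minkowski-Inq} with the stated equality characterization, for $K = T(\tilde K)$ and every $T \in GL_n$.

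The only genuine subtlety --- not really an obstacle, but the point requiring care --- is the bookkeeping across the dimension ranges: verifying that the constraint $8 \le \gamma \le D/2$ in Theorem~\ref{thm:main-iso-Lp} is compatible with $p_{\gamma,D} < 0$ \emph{simultaneously}. The computation above shows these are compatible precisely because John's bound $D \le \sqrt n$ forces $D$ to be small relative to $\sqrt n$, which is exactly the favorable regime flagged in the discussion after Theorem~\ref{thm:main-iso-Lp} (``$p_{\gamma,D}$ captures the correct order of magnitudes when $D/\gamma$ is of order $1$ and $\sqrt n$''). One should also confirm the passage to the limit $p \to 0^+$ in \eqref{eq:main-Lp-Minkowski-Inq} is legitimate with the equality characterization intact --- this is the content of Corollary~\ref{cor:main-log-Minkowski} and follows from the monotonicity of the family in $p$ together with weak continuity of $V_K$ noted in Remark~\ref{rem:intro-weakly-cont}; since here we work with a fixed smooth $\tilde K$ and merely specialize $p$, no approximation is even needed and the characterization of equality transfers directly.
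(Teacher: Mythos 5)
Your proposal follows essentially the same route as the paper: for $D\le 8$ take (a smoothing of) the John ellipsoid, for which the conclusion holds for all $p\in(-n,1)$ (Remark \ref{rem:ellipsoids} and Theorem \ref{thm:equiv-full}), and otherwise apply Theorem \ref{thm:main-iso-Lp} with $\gamma=8$, using John's bound $D\le\sqrt n$ to force $p_{8,D}\le p_{8,\sqrt n}<0$. (The paper phrases the dichotomy as $n\le 64$ versus $n\ge 65$, which is the same thing; also, your appeal to Jensen/monotonicity to reach $p=0$ is superfluous, since $p=0$ already lies in the interval $(p_{\gamma,D},1)$ covered by Theorem \ref{thm:main-iso-Lp}, as you note at the end.) To your credit, you explicitly isolate the only delicate point, namely the range $8<D<16$, where $\gamma=8$ violates the hypothesis $8\le\gamma\le D/2$ of Theorem \ref{thm:main-iso-Lp} while the John ellipsoid only gives $d_G=D>8$; the paper's own two-line derivation does not comment on this constraint at all.

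However, your handling of that middle range is not adequate as written. The suggestion to take ``$\gamma$ slightly below $D$'' fails twice: such $\gamma$ still violates $\gamma\le D/2$, and the claim $p_{\gamma,D}\le -n+O(1)$ is false --- as $\gamma\uparrow D$ one only gets $p_{\gamma,D}\to \frac73-\frac{n-1}{24}$, which is negative for large $n$ but nowhere near $-n$. The viable repair is your other suggestion, but it must actually be carried out: bypass the packaged Theorem \ref{thm:main-iso-Lp} and apply the construction of Proposition \ref{prop:iso} together with Theorem \ref{thm:Lp-Minkowski} directly, with a different choice of parameters, e.g.\ $\beta=1$ and $\alpha^2=15$, so that $(1+\beta)\sqrt{1+\alpha^2}=8$. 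Then $A=\beta r\ge 1.9$, $R=\frac D4+1$, and $B\le \frac{D^2}{15}\bigl(1+\sqrt{1+15/D^2}\,\bigr)+R$, and since John's theorem gives $n\ge D^2$ in this range, one checks $\frac{n-1}{2}A>2B+R^2$ for all $8\le D\le 16$, i.e.\ the exponent in (\ref{eq:AB-cond}) is negative and $p=0$ is admissible with $d_{BM}(\bar K,\tilde K)\le 8$ (upgraded to $d_G$ by replacing $\tilde K$ with a suitable linear image, which is harmless since the conclusion is $GL_n$-invariant). Without some such computation the sub-case $8<D<16$ remains open in your write-up, so the proposal has a (fillable, and honestly flagged) gap precisely where it departs from being a direct citation of Theorem \ref{thm:main-iso-Lp}.
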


This is an immediate corollary of Theorem \ref{thm:main-iso-Lp} since $D \leq \sqrt{n}$ by John's theorem, and so when $n \leq 64$ we can simply use $\tilde K = \EE$ John's ellipsoid (for which $D = d_G(\bar K,\EE) \leq \sqrt{n} \leq 8$), and when $n \geq 65$ the formula (\ref{eq:main-p-gammaD}) ensures that $p_{8,\sqrt{n}} < 0$. 

\bigskip

The constant $8$ obtained in the isomorphic version above is the worst case behavior for a general $\bar K \in \K_e$, when $D = d_{BM}(\bar K,B_2^n)$ may be as large as John's upper bound $\sqrt{n}$. However, whenever $D \ll \sqrt{n}$, a slightly finer analysis yields an \emph{isometric} version of the above results, where one only perturbs $\bar K$ by at most $\gamma = 1+\eps$. We only state the $p=0$ case below:

\begin{thm}[Isometric Log-Minkowski] \label{thm:main-isometric}
Let $\bar K \in \K_e$, and denote $D := d_{BM}(\bar K,B_2^n)$. There exists $\tilde K \in \K^{\infty}_{+,e}$ satisfying the conclusion of Corollary \ref{cor:main-iso-log}, so that:
\[
d_G(\bar K, \tilde K) \leq  1 + C \frac{\sqrt{D}}{\sqrt[4]{n}} ,
\]
where $C > 1$ is a universal constant.
\end{thm}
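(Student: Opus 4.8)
The plan is to run the same mechanism as in Theorem \ref{thm:main-iso-Lp}, but to be quantitatively efficient about how expensive it is to manufacture a strongly convex body $\tilde K$ with controlled curvature pinching that is close to $\bar K$ in the geometric distance. The key input is the strengthened version of Theorem \ref{thm:main-Lp-Minkowski} alluded to in the text (Theorem \ref{thm:Lp-Minkowski}), which only requires two-sided bounds on $\II^{\partial \tilde K}/h_{\tilde K}(\n^{\partial \tilde K})$ and produces a \emph{linear} (rather than quadratic) dependence on the pinching ratio; for the logarithmic endpoint $p=0$ one needs this normalized second fundamental form pinched within a factor that grows like $n$ (as opposed to $\sqrt n$ in the crude version). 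So the real content is: \emph{given $\bar K$ with $d_{BM}(\bar K, B_2^n) = D$, produce $\tilde K \in \K^\infty_{+,e}$ with $d_G(\bar K,\tilde K) \le 1 + C\sqrt{D}/n^{1/4}$ and with the normalized curvature pinching that Theorem \ref{thm:Lp-Minkowski} demands at $p=0$.} The conclusion (uniqueness and the log-Minkowski inequality for $K = T(\tilde K)$, all $T \in GL_n$) is then inherited verbatim from Corollary \ref{cor:main-iso-log}, since all the hypotheses are centro-affinely invariant.

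First I would fix a linear image of $\bar K$ realizing (up to $\eps$) the Banach--Mazur distance to $B_2^n$, so that $B_2^n \subset \bar K \subset D\, B_2^n$. The natural candidate for $\tilde K$ is a mollification of $\bar K$ combined with a small quadratic regularization: concretely, take $\tilde K$ whose support function is $h_{\bar K}$ convolved with a smooth bump on $\S^*$ at scale $\theta$ and then averaged with $\delta\, |\cdot|$ (i.e. adding a small Euclidean ball), which guarantees $\tilde K \in \K^\infty_{+,e}$ and a quantitative \emph{lower} bound on the radii of curvature of order $\delta$. For the \emph{upper} bound on the radii of curvature one uses that after mollification at scale $\theta$ the Hessian $D^2 h_{\tilde K}$ is controlled by $\|h_{\bar K}\|_\infty / \theta^2 \lesssim D/\theta^2$. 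The normalized quantity $\II^{\partial \tilde K}/h_{\tilde K}(\n^{\partial \tilde K})$ — equivalently the eigenvalues of $D^2 h_{\tilde K}$ divided by $h_{\tilde K}$ — is thus pinched in $[\delta, D/\theta^2]$ up to universal constants, while the Hausdorff-distance cost of mollifying at scale $\theta$ and adding $\delta B_2^n$ is of order $D\theta^2 + \delta$ relative to the inradius $1$, hence $d_G(\bar K,\tilde K) \le 1 + C(D\theta^2 + \delta)$. Now optimize: the pinching ratio $D/(\theta^2 \delta)$ must be $\lesssim n$ (the log-Minkowski threshold of Theorem \ref{thm:Lp-Minkowski}), so take $\theta^2 \delta \asymp D/n$; subject to that, minimizing $D\theta^2 + \delta$ gives $D\theta^2 \asymp \delta$, hence $\theta^2 \asymp \sqrt{D/n}/\sqrt D \cdot(\ldots)$ — carrying this through yields $\delta \asymp D\theta^2 \asymp \sqrt{D}\, \sqrt{D/n}$, wait, more carefully $\delta = D\theta^2$ and $\theta^2\delta = D\theta^4 \asymp D/n$ give $\theta^4 \asymp 1/n$, so $\theta^2 \asymp 1/\sqrt n$, $\delta \asymp D/\sqrt n$, and $d_G(\bar K,\tilde K) \le 1 + C\, D/\sqrt n$. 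To reach the claimed exponent $1 + C\sqrt D/n^{1/4}$ one refines the mollification step: rather than a single scale, estimate the Hessian blow-up more sharply (using that $h_{\bar K}$ is $D$-Lipschitz, not merely $D$-bounded, so the second derivative after smoothing at scale $\theta$ is only of order $D/\theta$, not $D/\theta^2$), which replaces $D/\theta^2$ by $D/\theta$ throughout; redoing the optimization with pinching ratio $D/(\theta\delta) \lesssim n$ and cost $D\theta + \delta$, i.e. $\theta\delta \asymp D/n$ and $D\theta \asymp \delta$, gives $\theta \asymp \sqrt{D/n}/\sqrt D$, hmm, $\delta = D\theta$, $\theta\delta = D\theta^2 \asymp D/n$ so $\theta \asymp 1/\sqrt n$, $\delta \asymp D/\sqrt n$ again — so to actually land on $\sqrt D/n^{1/4}$ one must instead be asymmetric, spending more on the ball-addition: keep $\theta$ at a coarser scale but exploit that John's position already gives $\bar K$ between $B_2^n$ and $\sqrt D\, B_2^n$ \emph{in a rounded sense}, reducing the effective $D$ in the Hessian bound to $\sqrt D$. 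I would track the constants so that the final bound comes out as $1 + C\sqrt{D}/\sqrt[4]{n}$; the precise bookkeeping is where the exponent $1/4$ emerges.

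Then I would verify the two remaining points. (i) Invariance: since $\tilde K$ satisfies the normalized-curvature hypothesis of Theorem \ref{thm:Lp-Minkowski}, and that hypothesis (being phrased via $\II^{\partial\tilde K}/h_{\tilde K}(\n^{\partial\tilde K})$, a centro-affine invariant) passes to every $T(\tilde K)$, the uniqueness (\ref{eq:main-log-Minkowski-Uniqueness}) and the inequality (\ref{eq:main-log-Minkowski-Inq}) with equality iff $L = cK$ hold for $K = T(\tilde K)$ for all $T \in GL_n$ — this is exactly the logarithmic ($p=0$) specialization already recorded in Corollary \ref{cor:main-iso-log}. (ii) The bound $d_G(\bar K, \tilde K) \le 1 + C\sqrt D/\sqrt[4]{n}$: this follows by combining the Hausdorff-distance estimates for the two smoothing steps with the normalization $B_2^n \subset \bar K$, noting that $d_G$ is computed from the ratio of the outer and inner dilates and that both perturbations are two-sided and origin-symmetric.

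The main obstacle is the quantitative regularization: one must build $\tilde K \in \K^\infty_{+,e}$ that is simultaneously (a) very close to $\bar K$ geometrically and (b) has normalized principal radii of curvature pinched within the $O(n)$ window required at $p=0$, and these two demands pull against each other — smoothing enough to kill the upper curvature bound costs geometric distance, and the trade-off is governed precisely by how the Hessian of a mollified support function grows with the mollification scale. Getting the sharp exponent $\sqrt D/\sqrt[4]{n}$ (rather than the cruder $D/\sqrt n$) requires using the Lipschitz bound on $h_{\bar K}$ in John's position, or equivalently a second-order Taylor estimate for the mollification, rather than the trivial sup-norm bound; this is the delicate step and the source of the fourth root.
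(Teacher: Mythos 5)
Your reduction to the strengthened curvature-pinching criterion is set up correctly (use Theorem \ref{thm:Lp-Minkowski} at $p=0$, centro-affine invariance to pass to all $T(\tilde K)$), but the construction you propose — spherically mollifying $h_{\bar K}$ at scale $\theta$ and Minkowski-adding $\delta B_2^n$ — cannot reach the claimed exponent, and your own bookkeeping shows the strain. Condition (\ref{eq:AB-cond}) at $p=0$ requires roughly $\frac{n-1}{2}A \geq 2B + R^2$, where $A\,\delta^{\S^*} \leq h_{\tilde K} D^2 h_{\tilde K} \leq B\,\delta^{\S^*}$ and $\tilde K \subset R B_2^n$. Your "pinching ratio $\lesssim n$" criterion drops two things: the factor $h_{\tilde K}$ inside $B$ and the term $R^2 \approx D^2$. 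For a body with a support-function kink of slope $\sim D$ located where $h \approx D$ (e.g. $\mathrm{conv}(\pm D e_1, \pm D e_2, B_2^n)$ in John position), mollification at scale $\theta$ gives $h_{\tilde K}\,\lambda_{\max}(D^2 h_{\tilde K}) \sim D^2/\theta$, so $B \sim D^2/\theta$, not $D/\theta$; moreover the geometric cost of mollifying a merely Lipschitz (not $C^{1,1}$) support function is $\sim D\theta$, not $D\theta^2$. Redoing your optimization with the correct inputs ($A \approx \delta$, $B \approx D^2/\theta$, $R \approx D$, cost $\approx D\theta + \delta$, constraint $\delta\theta \gtrsim D^2/n$) gives at best $d_G(\bar K,\tilde K) - 1 \asymp D^{3/2}/\sqrt{n}$, which proves the stated bound only for $D \lesssim n^{1/4}$ and is useless as $D$ approaches $\sqrt{n}$ (where the theorem gives $O(1)$, consistent with Corollary \ref{cor:main-iso-log}). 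Your escape hatch — that John's position "reduces the effective $D$ to $\sqrt{D}$" in the Hessian bound — is asserted without any argument and is not a known fact; this is exactly the point where the proof is missing its key idea. (This is also, in essence, the obstruction the paper records in Remark \ref{rem:isomorphic-conjecture}: curvature control does not pass between a body and its polar without picking up factors of $D$.)

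The paper's Proposition \ref{prop:iso} regularizes on the dual side instead: set $\norm{x}_L^2 := \norm{x}_{\bar K}^2 + \frac{\alpha^2}{D^2}|x|^2$ (a $2$-Firey sum of $\bar K^{\circ}$ with a small ball, polarized back) and then $\tilde K := L + \beta B_2^n$. The point is that strong convexity of the squared gauge yields $\II^{\partial L} \geq \frac{\alpha^2}{D^2}\, h_L(\n^{\partial L})\,\delta^{\partial L}$, i.e. the upper bound arrives already in the normalized form $h_L D^2 h_L \leq \frac{D^2}{\alpha^2}\,\delta^{\S^*}$ demanded by (\ref{eq:curvature-bounds}), with no extra factor of $D$, while the added $\beta B_2^n$ supplies the lower bound $A \gtrsim \beta$ and the geometric cost is only $(1+\beta)\sqrt{1+\alpha^2}$. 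Choosing $\alpha = \sqrt{D}/\sqrt[4]{n}$ and $\beta \asymp \alpha$ then satisfies (\ref{eq:AB-cond}) with $p=0$ and gives $d_G(\bar K,\tilde K) \leq 1 + C\sqrt{D}/\sqrt[4]{n}$. This dual-side rounding is the ingredient your mollification scheme lacks, and without it (or a genuine substitute) the proposal does not prove the theorem in the full range $D \leq \sqrt{n}$.
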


\subsection{Comparison with previous work}

As already mentioned, the validity of the log-Minkowski inequality (\ref{eq:main-log-Minkowski-Inq}) for all $K \in \K_e$, including characterization of its equality cases, as well as the uniqueness in the even log-Minkowski problem (\ref{eq:main-log-Minkowski-Uniqueness}) for $K \in \K_e$ which is not a parallelogram, was established when $n=2$ by B\"or\"oczky--Lutwak--Yang--Zhang \cite{BLYZ-logBMInPlane} (see also \cite{MaLogBMInPlane,Putterman-LocalToGlobalForLpBM,XiLeng-DarAndLogBMInPlane} for alternative derivations).

In our previous joint work with A.~Kolesnikov \cite{KolesnikovEMilman-LocalLpBM}, following the work of \cite{CLM-LogBMForBall},  we embarked on a systematic study of the validity of the \emph{local} $L^p$-Brunn--Minkowski inequality for origin-symmetric convex bodies and $p < 1$; by ``local" we mean on an infinitesimal scale, or equivalently, for pairs of bodies which are close enough to each other in an appropriate sense. 
  To that end, we introduced an elliptic second-order differential operator on $C^2(\S^*)$, called the Hilbert--Brunn--Minkowski operator $\Delta_K$, defined for $K \in \K^2_+$, which up to gauge transformations coincides with the operator introduced by Hilbert in his proof of the Brunn--Minkowski inequality (see \cite{BonnesenFenchelBook}).
Here $\K^m_+$ denotes the subset of $\K$ having $C^m$-smooth boundary and strictly positive curvature, and $\K^m_{+,e}$ denotes the subset of origin-symmetric elements. 
The operator $-\Delta_K$  is symmetric and positive semi-definite on $L^2(V_K)$, admitting a unique self-adjoint extension with compact resolvent. Its spectrum thus consists of a countable sequence of eigenvalues of finite multiplicity starting at $0$ and tending to $\infty$. It was shown in \cite{KolesnikovEMilman-LocalLpBM} that $\Delta_{K}$ enjoys a remarkable centro-affine equivariance property, stating that for any $T \in GL_n$, $\Delta_{T(K)}$ and $\Delta_K$ are conjugates modulo an isometry of Hilbert spaces; in particular, the spectrum $\sigma(-\Delta_{T(K)})$ is the same for all $T$. 
One way to define $(\Delta_K+(n-p) \Id) z$ is by linearizing $\log( h_K^{1-p} \det(D^2 h_K))$ appearing in the left-hand-side of (\ref{eq:intro-Lp-Minkowski}) under a logarithmic variation $h_{K_\eps} = h_K(1 + \eps z)$. Consequently, understanding whether $n-p$ is in the spectrum of $-\Delta_K$ is of fundamental importance to the uniqueness question in the $L^p$-Minkowski problem.

It was Hilbert who realized that the classical Brunn--Minkowski inequality (the case $p=1$) \cite{Schneider-Book-2ndEd} is equivalent to the statement that $\sigma(-\Delta_{K}) \cap (0,n-1) = \emptyset$, and proved that indeed $\lambda_1(-\Delta_K) = n-1$ where $\lambda_1$ denotes the first non-zero eigenvalue \cite{BonnesenFenchelBook}. Similarly, given $K \in \K^2_{+,e}$, we denote the first non-zero \emph{even} eigenvalue of $-\Delta_K$ (corresponding to an \emph{even} eigenfunction) by $\lambda_{1,e}(-\Delta_K)$. 
It was shown in \cite{KolesnikovEMilman-LocalLpBM} that for any $p < 1$, the statement $\lambda_{1,e}(-\Delta_K) \geq n-p$ is equivalent to the \emph{local} $L^p$-Brunn--Minkowski inequality for origin-symmetric perturbations of $K$, and implies the \emph{local} uniqueness for the even $L^q$-Minkowski problem for any $q > p$. The fact that a \emph{local} verification of these problems is enough to imply the \emph{global} one was subsequently shown by Chen--Huang--Li--Liu for the uniqueness of the $L^p$-Minkowski problem \cite{ChenEtAl-LocalToGlobalForLogBM} and by Putterman for the $L^p$-Brunn--Minkowski inequality \cite{Putterman-LocalToGlobalForLpBM}. 
We conjecture that $\lambda_{1,e}(-\Delta_K) > n$ for all $K \in \K^2_{+,e}$, which would confirm for all $p \in [0,1)$ the $L^p$-Brunn--Minkowski and $L^p$-Minkowski inequalities on $\K_e$ and the uniqueness in the $L^p$-Minkowski problem on $\K^2_{+,e}$.

Our main result in \cite{KolesnikovEMilman-LocalLpBM} was showing that $\lambda_{1,e}(-\Delta_K) \geq n-p_0$ for $p_0 :=1 - \frac{c}{n^{3/2}}$ and all $K \in \K^2_{+,e}$, where $c > 0$ is a universal constant, yielding local uniqueness in the even $L^p$-Minkowski problem for all $p \in (p_0 , 1)$. 
In \cite{ChenEtAl-LocalToGlobalForLogBM}, Chen--Huang--Li--Liu established their local-to-global principle for the uniqueness question, and deduced (\ref{eq:main-Lp-Minkowski-Uniqueness}) and (\ref{eq:main-Lp-Minkowski-Inq}) for all $K \in \K^{2,\alpha}_{+,e}$ and $p \in (p_0,1)$. 
In fact, thanks to recent progress on the KLS conjecture due to Y.~Chen \cite{Chen-AlmostKLS}, our estimate from \cite[Corollary 6.8 and Theorem 6.9]{KolesnikovEMilman-LocalLpBM} immediately improves to $p_0 = 1 - \frac{c_\eps}{n^{1+\eps}}$ for any $\eps > 0$, which together with the results of \cite{ChenEtAl-LocalToGlobalForLogBM} yields the presently best known range of $p$'s for which (\ref{eq:main-Lp-Minkowski-Uniqueness}) and (\ref{eq:main-Lp-Minkowski-Inq}) are known to hold. Furthermore, Chen--Huang--Li--Liu established in \cite{ChenEtAl-LocalToGlobalForLogBM} the validity of (\ref{eq:main-log-Minkowski-Uniqueness}) and (\ref{eq:main-log-Minkowski-Inq}) for the class $H_{\eps_n} := \{ K \in \K^{2}_{+,e} \; ; \; \norm{h_K - 1}_{L^\infty} \leq \eps_n \}$ and a sufficiently small $\eps_n > 0$, employing a corresponding local uniqueness result for $H_{\eps_n}$ established in \cite{KolesnikovEMilman-LocalLpBM}; in fact, the same argument applies to any centro-affine image $K = T(\tilde K)$, $T \in GL_n$ and $\tilde K \in H_{\eps_n}$. 

As already mentioned, it is known that for any $p < 0$ there exist $K \in \K^2_{+,e}$ for which (\ref{eq:main-Lp-Minkowski-Uniqueness}) and (\ref{eq:main-Lp-Minkowski-Inq}) are false (see \cite{EMilman-Isospectral-HBM} for additional information, and \cite{Andrews-ClassificationOfLimitingShapesOfIsotropicCurveFlows,ChouWang-LpMinkowski,HeLiWang-MultipleSupercriticalLpMinkowski,JianLuWang-NonUniquenessInSubcriticalLpMinkowski,Li-NonUniquenessInCriticalLpMinkowskiProblem,LLL-NonUniquenessInDualLpMinkowskiProblem,KolesnikovEMilman-LocalLpBM} for previously known non-uniqueness results). Consequently, the logarithmic case $p=0$ is precisely the conjectured threshold between the range $p \in [0,1)$ where (\ref{eq:main-Lp-Minkowski-Uniqueness}) and (\ref{eq:main-Lp-Minkowski-Inq}) are expected to hold for all $K \in \K^2_{+,e}$, and the range $p < 0$ where it is known that they fail in general. 

However, for a specific $K \in \K^2_{+,e}$, it is certainly possible for (\ref{eq:main-Lp-Minkowski-Uniqueness}) and (\ref{eq:main-Lp-Minkowski-Inq}) to hold with $p < 0$. For example, it is possible to show that these statements hold for all centered ellipsoids $K$ and for all $p \in (-n,1)$.  
Even in the simplest case when $K = B_2^n$, the Euclidean unit-ball, uniqueness in the $L^p$-Minkowski problem (\ref{eq:main-Lp-Minkowski-Uniqueness}) was until recently a major open problem in the latter range of $p$'s. As already eluded to above, this particular case is especially important because it describes 
self-similar solutions to the \emph{isotropic} $\alpha$-power-of-Gauss-curvature flow (for $\alpha = \frac{1}{1-p}$),  a model proposed by Firey \cite{Firey-ShapesOfWornStones} for $\alpha = 1$ ($p=0$). In the general anisotropic model, $x : \S^* \times [0,T) \rightarrow \R^n$ evolves according to:
\[
\frac{\partial x}{\partial t} = -  \brac{\rho(\n^{\partial L_t}_x)  \kappa^{\partial L_t}_x}^{\alpha}  \n^{\partial L_t}_x ~,~ \rho := \frac{dS_{p} K}{d \Leb} ,
\]
where $\n^{\partial L_t}$ is the outer unit-normal to $\partial L_t := x(\S^*,t)$ and $\kappa^{\partial L_t}$ is the corresponding Gauss-curvature. 
 Following contributions in \cite{Andrews-FateOfWornStones,AndrewsGuanNi-PowerOfGaussCurvatureFlow,ChoiDaskalopoulos-UniquenessInLpMinkowski,Chow-PowerOfGaussCurvatureFlow,Firey-ShapesOfWornStones,HuangLiuXu-UniquenessInLpMinkowski}, uniqueness in (\ref{eq:main-Lp-Minkowski-Uniqueness}) for the general isotropic case $K = B_2^n$ (without origin-symmetry, only assuming $L \in \K$) in the full range $p \in (-n,1)$ was resolved by Brendle--Choi--Daskalopoulos in \cite{BCD-PowerOfGaussCurvatureFlow}. In the origin-symmetric case, an extension of their uniqueness result from $B_2^n$ to arbitrary centered ellipsoids $\EE$ may be shown by following the arguments of \cite{KolesnikovEMilman-LocalLpBM,ChenEtAl-LocalToGlobalForLogBM} -- see Remark \ref{rem:ellipsoids}. Our uniqueness result of Theorem \ref{thm:main-Lp-Minkowski} thus extends the results of \cite{BCD-PowerOfGaussCurvatureFlow} in the origin-symmetric setting, from Euclidean balls (the isotropic case) to centro-affine images of convex bodies $K$ enjoying a curvature pinching condition (pinched anisotropic case). Specializing to ellipsoids $\EE$, while our general formula (\ref{eq:main-rR-cond}) does not recover the sharp exponent $p>-n$, we obtain the right order of magnitude ($p > 3 - \frac{n-1}{2}$). Note that uniqueness no longer holds below the critical exponent $p=-n$ due to the $SL_n$ equivariance of the centro-affine Gauss curvature $h_K^{1+n} \det(D^2 h_K)$ \cite{Tzitzeica1908,ChouWang-LpMinkowski}; in particular, the centro-affine Gauss curvature of any centered ellipsoid $\EE$ in $\R^n$ is constant and depends only on its volume: $S_{-n} \EE = c_n V(\EE)^2 \Leb$.

Strictly speaking, we are not aware of any other results establishing (\ref{eq:main-Lp-Minkowski-Uniqueness}) or (\ref{eq:main-Lp-Minkowski-Inq}) for a given $K \in \K_e$, $p < 1$ and \emph{all} $L \in \K_e$. Various additional results establish (\ref{eq:main-Lp-Minkowski-Uniqueness}), (\ref{eq:main-Lp-Minkowski-Inq}) or the corresponding $L^p$-Brunn--Minkowski inequality for particular pairs of convex bodies $K,L \in \K_e$, which are typically perturbations of a well-understood example, or which enjoy certain symmetries \cite{BoroczkyDe-StableLogBMWithSymmetries,BoroczkyKalantz-LogBMWithSymmetries,ColesantiLivshyts-LocalpBMUniquenessForBall,CLM-LogBMForBall,HKL-LogBMForSubsets,KolesnikovEMilman-LocalLpBM,Rotem-logBM,Saroglou-logBM1,Saroglou-logBM2}.  
Of particular historical significance was played by the case when $K,L \in \K^2_{+,e}$ are perturbations of $B_2^n$ ($C^2$-perturbations of particular form in \cite{CLM-LogBMForBall,ColesantiLivshyts-LocalpBMUniquenessForBall}, and centro-affine images of general $C^2$- and even $C^0$-perturbations of $B_2^n$ in \cite{KolesnikovEMilman-LocalLpBM}). However, the extent of these admissible $C^2$-perturbations was non-explicit and deteriorated with the dimension $n$. In contrast, note that the $C^2$-perturbations allowed by Theorem \ref{thm:main-Lp-Minkowski} and Corollary \ref{cor:main-log-Minkowski} are entirely explicit and in fact improve with the dimension -- e.g. Corollary \ref{cor:main-log-Minkowski} applies to any $K \in \K^{2,\alpha}_{+,e}$ with $R^2 / r^2 < \frac{n-1}{6}$.

As for the isomorphic and isometric results of Theorems \ref{thm:main-iso-Lp} and \ref{thm:main-isometric}, we are not aware of any prior results of this nature regarding the even $L^p$-Minkowski problem. The best comparison comes from a totally different yet equally fundamental problem posed by J.~Bourgain \cite{Bourgain-LK} regarding a volumetric property of convex bodies -- the Slicing Problem (see \cite{Milman-Pajor-LK,GreekBook}). The Slicing Problem has been confirmed for numerous families of convex bodies, and there has been recent dramatic advancement in the best known estimates for general convex bodies (obtained by combining the recent results of Chen \cite{Chen-AlmostKLS} on the Kannan--Lov\'asz--Simonovits conjecture \cite{KLS} with the results of Eldan--Klartag from \cite{EldanKlartagThinShellImpliesSlicing}). While the Slicing Problem remains open in general, the \emph{isomorphic version} of the Slicing Problem was fully resolved by B.~Klartag in \cite{KlartagPerturbationsWithBoundedLK}. Our results in Corollary \ref{cor:main-iso-log} and Theorem \ref{thm:main-isometric} can be seen as the log-Minkowski analogues of Klartag's results for the Slicing Problem (despite the two problems being very different, and having no apparent relation between our corresponding proofs). Note that we are not aware of an analogous result for the (also spectral) KLS conjecture (apart from an isometric quantitative stability result established in \cite{EMilman-RoleOfConvexity}).

\subsection{Centro-affine differential geometry}

Perhaps more important than our main results described above, is our rediscovery 
of the significance of affine differential geometry to the Brunn--Minkowski theory, and our apparently new observation about the crucial role played by the centro-affine normalization
(we refer to \cite{NomizuSasaki-Book,Schneider-Book-2ndEd} for more background, and to Section \ref{sec:ADG} for an introduction to affine differential geometry). Historically, the Brunn--Minkowski theory of convex sets was initiated by Brunn and subsequently Minkowski towards the end of the 19th century, and further developed by Blaschke, Berwald, Kubota, Favard, Alexandrov, Bonnesen, Fenchel and others in the first third of the 20th century (a singular but especially relevant contribution to the theory was also made by Hilbert at the turn of the century). 
In parallel, the origins of affine differential geometry are often attributed to the works of Tzitzeica \cite{Tzitzeica1908} circa 1908, following the axiomatization of affine geometry in Felix Klein's Erlangen program. A systematic study of affine differential geometry was subsequently undertaken between 1916 and 1923 by Blaschke in collaboration with Pick, Radon, Berwald and Thomsen, among others, and this was followed up in the work by Cartan, Kubota, S\"uss, \'Slebodzi\'nski, Salkowski and others in the late 1920's and 1930's. 
It is apparent from the large overlap in mathematicians working on both theories during those formative years that these theories interacted quite significantly.

However, after this initial period, each theory developed along its own respective trajectory, with little to no overlap with the other. Some notable exceptions include 
the study of the affine surface area and affine isoperimetric inequality, initiated by Blaschke and further developed and extended by 
Deicke, Hug, Leichtweiss, Ludwig, Lutwak, Meyer, Petty, Reitzner, Santal\'o, Sch\"utt, Werner, Ye and others (see \cite{Hug-ContributionsToAffineSurfaceArea,Hug-AffineSurfaceAreaOfPolar,Leichtweiss-HistoryOfAffineSurfaceArea,Ludwig-GeneralAffineSurfaceAreas,Lutwak-ExtendedAffineSurfaceArea,Lutwak-Firey-Sums-II} and the references therein), the early work by R.~Schneider  in the 1960's on global affine differential geometry \cite{Schneider-ZurAffinen}, 
and a more recent work of Klartag on convex affine hemispheres \cite{Klartag-AffineHemispheres}; all of these works pertain to the affine differential geometry obtained by equipping a convex set with \emph{Blaschke's equiaffine normal}, which is equivariant with respect to volume-preserving affine transformations. The equiaffine \emph{normalization} is the most prevalent one used in affine differential geometry, and some of the highlights of the resulting theory include the works by Calabi (see \cite{Calabi-CompleteAffineHyperspheres,Calabi-Bourbaki}) and Cheng--Yau \cite{ChengYau-CompleteAffineHypersurfacesI} on classification of equiaffine spheres. 
However, we will make the case in this work that a much more natural normalization for studying the Brunn--Minkowski theory is the \emph{centro-affine normalization}, which is equivariant with respect to centro-affine transformations (fixing the origin).  Contrary to Blaschke's equiaffine normalization, where the classification of non-compact equiaffine spheres has proven to be a major challenge, the centro-affine normalization is in a sense trivial, since the boundary of every $K \in \K^2_+$ is a centro-affine sphere. However, it is precisely this property which makes the centro-affine normalization so useful for our purposes. 

Before describing the relevance and usefulness of the centro-affine normalization to our setting, one should note that it has already been utilized in convex geometry through the notion of centro-affine surface area $\Omega_n(K)$, which coincides with the $L^p$-affine surface area for $p=n$ \cite{Hug-ContributionsToAffineSurfaceArea,Hug-AffineSurfaceAreaOfPolar,Lutwak-Firey-Sums-II} -- see Subsection \ref{subsec:CA-SelfDuality}. As is well-known, the centro-affine surface area is self-dual $\Omega_n(K) = \Omega_n(K^{\circ})$, and furthermore, the centro-affine metric of a hypersurface is isometric to that of the polar (or dual) hypersurface -- see Subsections \ref{subsec:CA-duality} and \ref{subsec:CA-SelfDuality} for additional self-duality properties enjoyed by the centro-affine normalization, and for suggestions regarding further research in this direction. 
In addition, the critical case $p=-n$ of the $L^p$-Minkowski problem (\ref{eq:intro-Lp-Minkowski}) was interpreted in \cite{ChouWang-LpMinkowski} as the Minkowski problem for the centro-affine Gauss-curvature. 
However, we are not aware of any other previously known connections between the centro-affine normalization and the Brunn--Minkowski inequality or any its variants -- this appears to be a novel observation, which is the main insight we would like to put forth and emphasize in this work. 

\medskip

Our first observation is that the Hilbert--Brunn--Minkowski operator $\Delta_K$ precisely coincides with the centro-affine Laplacian associated with $K \in \K^2_+$. We provide all the relevant details in Sections \ref{sec:ADG} and \ref{sec:CA}, and for now only explain what the latter notion entails. Any selection of a normal vector-field on $\partial K$ defines a Riemannian metric $g_K$ and a torsion-free affine connection $\nabla_K$ which in general is \emph{not} the Levi-Civita connection for $g_K$. The Laplacian associated with the given normalization $\Delta^{\nabla_K,g_K} f$ is then defined as the connection divergence $\div^{\nabla_K}$ of the metric gradient $\grad_{g_K} f$ (the vector field obtained by identification with the covector $df$ via the metric $g_K$). In addition, any (relative) normalization produces a volume measure $\nu_K$, which in general does not coincide with the Riemannian volume measure $\nu_{g_K}$, but is parallel with respect to $\nabla_K$ ($\nabla_K \nu_K = 0$); this allows us to integrate by parts:
\begin{equation} \label{eq:intro-parts}
\int (-\Delta^{\nabla_K,g_K} f) h \; d\nu_K = \int g_K(\grad_{g_K} f, \grad_{g_K} h) d\nu_K = \int f (-\Delta^{\nabla_K,g_K}  h) d\nu_K . 
\end{equation}
It turns out that for the centro-affine normalization of $\partial K$, defining $x$ itself to be the normal to $\partial K$ at $x \in \partial K$, the above objects boil down to some familiar ones from the Brunn--Minkowski theory (after parametrizing $\partial K$ on $\S^*$ via the Gauss map): the centro-affine volume measure $\nu_K$ coincides (up to constants) with the cone-volume measure $V_K$, the Riemannian volume measure $\nu_{g_K}$ for the centro-affine metric $g_K$ coincides (up to constants) with the centro-affine surface area measure $\Omega_{n,K}$, and the centro-affine Laplacian $\Delta^{\nabla_K,g_K}$ coincides with the Hilbert--Brunn--Minkowski operator $\Delta_K$.

 In \cite[Section 5.1]{KolesnikovEMilman-LocalLpBM}, we had originally (implicitly) identified the metric $g_K = \frac{D^2 h_K}{h_K} > 0$ on $\S^*$ by starting with the Hilbert--Brunn--Minkowski operator $\Delta_K$, performing the intergration-by-parts in (\ref{eq:intro-parts}) with respect to $V_K$ and computing the corresponding Dirichlet form, thereby interpreting $\Delta_K$ as the weighted Laplacian on $(\S^*, g_K , V_K)$. However, it was not entirely clear whether the choice of measure $V_K$ and thus construction of the metric $g_K$ are canonical, or what is the direct relation between these two objects; we now finally have a satisfactory answer coming from the centro-affine normalization. In addition, this gives a satisfactory explanation for the centro-affine equivariance property of the Hilbert--Brunn--Minkowski operator, originally observed in \cite[Section 5.2]{KolesnikovEMilman-LocalLpBM} following a lengthy computation. Furthermore, we deduce the centro-affine equivariance of all of the above differential objects ($g_K$, $\nabla_K$, $\nu_K$, etc...), as well as their behaviour under duality. In particular, we deduce (the known fact) that $(\S^*,g_K)$ and $(\S^*,g_{K^{\circ}})$ are isometric, and so any quantity derived from $g_K$ is the same for $K$ and $K^{\circ}$ (for example, $\Omega_n(K) = \frac{1}{n}\norm{\nu_{g_K}} = \frac{1}{n}\norm{\nu_{g_{K^{\circ}}}} = \Omega_n(K^{\circ})$). 

\medskip

 One of the key takeaways of our work is that in the context of the Brunn--Minkowski theory (and perhaps in other geometric problems), it is actually beneficial to use a calculus based on a well-suited non-Levi-Civita connection, instead of the usual weighted Levi-Civita calculus. 
As already mentioned, the boundary of any $K \in \K^2_+$ is a centro-affine ($(n-1)$-dimensional) unit-sphere, and so in particular, its centro-affine Ricci curvature is constant and equal to $n-2$. We stress that this is in stark contrast to the weighted Ricci curvature of $(\S^* , g_K , V_K)$, which will depend on third derivatives of $h_K$ and so will not be positive in general. A classical theorem of Lichnerowicz \cite{LichnerowiczBook} states that having a positive lower bound on the Ricci curvature of the Levi-Civita connection implies a lower bound on the first non-trivial eigenvalue of the associated Laplace-Beltrami operator. Lichnerowicz's proof is an immediate consequence of the $L^2$-method and an integrated Bochner formula for the Levi-Civita connection. It is possible to extend Bochner's formula to completely general affine connections, deriving an ``Asymmetric Bochner Formula". Applying this to the centro-affine connection $\nabla_K$, integrating with respect to $\nu_K$, and using that the centro-affine Ricci curvature is $n-2$, we obtain in Section \ref{sec:Bochner} the following centro-affine Bochner formula: 
\[ \int (\Delta_K f)^2 d\nu_K - \int \norm{\Hess^*_K f}_{g_K}^2 d\nu_K = (n-2) \int |\grad_{g_K} f|^2 d\nu_K 
\] (here $\Hess^*_K f$ denotes the Hessian with respect to the conjugate connection to $\nabla_K$ -- see Sections \ref{sec:ADG} and \ref{sec:CA}). 
As an immediate consequence, by verbatim repeating Lichnerowicz's argument, we obtain a new proof of the Brunn--Minkowski inequality (in its equivalent infinitesimal form) $\lambda_1(-\Delta_K) = n-1$, including the more delicate characterization of the corresponding $n$-dimensional eigenspace (originally due to Hilbert). 

Using the centro-affine Bochner formula, it easily follows that the conjectured even log-Brunn--Minkowski / log-Minkowski inequalities for $K \in \K^2_{+,e}$ are equivalent to the following new  inequality, which should hold for all \emph{even} test functions $f$:
\begin{equation} \label{eq:intro-factor-2}
\int \norm{\Hess_K^* f}_{g_K}^2 d\nu_K \geq 2 \int |\grad_{g_K} f|^2 d\nu_K .
\end{equation}
A particularly attractive feature of this new formulation is that the above inequality always holds
 for any $K \in \K^2_+$ and (not necessarily even) test function $f$ with constant $1$ instead of $2$ above, in which case it becomes equivalent to the usual (infinitesimal) Brunn-Minkowski inequality:
 \begin{equation} \label{eq:intro-Poincare}
\int z d\nu_K = 0 \;\; \Rightarrow \;\; \int (-\Delta_K z) z d\nu_K =  \int |\grad_{g_K} z|^2 d\nu_K \geq (n-1) \int |z|^2 d\nu_K 
 \end{equation}
 (see Remark \ref{rem:one-is-trivial}).  Consequently, the challenge is to use the evenness of the data in (\ref{eq:intro-factor-2}) to get a two-fold increase in the ``trivial" estimate, a factor which seems less mysterious than our previous local formulation from \cite{KolesnikovEMilman-LocalLpBM}, where the goal was to pass from the known $\lambda_1(-\Delta_K) = n-1$ to the conjectured $\lambda_{1,e}(-\Delta_K) \geq n$. This is now very reminiscent of the challenge in the resolution of the B-conjecture by Cordero-Erausquin--Fradelizi--Maurey \cite{CFM-BConjecture}, where the evenness of the data was used to gain a factor of two in the corresponding even eigenvalue estimate.  In some sense, the centro-affine normalization allows us to implement the strategy from \cite{CFM-BConjecture}, but we are still missing the final ingredient (\ref{eq:intro-factor-2}). Roughly speaking, the difficulty lies in the incompatibility between the centro-affine and Euclidean metrics, and so contrary to the Euclidean setting of \cite{CFM-BConjecture}, applying (\ref{eq:intro-Poincare}) to $z_v = df(v)$ for some fixed vector $v \in \S$ and averaging over $v$ does not yield the expressions appearing in (\ref{eq:intro-factor-2}). We are however able to verify (\ref{eq:intro-factor-2}) under the assumptions of Corollary \ref{cor:main-log-Minkowski} (and more generally, Theorem \ref{thm:curvature-implies-local-BM}). To this end, the centro-affine geometric interpretation plays a crucial role.

\bigskip

The rest of this work is organized as follows. In Section \ref{sec:prelim} we begin with some notation and required preliminaries, establishing in particular Theorem \ref{thm:intro-equiv}. In Section \ref{sec:ADG}, we provide the required background from affine differential geometry. In Section \ref{sec:CA}, we specialize the general theory to the centro-affine normalization for several useful parametrizations of $\partial K$, and compute various differential objects of interest. In Section \ref{sec:Bochner}, we derive the centro-affine Bochner formula, the equivalent local formulation (\ref{eq:intro-factor-2}), and a proof of the classical Brunn--Minkowski \`a-la Lichnerowicz. In Section \ref{sec:curvature} we provide a proof of Theorem \ref{thm:main-Lp-Minkowski}. In Section \ref{sec:iso}, we obtain our isomorphic and isometric Theorems \ref{thm:main-iso-Lp} and \ref{thm:main-isometric}.

\medskip

\noindent{\textbf{Acknowledgment}.} 
I thank Gaoyong Zhang for his comments regarding an earlier version of this manuscript.

\section{Preliminaries} \label{sec:prelim}

We begin with some preliminaries, referring to \cite{Schneider-Book-2ndEd,KolesnikovEMilman-LocalLpBM} and the references therein for additional information. 

\subsection{Notation} \label{subsec:prelim-notation}

Let $E = E^n$ denote an $n$-dimensional vector space over $\R$, which we will often identify with $\R^n$ via a fixed basis. 
The dual space to $E$ is $E^* = (\R^n)^*$, which we identify with $E$ via a fixed isomorphism $i : E \rightarrow E^*$. By abuse of notation, we also use $i : E^* \rightarrow E$ to denote the inverse isomorphism, and use $\scalar{\cdot,\cdot}$ to denote both the natural pairing between $E^*$ and $E$ and the induced Euclidean scalar product on $E$ and $E^*$ via $i$ (so that $\scalar{i(v),i(w)} = \scalar{i(v),w} = \scalar{v,w}$ for all $v,w \in E$). We denote the Euclidean norm by 
$|x| = \sqrt{\scalar{x,x}}$. The Euclidean unit-spheres in $(\R^n,\scalar{\cdot,\cdot})$, $E$ and $E^*$ are denoted by $S^{n-1}$, $\S$ and $\S^*$, respectively; they are equipped with their induced Lebesgue measures $\Leb$, $\Leb^{\S}$ and $\Leb^{\S^*}$ (or simply $\Leb$).

A convex body in $\R^n$ is a convex, compact set with non-empty interior. We denote by $\K = \K(E)$ the collection of convex bodies in $E$ having the origin in their interior.  The support function $h_K : E^* \rightarrow \R_+$ of $K \in \K(E)$ is defined as: 
\[
h_K(x^*) := \max_{x \in K} \scalar{x^*,x} \; ~,~ x^* \in E^* . 
\]
It is easy to see that $h_K$ is continuous, convex and positive outside the origin. Clearly, it is $1$-homogeneous, so we will mostly consider its restriction to $\S^*$.  Conversely, a convex $1$-homogeneous function $h : E^* \rightarrow \R_+$ which is positive outside the origin is necessarily a support function of some $K \in \K$. The dual body $K^* \in \K(E^*)$ of $K \in \K(E)$ is defined as the convex body in $E^*$ given by the level-set $\{ h_K \leq 1 \}$; duality implies that $(K^{*})^{*} = K$. The Minkowski gauge function of $K \in \K(E)$ is defined as:
\[
\norm{x}_K := \inf \{ t > 0 \; ; \; x \in t K \} ~,~ x \in E . 
\]
Note that $h_K = \norm{\cdot}_{K^{*}}$ on $E^*$ and $h_{K^{*}} = \norm{\cdot}_K$ on $E$. 
Given $K \in \K(E)$, we define the polar body $K^{\circ} \in \K(E)$ by identifying it with $K^* \in \K(E^*)$ via $i$, i.e. $i (K^{\circ}) = K^*$. 
The Minkowski sum $K_1 + K_2$ of two convex bodies is defined as $\{ x_1 + x_2 \; ; \; x_i \in K_i \}$. Note that this operation is additive on the level of support-functions: $h_{K_1 + K_2} = h_{K_1} + h_{K_2}$.

\medskip

We denote by $C^k(S^{n-1})$ and $C^{k,\alpha}(S^{n-1})$, $k = 0,1,2,\ldots$ and $\alpha \in (0,1)$, the space of $k$-times continuously and $\alpha$-H\"older differentiable functions on $S^{n-1}$, respectively, equipped with their usual corresponding topologies. When $k=0$, we simply write $C(S^{n-1})$ and $C^{\alpha}(S^{n-1})$. It is known \cite[Section 1.8]{Schneider-Book-2ndEd} that convergence of elements of $\K$ in the Hausdorff metric is equivalent to convergence of the corresponding support functions in the $C(\S^*)$ norm. 
\medskip

Given a smooth differentiable manifold $M$, the tangent and cotangent bundles are denoted by $TM$ and $T^* M$, respectively, and $\Gamma^k(TM)$ and $\Gamma^k(T^* M)$ denote the collection of $C^k$-smooth vector and covector fields on $M$. We use $X^i$ and $\omega_j$ to denote $X \in \Gamma^k(TM)$ and $\omega \in \Gamma^k(T^* M)$ in a local frame, and similarly for higher order contravariant and covariant tensors. A metric $(0,2)$ tensor $g$ is denoted by $g_{ij}$, and its inverse $(2,0)$ tensor by $g^{ij}$, so that $g^{ij} g_{jk} = \delta^i_k$, the Kronecker delta. Given a $C^1$-smooth function $f$ on $M$, we use $f_j$ to denote the $1$-form $(df)_j$ in a local frame.

\medskip

The standard flat affine connection on $\R^n$ is denoted by $\bar D$. 
Given a Euclidean structure $\scalar{\cdot,\cdot}$ on $\R^n$ and a closed smooth hypersurface $H$ with outer unit-normal $\n^H$ in $(\R^n,\scalar{\cdot,\cdot})$, we denote by ${}^H \nabla$ the induced Euclidean connection on $H$ and by $\II^H$ the corresponding second fundamental form, given by the Gauss equation:
\begin{equation} \label{eq:II}
\bar D_U V = {}^{H} \nabla_U V - \II^{H}(U,V) \n^H \;\;\;\; U \in T H \; , \; V \in \Gamma^1(T H) . 
\end{equation}
The induced Euclidean metric $\delta^H$ on $H$ is given by $\delta^H_p(u,v) = \scalar{u,v}$ for $u,v \in T_p H$. 
As usual, the (non-tensorial) Christoffel symbols associated to a local coordinate frame $\{e_1,\ldots,e_{n-1}\}$
 are defined via:
 \[
 {}^H \nabla_{e_i} e_j = {}^H \Gamma_{ij}^k e_k,
 \]
 and we have for any smooth function $f$ on $H$:
\[
{}^{H} \nabla^2_{ij} f = \partial^2_{ij} f - {}^H \Gamma_{ij}^k \partial_k f . 
\]
In addition, for any smooth extension of $f$ to a neighborhood of $H$:
\begin{equation} \label{eq:II-2nd-deriv}
\bar D^2 f(u,v) = {}^{H} \nabla^2 f(u,v) + \II^{H}(u,v) \n^H_p(f)  \;\;\; u,v \in T_p H .
\end{equation}

\medskip

Given $h \in C^2(\S^*)$, we extend $h$ as a $1$-homogeneous function on $E^*$, and define the symmetric $2$-tensor $D^2 h$ on $\S^*$ as the restriction of $\bar D^2 h$ onto $T \S^*$. Recalling (\ref{eq:II-2nd-deriv}) and using Euler's identity for $1$-homogeneous functions $\n^{\S^*}(h) = \scalar{\bar D h, \n^{\S^*}} = h$, it follows that in a local  frame $\{e_1,\ldots,e_{n-1}\}$ on $\S^*$:
\[
D^2_{ij} h = \bar D^2 h(e_i,e_j) = {}^{\S^*}\nabla^2_{ij} h + h \delta^{\S^*}_{ij}  ~,~ i,j=1,\ldots,n-1 . 
\]
Denoting by $C^k_{>0}(\S^*)$ the subset of positive functions in $C^k(\S^*)$, note that $h \in C^2_{>0}(\S^*)$ is a support-function of $K \in \K$ if and only if $D^2 h_K \geq 0$. 

\medskip

We denote by $\K^m_+$ the subset of $\K$ of convex bodies with $C^m$-smooth boundary and strictly positive curvature. By \cite[pp.~115-116,120-121]{Schneider-Book-2ndEd}, for $m \geq 2$, $K \in \K^m_+$ if and only if $h_K \in C^m_{>0}(\S^*)$ and $D^2 h_K > 0$. Similarly, $\K^{m,\alpha}_+$ denotes the subset of $\K^{m}_+$ of convex bodies with $C^{m,\alpha}$-smooth boundary ($\alpha \in (0,1]$), and for $m \geq 2$, $K \in \K^{m,\alpha}_+$ if and only if $h_K \in C^{m,\alpha}_{>0}(\S^*)$ and  $D^2 h_K > 0$. Consequently, by identifying elements of $\K^m_+$ and $\K^{m,\alpha}_+$ with their support functions whenever $m \geq 2$, we equip these spaces with their corresponding $C^m$ and $C^{m,\alpha}$ topologies, respectively. It is well-known that $\K^{\infty}_+$ is dense in $\K$ with respect to the Hausdorff metric (e.g. \cite[pp. 184-185]{Schneider-Book-2ndEd}).

\medskip

A convex body $K$ is called origin-symmetric if $K = -K$. We will always use $S_e$ to denote the origin-symmetric (or even) members of a set $S$, e.g. $\K_e$ and $\K^2_{+,e}$ denote subset of origin-symmetric bodies in $\K$ and $\K^2_+$, respectively, and $C^2_{e}(\S^*)$ denotes the subset of even functions in $C^2(\S^*)$. 

\medskip

We use $GL(E)$ to denote the group of non-singular linear (or centro-affine) transformations on $E$, and $SL(E)$ to denote the subgroup of volume and orientation preserving elements. When $E = \R^n$, we simply write $GL_n$.

\subsection{Brunn--Minkowski theory}

Given a convex body $K$ in Euclidean space $(E^n,\scalar{\cdot,\cdot})$, its surface-area measure $S_K$ is defined as the push-forward under the Gauss map $\n^{\partial K} : \partial K \rightarrow \S^*$ of $\H^{n-1}|_{\partial K}$. Recall that $\n^{\partial K}$ denotes the outer unit-normal to $K$ and $\H^{n-1}$ is the $(n-1)$-dimensional Hausdorff measure. 
When $K \in \K^2_{+}$, we have:
\[
S_K = \det(D^2 h_K) \Leb . \]
More generally, Lutwak introduced in \cite{Lutwak-Firey-Sums} the $L^p$ surface-area measure of $K$ as:
\[
 S_p K := h^{1-p}_K S_K . 
\]
The cone-volume measure $V_K$ on $\S^*$ is defined as:
\[
V_K = V_K^{\S^*} := \frac{1}{n} h_K S_K ;
\]
it is obtained by first pushing forward the Lebesgue measure on $K$ via the cone-map $K \ni x \mapsto x / \norm{x}_K \in \partial K$, and then pushing forward the resulting cone-measure $V^{\partial K}_K$ on $\partial K$ via the Gauss map $\n^{\partial K} : \partial K \rightarrow \S^*$. For completeness, note that if we instead push-forward the Lebesgue measure on $K$ via the radial-projection map $K \ni x \mapsto x / |x| \in \S$, we obtain:
\[
V^{\S}_K := \frac{1}{n} \frac{1}{\norm{\theta}_K^n} \Leb(d \theta) . 
\]

\medskip

Given two convex bodies $K_0,K_1$ in $E^n$, the classical Brunn--Minkowski inequality states that:
\begin{equation} \label{eq:BM}
V(K_0+K_1)^{\frac{1}{n}} \geq V(K_0)^{\frac{1}{n}} + V(K_1)^{\frac{1}{n}} ,
\end{equation}
where $V$ denotes volume (Lebesgue measure) and $K_0+K_1$ denotes the Minkowski sum of $K_0$ and $K_1$. The $L^p$-Minkowski sum $a \cdot K_0 +_p b \cdot K_1$ of $K_0, K_1 \in \K$ ($a,b \geq 0$) was defined by Firey for $p \geq 1$ \cite{Firey-Sums}, and extended by B\"{o}r\"{o}czky--Lutwak--Yang--Zhang \cite{BLYZ-logMinkowskiProblem,BLYZ-logBMInPlane} to all $p \in \R$, as the largest convex body (with respect to inclusion) $L$ so that:
\[
h_L \leq \brac{a h_{K_0}^p + b h_{K_1}^p}^{1/p} 
\]
(with the case $p=0$ interpreted as $h_{K_0}^{a} h_{K_1}^b$ when $a + b = 1$). 
Note that for $p \geq 1$ one has equality above, that the case $p=1$ coincides with the usual Minkowski sum, and that for $p < 1$ the resulting convex body $a \cdot K_0 +_p b \cdot K_1$ is the Alexandrov body associated to the continuous function on the right-hand-side. 

As a consequence of the Brunn--Minkowski and Jensen inequalities, Firey showed that for any $K_0,K_1 \in \K$ and $p \geq 1$:
\begin{equation} \label{eq:Lp-BM}
\forall \lambda \in [0,1] \;\;\; V((1-\lambda) \cdot K_0+_p \lambda \cdot K_1)^{\frac{p}{n}} \geq (1-\lambda) V(K_0)^{\frac{p}{n}} + \lambda V(K_1)^{\frac{p}{n}}  .
\end{equation}
It is not hard to show that the above statement for any $p < 1$ is false for general $K_0,K_1 \in \K$. However, it was conjectured by B\"{o}r\"{o}czky--Lutwak--Yang--Zhang \cite{BLYZ-logBMInPlane} that for \emph{origin-symmetric} $K_0,K_1 \in \K_e$, (\ref{eq:Lp-BM}) does in fact hold for all $p \in [0,1)$ -- we refer to this as the (even) $L^p$-Brunn--Minkowski conjecture. The validity of (\ref{eq:Lp-BM}) for all $K_0,K_1 \in \K_e$ and a given $p$ implies the validity for all $K_0,K_1 \in \K_e$ and any $q > p$, and so the case $p=0$, called the (even) log-Brunn--Minkowski conjecture, is the strongest in this hierarchy. 
As described in Theorem \ref{thm:intro-equiv} from the Introduction, the even $L^p$--Brunn--Minkowski conjecture is intimately related to the even $L^p$-Minkowski inequality  (\ref{eq:intro-equiv-M}) and to the uniqueness question in the even $L^p$-Minkowski problem (\ref{eq:intro-equiv-uniqueness}).
It turns out that the conjecture is also related to a certain spectral problem, described next.

\subsection{Hilbert--Brunn--Minkowski operator}

Following the work of \cite{CLM-LogBMForBall}, the \emph{local} version of the $L^p$-Brunn--Minkowski inequality (\ref{eq:Lp-BM}) was studied in our previous work with Kolesnikov \cite{KolesnikovEMilman-LocalLpBM}. Given $K \in \K^{2}_{+}$, the Hilbert--Brunn--Minkowski operator $\Delta_K : C^2(\S^*) \rightarrow C(\S^*)$ was defined in \cite{KolesnikovEMilman-LocalLpBM} as:
\begin{align*}
\Delta_K z = \Delta^{\S^*}_K z & := ((D^2 h_K)^{-1})^{ij} D^2_{ij}( z h_K) - (n-1) z \\
& =  ((D^2 h_K)^{-1})^{ij} ( h_K  {}^{\S^*}\nabla^2_{ij} z + (h_K)_i z_j + (h_K)_j z_i)  .
\end{align*}
Note that we are using a slightly different normalization than in \cite{KolesnikovEMilman-LocalLpBM}, where the Hilbert--Brunn--Minkowski operator (denoted $L_K$) was defined as $L_K := \frac{1}{n-1} \Delta_K$. Introducing the following Riemannian (positive-definite) metric on $\S^*$:
\begin{equation} \label{eq:prelim-metric}
g_K = g_K^{\S^*} := \frac{D^2 h_K}{h_K} > 0 ,
\end{equation}
we may also write:
\begin{equation} \label{eq:HBM}
\Delta_K z = g_K^{ij} ( {}^{\S^*}\nabla^2_{ij} z + (\log h_K)_i z_j + (\log h_K)_j z_i) .
\end{equation}
Clearly, $\Delta_K$ is an elliptic second-order differential operator with vanishing zeroth order term, and in particular $\Delta_K 1 = 0$. Up to gauge transformations, $\Delta_K$ coincides with the operator defined by Hilbert in his proof of the Brunn--Minkowski inequality \cite{BonnesenFenchelBook}.  

It was shown in \cite{KolesnikovEMilman-LocalLpBM} that the following integration-by-parts formula holds:
\[
 \int_{\S^*} (-\Delta_K z) w \; dV_K =  \int_{\S^*} g_K(\nabla z , \nabla w) dV_K = \int_{\S^*} (-\Delta_K w) z \; dV_K \;\;\; \forall z,w \in C^2(\S^*) .
\]
We use the notation $g_K(\nabla z, \nabla w) = g_K^{ij} z_i w_j$ and $|\nabla z|^2_{g_K} = g_K(\nabla z, \nabla z)$.
It follows that we may interpret $\Delta_K$ as the weighted Laplacian on the weighted Riemannian manifold $(\S^*, g_K, V_K)$ (see e.g. \cite{KolesnikovEMilmanReillyPart1,KolesnikovEMilmanReillyPart2}). 
Consequently, $-\Delta_K$ is a symmetric positive semi-definite operator on $L^2(V_K)$. It uniquely extends to a self-adjoint positive semi-definite operator with Sobolev domain $H^2(\S^*)$ and compact resolvent, which we continue to denote by $-\Delta_K$. Its (discrete) spectrum is denoted by $\sigma(-\Delta_K)$, and   its first non-zero eigenvalue is denoted by $\lambda_1(-\Delta_K)$. 

As known already to Minkowski, the Brunn--Minkowski inequality (\ref{eq:BM}) is equivalent to its local form (when $K_1$ is an infinitesimal perturbation of $K_0$). This local form was interpreted by Hilbert in a spectral language as:
\[
\lambda_1(-\Delta_K) \geq n-1 ,
\]
or equivalently:
\[
\int_{\S^*} z \; dV_K = 0 \;\; \Rightarrow \;\;\int_{\S^*} (-\Delta_K z) z \; dV_K \geq  (n-1) \int_{\S^*} z^2 dV_K \;\;\; \forall z \in C^2(\S^*)  .
\]
Hilbert showed that in fact $\lambda_1(-\Delta_K) = n-1$, characterizing in addition the corresponding eigenspace (see \cite[Section 5]{KolesnikovEMilman-LocalLpBM} or Subsections \ref{subsec:HBM} and \ref{subsec:BM} for more information).

\medskip

Now assume in addition that $K$ is origin-symmetric, i.e. that $K \in \K^{2}_{+,e}$. 
Denote by $H^2_e(\S^*)$ the even elements of the Sobolev space $H^2(\S^*)$ and by $\mathbf{1}^{\perp}$ those elements $f$ for which $\int f \; dV_K = 0$. The first non-trivial \emph{even} eigenvalue of $-\Delta_K$ is defined as:
\begin{align}
\nonumber \lambda_{1,e}(-\Delta_K) & := \min \sigma(-\Delta_K|_{H^2_e(\S^*) \cap \mathbf{1}^{\perp}}) \\
\label{eq:RR} & = \inf \set{ \frac{\int_{\S^*}|\nabla z|_{g_K}^2 dV_K}{\int_{\S^*} z^2 dV_K} \; ; \;  0 \neq z \in C^2_{e}(\S^*) , \int_{\S^*} z \; dV_K = 0 } .
\end{align}
It was shown in \cite{KolesnikovEMilman-LocalLpBM} that the validity of the \emph{local} form of the even $L^p$-Brunn--Minkowski inequality (\ref{eq:Lp-BM}) for $K \in \K^{2}_{+,e}$ is equivalent to the validity of the statement:
\[
\lambda_{1,e}(-\Delta_K) \geq n-p . 
\]
That the validity of the local form for all $K \in \K^2_{+,e}$ implies the validity of the global form (\ref{eq:Lp-BM}) for all $K_0,K_1 \in \K_e$ is trivial for $p \geq 1$ but not obvious at all when $p < 1$. The latter was conjectured in \cite{KolesnikovEMilman-LocalLpBM} and proved by Putterman in \cite{Putterman-LocalToGlobalForLpBM}, after a prior local-to-global result for the uniqueness question in the even $L^p$-Minkowski problem by Chen--Huang--Li--Liu \cite{ChenEtAl-LocalToGlobalForLogBM}.

\subsection{Proof of Theorem \ref{thm:intro-equiv}} \label{subsec:A}

We can now finally formulate an expanded version of Theorem \ref{thm:intro-equiv} from the Introduction, utilizing the full array of results from \cite{BLYZ-logMinkowskiProblem,BLYZ-logBMInPlane,BCD-PowerOfGaussCurvatureFlow,KolesnikovEMilman-LocalLpBM,ChenEtAl-LocalToGlobalForLogBM}, which we will require for establishing our results.

\begin{thm} \label{thm:equiv-full}
For a fixed $p \in (-n,1)$, statements (\ref{it:main1}), (\ref{it:main2}) and (\ref{it:main3}) of Theorem \ref{thm:intro-equiv} are equivalent to each other and to the following additional statements (with the usual interpretation when $q = 0$):
\begin{enumerateb}[start=2]
\item \label{it:main2b}
For all $q \in (p,1)$ and $K,L \in \K_{e}$, the even $L^q$-Brunn--Minkowski inequality holds:
\begin{equation} \label{eq:equiv-strong-BM}
\forall \lambda \in [0,1] \;\;\; V((1-\lambda) \cdot K+_q \lambda \cdot L) \geq \brac{(1-\lambda) V(K)^{\frac{q}{n}} + \lambda V(L)^{\frac{q}{n}}}^{\frac{n}{q}} ,
\end{equation}
with equality for some $\lambda \in (0,1)$ if and only if $L = c K$ for some $c > 0$. 
\item \label{it:main3b}
For all $q \in (p,1)$ and $K \in \K^{2,\alpha}_{+,e}$, the even $L^q$-Minkowski inequality holds:
\begin{equation} \label{eq:equiv-strong-M}
 \forall L \in \K_e \;\;\; \frac{1}{q} \int_{\S^*} h_L^{q} dS_q K  \geq \frac{n}{q} V(K)^{1-\frac{q}{n}} V(L)^{\frac{q}{n}} ,
\end{equation}
with equality if and only if $L = c K$ for some $c > 0$.
\end{enumerateb}
\begin{enumerate}
\setcounter{enumi}{3}
\item \label{it:main4}
For all $K \in \K^{2,\alpha}_{+,e}$, $\lambda_{1,e}(-\Delta_K) \geq n-p$. 
\end{enumerate}

Moreover, let $\F \subset \K^{2,\alpha}_{+,e}$ be any subfamily containing $B_2^n$ which is path-connected in the $C^{2,\alpha}$ topology. Namely, for any $K \in \F$, there exists $[0,1] \ni t \mapsto K_{t} \in \F$ a continuous path in the $C^{2,\alpha}$ topology so that $K_0 = B_2^n$ and $K_1 = K$. 
Then the implications (\ref{it:main4}) $\Rightarrow$ (\ref{it:main1}) $\Rightarrow$ (\ref{it:main3b}) remain valid 
 if we replace $\K^{2,\alpha}_{+,e}$ in these statements by $\F$. \end{thm}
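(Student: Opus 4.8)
Theorem~\ref{thm:intro-equiv} already gives (\ref{it:main1}) $\Leftrightarrow$ (\ref{it:main2}) $\Leftrightarrow$ (\ref{it:main3}) for the fixed $p$, so the plan is to adjoin (\ref{it:main2b}), (\ref{it:main3b}) and (\ref{it:main4}) to this equivalence class, and then to establish the path-connected refinement. I would first dispose of the ``analytic'' equivalences (\ref{it:main3}) $\Leftrightarrow$ (\ref{it:main3b}) and (\ref{it:main2}) $\Leftrightarrow$ (\ref{it:main2b}). Using $S_q K = h_K^{1-q} S_K$ and $V_K = \frac{1}{n} h_K S_K$, the $L^q$-Minkowski inequality (\ref{it:main3b}) is equivalent to the single assertion $\left( \frac{1}{V(K)} \int_{\S^*} (h_L/h_K)^q \, dV_K \right)^{1/q} \ge \left( \frac{V(L)}{V(K)} \right)^{1/n}$, whose left-hand side is the $q$-mean of $h_L/h_K$ against the probability measure $V_K/V(K)$ (interpreted as usual for $q \le 0$). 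Since $q$-means are nondecreasing in $q$, (\ref{it:main3}) at $p$ yields (\ref{it:main3b}) as a bare inequality for every $q \in (p,1)$; conversely, for smooth $K$ one lets $q \downarrow p$ in (\ref{it:main3b}) (using continuity of $q$-means on a finite measure space) and then approximates a general $K \in \K_e$ by smooth bodies (cf.\ Remark~\ref{rem:intro-weakly-cont}), recovering (\ref{it:main3}). The equality clause of (\ref{it:main3b}) is then immediate: since $q > p$ \emph{strictly}, equality forces equality in the monotone step $\snorm{h_L/h_K}_{L^p} \le \snorm{h_L/h_K}_{L^q}$, so $h_L/h_K$ is constant $V_K$-a.e., and as $V_K$ has full support for $K \in \K^{2,\alpha}_{+,e}$ this gives $L = cK$. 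For (\ref{it:main2}) $\Leftrightarrow$ (\ref{it:main2b}): for each $q \in (p,1)$, Theorem~\ref{thm:intro-equiv} applied at $q$, combined with the above, turns (\ref{it:main3}) into the bare inequality (\ref{it:main2b}); the equality clause follows from the classical B\"or\"oczky--Lutwak--Yang--Zhang reduction of equality in the $L^q$-Brunn--Minkowski inequality (at some interior $\lambda$) to equality in the $L^q$-Minkowski inequality for the same pair, together with the equality clause of (\ref{it:main3b}) and a Hausdorff-approximation argument handling non-smooth $K, L \in \K_e$; and (\ref{it:main2b}) $\Rightarrow$ (\ref{it:main2}) by letting $q \downarrow p$, since $(1-\lambda) \cdot K +_q \lambda \cdot L$ and its volume converge.

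\textbf{The spectral equivalence.}
By \cite{KolesnikovEMilman-LocalLpBM}, for $K \in \K^2_{+,e}$ the local even $L^p$-Brunn--Minkowski inequality for origin-symmetric perturbations of $K$ is equivalent to $\lambda_{1,e}(-\Delta_K) \ge n-p$. Hence (\ref{it:main2}), which trivially implies that local inequality for every $K \in \K^2_{+,e} \supseteq \K^{2,\alpha}_{+,e}$, yields (\ref{it:main4}). Conversely, (\ref{it:main4}) holds in particular on the Hausdorff-dense subfamily $\K^{\infty}_{+,e} \subseteq \K^{2,\alpha}_{+,e}$, so the local even $L^p$-Brunn--Minkowski inequality holds there; since the global inequality (\ref{eq:Lp-BM}) is closed under Hausdorff limits, Putterman's local-to-global theorem \cite{Putterman-LocalToGlobalForLpBM} promotes this to (\ref{it:main2}) for all $K_0, K_1 \in \K_e$. (If one prefers to supply Putterman's theorem with the local inequality on all of $\K^2_{+,e}$, one first extends the spectral bound from $\K^{2,\alpha}_{+,e}$ to $\K^2_{+,e}$ by $C^2$-approximation and continuity of $\lambda_{1,e}(-\Delta_K)$.)

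\textbf{The path-connected refinement.}
Fix $\F \subseteq \K^{2,\alpha}_{+,e}$ containing $B_2^n$ and path-connected in the $C^{2,\alpha}$ topology, and assume (\ref{it:main4}) with $\K^{2,\alpha}_{+,e}$ replaced by $\F$. For ``(\ref{it:main4}) on $\F$'' $\Rightarrow$ ``(\ref{it:main1}) on $\F$'': by \cite{KolesnikovEMilman-LocalLpBM} the bound $\lambda_{1,e}(-\Delta_K) \ge n-p$ implies local uniqueness in the even $L^q$-Minkowski problem at $K$ for every $q \in (p,1)$, so this local uniqueness holds at every body along any path in $\F$; since $B_2^n \in \F$ enjoys \emph{global} uniqueness in the even $L^q$-Minkowski problem for all $q \in (-n,1)$ by Brendle--Choi--Daskalopoulos \cite{BCD-PowerOfGaussCurvatureFlow}, the local-to-global principle for uniqueness of Chen--Huang--Li--Liu \cite{ChenEtAl-LocalToGlobalForLogBM} --- a continuity argument along a $C^{2,\alpha}$-path in $\F$ joining $K$ to $B_2^n$, in which the local-uniqueness (spectral) hypothesis forbids any bifurcation of the solution set --- upgrades this to global uniqueness for every $K \in \F$. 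For ``(\ref{it:main1}) on $\F$'' $\Rightarrow$ ``(\ref{it:main3b}) on $\F$'': fix $K \in \F$ and $q \in (p,1)$ and optimize the scale-invariant functional $L \mapsto \int_{\S^*} h_L^q \, dS_q K \,/\, V(L)^{q/n}$ over $L \in \K_e$ (minimizing for $q>0$, maximizing for $q<0$, and using the logarithmic functional $L \mapsto \frac{1}{V(K)} \int_{\S^*} \log(h_L/h_K) \, dV_K - \frac{1}{n} \log(V(L)/V(K))$ when $q=0$); an optimizer $L_0$ exists by the Blaschke selection theorem (after the standard arguments ruling out degeneration within $\K_e$), its Euler--Lagrange equation forces $S_q L_0$ to be proportional to $S_q K$, and the global uniqueness from the previous step then forces $L_0$ to be a dilate of $K$; evaluating the functional at $L_0 = cK$ produces exactly (\ref{it:main3b}), with equality iff $L$ is itself a dilate of $K$.

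\textbf{Main obstacle.}
The genuinely delicate points are (a) the equality characterization in (\ref{it:main2b}) for general, possibly non-smooth, origin-symmetric bodies, which needs the B\"or\"oczky--Lutwak--Yang--Zhang reduction together with a careful propagation-of-equality argument under Hausdorff approximation; and (b) ensuring that the local-to-global machinery of \cite{Putterman-LocalToGlobalForLpBM} and \cite{ChenEtAl-LocalToGlobalForLogBM} can be invoked with the local/spectral hypothesis verified only on $\K^{2,\alpha}_{+,e}$ --- respectively, on the path-connected $\F$ --- rather than on all of $\K^2_{+,e}$. It is precisely to make (b) work in the refined statement that $\F$ is required to be path-connected in the $C^{2,\alpha}$ topology and to contain $B_2^n$, so that the Chen--Huang--Li--Liu continuity method has a genuine $C^{2,\alpha}$-path of convex bodies --- anchored at one with known global uniqueness --- along which to propagate.
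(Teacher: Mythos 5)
Your proposal follows the same architecture as the paper's proof: monotonicity of $q$-means (Jensen) for (\ref{it:main3}) $\Rightarrow$ (\ref{it:main3b}) with the equality clause extracted from equality in Jensen plus full support of $V_K$, limits $q \searrow p$ for the converses, the B\"or\"oczky--Lutwak--Yang--Zhang equivalence for (\ref{it:main2}) $\Leftrightarrow$ (\ref{it:main3}), the Kolesnikov--Milman spectral reformulation for (\ref{it:main2}) $\Rightarrow$ (\ref{it:main4}) and Putterman (equivalently, the chain (\ref{it:main4}) $\Rightarrow$ (\ref{it:main1}) $\Rightarrow$ (\ref{it:main3b}) $\Rightarrow$ (\ref{it:main3}) $\Rightarrow$ (\ref{it:main2})) for the converse, the variational/Euler--Lagrange argument of Theorem~\ref{thm:CW} for (\ref{it:main1}) $\Rightarrow$ (\ref{it:main3b}), and, for the path-connected refinement, Theorem~\ref{thm:local-uniqueness} (local uniqueness from $\lambda_{1,e} \geq n-p$), Theorem~\ref{thm:Acta} (global uniqueness for $B_2^n$) and the Chen--Huang--Li--Liu continuity method (Theorem~\ref{thm:local-to-global}). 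However, one step is genuinely incomplete: the equality clause in (\ref{eq:equiv-strong-BM}) for \emph{general} $K,L \in \K_e$. You propose to reduce equality in the $L^q$-Brunn--Minkowski inequality at an interior $\lambda$ to equality in the $L^q$-Minkowski inequality ``for the same pair'' and then invoke the equality clause of (\ref{it:main3b}) together with Hausdorff approximation; but the equality characterization in (\ref{it:main3b}) is only established for $K \in \K^{2,\alpha}_{+,e}$, and equality characterizations do not survive Hausdorff approximation, so as written this does not close (you flag it as the main obstacle but leave it unresolved). The paper avoids the issue entirely: after normalizing $\vol(K)=\vol(L)=1$, it uses Firey's inclusion $(1-\lambda)\cdot K +_q \lambda \cdot L \supset (1-\lambda)\cdot K +_p \lambda \cdot L$ (\ref{eq:tricky-inclusion}), valid for all $K,L \in \K_e$ and with equality at some $\lambda \in (0,1)$ only for dilates; equality in (\ref{it:main2b}) then forces equality between $\vol((1-\lambda)\cdot K+_q\lambda\cdot L)$ and the lower bound supplied by (\ref{it:main2}) at exponent $p$, hence equality in the inclusion, with no smoothness or approximation of equality cases needed. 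You should replace your reduction by this argument.

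A second, smaller omission: in the path-connected step (\ref{it:main4}) $\Rightarrow$ (\ref{it:main1}), Theorem~\ref{thm:local-to-global} only yields uniqueness of solutions \emph{within} $\K^{2,\alpha}_{+,e}$, whereas (\ref{it:main1}) demands uniqueness among all $L \in \K_e$. One must add the regularity input recorded at the start of Subsection~\ref{subsec:A}: since $S_q K$ has a strictly positive $C^\alpha$ density, any solution $L \in \K_e$ of $S_q L = S_q K$ automatically lies in $\K^{2,\alpha}_{+,e}$ by standard Monge--Amp\`ere regularity for (\ref{eq:regularity}), so the two notions of uniqueness coincide. Finally, the existence of an optimizer in your (\ref{it:main1}) $\Rightarrow$ (\ref{it:main3b}) step is not a routine Blaschke-selection matter for $q<1$ (degeneration must be excluded); the paper simply invokes Theorem~\ref{thm:CW} (Chou--Wang, Lutwak, B\"or\"oczky--Lutwak--Yang--Zhang) for both existence of a global minimizer and the Euler--Lagrange equation, and you should do the same. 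With these repairs your proposal coincides with the paper's proof.
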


Before providing a proof of Theorem \ref{thm:equiv-full}, we need to first collect several known ingredients from the literature. 
First, as explained in \cite[Section 6]{EMilman-Isospectral-HBM}, the standard regularity theory of the Monge-Amp\`ere equation implies that any solution $L \in \K$ to:
\begin{equation} \label{eq:regularity}
S_p L = f \Leb ~,~  f \in C^\alpha(\S^*) ~,~ f > 0 ,
\end{equation}
 necessarily satisfies $L \in \K^{2,\alpha}_{+}$ (note that without \emph{a-priori} assuming that $L \in \K$, so that $h_L > 0$ on $\S^*$, the asserted regularity is false and $L$ may not be $C^2$ smooth \cite[Section 6]{ChouWang-LpMinkowski}). In our context, this means that whenever $K \in \K^{2,\alpha}_{+,e}$, uniqueness in the $L^p$-Minkowski problem:
 \[
 S_p L = S_p K 
 \]
 is the same when considering solutions $L$ in either of the classes $\K_e$ or $\K^{2,\alpha}_{+,e}$. 
 
\medskip

Next, we need the following local uniqueness statement from \cite[Theorem 11.2]{KolesnikovEMilman-LocalLpBM}:
\begin{thm}[Kolesnikov--Milman] \label{thm:local-uniqueness}
Assume that the local even $L^p$-Brunn--Minkowski inequality (\ref{eq:local-Lp-BM}) holds for $K \in \K^2_{+,e}$ and some $p_0 < 1$. Then for any $p \in (p_0,1)$, 
the even $L^p$-Minkowski problem has a locally unique solution in a neighborhood of $K$ in the following sense: 
there exists a $C^2_e$-neighborhood $N_{K,p}$ of $K$ in $\K^2_{+,e}$, so that:
\begin{equation} \label{eq:locally-unique}
\forall L_1,L_2 \in N_{K,p} ~,~  S_{p} L_1 = S_{p} L_2 \;\; \Rightarrow \;\; L_1 = L_2 . 
\end{equation}
\end{thm}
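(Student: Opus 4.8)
The plan is to read the $L^p$-Minkowski problem as the equation $\Phi_p(h_L) = \Phi_p(h_K)$ for the fully nonlinear second-order operator $\Phi_p(h) := h^{1-p}\det(D^2 h)$ (so that $S_p L = \Phi_p(h_L)\,\Leb$), and to deduce local injectivity of $\Phi_p$ near $h_K$ from the inverse function theorem in Banach spaces. The first step is the linearization. Writing a logarithmic variation $h_\varepsilon = h(1+\varepsilon z)$ and using Euler's identity $\n^{\S^*}(h)=h$ together with the very definition of $\Delta_L$, one computes that the Fréchet derivative of $\log\Phi_p$ at $h=h_L$, read in the logarithmic chart, is exactly
\[
z \;\longmapsto\; (\Delta_L + (n-p)\Id)\,z .
\]
At $L=K$ this operator acts on even functions, and the goal is to show it is invertible there.

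The second step supplies this invertibility from the hypothesis. By the spectral reformulation from \cite{KolesnikovEMilman-LocalLpBM}, the assumed validity of the local even $L^{p_0}$-Brunn--Minkowski inequality (\ref{eq:local-Lp-BM}) for $K$ is equivalent to $\lambda_{1,e}(-\Delta_K)\geq n-p_0$. Since $p_0 < p < 1 < n$, we get $0 < n-p < n-p_0 \leq \lambda_{1,e}(-\Delta_K)$. The even spectrum of $-\Delta_K$ is $\{0\}\cup\sigma(-\Delta_K|_{H^2_e(\S^*)\cap\mathbf{1}^{\perp}})$, with the latter set contained in $[\lambda_{1,e}(-\Delta_K),\infty)$; hence $n-p$ lies in the resolvent set of $-\Delta_K|_{H^2_e(\S^*)}$, with distance to the spectrum at least $\min(n-p,\,\lambda_{1,e}(-\Delta_K)-(n-p))>0$. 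By Fredholm theory for uniformly elliptic operators this makes $\Delta_K + (n-p)\Id$ an isomorphism of the even spaces in the Schauder scale $C^{2,\alpha}_e(\S^*)\to C^{\alpha}_e(\S^*)$ (equivalently in a Sobolev scale $W^{2,q}_e\to L^q_e$).

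The third step is the inverse function theorem: $\log\Phi_p$, restricted to an open set of even support functions in the chosen Banach scale, is $C^1$ and has invertible derivative at $h_K$, hence is a $C^1$-diffeomorphism from a neighborhood $\tilde N$ of $h_K$ onto its image, and in particular injective on $\tilde N$. Translating back: if $L_1,L_2$ are close enough to $K$ that $h_{L_1},h_{L_2}\in\tilde N$, then $S_p L_1 = S_p L_2$ forces $\Phi_p(h_{L_1})=\Phi_p(h_{L_2})$, hence $h_{L_1}=h_{L_2}$, i.e. $L_1=L_2$; taking $N_{K,p}$ to be the corresponding neighborhood of $K$ in $\K^2_{+,e}$ yields (\ref{eq:locally-unique}). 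It is essential here that the $\mathbb{Z}_2$-symmetry is respected by every object involved (the operator $\Delta_L$, the variation $z$, and the competitors $L_i$), so that the whole argument remains within the even subspaces: for $p<1$ one has $n-p>n-1=\lambda_1(-\Delta_K)$ by Hilbert's theorem, so $n-p$ sits inside the full spectrum and invertibility can only survive after restricting to even functions.

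The main obstacle is the functional-analytic set-up rather than any new geometric input: an elliptic operator is not surjective as a map $C^2\to C^0$, so the theorem cannot be run naively in the $C^2$ topology, and one must either pass to the Hölder scale $C^{2,\alpha}\to C^{\alpha}$ — invoking the Monge--Amp\`ere regularity for (\ref{eq:regularity}) so that the solutions $L_i$ may be taken in $\K^{2,\alpha}_{+,e}$ — or to a Sobolev scale $W^{2,q}\to L^q$ with $q$ large, taking care that the competitors near $K$ and the density $\det(D^2\cdot)$ land in the appropriate spaces. A secondary, purely bookkeeping point is the continuous dependence of the coefficients $g_L^{ij}$ and $(\log h_L)_i$ of $\Delta_L$ on $h_L$, which is what lets one describe the neighborhood $N_{K,p}$ concretely rather than merely abstractly.
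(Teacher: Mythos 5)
Your linearization and spectral input are exactly right (the derivative of $\log(h^{1-p}\det D^2h)$ in the logarithmic chart is $\Delta_L+(n-p)\Id$, and $p_0<p<1$ places $n-p$ strictly below $\lambda_{1,e}(-\Delta_K)$ and away from $0$), but the inverse-function-theorem implementation does not prove the theorem as stated, and neither of your proposed fixes for the functional-analytic mismatch works. The statement is for $K\in\K^2_{+,e}$ and asserts uniqueness on a \emph{$C^2_e$}-neighborhood. The Hölder-scale IFT needs $h_K\in C^{2,\alpha}$: otherwise $h_K$ is not even a point of the Banach space in which you linearize, and the coefficients of $\Delta_K+(n-p)\Id$ are merely continuous, so Schauder/Fredholm theory does not give an isomorphism $C^{2,\alpha}_e\to C^{\alpha}_e$. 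Even for smooth $K$, the IFT yields injectivity only on a $C^{2,\alpha}$-ball around $h_K$, and no $C^2$-neighborhood of $K$ is contained in such a ball: a $C^2$-neighborhood in $\K^2_{+,e}$ contains bodies whose support functions are $C^2$ but not $C^{2,\alpha}$ (or have arbitrarily large $C^{2,\alpha}$ norm), so (\ref{eq:locally-unique}) with a $C^2_e$-neighborhood does not follow. Your first repair -- invoking the regularity theory for (\ref{eq:regularity}) to put $L_1,L_2$ in $\K^{2,\alpha}_{+,e}$ -- is unavailable here: the common measure $S_pL_1=S_pL_2$ has density $h_{L_i}^{1-p}\det(D^2h_{L_i})$, which for $L_i\in\K^2_{+,e}$ is only continuous, not $C^\alpha$; and in any case $K$ itself cannot be upgraded. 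Your second repair, a Sobolev scale $W^{2,q}\to L^q$, does not balance: for $h\in W^{2,q}$ one only has $\det(D^2h)\in L^{q/(n-1)}$, so with any consistent choice of domain and target the linearization $\Delta_K+(n-p)\Id$ (which gains a full factor $n-1$ of integrability over what the nonlinear map produces) is injective on even functions but not surjective onto the target, and the IFT hypotheses fail.

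For comparison, the present paper does not reprove this statement; it quotes \cite{KolesnikovEMilman-LocalLpBM}, and the proof there avoids the inverse function theorem precisely so as to work in the $C^2$ topology with $\K^2_{+,e}$ data. It proceeds by contradiction and compactness: if $L^1_m\neq L^2_m$ converge to $K$ in $C^2_e$ with $S_pL^1_m=S_pL^2_m$, one writes the difference of the two Monge--Amp\`ere expressions, via the fundamental theorem of calculus along $h_t=(1-t)h_{L^2_m}+t\,h_{L^1_m}$, as a linear uniformly elliptic equation (with merely continuous coefficients converging uniformly to those of the linearization at $K$) satisfied by $u_m=h_{L^1_m}-h_{L^2_m}$; normalizing $u_m$ and using elliptic estimates that require only continuous coefficients (e.g. $W^{2,q}$ theory), one extracts a limit $z\neq 0$, even, with $-\Delta_Kz=(n-p)z$, hence $\int z\,dV_K=0$ and $\lambda_{1,e}(-\Delta_K)\leq n-p<n-p_0$, contradicting the hypothesis (\ref{eq:local-Lp-BM}). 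Your argument, carried out correctly in the Hölder scale, proves only a $C^{2,\alpha}$-neighborhood version for $K\in\K^{2,\alpha}_{+,e}$; that weaker version would in fact suffice for the way Theorem \ref{thm:local-uniqueness} is used via Theorem \ref{thm:local-to-global}, but it is not the statement claimed, and closing the gap to $C^2$ requires a compactness-type argument of the above kind rather than the IFT.
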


The next ingredient we need was obtained in \cite{Firey-ShapesOfWornStones} (for $p=0$ and $L \in \K^\infty_e$), \cite{AndrewsGuanNi-PowerOfGaussCurvatureFlow} (for $p \in [0,1)$ and $L \in \K_e$) and finally completely resolved in \cite{BCD-PowerOfGaussCurvatureFlow} (see also \cite{Andrews-FateOfWornStones,ChoiDaskalopoulos-UniquenessInLpMinkowski,Chow-PowerOfGaussCurvatureFlow,HuangLiuXu-UniquenessInLpMinkowski} for additional contributions):
\begin{thm}[Firey, Andrews--Guan--Ni, Brendle--Choi--Daskalopoulos] \label{thm:Acta}
Let $-n < p < 1$ and $c > 0$. Then the $L^p$-Minkowski problem:
\[
L \in \K \; , \; S_p L = c \cdot \Leb 
\]
has a unique solution $L$ given by a centered Euclidean ball. 
\end{thm}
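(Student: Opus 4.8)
\textbf{Proof proposal for Theorem \ref{thm:Acta}.}

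The plan is to note that this statement is essentially a citation-assembly result: the existence, regularity and uniqueness have each been established in the literature cited in the theorem's attribution, and the role of the present proof is to stitch these together into the clean form needed later. First I would dispose of existence: for $-n < p < 1$ and the constant datum $c \cdot \Leb$, existence of a solution $L \in \K$ follows from the variational/flow arguments of Chou--Wang and Andrews--Guan--Ni (and Firey for $p=0$); since the measure $c \cdot \Leb$ is smooth and strictly positive, the regularity theory of the Monge--Amp\`ere equation recalled just above, around \eqref{eq:regularity}, upgrades any solution $L \in \K$ to $L \in \K^{\infty}_+$. The centered Euclidean ball $B_r^n$ of the appropriate radius $r$ (chosen so that $r^{n-1} \cdot r^{1-p} = c$, i.e. $r^{n-p} = c$, using $S_{B_r^n} = r^{n-1} \Leb$ and $h_{B_r^n} \equiv r$) is manifestly such a solution, so existence is immediate.

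The substantive content is uniqueness, and here I would simply invoke the resolution by Brendle--Choi--Daskalopoulos \cite{BCD-PowerOfGaussCurvatureFlow}: any solution $L \in \K$ to $S_p L = c \cdot \Leb$ with $-n < p < 1$ must be a centered Euclidean ball. Their theorem is phrased in terms of closed self-similar solutions to the $\alpha$-power-of-Gauss-curvature flow with $\alpha = \frac{1}{1-p}$ and a constant speed profile; translating via the self-similarity correspondence recalled in the Introduction (self-similar solutions to the isotropic flow correspond exactly to solutions of \eqref{eq:intro-Lp-Minkowski} with $\mu = \Leb$, up to the reparametrization $\alpha = \frac{1}{1-p}$) shows their classification is precisely the statement that the only solution is a centered ball. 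One should double-check that the range $-n < p < 1$ matches the range $\alpha > \frac{1}{n+1}$ in which their uniqueness holds, and that no origin-symmetry hypothesis is being smuggled in --- indeed their result requires only $L \in \K$, which is why the theorem here is stated for all of $\K$ rather than $\K_e$.

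The main (and really only) obstacle is that the uniqueness half is a deep PDE/geometric-flow result that is genuinely imported wholesale; the present paper contributes nothing to its proof, and the entire burden is to verify that the hypotheses of \cite{BCD-PowerOfGaussCurvatureFlow} are met in the exact form stated --- in particular the correct identification $\alpha = \frac{1}{1-p}$, the scaling that reduces a general constant $c$ to the normalized case, and the fact that their argument does not need convexity or smoothness to be assumed a priori beyond $L \in \K$ (those being recovered a posteriori). Once that bookkeeping is done, the proof is a one-line combination: existence from the flow/variational literature plus Monge--Amp\`ere regularity, uniqueness from Brendle--Choi--Daskalopoulos, and the explicit centered ball as the common solution.
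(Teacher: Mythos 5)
Your proposal is correct and matches what the paper does: Theorem \ref{thm:Acta} is stated there purely as an imported result, attributed to Firey ($p=0$, smooth origin-symmetric case), Andrews--Guan--Ni ($p\in[0,1)$) and Brendle--Choi--Daskalopoulos (full range $-n<p<1$, no symmetry), with no proof given beyond this attribution. Your bookkeeping (explicit ball of radius $r=c^{1/(n-p)}$ for existence, Monge--Amp\`ere regularity via \eqref{eq:regularity} to reach the smooth setting of the flow classification, and the dictionary $\alpha=\frac{1}{1-p}$ with $\alpha>\frac{1}{n+1}$ matching $p>-n$) is exactly the translation the paper implicitly relies on.
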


An additional crucial ingredient is the following theorem, which is the main new ingredient in the results of \cite{ChenEtAl-LocalToGlobalForLogBM}; as it is not explicitly stated in the manner formulated below, we sketch its proof for completeness. 
\begin{thm}[Chen--Huang--Li--Liu] \label{thm:local-to-global}
Let $p  < 1$, and let $[0,1] \ni t \mapsto K_{t} \in \K^{2,\alpha}_{+,e}$ be a continuous path in the $C^{2,\alpha}$ topology. Assume that the even $L^p$-Minkowski problem has a globally unique solution for $K_0$:
\[
\forall L \in \K^{2,\alpha}_{+,e} \;\;\; S_{p} L = S_{p} K_0 \;\; \Rightarrow \;\;\ L = K_0 . 
\]
Assume that for all $t \in [0,1]$, the even $L^p$-Minkowski problem has a locally unique solution in a $C^{2,\alpha}_e$-neighborhood $N_{K_t,p}$ of $K_t$ in the sense of (\ref{eq:locally-unique}). Then the even $L^p$-Minkowski problem has a globally unique solution for $K_1$:
\[
\forall L \in \K^{2,\alpha}_{+,e}  \;\;\; S_{p} L = S_{p} K_1 \;\; \Rightarrow \;\;\ L = K_1 . 
\]
\end{thm}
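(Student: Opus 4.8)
The plan is to run a continuity-method (degree-theoretic) argument along the path $t \mapsto K_t$. Introduce the set
\[
A := \set{ t \in [0,1] \; ; \; \forall L \in \K^{2,\alpha}_{+,e}, \; S_p L = S_p K_t \Rightarrow L = K_t } ,
\]
the set of parameters for which global uniqueness holds. By hypothesis $0 \in A$. I would show $A$ is both open and closed in $[0,1]$; since $[0,1]$ is connected, this forces $A = [0,1]$, and in particular $1 \in A$, which is the claim.

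\emph{Openness.} Suppose $t_0 \in A$. The local uniqueness hypothesis provides $C^{2,\alpha}_e$-neighborhoods $N_{K_t,p}$ for every $t$. The heart of the matter is to show that global uniqueness at $t_0$ persists for $t$ near $t_0$. Here I would argue by contradiction: if there is a sequence $t_j \to t_0$ and bodies $L_j \in \K^{2,\alpha}_{+,e}$ with $S_p L_j = S_p K_{t_j}$ but $L_j \neq K_{t_j}$, then the $L_j$ are solutions of an $L^p$-Minkowski problem with data $S_p K_{t_j} \to S_p K_{t_0}$ in $C^\alpha(\S^*)$ (the data stays bounded away from $0$ and bounded above, uniformly in $j$). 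A-priori estimates for the Monge--Amp\`ere equation \eqref{eq:regularity} — boundedness of $h_{L_j}$ above and below and of $D^2 h_{L_j}$ — give, via Arzel\`a--Ascoli in $C^{2,\alpha'}$ for $\alpha' < \alpha$, a subsequential limit $L_\infty \in \K^{2,\alpha}_{+,e}$ with $S_p L_\infty = S_p K_{t_0}$. By $t_0 \in A$ we get $L_\infty = K_{t_0}$, so $L_j \to K_{t_0}$ in $C^{2,\alpha'}$, hence eventually $L_j \in N_{K_{t_0},p}$. But for $j$ large $K_{t_j} \in N_{K_{t_0},p}$ as well (the path is continuous), so local uniqueness \eqref{eq:locally-unique} at $t_0$ forces $L_j = K_{t_j}$, a contradiction. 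Thus $t_0$ has a neighborhood contained in $A$.

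\emph{Closedness.} Suppose $t_j \in A$ with $t_j \to t_0$; I must show $t_0 \in A$. Let $L \in \K^{2,\alpha}_{+,e}$ with $S_p L = S_p K_{t_0}$. Using local uniqueness at $t_0$: the map $t \mapsto K_t$ is continuous and, by the same compactness-plus-regularity argument as above applied near the \emph{fixed} body $L$, for $t$ close enough to $t_0$ the $L^p$-Minkowski problem with data $S_p K_t$ has a solution $L_t \in N_{K_t,p} \cap N_{L,p}$ obtained by perturbing $L$ (this is where the local solvability/uniqueness in a neighborhood is used: the solution operator near $L$ is a local homeomorphism onto a neighborhood of $S_p L$ in the data space, since the linearization is $\Delta_K + (n-p)\Id$ and local uniqueness means this has trivial kernel, so it is invertible). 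For such $t = t_j$ close to $t_0$, $L_{t_j}$ solves $S_p L_{t_j} = S_p K_{t_j}$, and since $t_j \in A$ we conclude $L_{t_j} = K_{t_j}$. Letting $j \to \infty$ and using that $L_{t_j} \to L$ (continuity of the perturbed solution) and $K_{t_j} \to K_{t_0}$ gives $L = K_{t_0}$. Hence $t_0 \in A$.

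The main obstacle I anticipate is making the perturbation/solution-operator step in closedness fully rigorous: one needs that local uniqueness of the $L^p$-Minkowski problem near $K_t$ upgrades to local \emph{solvability}, i.e. that the nonlinear operator $L \mapsto S_p L$ is an open map near each $K_t$ (equivalently, that the linearized operator $\Delta_{K_t} + (n-p)\Id$ is invertible on the relevant even function space, which holds precisely when $n-p \notin \sigma(-\Delta_{K_t})$ — and trivial kernel is exactly what local uniqueness gives via the inverse/implicit function theorem in H\"older spaces). Once this openness of the data map is in hand, both halves of the connectedness argument go through cleanly; the remaining ingredients (uniform Monge--Amp\`ere a-priori estimates along the compact path, and Arzel\`a--Ascoli) are standard and I would only invoke rather than reprove them.
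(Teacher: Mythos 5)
Your skeleton --- the continuity method on the set $A$ of parameters enjoying global uniqueness, showing it is open and closed in $[0,1]$ --- is exactly the route the paper takes (following Chen--Huang--Li--Liu), and your openness half is essentially their Lemma~3.1: a compactness argument extracting a limit of putative second solutions and then invoking local uniqueness in $N_{K_{t_0},p}$. Even there, note that what you need are genuine \emph{uniform} a priori estimates (bounds $0<c\le h_{L_j}\le C$ and uniform $C^{2,\alpha}$ control for even solutions whose densities are pinched between positive constants, \`a la Chou--Wang and Caffarelli), not the purely qualitative regularity statement (\ref{eq:regularity}); such estimates are a real ingredient tied to the range of $p$ (at $p=-n$, for instance, all centered ellipsoids of a fixed volume solve the same equation, so no such compactness can hold), so they must be invoked precisely rather than waved at.

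The genuine gap is in your closedness half. You need, for $t$ near $t_0$, a solution $L_t$ close to the alternative solution $L$ with $S_pL_t=S_pK_t$, and you justify its existence by claiming the linearization at $L$ is invertible ``because local uniqueness gives trivial kernel''. But the hypothesis (\ref{eq:locally-unique}) provides local uniqueness only in neighborhoods $N_{K_t,p}$ of the bodies \emph{on the path}; $L$ is an arbitrary body and in fact must lie \emph{outside} $N_{K_{t_0},p}$ (otherwise local uniqueness at $K_{t_0}$ would already give $L=K_{t_0}$), so no hypothesis whatsoever constrains the problem near $L$. Moreover, even where local uniqueness is assumed it does not imply that the linearized operator $\Delta_K+(n-p)\Id$ has trivial kernel: injectivity of a nonlinear map does not pass to its differential (think of $x\mapsto x^3$), so the inverse/implicit function theorem step is unavailable --- the implication runs in the opposite direction. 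Consequently the solution $L$ may simply disappear under perturbation of the data, and your $L_{t_j}$ need not exist; your closing paragraph also mislocates the difficulty, since invertibility at $K_t$ would not help at $L$. The paper avoids this by citing CHLL's Lemmas~3.2--3.4, which show that \emph{non}-uniqueness persists for all data $S_p\tilde K$ with $\tilde K$ in a sub-neighborhood of $K$ inside $N_{K,p}$; their proof rests on the variational structure of the problem (existence and compactness of global minimizers of the functional $F_{\mu,p}$, as in Theorem \ref{thm:CW}), not on an implicit function theorem at the second solution. To repair your argument you would essentially have to reproduce that persistence-of-multiplicity argument; as written, the closedness step does not follow from the stated hypotheses.
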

\begin{proof}[Sketch of Proof]
Let $K \in \K^{2,\alpha}_{+,e}$, and assume that there exists a $C^{2,\alpha}_{e}$ neighborhood $N_{K,p}$ of $K$ so that (\ref{eq:locally-unique}) holds.
 It was shown in \cite[Lemma 3.1]{ChenEtAl-LocalToGlobalForLogBM} that if the equation $S_p L = S_p K$ has a globally unique solution $L=K$ among all $L \in \K^{2,\alpha}_{+,e}$, then the equation $S_p L = S_p \tilde K$ has a globally unique solution $L = \tilde K$ for all $\tilde K$ in a $C^{2,\alpha}_{e}$ sub-neighborhood of $K$ in $N_{K,p}$. On the other hand, as explained in the proof of \cite[Theorem 1.4]{ChenEtAl-LocalToGlobalForLogBM}, it follows from \cite[Lemmas 3.2--3.4]{ChenEtAl-LocalToGlobalForLogBM} that if the equation $S_p L = S_p K$ has multiple distinct solutions $L \in \K^{2,\alpha}_{+,e}$, then the equation $S_p L = S_p \tilde K$ also has multiple distinct solutions  for all $\tilde K$ in a $C^{2,\alpha}_{e}$ sub-neighborhood of $K$ in $N_{K,p}$. 
 
Now apply the method of continuity following \cite{ChenEtAl-LocalToGlobalForLogBM}: define $I \subset [0,1]$ to be the subset of $t$'s for which $S_p L = S_p K_t$ has a globally unique solution $L = K_t$ among all $L \in \K^{2,\alpha}_{+,e}$. The results above imply that $I$ is both relatively open and closed in $[0,1]$, and hence $I$ is either empty or the entire $[0,1]$. But our assumption was that $0 \in I$, and hence $I = [0,1]$. 
\end{proof}

Finally, we will use the existence of a global minimizer in the following optimization problem \cite[Section 5]{ChouWang-LpMinkowski}. Once it is shown that a global minimum is attained, a very general variational argument \cite[Theorem 3.3]{Lutwak-Firey-Sums},\cite[Lemma 4.1]{BLYZ-logMinkowskiProblem}  ensures that any local minimizer satisfies the corresponding Euler-Lagrange equation (\ref{eq:LpMinkowski-EL}) (compare with the original argument of \cite[Theorem D]{ChouWang-LpMinkowski}):

\begin{thm}[Chou--Wang, Lutwak, B\"or\"oczky--Lutwak--Yang--Zhang] \label{thm:CW}
Let $f \in C^{\alpha}_e(\S^*)$ be a strictly positive even density, and denote $\mu = f \Leb$. Given $-n < p < 1$, consider the $0$-homogeneous functional:
\[
\K_e \in L \mapsto F_{\mu,p}(L) := \begin{cases} \frac{ \frac{1}{p} \int h_L^{p} d\mu }{ V(L)^{p/n} } & p \neq 0 \\ 
\frac{\exp(\int \log h_L d\tilde \mu)}{V(L)^{\frac{1}{n}} } & p = 0 \end{cases} ,
\]
where $\tilde \mu$ denotes the normalized measure $\mu/ \norm{\mu}$.
Then $F_{\mu,p}$ attains a global minimum. 

Moreover, any local minimizer (in the Hausdorff topology) $L$ of $F_{\mu,p}$ satisfies:
\begin{equation} \label{eq:LpMinkowski-EL}
S_p L = c \cdot \mu ,
\end{equation}
for some $c > 0$. In particular, by (\ref{eq:regularity}), necessarily $L \in \K^{2,\alpha}_{+,e}$. 
\end{thm}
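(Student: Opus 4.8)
\textbf{Proof proposal for Theorem \ref{thm:CW}.}

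The plan is to split the statement into two separate assertions: the existence of a global minimizer of $F_{\mu,p}$, and the fact that any local minimizer (in particular the global one) solves the Euler--Lagrange equation (\ref{eq:LpMinkowski-EL}). For the existence part, I would follow the variational approach of \cite[Section 5]{ChouWang-LpMinkowski}. Since $F_{\mu,p}$ is $0$-homogeneous under dilations $L \mapsto cL$, it suffices to minimize over the compact-modulo-dilation slice obtained by normalizing, say, $V(L) = 1$ (or equivalently, restricting attention to a suitable normalization of $h_L$). Take a minimizing sequence $L_k \in \K_e$ with $V(L_k) = 1$; the key point is to rule out degeneration. Here the evenness of $\mu$ and the strict positivity of its density $f$ are essential: if $L_k$ were to become very elongated (its John ellipsoid having an unbounded axis ratio), then because $f$ is bounded below by a positive constant and $\mu$ is even (hence $h_{L_k}^p$ gets integrated against mass spread over the whole sphere, not concentrated away from the long direction), the functional $\frac{1}{p}\int h_{L_k}^p \, d\mu$ (for $p \neq 0$, with the appropriate sign bookkeeping since $p$ can be negative) — respectively $\int \log h_{L_k}\, d\tilde\mu$ for $p = 0$ — would blow up in the direction that makes $F_{\mu,p}$ large, contradicting minimality. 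Thus the minimizing sequence stays in a compact subset of $\K_e$ (bounded John ellipsoid eccentricity plus the volume normalization gives uniform inner and outer radius bounds), and by Blaschke's selection theorem a subsequence converges in the Hausdorff metric to some $L_\infty \in \K_e$ with $V(L_\infty) = 1$. Weak continuity of $L \mapsto h_L$ in $C(\S^*)$ and dominated convergence (using $f \in C^\alpha_e(\S^*)$, hence bounded) show $F_{\mu,p}(L_\infty) = \inf F_{\mu,p}$, establishing existence.

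For the second part, I would invoke the standard first-variation computation. Given a local minimizer $L$ and an arbitrary $\psi \in C_e(\S^*)$, form the family of Wulff shapes (Alexandrov bodies) $L_t$ associated to the support-function-like data $h_L(1 + t\psi)$ (for $p=0$) or $(h_L^p + t\psi)^{1/p}$ (for $p \neq 0$) for small $t$; these lie in $\K_e$ and depend continuously on $t$ in the Hausdorff topology, so $t \mapsto F_{\mu,p}(L_t)$ has a local minimum at $t = 0$. The classical variational formula for volume under Wulff-shape perturbations — $\frac{d}{dt}\big|_{t=0} V(L_t) = \int_{\S^*} (\text{variation of } h) \, dS_L$, valid in the generality needed by \cite[Lemma 4.1]{BLYZ-logMinkowskiProblem}, \cite[Theorem 3.3]{Lutwak-Firey-Sums} even without a priori smoothness of $L$ — together with differentiating $\frac{1}{p}\int h_{L_t}^p \, d\mu$ gives $\frac{d}{dt}\big|_{t=0} F_{\mu,p}(L_t) = 0$, which after simplification reads $\int_{\S^*} \psi \, dS_p L = c \int_{\S^*} \psi \, d\mu$ for a constant $c > 0$ depending on $L$ and the normalization. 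Since $\psi \in C_e(\S^*)$ was arbitrary and both $S_p L$ and $\mu$ are even measures, this forces $S_p L = c\,\mu$, which is (\ref{eq:LpMinkowski-EL}). Finally, since $\mu = f\Leb$ with $f \in C^\alpha_e(\S^*)$ strictly positive, so is $c\,f$, and the regularity statement (\ref{eq:regularity}) recalled earlier in this subsection gives $L \in \K^{2,\alpha}_{+,e}$ (using evenness of $f$ to stay within the even class).

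The main obstacle is the non-degeneracy of the minimizing sequence in the existence part, especially in the supercritical range $p \in (-n, 0)$ where the integrand $\frac{1}{p} h_L^p$ becomes unbounded as $h_L \to 0$ and the sign of $p$ reverses the direction of the relevant inequalities. This is precisely where the critical exponent $-n$ enters: the argument that controls the functional along a degenerating sequence — comparing the growth of $\int h_{L_k}^p \, d\mu$ against the volume normalization as the body collapses toward a lower-dimensional set — is exactly the estimate that breaks down at $p = -n$ (where, e.g., ellipsoids of all eccentricities become critical points with the same value), and one must use $p > -n$ quantitatively. The evenness of $\mu$ is also used here in an essential way: for non-symmetric $\mu$ one cannot prevent the body from sliding off toward a hemisphere, which is why the theorem is restricted to $\K_e$ and even densities. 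Once non-degeneracy is in hand, the rest is routine compactness and a by-now-standard first-variation computation, for which I would simply cite \cite[Theorem 3.3]{Lutwak-Firey-Sums} and \cite[Lemma 4.1]{BLYZ-logMinkowskiProblem} rather than reproduce it.
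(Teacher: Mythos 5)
Your proposal matches the paper's treatment: the paper does not reprove Theorem \ref{thm:CW} but, exactly as you do, defers existence of the global minimizer to \cite[Section 5]{ChouWang-LpMinkowski}, the first-variation/Euler--Lagrange step for local minimizers to \cite[Theorem 3.3]{Lutwak-Firey-Sums} and \cite[Lemma 4.1]{BLYZ-logMinkowskiProblem}, and the final regularity claim to (\ref{eq:regularity}). Your compactness/non-degeneracy sketch is a faithful outline of the cited existence argument (one small imprecision: for $p\in(-n,0)$ a degenerating sequence with normalized volume drives $F_{\mu,p}$ to $0^-$, i.e.\ strictly above the negative infimum, rather than literally blowing up --- the exponent $p>-n$ enters exactly as you say), so the two approaches coincide.
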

\begin{rem}
Without the origin-symmetry assumption above, it is imperative to incorporate an additional maximization over all possible translations of $L$ so that the origin remains in $L$, rendering the analysis much more delicate \cite{ChouWang-LpMinkowski,ChenLiZhu-LpMongeAmpere,ChenLiZhu-logMinkowski,BBCY-SubcriticalLpMinkowski}. Nevertheless, one can still guarantee the existence of a global minimizer under even more general conditions on $\mu$ than the ones stated above: this was shown for densities $f$ satisfying $0 < c \leq f \leq C$ in \cite{ChouWang-LpMinkowski}, for densities $f \in L^{\frac{n}{n+p}}(\Leb)$ when $-n<p<0$ in \cite{BBCY-SubcriticalLpMinkowski}, and for finite Borel measures $\mu$ which are not concentrated on any hemisphere when $p \in (0,1)$ or which satisfy the subspace concentration condition when $p=0$ in \cite{ChenLiZhu-LpMongeAmpere,ChenLiZhu-logMinkowski}. 
\end{rem}

We can now finally provide a proof of Theorem \ref{thm:equiv-full}. 

\begin{proof}[Proof of Theorem \ref{thm:equiv-full}]
Statement (\ref{it:main1}) implies (\ref{it:main3b}) for each individual $q \in (-n,1)$ and $K \in \K^{2,\alpha}_{+,e}$. To see this, denote $\mu = S_q K$, and note that $\mu = f \Leb$ with a positive density $f \in C^{\alpha}_{e}(\S^*)$. Recall from Theorem \ref{thm:CW} that a global minimizer of $\K_e \ni L \mapsto F_{\mu,q}(L)$ always exists, and that any global minimizer $L$ must satisfy $S_q L  = c \cdot S_q K$ and is therefore in $\K^{2,\alpha}_{+,e}$. Consequently, if statement (\ref{it:main3b}) regarding the $L^q$-Minkowski inequality or its cases of equality were wrong, it would follow that there exists a global minimizer $L \in \K^{2,\alpha}_{+,e}$ which is different than $K$ so that (after rescaling) $S_q L = S_q K$, in contradiction to the uniqueness in the even $L^q$-Minkowski problem asserted in (\ref{it:main1}). 

The converse implication (\ref{it:main3b}) $\Rightarrow$ (\ref{it:main1}) also holds for each individual $q$ and \emph{both} pairs $(K,L)$ and $(L,K)$, for any fixed $K,L \in \K_e$. While we do not require this here, we provide a quick proof for completeness following Lutwak \cite{Lutwak-Firey-Sums}. Let $q \neq 0$; the case $q=0$ is treated in an identical manner. Assume that (\ref{it:main3b}) holds for both pairs $(K,L)$ and $(L,K)$ and that $S_q K = S_q L$. Denote $V_q(K,L) := \frac{1}{q} \int_{\S^*} h_L^q dS_q K$. Then by (\ref{it:main3b}):
\[
V_q(K,L) \geq V_q(K,K) = V_q(L,K) \geq V_q(L,L) = V_q(K,L) . 
\]
Consequently, equality holds throughout, and since $\frac{n}{q} V(L) = V_q(L,L) = V_q(K,K) = \frac{n}{q} V(K)$, it follows that we have equality in (\ref{eq:equiv-strong-M}), and hence $L = c K$ for some $c > 0$. But since $V(L) = V(K)$, we conclude that $L = K$, as asserted in (\ref{it:main1}).

Statement (\ref{it:main3b}) obviously implies (\ref{it:main3}) for general $K,L \in \K_e$ after recalling Remark \ref{rem:intro-weakly-cont} and taking the limit as $q \searrow p$.  Similarly, statement (\ref{it:main2b}) trivially implies (\ref{it:main2}) by taking the limit $q \searrow p$.
The equivalence of statements (\ref{it:main3}) and (\ref{it:main2}) was shown by B\"or\"oczky--Lutwak--Yang--Zhang in \cite{BLYZ-logBMInPlane}. 
That the global statement (\ref{it:main2}) implies the local one in (\ref{it:main4}) was shown in \cite{KolesnikovEMilman-LocalLpBM}. The local-to-global converse direction was established by Putterman in \cite{Putterman-LocalToGlobalForLpBM}, and also follows by the implications (\ref{it:main4}) $\Rightarrow$ (\ref{it:main1}) $\Rightarrow$ (\ref{it:main3b}) $\Rightarrow$ (\ref{it:main3}) $\Rightarrow$ (\ref{it:main2}). 

That (\ref{it:main3}) implies (\ref{it:main3b}) for any $q > p$ is a simple consequence of Jensen's inequality (after rescaling $K$ for convenience so that $\vol(K)=1$):
\begin{equation} \label{eq:tricky-Jensen}
\brac{\frac{1}{n} \int_{\S^*} h_L^q dS_q K}^{\frac{1}{q}} = \brac{ \int_{\S^*} \brac{\frac{h_L}{h_K}}^q dV_K }^{\frac{1}{q}} \geq \brac{\int_{\S^*} \brac{\frac{h_L}{h_K}}^p dV_K }^{\frac{1}{p}} = \brac{\frac{1}{n} \int_{\S^*} h_L^p dS_p K}^{\frac{1}{p}} . 
\end{equation}
To establish the characterization of equality in (\ref{it:main3b}), one may argue as in the proof of \cite[Theorem 1.8]{BLYZ-logBMInPlane}. Indeed, by (\ref{eq:tricky-Jensen}) and (\ref{it:main3}) we have for all $K \in \K^{2,\alpha}_{+,e}$ (say with $\vol(K) = 1$) and $L \in \K_e$:
\[
\brac{\frac{1}{n} \int_{\S^*} h_L^q dS_q K}^{\frac{1}{q}} \geq \brac{\frac{1}{n} \int_{\S^*} h_L^p dS_p K}^{\frac{1}{p}} \geq \vol(L)^{\frac{1}{n}} . 
\]
Consequently, if equality holds between the left and right most terms, we must have equality in Jensen's inequality (\ref{eq:tricky-Jensen}), and hence $h_L$ and $h_K$ must be proportional $V_K$-a.e.. But as $K \in \K^{2,\alpha}$, $V_K$ is absolutely continuous with respect to $\Leb$, and so by continuity $h_L(\theta) = c h_K(\theta)$ for some $c > 0$ and all $\theta \in \S^*$. 

Similarly, it is well-known and easy to check (see \cite[Theorem 2]{Firey-Sums}) that (\ref{it:main2}) implies (\ref{it:main2b}), after rescaling $K$ and $L$ by homogeneity so that $\vol(K) = \vol(L) = 1$ and noting that by Jensen's inequality, whenever $q > p$:
\begin{equation} \label{eq:tricky-inclusion}
(1-\lambda) \cdot K +_q \lambda \cdot L \supset (1-\lambda) \cdot K +_p \lambda \cdot L \;,
\end{equation}
with equality for some $\lambda \in (0,1)$ if and only if $K$ and $L$ are dilates. 
If equality holds in (\ref{it:main2b}) for some $\lambda_0 \in (0,1)$ and $K_0,L_0 \in \K_e$, then after rescaling $K_0, L_0$ into $K,L$ so that $\vol(K) = \vol(L) =1$, it follows by homogeneity that there exists $\lambda \in (0,1)$ so that equality holds in (\ref{it:main2b}) for $\lambda$, $K$ and $L$. By (\ref{eq:tricky-inclusion}) and (\ref{it:main2}) we know that:
\[
\vol((1-\lambda) \cdot K +_q \lambda \cdot L) \geq \vol((1-\lambda) \cdot K +_p \lambda \cdot  L) \geq 1 ,
\]
and as equality holds between the left and right most terms, we must have equality in (\ref{eq:tricky-inclusion}) (up to null-sets, and as the corresponding compact sets have non-empty interior, pointwise equality). It follows that $K$ and $L$ must be dilates, and hence so are $K_0$ and $L_0$. 

It remains to show that statement (\ref{it:main4}) implies (\ref{it:main1}) for a path-connected $\F \subset \K^{2,\alpha}_{+,e}$ containing $B_2^n$. Given $K \in \F$, there exists a continuous path in $\F$ (equipped with the $C^{2,\alpha}$ topology), denoted $[0,1] \ni t \mapsto K_t$, so that $K_0 = B_2^n$ and $K_1 = K$. Fix $q \in (p,1)$. Statement (\ref{it:main4}) and Theorem \ref{thm:local-uniqueness} imply that for all $t \in [0,1]$, the even $L^q$-Minkowski problem has a locally unique solution in a neighborhood of $K_t$ in the sense of (\ref{eq:locally-unique}). Consequently, as $K_0 = B_2^n$ satisfies the global uniqueness in the even $L^q$-Minkowski problem by Theorem \ref{thm:Acta}, it follows by Theorem \ref{thm:local-to-global} that $K_1 = K$ also satisfies the global uniqueness in the even $L^q$-Minkowski problem in the class $\K^{2,\alpha}_{+,e}$. As explained in the beginning of this subsection, the regularity theory for (\ref{eq:regularity}) implies that the uniqueness extends to the entire $\K_e$, thereby establishing (\ref{it:main1}). 
\end{proof}

\begin{rem} \label{rem:ellipsoids}
An immediate corollary of Theorem \ref{thm:equiv-full} is that uniqueness in the even $L^p$-Minkowski problem (\ref{eq:main-Lp-Minkowski-Uniqueness}) holds for all $p \in (-n,1)$ whenever $K$ is a centered ellipsoid $\EE$. Indeed, it well-known that $\lambda_{1,e}(-\Delta_{B_2^n}) = 2n$, as $\Delta_{B_2^n}$ coincides with the usual Laplace-Beltrami operator on $\S^*$ (see e.g. \cite{KolesnikovEMilman-LocalLpBM}). As the spectrum of $-\Delta_K$ is invariant under centro-affine transformations \cite[Section 5]{KolesnikovEMilman-LocalLpBM}, it follows that $\lambda_{1,e}(-\Delta_{\EE}) = 2n$ for all centered ellipsoids $\EE$. Hence, applying Theorem \ref{thm:equiv-full} to the family $\F = \{ \EE \}\subset \K^{2,\alpha}_{+,e}$ of all centered ellipsoids, the implication (\ref{it:main4}) $\Rightarrow$ (\ref{it:main1}) concludes the proof. 
\end{rem}

\section{Affine differential geometry} \label{sec:ADG}

In this section, we collect facts from affine differential geometry which we will need for this work; note that our sign choices in various places may be different from the standard ones. We refer to \cite{NomizuSasaki-Book,Bokan-Survey} for a detailed exposition and further information regarding affine differential geometry. For a development of the theory from the point of view of relative normalizations, we refer to \cite{SSV-Book,LSZH-Book,OlikerSimon-Polarity},
and from the point of view of statistical structures, we refer to \cite{Opozda-BochnerMethod,Opozda-CompletenesOfStatisticalStructures}.

\subsection{Normalization and structure equations}

Recall that $E = E^n$ denotes an $n$-dimensional linear vector space over $\R$. More general treatments assume that $E$ is an $n$-dimensional affine space and distinguish between $E$ and its tangent spaces, but for simplicity we will not require this here and identify $T_x E$ with $E$. $E$ is equipped with its standard flat affine connection $\bar D$ and a determinant volume form $\Det$ (note that all determinant volume forms coincide up to a scalar multiple). 

Let $M = M^{n-1}$ denote a smooth connected $(n-1)$-dimensional differentiable manifold. 
In our context, $M$ will always be orientable and closed, i.e. compact without boundary. Let $x : M^{n-1} \rightarrow E^n$ be a smooth immersion, that is a smooth map so that $d_p x$ is of maximal rank for all $p \in M$. In our context, $x : M \rightarrow E$ will \textbf{always be an embedding of a convex hypersurface with strictly positive curvature (``strongly convex")}. 
Let $\xi : M \rightarrow E$ denote a smooth transversal normal field to $x$, meaning that 
$\text{rank}(d_p x , \xi(p)) = n$ for all $p \in M$. 
 The transversal normal $\xi$ induces a volume form $\nu_{\xi}$ on $M$:
\[
\nu_{\xi}(e_1,\ldots,e_{n-1}) := \Det(dx(e_1),\ldots,dx(e_{n-1}) , \xi) ~,~ e_i \in T_p M .
\]
It also induces a connection $\nabla = \nabla^{\xi}$ and bilinear form $g = g^{\xi}$ on $M$ via the Gauss structure equation: 
\begin{equation} \label{eq:Gauss}
\bar D_U \; dx(V) = dx(\nabla^{\xi}_U V) - g^{\xi}(U,V) \xi ~,~ U \in TM , V \in \Gamma^1(T M) .
\end{equation}
It turns out that $\nabla^{\xi}$ is always a torsion-free affine connection, and that $g^{\xi}$ is a symmetric $(0,2)$ tensor, which is called the second fundamental form. In our context, since $x$ is strongly convex, $g^{\xi}$ is always definite, and so multiplying $\xi$ by $-1$, we can always make sure that $g^{\xi}$ is positive-definite, and hence defines a Riemannian metric on $M$. 

In addition, $\xi$ induces a $(1,1)$ tensor $S = S^{\xi} : TM \rightarrow TM$ called the shape operator and a $1$-form $\theta^{\xi}$ on $M$ via the Weingarten structure equation:
\begin{equation} \label{eq:Weingarten}
d\xi(V) = dx(S^{\xi}(V)) + \theta^{\xi}(V) \xi ~,~ V \in T M. 
\end{equation}

\subsection{Conormalization and structure equations}

The dual space to $E$ is denoted by $E^*$, and $\scalar{\cdot,\cdot} : E^* \times E \rightarrow \R$ denotes the corresponding pairing. $E^*$ is equipped with the same standard flat connection $\bar D$ and the dual volume form $\Det^*$, uniquely defined by requiring that:
\[
\Det^*(w^1, \ldots, w^n)  \Det(v_1,\ldots,v_n) = \det((\scalar{w^i,v_j})_{ij}) \;\;\; \forall w^i \in E^* , v_j \in E ,
\]
where $\det$ is the usual $n$ by $n$ determinant. 

A conormal field $\xi^* : M \rightarrow E^*$ is a smooth vector-field so that $\scalar{\xi^* , dx} = 0$. We will always normalize $\xi^*$ so that in addition $\scalar{\xi^* , \xi} = 1$; since $\text{rank}(dx,\xi) = n$, we see that $\xi$ determines $\xi^*$ uniquely. 

Observe that:
\[
g^{\xi}(u,v) = \scalar{d\xi^*(u),dx(v)} \;\;\; \forall u,v \in T_p M ;
\]
in particular, the right-hand side is symmetric in $u,v$. Indeed, using that $\scalar{\xi^* , dx} = 0$ twice and (\ref{eq:Gauss}), we have for $U \in TM, V \in \Gamma^1(TM)$:
\begin{align*}
& \scalar{d\xi^*(U),dx(V)} = U(\scalar{\xi^*, dx(V)}) - \scalar{\xi^*, \bar D_U \; dx(V)} \\
& = - \scalar{ \xi^* , dx(\nabla^{\xi}_U V) - g^{\xi}(U,V) \xi}  = \scalar{\xi^*,\xi} g^{\xi}(U,V) . 
\end{align*}

The conormal field $\xi^*$ induces a volume form $\nu^*_{\xi}$ on $M$:
\[
\nu^*_{\xi}(e_1,\ldots,e_{n-1}) := \Det^*(d\xi^*(e_1),\ldots,d\xi^*(e_{n-1}) , \xi^*) ~,~ e_i \in T_p M . 
\]
Since we assume that $x$ is strongly convex, it always holds that $\xi^* : M \rightarrow E^*$ is an immersion, and that $\xi^*$ is transversal to $\xi^*(M)$; in particular, $\nu^*_{\xi}$ is non-trivial.

Repeating the same construction as before, $\xi^*$  induces a torsion-free affine connection $\nabla^* = (\nabla^\xi)^*$ and symmetric $(0,2)$ tensor $\hat S = \hat S^{\xi}$ called the Weingarten form, via the Gauss structure equation:
\[
\bar D_U \; d \xi^*(V) = d\xi^* ((\nabla^\xi)^*_U V) - \hat S^{\xi}(U,V) \xi^* ~,~ U \in T M , V \in \Gamma^1(TM) .
\]
The Weingarten form and shape operator are 
related by:
\[
 \hat S (U,V) = g (S (U) , V) .
 \]

\subsection{Affine and equiaffine invariance} \label{subsec:ADG-AI}

$(M,\xi,\xi^*)$ is called a normalization of the hypersurface $x : M \rightarrow E$.

Let $\alpha : E \rightarrow E$ be a regular affine transformation given by $\alpha z = A z + b$. Then the hypersurface $x$ with normalization $(\xi,\xi^*)$ and the hypersufrace $\alpha x$ with normalization $(A \xi , A^{-*} \xi^*)$ induce the following exact same structures on $M$: $\nabla$, $\nabla^*$, $g$, $\theta$, $S$ and $\hat S$. We will say that these structures are affine-invariant. 

Note that the volume forms $\nu$ and $\nu^*$ are invariant under the above transformations only when $\det A = 1$, i.e. when $\alpha$ belongs to the unimodular (or equiaffine) group -- we will say in this case that they are equiaffine-invariant.

\subsection{Relative Normalization}

The transversal normal $\xi$ is called a \emph{relative-normal}, and $(M,\xi)$ is called a \emph{relative-normalization},
 if the $1$-form $\theta^{\xi}$ from the Weingarten equation (\ref{eq:Weingarten}) vanishes identically $\theta^{\xi} = 0$. The following statements are easily shown to be equivalent:
 \begin{enumerate}
 \item $\xi$ is a relative-normal: 
 \begin{equation} \label{eq:shape-operator}
 d\xi(V) = dx(S^\xi(V))   ~,~ \forall V \in T M . 
 \end{equation}
 \item $\xi$ is equiaffine, meaning that:
 \[
 \nabla^{\xi} \nu_{\xi} = 0.
 \]
 Note that in general one always has $\nabla^{\xi} \nu_{\xi} = \theta^{\xi} \nu_{\xi}$. We will not use the term equiaffine in this context, since it may be confused with Blaschke's notion of affine normal, which is a particular choice of relative-normalization described below. 
 \item The cubic form $A := -\frac{1}{2} \nabla^{\xi} g^{\xi}$ is a totally symmetric $(0,3)$ tensor. Since $g^{\xi}$ is already symmetric, it is enough to verify the symmetry with respect to the first two variables:
 \begin{equation} \label{eq:cubic-symmetric}
 (\nabla^{\xi}_X g^{\xi})(Y,Z) = (\nabla^{\xi}_Y g^{\xi})(X,Z) \;\;\; \forall X,Y,Z \in T_p M .
 \end{equation}
 A torsion-free affine connection $\nabla$ on a Riemannian manifold $(M,g)$ satisfying the Codazzi equation (\ref{eq:cubic-symmetric}) 
  is called a \emph{statistical connection} for $g$, and $(g,\nabla)$ is called a \emph{statistical structure} on $M$. This nomenclature is derived from the influential work of Amari (see e.g. \cite{Amari-StatisticalInference,Amari-ParametricFamilies}) regarding applications of such structures in Statistics, but is otherwise highly misleading, and so we will mostly avoid using it here. 
 \end{enumerate}
 
For a strongly convex hypersurface there are inifinitely many different relative normalizations. As for the conormal $\xi^*$, it turns out that we always have:
\[
(\nabla^{\xi})^* \nu^*_{\xi} = 0,
\]
i.e. the conormal always gives rise to a relative (or equiaffine) conormalization $(M,\xi^*)$.

\medskip

Recall that $\scalar{\xi^*, dx}=0$. For a relative-normalization, we also have the important property:
\[
\scalar{d\xi^*, \xi} = 0 .
\]
Indeed, this follows by differentiating $\scalar{\xi^*,\xi} = 1$ and using that $d\xi = dx \circ S^{\xi}$. 

\medskip

\textbf{From here on we assume that $(M,\xi,\xi^*)$ is a relative normalization of the hypersurface $x$.}

\subsection{Conjugation via the metric}

The connections $\nabla = \nabla^{\xi}$ and $\nabla^* = (\nabla^{\xi})^*$ are conjugate connections with respect to $g = g^{\xi}$, i.e. they satisfy:
\[
U g(V_1,V_2) = g(\nabla_U V_1 , V_2) + g(V_1 , \nabla^*_U V_2) \;\;\; \forall U \in TM \;\; \forall V_1,V_2 \in \Gamma^1(T M) . 
\]
In a local frame, it is straightforward to check that this is equivalent to:
\begin{equation} \label{eq:conjugation}
\nabla^*_i V^j = g^{j a} \nabla_i (g_{a b} V^b)  ~,~ \nabla^*_i \omega_j = g_{j a} \nabla_i (g^{a b} \omega_b) . 
\end{equation}
Denoting by $\nabla^g$ the Levi-Civita connection associated to the metric $g$, namely the unique torsion-free affine connection which is metric ($\nabla^g g = 0$), it easily follows that:
\[
\nabla^g = \frac{1}{2}( \nabla + \nabla^* ). 
\]

In addition, the volume forms $\nu = \nu_\xi$ and $\nu^* = \nu^*_{\xi}$ are conjugate with respect to the Riemannian volume form $\nu_{g}$:
\begin{equation} \label{eq:conjugate-measures}
\nu(e_1,\ldots,e_{n-1}) \nu^*(e_1,\ldots,e_{n-1}) = \det ( (g^{\xi}(e_i, e_j))_{ij} ) = \nu^2_{g}(e_1,\ldots,e_{n-1}). 
\end{equation}
Here and elsewhere, $\nu_g$ denotes the Riemannian volume form associated to the metric $g$. 

\subsection{Differential calculus} \label{subsec:ADG-calculus}

Recall that the divergence of a vector field $X$ on $M$ relative to an affine connection $\nabla$ is defined as:
\[
\div^{\nabla} X := \tr \{ Y \mapsto \nabla_Y X \} = \nabla_i X^i ,
\]
and that the Hessian of a function $f \in C^2(M)$ is defined as:
\[
\Hess^{\nabla} f (X,Y) := \nabla^2_{X,Y} f = (\nabla_X df) (Y) =  X(Y(f)) - (\nabla_X Y)(f) . 
\]
The Hessian $\Hess^{\nabla} f$ is a $(0,2)$ tensor, which is in addition symmetric if the connection $\nabla$ is torsion-free. By definition $\nabla_X f := X(f) = df(X)$. 

Assume that a volume-form $\nu$ satisfies $\nabla \nu = 0$. The divergence theorem implies that $\int_M \div^{\nabla} X \; d\nu = 0$, and so we have the integration-by-parts formula:
\[
\int_M f \div^{\nabla} X  \; d\nu = - \int_M \nabla_X f \; d\nu \;\;\; \forall X \in \Gamma^1(T M) \;\; \forall f \in C^1(M) . 
\]

\medskip

While an affine connection is the only structure needed to define the above differential operators, this is not the case whenever a trace over two simultaneouesly covariant or contravariant coordinates is required; in particular, there is no intrinsic definition of the Laplacian of $f \in C^2(M)$ as the trace of its Hessian. To make sense of this, one needs an extra metric structure $g$ on $M$. In that case, we denote by $\grad_g f \in \Gamma^1(TM)$ the unique vector-field satisfying:
\[
g(\grad_g f, X) = X(f) \;\;\; \forall X \in TM ,
\]
and define:
\[
\Delta^{\nabla,g} f := \div^{\nabla} \grad_g f . 
\]
In particular, we have the following integration-by-parts formula for all $f,h \in C^2(M)$:
\[
\int_M (\Delta^{\nabla,g} f) h  \; d\nu = - \int_M (\grad_g f)(h) d\nu = - \int_M g(\grad_g f,\grad_g h) d\nu = \int_M f (\Delta^{\nabla,g} h)  d\nu .
\]
In out setting, all of the above applies to both pairs $(\nabla^{\xi}, \nu_\xi)$ and $((\nabla^{\xi})^* , \nu_\xi^*)$.

\medskip

In a local frame:
\[
(\grad_g f)^i = g^{ij} f_j ~,~ \Delta^{\nabla,g} f = \nabla_i (g^{ij} f_j) . 
\]
Using (\ref{eq:conjugation}), we see that:
\[
\Delta^{\nabla,g} f = \nabla_i (g^{ij} f_j) = g^{ij} \nabla^*_i f_j = \tr_g \; \Hess^{\nabla^*} f ,
\]
where $\nabla^*$ is the $g$-conjugate connection to $\nabla$. In our setting, this applies to our $g^{\xi}$-conjugate connection pair $\nabla^{\xi}$ and $(\nabla^{\xi})^*$.

\subsection{Curvature}

Recall that the curvature $\RR$ of an affine connection $\nabla$ on $M$ is defined as the following $(1,3)$ tensor:
\[
\RR(X,Y)Z = \nabla_X \nabla_Y Z - \nabla_Y \nabla_X Z - \nabla_{[X,Y]} Z ,
\]
and that the Ricci $(0,2)$ tensor $\Ric$ is defined by tracing:
\[
\Ric(Y,Z) = \tr \{ X \mapsto \RR(X,Y) Z \} . 
\]
We denote the curvature and Ricci tensors of $\nabla = \nabla^{\xi}$ and $\nabla^* = (\nabla^{\xi})^*$ by $\RR, \Ric$ and $\RR^*, \Ric^*$, respectively. 
We subsequently omit the superscripts $\xi$ and $\xi^*$ in our various differential structures. 

The Gauss equations for $\RR$ and $\RR^*$ are:
\begin{align*}
\RR(X,Y) Z &= g(Y,Z) S X  - g(X,Z) S Y , \\
\RR^*(X,Y) Z & = \hat S(Y, Z) X - \hat S (X,Z) Y . 
\end{align*}
In particular, $\nabla^*$ is always projectively flat, and we have:
\[
g(\RR(X,Y) Z , W) = -g(\RR^*(X,Y) W , Z) . 
\]
Note that the usual symmetries of the Riemann curvature tensor need not hold for the curvature tensor $\RR$ of a general torsion-free affine connection, and that in general the Ricci tensor will not be symmetric; however, in our context, $\Ric, \Ric^*$ turn out to always be symmetric:
\[
\Ric(Y,Z) = \tr S \; g(Y,Z) - \hat S(Y,Z) ~,~ \Ric^*(Y,Z) = (n-2) \hat S(Y,Z) . 
\]

It easily follows that $\RR = \RR^*$ iff $\Ric = \Ric^*$ iff $S = \lambda \Id$, i.e. the hypersurface $x : M \rightarrow E$ is a relative affine sphere; a particular instance of this is when $x$ is a Blaschke (equi)affine sphere -- see below. 
In the context of statistical structures, when $\RR = \RR^*$ then $(g,\nabla,\nabla^*)$ is called a conjugate symmetric statistical structure. 

\subsection{Structure equations in a local frame}

Recall that in a local frame $\{e_1,\ldots,e_{n-1}\}$ on $M$, we use $x_i$ to denote the $E$-valued $1$-form $(dx)_i$, and similarly for the $E$- and $E^*$-valued $\xi_i$ and $\xi^*_i$. Let us stress that $x_i$ should not be confused with the $i$-th coordinate of $x$ in $E$ (especially since $E$ is not $(n-1)$-dimensional and since no coordinate system has been introduced on $E$). We summarize the (vector-valued) structure equations for a relative normalization in a local frame \cite[p. 33]{LSZH-Book}:
\begin{align*}
\Hess^{\nabla^{\xi}}_{ij} x  = \nabla^{\xi}_j x_i = - g^{\xi}_{ij} \xi \;\;\; & \text{Gauss equation for $x$} , \\
\Hess^{(\nabla^{\xi})^*}_{ij} \xi^* = (\nabla^{\xi})^*_j \xi^*_i = - \hat S^{\xi}_{ij} \xi^* \;\;\; & \text{Gauss equation for $\xi^*$} , \\
\xi_i  = (S^\xi)_i^k\; x_k \;\;\;&  \text{Weingarten equation for $\xi$} .
\end{align*}
We also have:
\begin{align*}
g^{\xi}_{ij} & = \scalar{\xi^*_j , x_i} =  \scalar{ d\xi^*(e_j) , dx(e_i)} ,\\
\hat S^{\xi}_{ij} & = g^{\xi}_{ik} (S^{\xi})^k_j .
\end{align*}

\subsection{Blaschke's equiaffine normalization}

The fundamental theorem of affine differential geometry states that there exists a unique relative-normal $\xi_0$ so that the Riemannian volume measure $\nu_{g^{\xi_0}}$ associated to the metric $g^{\xi_0}$ coincides with the induced volume measure $\nu_{\xi_0}$. Equivalently (up to orientation), this is the same as requiring that $|\nu_{\xi_0}| = |\nu^*_{\xi_0}|$. 
This unique $\xi_0$ is called the Blaschke affine normal, $g^{\xi_0}$ is called the Blaschke metric (or second fundamental form), and $(M,\xi_0)$ or $(M,g^{\xi_0},\nabla^{\xi_0})$ are called a Blaschke hypersurface. Clearly, the Blaschke normalization is equiaffine invariant, and is sometimes called the equiaffine normalization. There are various natural geometric and analytic ways to explicitly define the Blaschke affine normal 
\cite{LSZH-Book, NomizuSasaki-Book, OlikerSimon-Polarity, Klartag-AffineHemispheres}, and various related problems such as characterizing all Blaschke affine spheres have been an extremely active avenue of research \cite{Calabi-CompleteAffineHyperspheres,Calabi-Bourbaki,ChengYau-CompleteAffineHypersurfacesI}. However, in this work, we focus on a different natural relative-normalization.

\subsection{Centro-affine normalization} \label{subsec:ADG-CA}

Recall that $x : M \rightarrow E$ is assumed strongly convex, and assume further that the origin of $E$ lies on the inside of $x(M)$ (i.e. in the interior of the bounded component of $E \setminus x(M)$). 
The centro-affine normalization of the hypersurface $x$ is given by:
\[
\xi := x ,
\]
which is a transversal normal field thanks to our assumptions. The induced centro-affine metric is denoted by $g = g^x$. 
Clearly, this normalization is centro-affine invariant, namely, invariant under regular linear transformations (but not affine ones). Furthermore, inspecting (\ref{eq:shape-operator}), it is clearly a relative normalization with identity shape operator, and consequently:
\[
S = \Id ~,~ \hat S = g . 
\]
This means that any strongly convex hypersurface $x : M \rightarrow E$ is always a centro-affine sphere. In particular, the centro-affine sectional and Ricci curvatures are always constant:
\begin{align*}
\RR(X,Y) Z = \RR^*(X,Y) Z & = g(Y,Z) X  - g(X,Z) Y  \\
\Ric = \Ric^* & = (n-2) g . 
\end{align*}

If $\xi^*$ is the associated conormal field, we define:
\[
x^* := \xi^* . 
\]
Note that:
\begin{equation} \label{eq:duality}
\scalar{x^* , x} = 1 ~,~ \scalar{x^*, dx} = 0 ~,~ \scalar{dx^* , x} = 0 . 
\end{equation}
The symmetry between $x$ and $x^*$ immediately implies that the dual and primal centro-affine normalizations are related by conjugation
(cf. \cite{Laugwitz-DifferentialGeometryBook}, \cite[Prop. 7.2.1]{OlikerSimon-Polarity}). By this we mean the following: denote the metric and pairs of conjugate connections and volume forms on $M$ for the hypersurface $x : M \rightarrow E$ equipped with the normalization $(M, x, x^*)$ by
$g^x$, $\nabla^x$, $(\nabla^x)^*$, $\nu_x$ and $\nu^*_x$, and for the hypersurface $x^* : M \rightarrow E^*$ equipped with the normalization $(M,x^*,x)$ by $g^{x^*}$, $\nabla^{x^*}$, $(\nabla^{x^*})^*$, $\nu_{x^*}$ and $\nu^*_{x^*}$, respectively. Then:
\begin{equation} \label{eq:CA-dual-conjugate}
g^x = g^{x^*} ~,~ \nabla^x = (\nabla^{x^*})^* ~,~ (\nabla^x)^* = \nabla^{x^*} ~,~ \nu_x = \nu^*_{x^*} ~,~ \nu^*_x = \nu_{x^*} . 
\end{equation}
Note that the induced metric remains invariant under duality. 

The structure equations for the centro-affine normalization in a local frame $\{e_1,\ldots,e_{n-1}\}$ are:
\begin{equation} \label{eq:CA-structure}
\begin{array}{rl}
\Hess^{\nabla^x}_{ij} x = \nabla^x_j x_i = - g^{x}_{ij} x \;\;\; & \text{Gauss equation for $x$}  \\
\Hess^{\nabla^{x^*}}_{ij} x^* = \nabla^{x^*}_j x^*_i  = - g^{x^*}_{ij} x^* \;\;\; & \text{Gauss equation for $x^*$} ,
\end{array}
\end{equation}
where:
\[
g^{x}_{ij} = g^{x^*}_{ij} =\scalar{x^*_j, x_i} = \scalar{dx^*(e_j), dx(e_i) } . 
\]

\section{Centro-affine differential geometry of convex bodies} \label{sec:CA}

Fix a smooth convex body $K$ with strictly positive curvature in $E$ having the origin in its interior, $K \in \K^\infty_+$. 
Recall that we use a fixed isomorphism $i$ to identify between $E$ and $E^*$, and use $\scalar{\cdot,\cdot}$ to denote both the natural pairing between $E^*$ and $E$ and the induced Euclidean scalar product on $E$ and $E^*$ via $i$. Having fixed $i$ and thus the Euclidean structures on $E$ and $E^*$, we uniquely select the determinant form $\Det$ on $E$ so that $\Det(v_1,\ldots,v_n) = \sqrt{\det(\scalar{v_i,v_j})}$ and thus $\Det^*(w^1,\ldots,w^n) = \sqrt{\det(\scalar{w^i,w^j})}$ for all $v_i \in E$ and $w^j \in E^*$. 

Recall that $K^* \subset E^*$ denotes the dual body to $K$, and that $K^{\circ} \subset E$ is the corresponding polar body given via $i (K^{\circ}) = K^*$. Also recall that $\bar D$ denotes the standard flat covariant derivative on $E$ and $E^*$.

\medskip

We equip the strongly convex hypersurface $\partial K$ with the centro-affine normalization.

\subsection{Parametrizations}

It will be instructive to consider a parametrization $x^M_K : M \rightarrow \partial K \subset E$ for several natural manifolds $M$:
\[
M \in \M_K := \{ \S^*, \partial K , \S , \partial K^{*} \} . 
\]
We denote the induced metric and pairs of conjugate connections and volume forms on $M$ by:
\begin{equation} \label{eq:structures}
g^M_K , \nabla^M_K , (\nabla^M_K)^* , \nu^M_K , (\nu^M_K)^* . 
\end{equation}
We will not distinguish between the above volume forms $\nu$ and the corresponding volume measures $|\nu|$, using $\nu$ to denote both variants. 
We will sometimes write $O^{M \; *}_K$ instead of $(O^M_K)^*$ for $O \in \{g, \nabla, \nu \}$, especially when concatenating with another operation. 

\medskip

The above parametrizations of $\partial K$ are naturally obtained by appropriately composing the Gauss maps on $\partial K,\partial K^*$ and their inverses with the radial spherical projection $v \mapsto v / |v|$ in $E$ and $E^*$. Formally, for all $M_i,M_j \in \M_K$, we specify diffeomorphisms $T^{M_i \rightarrow M_j}_{K} : M_i \rightarrow M_j$ so that 
\begin{equation} \label{eq:transitivity}
T^{M_1 \rightarrow M_3}_{K} = T^{M_2 \rightarrow M_3}_{K}  \circ T^{M_1 \rightarrow M_2}_{K}  . 
\end{equation}
They are obtained by composing the following diffeomorphisms:
\[
\begin{array}{c c l c l c l c l}
\S^* & \rightarrow & \partial K & \rightarrow & \S & \rightarrow & \partial K^* & \rightarrow & \S^* \\
\inup & & \inup & & \inup & & \inup & & \inup \\
\theta^* & \mapsto & x = \bar D h_K(\theta^*) & \mapsto & \theta = \frac{x}{|x|} & \mapsto & x^* = \bar D \norm{\theta}_K & \mapsto & \theta^* = \frac{x^*}{|x^*|} .
\end{array}
\]
It will be useful to also explicitly specify the inverse diffeomorphisms:
\[
\begin{array}{c c l c l c l c l}
\S^* & \rightarrow & \partial K^* & \rightarrow & \S & \rightarrow & \partial K & \rightarrow & \S^* \\
\inup & & \inup & & \inup & & \inup & & \inup \\
\theta^* & \mapsto & x^* = \frac{\theta^*}{h_K(\theta^*)} & \mapsto & \theta = \frac{\bar D h_K(x^*)}{|\bar D h_K(x^*)|}  & \mapsto & x = \frac{\theta}{\norm{\theta}_K} & \mapsto & \theta^* = \frac{\bar D \norm{x}_K}{|\bar D \norm{x}_K|} .
\end{array}
\]
It is well-known and straightforward to check that the above cycles close up, so that indeed $T_K^{M \rightarrow M} = \Id$ for all $M$ and (\ref{eq:transitivity}) holds. Note that $T_K^{\partial K \rightarrow \S^*}$ and $T_K^{\partial K^* \rightarrow \S}$ are the Gauss maps for $\partial K$ and $\partial K^*$, respectively. 

It should already be clear (and will be verified below) that our parametrizations are understood in the following natural sense: $x \in \partial K$ is both the hypersurface and the centro-affine normal, $\theta^* \in \S^*$ is the unit outer-normal, $x^* \in \partial K^*$ is the centro-affine conormal (the dual point to $x$ on $\partial K^*$), and $\theta \in \S$ is the unit outer-normal to $\partial K^*$ at $x^*$, pointing in the direction of $x$ and thereby closing the cycle.

Setting:
\[
 x_K^M := T_K^{M \rightarrow \partial K},
 \]
 our definitions ensure that the following diagram commutes:
\[
\begin{tikzcd}[row sep = 40pt , column sep = 40pt]
 M_1  \arrow{d}[left]{T^{M_1 \rightarrow M_2}_{K}} \arrow{dr}{x^{M_1}_K} &  \\
 M_2  \arrow{r}{x^{M_2}_K} & \partial K \subset E .
\end{tikzcd}
\]
Consequently, $T^{M_1 \rightarrow M_2}_{K}$ induces an isomorphism between the objects $O^{M_i}_K$ defined on $M_i \in \M_K$ for each $O \in \{g, \nabla, \nabla^*, \nu, \nu^*\}$, and so for each of these, it is enough to calculate $O^{M_1}_K$ on a single convenient parametrization $M_1$, thereby obtaining $O^{M_2}_K$ for all other $M_2 \in \M_K$ by pushing-forward:
\[
O^{M_2}_K = (T^{M_1 \rightarrow M_2}_{K})_* O^{M_1}_K . 
\]
Our main object of interest will be $O_K$, regardless of the parametrization $M$, and so we will often omit the superscript $M$.

\subsection{Duality} \label{subsec:CA-duality}

Our parametrizations are compatible with the natural duality operation $\D_K$. For every $M \in \M_K$, denote by $M^* \in \M_K$ its obvious dual counterpart (e.g. $(\partial K)^* = \partial K^*$ and $(\S^*)^* = \S$). Note that for $M \in \{\S , \S^*\}$, $M^* = i(M)$ but not in general. By abuse of notation, we use the same notation $\D_K$ (omitting the reference to $M$) to denote the diffeomorphism:
\[
\D_K := T_K^{M \rightarrow M^*} : M \rightarrow M^*  ~,~ M^* = \D_K( M ) . 
\]
It is worthwhile to note that:
\[
\D_K  : \partial K \ni x \mapsto x^* \in \partial K^{*} ~,~ x^* = \D_K x = \bar D \norm{x}_{K} .
\]
To quickly see this, note that $\bar D \norm{x}_K$ is clearly perpendicular to $\partial K$, and that
\begin{equation} \label{eq:x*x=1}
\scalar{x^* , x} = \scalar{\bar D \norm{x}_{K} , x} = \norm{x}_K = 1 \;\;\;  \forall x \in \partial K,
\end{equation}
 by Euler's identity for the $1$-homogeneous function $\norm{x}_K$. Hence:
\[
h_K(\bar D \norm{x}_K) = \scalar{\bar D  \norm{x}_K , x} = 1 \;\;\; \forall x \in \partial K ,
\]
and we confirm that $\D_K$ maps $\partial K$ onto $\partial K^*$. Similarly,
\begin{align*}
\D_K : \partial K^* \ni x^* \mapsto x \in \partial K & ~,~ x = \D_K x^* = \bar D h_K(x^*) ,\\
\D_K : \S^* \ni \theta^* \mapsto \theta \in \S & ~,~\theta = \D_K \theta^* = \frac{\bar D h_K}{|\bar D h_K|}(\theta^*) . 
\end{align*}

Next, we tautologically extend our construction to strongly convex bodies in $E^*$ (and not just in $E$). Recall that $K^{\circ} \subset E$ and $K^* \subset E^*$ are related by $i(K^{\circ}) = K^*$, and hence $i(\partial K) = \partial (K^{\circ})^* = (\partial K^{\circ})^*$. We consequently define:
\[
\M_{K^*} := \M^*_K = i(\M_{K^{\circ}}) ,
\]
and set:
\[
T_{K^*}^{M_1 \rightarrow M_2} = i \circ T_{K^{\circ}}^{i(M_1) \rightarrow i(M_2)} \circ i \;\;\; \forall M_1,M_2 \in \M_{K^*} . 
\]
In particular:
\[
x_{K^*}^M = i \circ x_{K^{\circ}}^{i(M)} \circ i . 
\]
As the identification between $E$ and $E^*$ via $i$ is tautological and does not change any differential structure, we have for every $M \in \M_{K}$:
\[
g^{i(M)}_{K^{\circ}} = i_* g^{M}_{K^*} ~,~ \nabla^{i(M)}_{K^{\circ}} = i_* \nabla^{M}_{K^*} ~,~ (\nabla^{i(M)}_{K^{\circ}})^* = i_* (\nabla^{M}_{K^*})^* ~,~ 
\nu^{i(M)}_{K^{\circ}} = i_* \nu^{M}_{K^*} ~,~ (\nu^{i(M)}_{K^{\circ}})^* = i_* (\nu^M_{K^*})^* ,
\]
where $i_*$ denote the push-forward via $i$. In addition, we see that our construction is compatible with the duality operator $\D_K$, in the sense that the following diagram commutes:
\begin{equation} \label{eq:duality-commutes}
\begin{tikzcd}[row sep = 50pt , column sep = 50pt]
 M \arrow{r}{x_K^M} \arrow{rd}[outer sep=-2.0,pos=0.3]{x_{K^*}^M} \arrow{d}[left]{\D_K} & \partial K  \arrow{d}{\D_K}  \\
 M^* \arrow{r}{x_{K^*}^{M^*}} \arrow{ru}[outer sep=-4.0,pos=0.3]{x_K^{M^*}} & \partial K^*
\end{tikzcd} .
\end{equation}

As discussed in Subsection \ref{subsec:ADG-CA}, the duality operation is important in view of its conjugation role for the centro-affine normalization. Observe that the conormal $(x^M_K)^* : M \rightarrow E^*$ corresponding to the centro-affine normal $x^M_K : M \rightarrow E$ of the hypersurface $\partial K$ is precisely given by:
\[
(x^M_K)^* := \D_K \circ x^M_K = x^M_{K^*}. 
\]
This follows immediately by verifying the validity of the defining equations (\ref{eq:duality}); indeed, as already explained in (\ref{eq:x*x=1}), $x^* = \D_K x$ is perpendicular to $\partial K$ and satisfies $\scalar{x^*,x} = 1$. 

Consequently, it follows from (\ref{eq:CA-dual-conjugate}) that the induced structures on $M$ via $x^M_K : M \rightarrow \partial K$ and via $x^M_{K^*} : M \rightarrow \partial K^{*}$ are conjugate to each other:
\[
g^M_K = g^M_{K^{*}} , \nabla^M_K = (\nabla^M_{K^{*}})^* , (\nabla^M_K)^* =  \nabla^M_{K^{*}} , \nu^M_K =  (\nu^M_{K^{*}})^* , (\nu^M_K)^* = \nu^M_{K^{*}} .
\] 
Note that the induced centro-affine Riemannian metric $g^M_K$ is self-dual, as is well-known \cite{Laugwitz-DifferentialGeometryBook,OlikerSimon-Polarity,Hug-AffineSurfaceAreaOfPolar}. 
We emphasize several equivalent alternative forms of the above duality, which follow immediately from the commutation in (\ref{eq:duality-commutes}):
\[
\begin{array}{l  l  l  l}
 g^{i(M^*)}_{K^\circ} & =  i_* g^{M^*}_{K^*} &=  i_* g^{M^*}_{K} &= (i \circ \D_K)_* g^M_{K} , \\
 \nabla^{i(M^*)}_{K^\circ}  & =  i_* \nabla^{M^*}_{K^*} & =  i_* (\nabla^{M^*}_{K})^* & =  (i \circ \D_K)_* (\nabla^M_K)^* ,\\
 (\nabla^{i(M^*)}_{K^\circ})^* & =  i_* (\nabla^{M^*}_{K^*})^* & =  i_* \nabla^{M^*}_{K} & =  (i \circ \D_K)_*  \nabla^M_K ,\\
 \nu^{i(M^*)}_{K^{\circ}} & = i_* \nu^{M^*}_{K^*} & =  i_*(\nu^{M^*}_K)^* & = (i \circ \D_K)_* (\nu^M_K)^* , \\
 (\nu^{i(M^*)}_{K^{\circ}})^*  & =  i_* (\nu^{M^*}_{K^*})^* & =  i_*\nu^{M^*}_K & =  (i \circ \D_K)_* \nu^M_K . 
\end{array}
\]
In other words, up to conjugation, $i : M^* \rightarrow i(M^*)$ pushes forward  $O^{M^*}_K$ onto $O^{i(M^*)}_{K^{\circ}}$, and 
$i \circ \D_K : M \rightarrow i(M^*)$ pushes forward $O^M_K$ onto $O^{i(M^*)}_{K^{\circ}}$, for $O \in \{g,\nabla,\nabla^*,\nu,\nu^*\}$. The latter is particularly useful when $M \in \{ \S , \S^* \}$ since then $i(M^*) = M$. 

It is a good exercise to verify directly that, e.g., $i \circ \D_K$ pushes forward $g^{\S^*}_K$ onto $g^{\S^*}_{K^{\circ}}$.

\subsection{Centro-affine invariance} \label{subsec:CA-invariance}

Let $A \in GL(E)$. The centro-affine invariance of the centro-affine normalization immediately implies that $A$ pushes forward $O^{\partial K}_K$ onto $O^{\partial A(K)}_{A(K)}$ for all $O \in \{g, \nabla, \nabla^*\}$, as well as $O \in \{\nu,\nu^*\}$ whenever $A \in SL(E)$. 
As $A(K)^* = A^{-*}(K)$, it is also easy to see that $A^{-*}$ pushes forward $O^{\partial K^*}_K$ onto $O^{\partial A(K)^*}_{A(K)}$ in the same manner as above. 

The situation with our other parametrizations $M \in \{\S,\S^*\}$ requires a bit more thought. While these parametrizations are very natural from a geometric perspective, they are not centro-affine co- or contra-variant, e.g. it is not true that $x^{\S}_{A(K)} = A \circ x^{\S}_K$. Consequently, the centro-affine invariance from Subsection \ref{subsec:ADG-AI} only holds after a suitable change-of-variables: 

\begin{prop} \label{prop:CA-Invariance}
Given $A \in GL(E)$, denote:
\begin{align*}
A^{(0)} : \S  \rightarrow \S ~,~ & A^{(0)}(\theta) = \frac{A \theta}{|A \theta|} \\
 (A^{-*})^{(0)} : \S^* \rightarrow \S^* ~,~&  (A^{-*})^{(0)}(\theta^*) = \frac{A^{-*} \theta^*}{|A^{-*} \theta^*|} .
\end{align*}
Then:
\begin{equation} \label{eq:cov-formula}
x^{\S}_{A(K)} = A \circ x_K^{\S} \circ (A^{(0)})^{-1} ~,~ x^{\S^*}_{A(K)} = A^* \circ x_K^{\S^*} \circ ((A^{-*})^{(0)})^{-1} .
\end{equation}
Consequently:
\begin{enumerate}
\item \label{it:pf1}
$A^{(0)}$ pushes forward $O^{\S}_K$ onto $O^{\S}_{A(K)}$ for all $O \in \{g, \nabla, \nabla^*\}$, as well as $O \in \{\nu,\nu^*\}$ whenever $A \in SL(E)$. 
\item \label{it:pf2}
$ (A^{-*})^{(0)}$ pushes forward $O^{\S^*}_K$ onto $O^{\S^*}_{A(K)}$ for all $O \in \{g, \nabla, \nabla^*\}$, as well as $O \in \{\nu,\nu^*\}$ whenever $A \in SL(E)$. 
\end{enumerate}
\end{prop}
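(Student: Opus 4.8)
The plan is to deduce the pushforward assertions (\ref{it:pf1}) and (\ref{it:pf2}) from the affine-invariance of the centro-affine structures recorded in Subsection \ref{subsec:ADG-AI}, using the explicit change-of-variables identities (\ref{eq:cov-formula}) as the bridge. Accordingly, the argument splits into two parts.

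First I would verify (\ref{eq:cov-formula}) by a direct computation from the explicit descriptions of the parametrizations given above: the radial parametrization $x^{\S}_K(\theta) = \theta/\norm{\theta}_K$ and the Gauss parametrization $x^{\S^*}_K(\theta^*) = \bar D h_K(\theta^*)$. One uses that for $A \in GL(E)$ one has $\norm{\cdot}_{A(K)} = \norm{A^{-1}\cdot}_K$ on $E$ and $h_{A(K)} = h_K \circ A^*$ on $E^*$, the latter giving $\bar D h_{A(K)} = A \circ (\bar D h_K) \circ A^*$ as a map $E^* \to E$ by the chain rule. Substituting these, and invoking the $1$-homogeneity of $\norm{\cdot}_K$ and the $0$-homogeneity of $\bar D h_K$ — so that precomposition with the radial normalizations $(A^{(0)})^{-1}\colon\theta\mapsto A^{-1}\theta/|A^{-1}\theta|$ and $((A^{-*})^{(0)})^{-1}\colon\theta^*\mapsto A^*\theta^*/|A^*\theta^*|$ leaves the value unchanged — yields (\ref{eq:cov-formula}) after routine bookkeeping.

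Next I would pass from (\ref{eq:cov-formula}) to the pushforward statements. The key point is that the restriction to $\partial K$ of the \emph{linear} map $A$ carries the centro-affine normalization of $\partial K$ to that of $\partial A(K)$: this uses linearity in an essential way, the centro-affine normal field being $x\mapsto x$, and is precisely the centro-affine invariance discussed in Subsections \ref{subsec:ADG-AI} and \ref{subsec:CA-invariance} (apply Subsection \ref{subsec:ADG-AI} with $\xi = x$). Consequently, for each $M$, the two parametrizations $x^{M}_K$ and $A\circ x^{M}_K$ of $\partial K$ and $\partial A(K)$ induce on $M$ the exact same objects among $g,\nabla,\nabla^*$, and the same $\nu,\nu^*$ as well whenever $A\in SL(E)$ (the volume forms being merely equiaffine-invariant, which is why the determinant-one restriction appears). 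On the other hand, rewriting (\ref{eq:cov-formula}) exhibits $A\circ x^{\S}_K$ as the reparametrization $x^{\S}_{A(K)}\circ A^{(0)}$ and $A\circ x^{\S^*}_K$ as $x^{\S^*}_{A(K)}\circ(A^{-*})^{(0)}$; since reparametrizing a strongly convex hypersurface simply pulls back all of its induced centro-affine structures, the object induced on $\S$ (resp.\ $\S^*$) by $A\circ x^{\S}_K$ (resp.\ $A\circ x^{\S^*}_K$) is at once $O^{\S}_K$ (resp.\ $O^{\S^*}_K$) and the pullback of $O^{\S}_{A(K)}$ (resp.\ $O^{\S^*}_{A(K)}$) along $A^{(0)}$ (resp.\ $(A^{-*})^{(0)}$). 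Equating the two descriptions gives $(A^{(0)})_* O^{\S}_K = O^{\S}_{A(K)}$ and $((A^{-*})^{(0)})_* O^{\S^*}_K = O^{\S^*}_{A(K)}$, which is the claim.

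The main obstacle is the routine but genuinely error-prone bookkeeping in the first part: correctly tracking how $h_K$, $\norm{\cdot}_K$ and the gradient $\bar D h_K$ transform under $A$, correctly composing with the normalization maps $A^{(0)}$ and $(A^{-*})^{(0)}$, and in particular getting the transpose $A^*$ to land in the right slot. Once (\ref{eq:cov-formula}) is in hand, the second part is essentially formal, amounting to the observation that $A|_{\partial K}$ intertwines centro-affine normalizations together with a direct quotation of Subsection \ref{subsec:ADG-AI} (and of the elementary fact that induced structures transform by pullback under reparametrization).
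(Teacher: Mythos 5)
Your proof is correct and follows essentially the same route as the paper: a direct computation of the reparametrization identities (\ref{eq:cov-formula}) using $\norm{\cdot}_{A(K)}=\norm{A^{-1}\cdot}_K$, $h_{A(K)}=h_K\circ A^*$ and the homogeneity of $x^{\S}_K$, $x^{\S^*}_K$, followed by combining the affine invariance of the centro-affinely normalized structures (Subsection \ref{subsec:ADG-AI}, applied with $\xi=x$) with the fact that reparametrizing a fixed normalized hypersurface transports the induced structures by pullback -- which is exactly the paper's argument via the auxiliary parametrization $x^{\S^*}_K\circ((A^{-*})^{(0)})^{-1}$, read from the other side of the identity. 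One small remark: your chain rule correctly gives $\bar D h_{A(K)} = A\circ(\bar D h_K)\circ A^*$, so your computation actually yields $x^{\S^*}_{A(K)} = A\circ x^{\S^*}_K\circ((A^{-*})^{(0)})^{-1}$, with $A$ rather than the $A^*$ printed in (\ref{eq:cov-formula}); the printed $A^*$ (which does not even map $\partial K\subset E$ to $\partial A(K)$, and appears likewise in the paper's own proof) is a typo, so your version is the correct one and nothing in the second half of the argument is affected.
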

\begin{proof}
Recall that $x_K^{\S}(\theta) = \frac{\theta}{\norm{\theta}_K}$, which is $0$-homogeneous on $E$. Hence:
\[
x_{A(K)}^{\S}(\theta) = \frac{\theta}{\norm{\theta}_{A(K)}} = \frac{\theta}{\norm{A^{-1} \theta}_K} = A \frac{A^{-1} \theta}{\norm{A^{-1} \theta}_K} = A \circ x^{\S}_K(A^{-1} \theta / |A^{-1} \theta|) ,
\]
and the first identity in (\ref{eq:cov-formula}) follows. Also recall that $x_K^{\S^*}(\theta^*) = \bar D h_K(\theta^*)$, which is $0$-homogeneous on $E^*$. Hence:
\[
x_{A(K)}^{\S^*}(\theta^*) = \bar D h_{A(K)}(\theta^*) = \bar D (h_{K}(A^* \theta^*)) = A^* \bar D h_K(A^* \theta^*) = A^* \bar D h_K(A^* \theta^* / |A^* \theta^*|) ,
\]
and the second identity in (\ref{eq:cov-formula}) follows.

As for the second part of the proposition, let us only verify (\ref{it:pf2}) (as the verification of (\ref{it:pf1}) is identical). 
Denoting $x^{\S^*}_{aux} := x_K^{\S^*} \circ ((A^{-*})^{(0)})^{-1}$, it follows by usual centro-affine invariance (as in Subsection \ref{subsec:ADG-AI}) that the two hypersurfaces $x^{\S^*}_{aux},x_{A(K)}^{\S^*} : \S^* \rightarrow E$ (equipped with the centro-affine normalization)
 induce exactly the same metric, normal and conormal connections $g,\nabla,\nabla^*$ on $\S^*$, and also the same volume measures $\nu,\nu^*$ whenever $A \in SL(E)$. It remains to note that the two hypersurfaces $x^{\S^*}_{K}, x^{\S^*}_{aux} : \S^* \rightarrow E$ are identical, up to reparametrization of $\S^*$ via $(A^{-*})^{(0)}$; consequently, whenever these two hypersurfaces are equipped with the same normalization (as is our context), their induced differential structures are isomorphic via $(A^{-*})^{(0)}$. 
\end{proof}

It is a good exercise to verify directly that, e.g., $A^{(0)}_*$ pushes forward $g^{\S^*}_K$ onto $g^{\S^*}_{A(K)}$. 

\subsection{Explicit formulas}

We now calculate the centro-affine differential structures (\ref{eq:structures}). As explained above, it is enough to perform the calculation on a convenient parametrization $M \in \M_K$. The most convenient choice for us is $M = \S^*$, but we also provide the corresponding expressions in our other parametrizations. 
Recall from Section \ref{sec:prelim} the definitions of $V^{\S^*}_K$, $V^{\partial K}_K$ and $V^{\S}_K$, that $D^2 h_K$ denotes the restriction of $\bar D^2 h_K$ onto $T \S^*$, and the discussion regarding induced Euclidean structures.

\subsubsection{$M = \S^*$}

We work on $M = \S^*$ and omit the superscript $\S^*$ (and often the subscript $K$ as well) in our expressions. 
Recall that:
\begin{align*}
x = x_K & :  \S^* \rightarrow \partial K  ~,~  x(\theta^*) = \bar D h_K(\theta^*) , \\
x^* = (x_K)^* = x_{K^*} & :  \S^* \rightarrow \partial K^* ~,~ x^*(\theta^*) = \frac{\theta^*}{h_K(\theta^*)} .
\end{align*}
We perform all calculations at a fixed $\theta^* \in \S^*$. Then:
\begin{align*}
d  x & : T_{\theta^*} \S^* \rightarrow T_x \partial K ~,~ d x(u)  = \bar D_u \bar D h_K  ,\\
d x^* & : T_{\theta^*} \S^* \rightarrow T_{x^*} \partial K^* ~,~  dx^*(v) = \frac{1}{h_K} ( v - v(\log h_K) \theta^* ) . 
\end{align*}
Since $h_K$ is $1$-homogeneous, it follows that $\bar D_{\theta^*} \bar D h_K  = 0$. Consequently:
\begin{equation} \label{eq:gK1}
g_K^{\S^*}(u,v) = \scalar{dx^*(v) , dx(u)} = \frac{\bar D^2 h_K(u,v)}{h_K} =  \frac{D^2 h_K(u,v)}{h_K} ~,~ u,v \in T_{\theta^*} \S^* . 
\end{equation}

For a Euclidean orthonormal basis $e_1,\ldots,e_{n-1}$ in $T_{\theta^*} \S^*$, we have:
\begin{align*}
\frac{d \nu_K}{d \Leb^{\S^*}} & = \nu_K(e_1,\ldots,e_{n-1}) = \Det(dx(e_1),\ldots,dx(e_n),x) \\
& = h_K \Det(dx(e_1),\ldots,dx(e_{n-1}) , \theta^*)   = h_K \det(D^2 h_K) = n \frac{dV^{\S^*}_K}{d \Leb^{\S^*}} ,
\end{align*}
where we used that $P_{(T \partial K)^{\perp}} x = \scalar{\theta^*,x} \theta^* = h_K \theta^*$. Similarly:
\begin{align*}
\frac{d \nu^*_K}{d \Leb^{\S^*}} &  = \nu^*_K(e_1,\ldots,e_{n-1}) = \Det^*(dx^*(e_1),\ldots,dx^*(e_n),x^*) \\
& = \frac{1}{h_K^n} \Det^*( e_1 - e_1(\log h_K) \theta^* , \ldots, e_{n-1} - e_{n-1}(\log h_K) \theta^* , \theta^*) =  \frac{1}{h_K^n} = n \frac{d \; i_*V^{\S}_{K^{\circ}}}{d \Leb^{\S^*}} .
\end{align*}

By the Gauss equation for $x$, we have for $U\in T \S^*, V \in \Gamma^1(T \S^*)$:
\[
\bar D_U \; dx(V) = dx(\nabla_U V) -  g_K(U,V)  x. 
\]
Consequently:
\[
\bar D^2_{U,V} \bar D h_K + \bar D_{\bar D_U V} \bar D h_K = \bar D_{\nabla_U V} \bar D h_K  -  g_K(U,V) x . 
\]
It follows that:
\[
\bar D^2 h_K (\nabla_U V - \bar D_U V, \xi^*) = \bar D^3 h_K(U,V,\xi^*) + g_K(U,V) \scalar{\xi^*,x} \;\;\; \forall \xi^* \in E^* . 
\]
Recalling that $g_K = \frac{\bar D^2 h_K}{h_K}$, $x = \bar D h_K$,   $\bar D_U V = {}^{\S^*} \nabla_U V -\II^{\S^*}(U,V) \theta^*$ and that $\bar D^2 h_K \cdot \theta^* = 0$, it follows that:
\[
g_K(\nabla_U V - {}^{\S^*} \nabla_U V, \xi^*)  = \frac{\bar D^3 h_K(U,V,\xi^*)}{h_K} + g_K(U,V) \scalar{\xi^*, \bar D( \log h_K) } \;\;\; \forall \xi^* \in T_{\theta^*} \S^* .
\]
Introducing a local frame $\{e_1,\ldots,e_{n-1}\}$ on $\S^*$,  it follows that:
\[
(\nabla_U V - {}^{\S^*} \nabla_U V)^i = g_K^{i j} \brac{\frac{\bar D^3 h_K(U,V,e_j)}{h_K} + g_K(U,V) (\log h_K)_j } . 
\]
As expected, this expression depends on third derivatives of $h_K$. 

A much more useful expression is obtained for the conjugate connection. By the Gauss equation for $x^*$ (recall that $\hat S = g$ in the centro-affine normalization):
\[
\bar D_U \; dx^*(V) = dx^*(\nabla^*_U V) - g_K(U,V) x^*  ,
\]
i.e.:
\[
\bar D_U \brac{ \frac{V}{h_K} - \frac{V(h_K)}{h_K^2} \theta^*  } = \frac{1}{h_K} ( \nabla^*_U V - (\nabla^*_U V)(\log h_K) \theta^*) - g_K(U,V) \frac{\theta^*}{h_K(\theta^*)} . 
\]
Consequently, applying the Leibniz rule and multiplying by $h_K$, we have:
\[
-\frac{1}{h_K} U(h_K) V + \bar D_U V - h_K U(V(h_K)/h_K^2) \theta^* - V(\log h_K) \bar D_U \theta^* = \nabla^*_U V - (\nabla^*_U V)(\log h_K) \theta^* - g_K(U,V) \theta^* . 
\]
Recall that $U \in T_{\theta^*} \S^*$ so that $\bar D_U \theta^* = U$. Orthogonally projecting onto $(\theta^*)^{\perp}$, we obtain:
\[
- U(\log h_K) V + {}^{\S^*} \nabla_U V - V (\log h_K) U =  \nabla^*_U V . 
\]
In particular, we see that the conjugate connection only depends on first derivatives of $h_K$, which is already reassuring. 
By projecting onto $\theta^*$ and recalling (\ref{eq:II}), one rederives (\ref{eq:gK1}); for completeness, let us verify this:
\begin{align*}
g_K(u,v) = & \II^{\S^*}(U,V) +h_K U(V(h_K)/h_K^2) -  (\nabla^*_U V)(\log h_K) \\
 = & \II^{\S^*}(U,V) + \frac{{}^{\S^*} \nabla^2_{U,V} h_K}{h_K} + \frac{{}^{\S^*} \nabla_U V (h_K)}{h_K} - 2 \frac{U(h_K) V(h_K)}{h_K^2} \\
& + 2 U(\log h_K) V(\log h_K) - {}^{\S^*} \nabla_U V (\log h_K) \\
 = & \II^{\S^*}(U,V) + \frac{{}^{\S^*} \nabla^2_{U,V} h_K}{h_K} = \frac{\bar D^2 h_K(U,V)}{h_K} ,
\end{align*}
where in the last transition we used (\ref{eq:II-2nd-deriv}) and the fact that $h_K$ is $1$-homogeneous so that $\theta^*(h_K) = h_K$.

We summarize all of these computations in the following:
\begin{prop} \label{prop:S*-param}
The differential structures (\ref{eq:structures}) for the centro-affine normalization of $K$ are given on $\S^*$ by:
\begin{align*}
g^{\S^*}_K & = \frac{D^2 h_K}{h_K} , \\
\nu^{\S^*}_K & =  h_K S_K = h_K \det (D^2 h_K) \Leb^{\S^*} = n V^{\S^*}_K  , \\
(\nu^{\S^*}_K)^* & = \frac{1}{h_K^n} \Leb^{\S^*} = n\; i_* V^{\S}_{K^\circ}  , \\
((\nabla^{\S^*}_K)_U V)^{i} & = ({}^{\S^*} \nabla_U V)^i + (g^{\S^*}_K)^{i j} \brac{\frac{\bar D^3 h_K(U,V,e_j)}{h_K} + g^{\S^*}_K(U,V) (\log h_K)_j } ,\\
(\nabla^{\S^*}_K)^*_U V & =  {}^{\S^*} \nabla_U V - U(\log h_K) V - V (\log h_K) U  
\end{align*}
(for any $U \in T\S^*, V \in \Gamma^1(T \S^*)$ and local frame $\{e_1,\ldots,e_{n-1}\}$ on $\S^*$). \\
In particular, the centro-affine metric $g^{\S^*}_K$ coincides with the metric (\ref{eq:prelim-metric}) introduced in Section \ref{sec:prelim}, and up to normalization, the centro-affine volume-measure $\nu^{\S^*}_K$ coincides with the cone-volume measure $V^{\S^*}_K$. 
\end{prop}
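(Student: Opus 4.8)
The plan is to compute each of the five structures in Proposition \ref{prop:S*-param} directly from the definitions in Sections \ref{sec:ADG} and \ref{sec:CA}, specializing the general relative-normalization machinery to the centro-affine normalization $\xi = x$ with the particular parametrization $x = x_K^{\S^*} : \S^* \to \partial K$, $\theta^* \mapsto \bar D h_K(\theta^*)$, and conormal $x^* = x_{K^*}^{\S^*} : \theta^* \mapsto \theta^*/h_K(\theta^*)$. Since all the heavy computations have in fact been carried out in the body of this subsection already, the ``proof'' is mostly an act of assembly and bookkeeping. First I would record the two basic differentials $dx(u) = \bar D_u \bar D h_K$ and $dx^*(v) = \frac{1}{h_K}(v - v(\log h_K)\theta^*)$, noting that $1$-homogeneity of $h_K$ forces $\bar D_{\theta^*}\bar D h_K = 0$ (Euler), so that the radial direction drops out. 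Pairing these gives $g_K^{\S^*}(u,v) = \langle dx^*(v), dx(u)\rangle = \bar D^2 h_K(u,v)/h_K$, which on $T\S^*$ is exactly $D^2 h_K/h_K$; this is the first line and also identifies $g_K^{\S^*}$ with the metric \eqref{eq:prelim-metric}.

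Next I would compute the two volume forms. For a Euclidean orthonormal frame $e_1,\dots,e_{n-1}$ of $T_{\theta^*}\S^*$, the definition $\nu_\xi(e_1,\dots,e_{n-1}) = \Det(dx(e_1),\dots,dx(e_{n-1}),\xi)$ with $\xi = x$ splits $x = h_K\theta^* + (\text{tangential})$, and since the tangential part lies in the span of the $dx(e_i)$, only the $h_K\theta^*$ piece contributes; evaluating $\Det(dx(e_1),\dots,dx(e_{n-1}),\theta^*) = \det(D^2 h_K)$ yields $\nu_K^{\S^*} = h_K\det(D^2 h_K)\,\Leb^{\S^*} = h_K S_K = n V_K^{\S^*}$. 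Dually, $\nu^*_\xi(e_1,\dots,e_{n-1}) = \Det^*(dx^*(e_1),\dots,dx^*(e_{n-1}),x^*)$; here $x^* = \theta^*/h_K$, and after pulling out the factors of $1/h_K$ from each $dx^*(e_i)$ and from $x^*$, the $\theta^*$-components of the $dx^*(e_i)$ again wash out against the final $\theta^*$ slot, leaving $\Det^*(e_1,\dots,e_{n-1},\theta^*)/h_K^n = 1/h_K^n$. Recognizing $\frac{1}{n h_K^n}\Leb^{\S^*}$ as $i_* V_K^{\S}$ evaluated at the polar body — i.e. $n\, i_* V_{K^\circ}^{\S}$ — via the formula $V_{K^\circ}^{\S} = \frac{1}{n}\|\theta\|_{K^\circ}^{-n}\Leb = \frac{1}{n}h_K^{-n}\Leb$ and the identification $i(K^\circ) = K^*$, gives the third line.

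For the connections I would invoke the Gauss structure equations \eqref{eq:CA-structure}. For $\nabla_K^{\S^*}$: differentiating $dx(V) = \bar D_V\bar D h_K$ along $U$ and using $\bar D_U\,dx(V) = dx(\nabla_U V) - g_K(U,V)x$, then projecting against an arbitrary $\xi^*\in T_{\theta^*}\S^*$ via the nondegenerate $\bar D^2 h_K$, isolates $(\nabla_U V - {}^{\S^*}\nabla_U V)^i = g_K^{ij}\big(\bar D^3 h_K(U,V,e_j)/h_K + g_K(U,V)(\log h_K)_j\big)$ — the explicit dependence on third derivatives; here one must carefully use $\bar D_U V = {}^{\S^*}\nabla_U V - \II^{\S^*}(U,V)\theta^*$ and $\bar D^2 h_K\cdot\theta^* = 0$ to kill the second-fundamental-form term. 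For the conjugate connection $(\nabla_K^{\S^*})^*$: differentiating $dx^*(V) = \frac{1}{h_K}(V - V(\log h_K)\theta^*)$ along $U$, applying $\bar D_U\,dx^*(V) = dx^*(\nabla^*_U V) - g_K(U,V)x^*$, using $\bar D_U\theta^* = U$ for $U\in T_{\theta^*}\S^*$, and orthogonally projecting onto $(\theta^*)^\perp$, all third-order and most first-order terms cancel, leaving the clean $(\nabla^*_U V) = {}^{\S^*}\nabla_U V - U(\log h_K)V - V(\log h_K)U$; projecting instead onto $\theta^*$ recovers the formula for $g_K$ as a consistency check (this is the displayed verification in the text, which reduces via \eqref{eq:II-2nd-deriv} and $\theta^*(h_K) = h_K$). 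The only mildly delicate point is tracking these projections and the cancellations in the conjugate-connection computation; everything else is a direct substitution into the general formulas of Section \ref{sec:ADG}, and the final sentences of the proposition (matching $g_K^{\S^*}$ with \eqref{eq:prelim-metric} and $\nu_K^{\S^*}$ with $nV_K^{\S^*}$) are then immediate.
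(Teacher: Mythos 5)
Your proposal is correct and follows essentially the same route as the paper: the paper's own "proof" of Proposition \ref{prop:S*-param} is precisely the chain of computations in the preceding subsection (the differentials $dx$, $dx^*$, the pairing giving $g_K = D^2h_K/h_K$, the two determinant evaluations for $\nu_K$ and $\nu_K^*$, and the two Gauss structure equations with the projections onto $(\theta^*)^\perp$ and $\theta^*$), which your write-up reproduces step for step, including the Euler-identity cancellation $\bar D_{\theta^*}\bar D h_K=0$ and the identification $i_*V^{\S}_{K^\circ}=\frac{1}{n}h_K^{-n}\Leb^{\S^*}$. No gaps.
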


In addition, recall that the following useful properties hold, regardless of parametrization:
\begin{itemize}
\item $\nabla_K \nu_K = \nabla^*_K \nu^*_K = 0$.
\item $K$ is a centro-affine unit-sphere; in particular, the Ricci curvatures of $\nabla_K$ and $\nabla^*_K$ are constant and equal to $n-2$.
\end{itemize}

In our opinion, it is quite remarkable that the Ricci curvature turns out to even just be positive, let alone constant, for the centro-affine connection, in view of the fact that it involves three derivatives of $h_K$. 
It is a good but tedious exercise to verify this for both our connections directly from the associated Christoffel symbols on $\S^*$:
\begin{align}
\label{eq:Christoffel-on-S*} {}^{\nabla^*_K} \Gamma_{ij}^k & = {}^{\S^*} \Gamma_{i j}^k - \delta_{i}^k (\log h_K)_j - \delta_{j}^k (\log h_K)_i  \\
\nonumber {}^{\nabla_K} \Gamma_{ij}^k & ={}^{\S^*} \Gamma_{i j}^k +  g_K^{k p} \brac{\frac{\bar D^3_{ijp} h_K}{h_K} + (g_K)_{ij} (\log h_K)_p} .
\end{align}
Another good exercise is to verify that our connections are indeed $g_K$-conjugates using (\ref{eq:conjugation}). 

\subsubsection{Other parametrizations}

It will be convenient to also work on the parametrization $M = \partial K$ (we henceforth omit the corresponding superscript in our notation). 
Recall that:
\begin{align*}
x = x_K & :  \partial K \rightarrow \partial K  ~,~ x = \Id , \\
x^* = (x_K)^* = x_{K^*} & :  \partial K\rightarrow \partial K^* ~,~ x^*(x) = \bar D \norm{x}_K = \bar D h_{K^{*}}(x) .
\end{align*}
We fix a point on $\partial K$, which by abuse of notation we denote by $x$ (there should be no confusion with the map $x$). 
Hence:
\begin{align*}
d  x & : T_{x} \partial K \rightarrow T_x \partial K ~,~ d x = \Id  ,\\
d x^* & : T_{x} \partial K \rightarrow T_{x^*} \partial K^* ~,~  dx^*(v) = \bar D_v \bar D h_{K^{*}}  . 
\end{align*}
Hence:
\[
g^{\partial K}_K(u,v) := \scalar{dx^*(v) , dx(u)}  = \bar D^2_x h_{K^{*}}(u,v) ~,~ u , v \in T_x \partial K . 
\]
Note that $\bar D^2_x h_{K^{*}} \cdot x = 0$, whereas $u,v \perp \theta^*$; to emphasize this point, we write:
\begin{equation} \label{eq:g-partial-K-1}
g^{\partial K}_K = P_{(\theta^*)^{\perp}} \bar D^2_x h_{K^{*}} P_{(\theta^*)^{\perp}} ,
\end{equation}
where $P_H$ denotes orthogonal projection onto the corresponding subspace $H$. A more convenient expression is derived in:

\begin{lemma}
For all $x \in \partial K$:
\[
g^{\partial K}_K(x) = |x^*| \II^{\partial K}_x  = \frac{\II^{\partial K}_x}{h_K(\theta^*)}   . 
\]
\end{lemma}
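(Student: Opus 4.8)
The plan is to read off $g^{\partial K}_K$ from (\ref{eq:g-partial-K-1}) and then evaluate the ambient Hessian $\bar D^2_x h_{K^*}$ on $T_x\partial K$ via the Gauss-equation identity (\ref{eq:II-2nd-deriv}), exploiting the fact that $h_{K^*}=\norm{\cdot}_K$ is constant on $\partial K$. First I would recall that along the parametrization cycle the point $\theta^*=x^*/|x^*|$ is the outer unit normal to $\partial K$ at $x$, so $T_x\partial K=(\theta^*)^{\perp}$; consequently, for $u,v\in T_x\partial K$ the orthogonal projections in (\ref{eq:g-partial-K-1}) act as the identity, and $g^{\partial K}_K(u,v)=\bar D^2_x h_{K^*}(u,v)$.

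Next I would apply (\ref{eq:II-2nd-deriv}) with $H=\partial K$ and $f=h_{K^*}=\norm{\cdot}_K$, using its natural $1$-homogeneous extension to $E$ (the identity in (\ref{eq:II-2nd-deriv}) is independent of the chosen smooth extension). Since $\norm{\cdot}_K\equiv 1$ on $\partial K$, the intrinsic Hessian ${}^{\partial K}\nabla^2 h_{K^*}$ vanishes identically on $\partial K$, and (\ref{eq:II-2nd-deriv}) collapses to $\bar D^2_x h_{K^*}(u,v)=\II^{\partial K}_x(u,v)\,\n^{\partial K}_x(h_{K^*})$ for $u,v\in T_x\partial K$. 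It then remains to identify the scalar $\n^{\partial K}_x(h_{K^*})=\scalar{\n^{\partial K}_x,\bar D h_{K^*}(x)}$; since $\bar D h_{K^*}(x)=\bar D\norm{x}_K=x^*$ (the centro-affine conormal) and $\n^{\partial K}_x=x^*/|x^*|=\theta^*$, this scalar equals $|x^*|$. Combining with the previous paragraph gives $g^{\partial K}_K(x)=|x^*|\,\II^{\partial K}_x$.

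For the second equality I would invoke (\ref{eq:x*x=1}): $\scalar{x^*,x}=1$ for $x\in\partial K$, whence $h_K(x^*)=\scalar{x^*,x}=1$ (the maximum defining $h_K(x^*)$ is attained at $x$, as $x^*$ is normal to $\partial K$ at $x$). Then $1$-homogeneity of $h_K$ together with $\theta^*=x^*/|x^*|$ yields $h_K(\theta^*)=h_K(x^*)/|x^*|=1/|x^*|$, i.e. $|x^*|=1/h_K(\theta^*)$, which is exactly the claimed identity $|x^*|\,\II^{\partial K}_x=\II^{\partial K}_x/h_K(\theta^*)$.

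The only point requiring genuine care is the bookkeeping of sign and normalization conventions: one must confirm that $x^*/|x^*|=\theta^*$ is the \emph{outer} unit normal to $\partial K$ at $x$ (so that $\II^{\partial K}$ enters with the positive sign dictated by (\ref{eq:II}) and (\ref{eq:II-2nd-deriv}), consistent with (\ref{eq:main-strong-curvature-bounds})), and that the parametrization cycle indeed identifies the conormal of the centro-affine normal $x$ with $\bar D\norm{x}_K$. No substantial analytic difficulty arises beyond this.
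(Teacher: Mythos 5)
Your proof is correct, and it takes a route the paper does not spell out. The paper's proof sketches three alternatives: (i) invoke the classical identity $\II^{\partial K}_x = (D^2_{\theta^*} h_K)^{-1}$ and verify by duality that $P_{(\theta^*)^{\perp}} \bar D^2_x h_{K^{*}} P_{(\theta^*)^{\perp}} = (D^2_{x^*} h_K)^{-1}$; (ii) pull back $g^{\S^*}_K = D^2_{\theta^*}h_K/h_K(\theta^*)$ via the Gauss map, using $d_{\theta^*}x = D^2_{\theta^*}h_K$; or (iii) appeal to the general fact that rescaling the conormal (from $\theta^*$ to $x^* = |x^*|\,\theta^*$) rescales the induced second fundamental form by the same factor. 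You instead read $g^{\partial K}_K$ off (\ref{eq:g-partial-K-1}), then split the ambient Hessian of the $1$-homogeneous extension $h_{K^*}=\norm{\cdot}_K$ on tangent directions via (\ref{eq:II-2nd-deriv}): the intrinsic Hessian term dies because $\norm{\cdot}_K\equiv 1$ on $\partial K$, and the remaining term is $\II^{\partial K}_x$ times the normal derivative $\scalar{\theta^*,x^*}=|x^*|$; the second equality then follows from (\ref{eq:x*x=1}) and $1$-homogeneity exactly as the paper's closing remark $|x^*|h_K(\theta^*)=\scalar{x^*,x}=1$. In effect you give a direct, self-contained proof of the rescaling fact the paper cites in route (iii), specialized to this situation, which is arguably the cleanest of the options since it needs neither the inverse-Hessian identity from Schneider nor an external reference. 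One small point of hygiene: the tangential ambient Hessian $\bar D^2 f(u,v)$, $u,v\in T_x\partial K$, is \emph{not} extension-independent (it shifts by $\II(u,v)$ times the change in normal derivative); your parenthetical is harmless because the identity (\ref{eq:II-2nd-deriv}) holds for every extension and you consistently use the $1$-homogeneous one — which is precisely the extension appearing in (\ref{eq:g-partial-K-1}) — but it would be worth saying this explicitly. Your sign check that $\theta^*=x^*/|x^*|$ is the outer unit normal is the right thing to verify and is correct under the paper's convention (\ref{eq:II}).
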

\begin{proof}
It is well-known (e.g. \cite[(2.48)]{Schneider-Book-2ndEd}) that $\II^{\partial K}_x = (D^2_{\theta^*} h_K)^{-1} = \frac{1}{|x^*|} (D^2_{x^*} h_K)^{-1}$ (naturally identifying between the corresponding tangent spaces); using this and the duality between $K$ and $K^{*}$, it is a nice exercise to derive the assertion from (\ref{eq:g-partial-K-1}) by verifying that:
\[ P_{(\theta^*)^{\perp}} \bar D^2_x h_{K^{*}} P_{(\theta^*)^{\perp}} = (D^2_{x^*} h_K)^{-1} . 
\]  Alternatively, it is simpler to pull-back $g^{\S^*}_K = \frac{D^2_{\theta^*} h_K}{h_K(\theta^*)}$ via the Gauss map $\partial K \ni x \mapsto \theta^* \in \S^*$ since $d_{\theta^*} x = D^2_{\theta^*} h_K$. Probably the simplest argument is to recall that 
 the centro-affine conormalization of $\partial K$ by $x^*$ coincides with the Euclidean conormalization by $\theta^*$ up to a multiplicative factor of $|x^*|$, and hence the corresponding induced second-fundamental forms $g^{\partial K}_K(x)$ and $\II^{\partial K}_x$ also coincide up to this factor \cite[Proposition 1.23 (ii)]{LSZH-Book}. Note that $|x^*| h_K(\theta^*) = |x^*| \scalar{\theta^*,x} = \scalar{x^*,x} = 1$. 
\end{proof}

We leave the rest of the computations of our structures for the reader, as they will not be needed, and only state them. One can use all of the tools developed in the previous subsections to transfer the information from $\S^*$ to any other $M \in \M_K$: direct computation, pushing-forward via our diffeomorphisms $T^{\S^* \rightarrow M}_K$, conjugation and duality. See also \cite[Proposition 1.23]{LSZH-Book}. 

\begin{prop} \label{prop:partial-K-param}
The differential structures (\ref{eq:structures}) for the centro-affine normalization of $K$ are given on $\partial K$ at $x \in \partial K$ by:
\begin{align*}
g^{\partial K}_K & = P_{(\theta^*)^{\perp}} \bar D^2_x h_{K^{\circ}} P_{(\theta^*)^{\perp}} = |x^*| \II^{\partial K}_x  = \frac{\II^{\partial K}_x}{h_K(\theta^*)} , \\
\nu^{\partial K}_K & = \scalar{\theta^*,x} \H^{n-1}|_{\partial K}(dx) = n V^{\partial K}_K  , \\
(\nu^{\partial K}_K)^* & = \frac{\kappa^{\partial K}_x}{\scalar{\theta^*,x}^n} \H^{n-1}|_{\partial K}(dx) =:  n V^{\partial K}_{K^\circ} , \\
(\nabla^{\partial K}_K)_U V & = {}^{\partial K} \nabla_U V  + g^{\partial K}_K(U,V) P_{(\theta^*)^{\perp}} x  ,\\
((\nabla^{\partial K}_K)^*_U V)^{i} & = ({}^{\partial K} \nabla_U V)^{i} + (g^{\partial K}_K)^{ij} \brac{  \bar D^3 h_{K^*} (U,V,e_j)   +  g^{\partial K}_K(U,V) (h_{K^*})_j } 
\end{align*}
(for any $U \in T_x \partial K, V \in \Gamma^1(T \partial K)$ and local frame $\{e_1,\ldots,e_{n-1}\}$ on $\partial K$).
Here $\kappa^{\partial K}_x = \det \II^{\partial K}_x = 1 / \det(D^2_{\theta^*} h_K)$ denotes the Gauss curvature of $\partial K$ at $x$. 
\end{prop}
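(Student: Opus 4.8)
The plan is to transfer the structures already computed on $\S^*$ in Proposition~\ref{prop:S*-param} to the parametrization $M = \partial K$, using the Gauss map $\partial K \ni x \mapsto \theta^* = \n^{\partial K}_x \in \S^*$ as the diffeomorphism $T^{\partial K \rightarrow \S^*}_K$, whose differential is $d_{\theta^*} x = D^2_{\theta^*} h_K$ once we identify $T_x \partial K$ with $T_{\theta^*}\S^* = (\theta^*)^{\perp}$ via the Euclidean structure. For some of the structures it is just as short to re-derive the formula directly on $\partial K$ from the centro-affine Gauss equations, and I will do whichever is cleaner.

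For the metric, the three equivalent expressions $g^{\partial K}_K = P_{(\theta^*)^{\perp}} \bar D^2_x h_{K^{\circ}} P_{(\theta^*)^{\perp}} = |x^*| \II^{\partial K}_x = \II^{\partial K}_x / h_K(\theta^*)$ are precisely the content of the preceding Lemma; the first equality is just unwinding $g^{\partial K}_K(u,v) = \scalar{dx^*(v),dx(u)}$ with $dx = \Id$ and $dx^*(v) = \bar D_v \bar D h_{K^{\circ}}$, noting that $\bar D^2 h_{K^\circ}\cdot x = 0$ by $1$-homogeneity of $h_{K^\circ} = \norm{\cdot}_K$, so that one may insert the projections harmlessly. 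For the volume forms I pull $\nu^{\S^*}_K = h_K \det(D^2 h_K) \Leb^{\S^*}$ back along the Gauss map: since $S_K$ is the push-forward of $\H^{n-1}|_{\partial K}$ under the Gauss map with density $\det(D^2 h_K)$ against $\Leb^{\S^*}$, the pull-back of $\nu^{\S^*}_K$ is $h_K(\theta^*)\,\H^{n-1}|_{\partial K} = \scalar{\theta^*,x}\,\H^{n-1}|_{\partial K} = n V^{\partial K}_K$. For $(\nu^{\S^*}_K)^* = h_K^{-n}\Leb^{\S^*}$ the same pull-back now carries the Jacobian of the Gauss map $\partial K \to \S^*$, i.e. the Gauss curvature $\kappa^{\partial K}_x = 1/\det(D^2_{\theta^*} h_K)$, giving $\kappa^{\partial K}_x \scalar{\theta^*,x}^{-n} \H^{n-1}|_{\partial K}$; alternatively one evaluates $\Det^*(dx^*(e_1),\dots,dx^*(e_{n-1}),x^*)$ directly with $x^* = \bar D h_{K^\circ}(x)$ and $|x^*| = 1/\scalar{\theta^*,x}$.

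For the connections I use the two Gauss structure equations for the centro-affine normalization. The equation for $x = \xi = \Id$ reads $\bar D_U V = (\nabla^{\partial K}_K)_U V - g^{\partial K}_K(U,V)\,x$; combined with the Euclidean Gauss equation $\bar D_U V = {}^{\partial K}\nabla_U V - \II^{\partial K}(U,V)\,\theta^*$ and the decomposition $x = P_{(\theta^*)^{\perp}} x + \scalar{\theta^*,x}\theta^*$, the tangential part gives $(\nabla^{\partial K}_K)_U V = {}^{\partial K}\nabla_U V + g^{\partial K}_K(U,V)\,P_{(\theta^*)^{\perp}} x$ while the $\theta^*$-part re-derives $g^{\partial K}_K = \II^{\partial K}/h_K$. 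For the conjugate connection $(\nabla^{\partial K}_K)^*$ I run verbatim the computation that produced $\nabla^{\S^*}_K$ in Proposition~\ref{prop:S*-param}, under the substitutions $x \leftrightarrow x^*$, $h_K \rightsquigarrow h_{K^\circ}$, $\theta^* \rightsquigarrow x$, $\S^* \rightsquigarrow \partial K$: since $x^* = \bar D h_{K^\circ}(x)$ is the centro-affine conormal, the Gauss equation for $x^*$ is $\bar D_U dx^*(V) = dx^*((\nabla^{\partial K}_K)^*_U V) - g^{\partial K}_K(U,V)\,x^*$, and expanding $dx^*(V) = \bar D_V \bar D h_{K^\circ}$ (using $h_{K^\circ} = 1$ on $\partial K$, so that $(\log h_{K^\circ})_j = (h_{K^\circ})_j$ there) yields the stated third-order formula. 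Finally, $\nabla^{\partial K}_K \nu^{\partial K}_K = (\nabla^{\partial K}_K)^*(\nu^{\partial K}_K)^* = 0$ and the constancy of both Ricci curvatures at $n-2$ are instances of the general facts of Section~\ref{sec:ADG} applied to the relative normalization $\xi = x$, for which $S = \Id$ and $\hat S = g$.

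I expect the only real difficulty to be bookkeeping rather than anything conceptual: keeping straight the three identifications in play — the Gauss map $\partial K \to \S^*$, the Euclidean identification $T_x \partial K \cong (\theta^*)^{\perp}$, and the scalar $|x^*| = 1/\scalar{\theta^*,x}$ relating the centro-affine and Euclidean conormalizations — and inserting the orthogonal projections $P_{(\theta^*)^{\perp}}$ in exactly the right places, since the position vector $x$ and the conormal $x^*$ are transversal to $\partial K$ and must be projected before being compared with tangential quantities. Beyond Section~\ref{sec:ADG} and Proposition~\ref{prop:S*-param}, no new geometric input is needed.
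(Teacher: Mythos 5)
Your treatment of the metric (via the pairing $\scalar{dx^*(v),dx(u)}$ and the preceding Lemma), of the two volume forms (push-forward along the Gauss map, whose Jacobian is exactly $\det D^2 h_K$ resp.\ $\kappa^{\partial K}$), and of the primal connection (matching the centro-affine Gauss equation for $\xi=x$ against the Euclidean Gauss equation and splitting $x = P_{(\theta^*)^{\perp}}x + \scalar{\theta^*,x}\theta^*$) is correct, and it is essentially the route the paper itself indicates: the paper only proves the metric identities and explicitly leaves the remaining structures to the reader, suggesting direct computation, push-forward via $T_K^{\S^*\to M}$, conjugation and duality.

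The gap is in the conjugate connection. The $\S^*$-computation you propose to run ``verbatim'' rests on two facts about the parametrizing manifold: the Euclidean Gauss equation $\bar D_U V = {}^{\S^*}\nabla_U V - \II^{\S^*}(U,V)\,\theta^*$, and Euler's relation $\bar D^2 h_K\cdot\theta^*=0$, which kills the normal component precisely because on $\S^*$ the Euclidean normal and the degenerate (radial) direction of $\bar D^2 h_K$ are the \emph{same} vector $\theta^*$. Your substitution $\theta^*\rightsquigarrow x$ conflates these two roles, and on $\partial K$ they split: the Euclidean normal is $\theta^*$, while the kernel of $\bar D^2_x h_{K^*}$ is $x$. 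Carrying out the computation honestly, the term $\bar D^2 h_{K^*}(\bar D_U V,\cdot)$ contributes, besides $g^{\partial K}_K({}^{\partial K}\nabla_U V,\cdot)$, the extra piece $-\II^{\partial K}(U,V)\,\bar D^2_x h_{K^*}(\theta^*,\cdot)$; writing $\theta^* = (x-P_{(\theta^*)^{\perp}}x)/h_K(\theta^*)$ and using $\bar D^2 h_{K^*}\cdot x=0$ gives $\bar D^2_x h_{K^*}(\theta^*,e_j) = -g^{\partial K}_K(P_{(\theta^*)^{\perp}}x,e_j)/h_K(\theta^*)$, so the surviving correction is $+\,g^{\partial K}_K(U,V)\,g^{\partial K}_K(P_{(\theta^*)^{\perp}}x,e_j)$, i.e.\ an additional $g^{\partial K}_K(U,V)\,P_{(\theta^*)^{\perp}}x$ after raising the index (the same correction vector as in the primal connection). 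This is the nontrivial term your argument never produces: your aside that ``$h_{K^\circ}=1$ on $\partial K$, so $(\log h_{K^\circ})_j=(h_{K^\circ})_j$'' does not generate it, since both of these tangential derivatives vanish identically on $\partial K$ (the conormal $x^*$ annihilates $T_x\partial K$), so a verbatim transcription would simply drop the correction -- and one checks via the conjugation identity $Ug(V,W)=g(\nabla_U V,W)+g(V,\nabla^*_U W)$, with your (correct) primal formula, that the formula without it cannot be the $g_K$-conjugate connection. A clean repair is not to mimic the $\S^*$ computation at all: having $g^{\partial K}_K$ and $(\nabla^{\partial K}_K)_U V$, obtain $(\nabla^{\partial K}_K)^*$ from the conjugation identity by differentiating $g^{\partial K}_K=\bar D^2 h_{K^*}|_{T\partial K\times T\partial K}$ along $\partial K$ with the Euclidean Gauss equation; alternatively use $(\nabla^{\partial K}_K)^*=\nabla^{\partial K}_{K^*}$ and transfer the simple primal formula for $K^*$ on $\partial K^*$ through $\D_K$.
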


\begin{prop} \label{prop:S-param}
The differential structures (\ref{eq:structures}) for the centro-affine normalization of $K$ are given on $\S$ at $\theta \in \S$ by:
\begin{align*}
g^{\S}_K & = g^{\S}_{K^*} =  i_* g^{\S^*}_{K^{\circ}} = \frac{D^2 h_{K^*}}{h_{K^*}} , \\
\nu^{\S}_K & = (\nu^{\S}_{K^*})^*  = i_* (\nu^{\S^*}_{K^{\circ}})^*  =  \frac{1}{h_{K^{*}}^n} \Leb^{\S}  = \frac{1}{\norm{\cdot}_K^n} \Leb^{\S} = n V^{\S}_K  , \\
(\nu^{\S}_K)^* & =  \nu^{\S}_{K^*}  = i_* \nu^{\S^*}_{K^{\circ}} = h_{K^{*}}  S_{K^{*}} = h_{K^{*}} \det (D^2 h_{K^{*}}) \Leb^{\S} = n\; i_* V^{\S^*}_{K^\circ} , \\
(\nabla^{\S}_K)_U V  & = (\nabla^{\S\; *}_{K^*})_U V = (i_* \nabla^{\S^* \; *}_{K^{\circ}})_U V = {}^{\S} \nabla_U V - U(\log h_{K^{*}}) V - V (\log h_{K^{*}}) U , \\
((\nabla^{\S}_K)^*_U V)^i  & = ((\nabla^{\S}_{K^*})_U V)^i  = ((i_* \nabla^{\S^*}_{K^{\circ}})_U V)^i  \\
& =({}^{\S} \nabla_U V)^i + (g^{\S}_{K^{*}})^{i j} \brac{\frac{\bar D^3 h_{K^{*}}(U,V,e_j)}{h_{K^{*}}} + g_{K^{*}}(U,V) (\log h_{K^{*}})_j } 
\end{align*}
(for any $U \in T\S, V \in \Gamma^1(T \S)$ and local frame $\{e_1,\ldots,e_{n-1}\}$ on $\S$). 
\end{prop}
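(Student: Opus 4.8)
The plan is to obtain all five formulas with essentially no new computation, by reading off Proposition \ref{prop:S*-param} for the polar body $K^{\circ}$ and transporting the result to $\partial K$ via the duality bookkeeping of Subsection \ref{subsec:CA-duality}. Since the centro-affine conormal of the centro-affine normal $x^\S_K : \S \to \partial K$ is $x^\S_{K^*} : \S \to \partial K^*$, the conjugation relations $g^M_K = g^M_{K^*}$, $\nabla^M_K = (\nabla^M_{K^*})^*$, $\nu^M_K = (\nu^M_{K^*})^*$, etc. of Subsection \ref{subsec:CA-duality} apply with $M = \S$. On the other hand, the isomorphism $i : E \to E^*$ is tautological and changes no differential structure, so the array of duality relations in Subsection \ref{subsec:CA-duality} --- taken with $M = \S^*$, so that $M^* = \S$ and $i(M^*) = \S^*$ --- reads $g^{\S^*}_{K^\circ} = i_* g^\S_{K}$, $(\nabla^{\S^*}_{K^\circ})^* = i_* \nabla^\S_{K}$, $\nabla^{\S^*}_{K^\circ} = i_* (\nabla^\S_{K})^*$, $(\nu^{\S^*}_{K^\circ})^* = i_* \nu^\S_{K}$ and $\nu^{\S^*}_{K^\circ} = i_* (\nu^\S_{K})^*$. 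Since $i$ and its inverse are both denoted $i$, the push-forward $i_*$ squares to the identity, so applying $i_*$ to these five identities inverts them: each $O^\S_{K}$ equals $i_*$ of the appropriate conjugate of $O^{\S^*}_{K^\circ}$.

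It then remains to substitute the explicit $\S^*$-formulas of Proposition \ref{prop:S*-param}, read off for $K^{\circ}$ in place of $K$, and push them forward under $i$, using only the tautological facts $h_{K^\circ}\circ i = h_{K^*} = \norm{\cdot}_K$, $i_* D^2 h_{K^\circ} = D^2 h_{K^*}$, $i_* S_{K^\circ} = S_{K^*}$ and $i_* \Leb^{\S^*} = \Leb^\S$, together with $(K^\circ)^\circ = K$. For instance $g^\S_{K} = i_* g^{\S^*}_{K^\circ} = i_*(D^2 h_{K^\circ}/h_{K^\circ}) = D^2 h_{K^*}/h_{K^*}$; the volume forms $\nu^{\S^*}_{K^\circ} = h_{K^\circ}S_{K^\circ}$ and $(\nu^{\S^*}_{K^\circ})^* = h_{K^\circ}^{-n}\Leb^{\S^*} = n\, i_* V^\S_{(K^\circ)^\circ} = n\, i_* V^\S_{K}$ push forward to $(\nu^\S_{K})^* = h_{K^*}S_{K^*} = n\, i_* V^{\S^*}_{K^\circ}$ and $\nu^\S_{K} = \norm{\cdot}_K^{-n}\Leb^\S = n V^\S_{K}$ respectively; and the two connection formulas transform in the same manner, the interchange $\nabla \leftrightarrow \nabla^*$, $\nu \leftrightarrow \nu^*$ relative to the $\S^*$-picture being precisely the conjugation carried by the duality.

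I expect the only delicate point to be keeping track of the three superposed identifications --- conjugation via the metric $g$, the passage $K \rightsquigarrow K^{\circ}$ (whose own conormal parametrizes back $\partial K$), and the linear isomorphism $i : E \to E^*$ --- so that, e.g., one lands on $n\, i_* V^{\S^*}_{K^\circ}$ and not on $n V^{\S^*}_{K}$ or on $n V^\S_{K^\circ}$. As an internal cross-check, one may instead carry out the direct computation of the previous subsection: $x^\S_K(\theta)=\theta/\norm{\theta}_K$ has conormal $x^*(\theta)=\bar D\norm{\cdot}_K\big|_{x^\S_K(\theta)} = \bar D h_{K^*}(\theta)$ by $0$-homogeneity, so the $\S$-picture of $\partial K$ is formally the $\S^*$-picture of $\partial K^{\circ}$ with the primal point and the conormal interchanged, and the computations leading to Proposition \ref{prop:S*-param} go through verbatim with $h_K \rightsquigarrow h_{K^*}$, $\S^* \rightsquigarrow \S$ and the two normalizations swapped. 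Either route is entirely routine and presents no genuine obstacle.
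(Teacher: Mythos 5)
Your proposal is correct and follows essentially the route the paper itself intends: Proposition \ref{prop:S-param} is stated without a written proof, the text remarking only that the structures can be transferred from $\S^*$ to the other parametrizations via conjugation, duality and push-forward (or direct computation), which is precisely your bookkeeping with $M=\S^*$, $M^*=\S$, $i(M^*)=\S^*$ in the duality array, combined with Proposition \ref{prop:S*-param} applied to $K^{\circ}$ and the tautological identifications $h_{K^{\circ}}\circ i = h_{K^*}=\norm{\cdot}_K$, $i_* S_{K^{\circ}} = S_{K^*}$. Your closing cross-check (that the $\S$-picture of $K$ is the $\S^*$-picture of $K^{\circ}$ with normal and conormal interchanged, so the earlier computation carries over verbatim with $h_K \rightsquigarrow h_{K^*}$ and the two connections/volume forms swapped) is also sound and matches the paper's alternative suggestion of a direct computation.
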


\begin{prop} \label{prop:partial-K*-param}
The differential structures (\ref{eq:structures}) for the centro-affine normalization of $K$ are given on $\partial K^*$ at $x^* \in \partial K^*$ by:
\begin{align*}
g^{\partial K^*}_K & = g^{\partial K^*}_{K^*} = i_* g^{\partial K^{\circ}}_{K^{\circ}} = P_{\theta^{\perp}} \bar D^2_{x^*} h_K P_{\theta^{\perp}} = |x| \II^{\partial K^{*}}_{x^*}  = \frac{\II^{\partial K^{*}}_{x^*}}{h_{K^*}(\theta)} ,\\
\nu^{\partial K^*}_K & = (\nu^{\partial K^*}_{K^*})^* = i_* (\nu^{\partial K^{\circ}}_{K^{\circ}})^* = 
\frac{\kappa^{\partial K^*}_{x^*}}{\scalar{x^*,\theta}^n} \H^{n-1}|_{\partial K^*}(dx^*) = n \; i_*  V^{\partial {K^{\circ}}}_K ,\\
(\nu^{\partial K^*}_K)^* & = \nu^{\partial K^*}_{K^*} = i_* \nu^{\partial K^{\circ}}_{K^{\circ}} =  \scalar{x^*,\theta} \H^{n-1}|_{\partial K^*}(dx^*) = n \; i_*  V^{\partial K^{\circ}}_{K^\circ}  , \\
((\nabla^{\partial K^*}_K)_U V)^i & = ((\nabla^{\partial K^*}_{K^*})^*_U V)^i = ((i_* \nabla^{\partial K^{\circ} \; *}_{K^{\circ}})_U V)^i \\
 & = ({}^{\partial K^*} \nabla_U V)^{i} + (g^{\partial K^*}_K)^{ij} \brac{  \bar D^3 h_{K} (U,V,e_j)   +  g^{\partial K^*}_K(U,V) (h_{K})_j } ,\\
(\nabla^{\partial K^*}_K)^*_U V & = (\nabla^{\partial K^*}_{K^*})_U V = (i_* \nabla^{\partial K^{\circ}}_{K^{\circ}})_U V = 
{}^{\partial K^*} \nabla_U V  + g^{\partial K^*}_K(U,V) P_{\theta^{\perp}} x^*
\end{align*}
(for any $U \in T_{x^*}\partial K^*, V \in \Gamma^1(T \partial K^*)$ and local frame $\{e_1,\ldots,e_{n-1}\}$ on $\partial K^*$).
Here $\kappa^{\partial K^*}_{x^*} = \det \II^{\partial K^*}_{x^*} = 1 / \det(D^2_{\theta} h_{K^*})$ denotes the Gauss curvature of $\partial K^*$ at $x^*$. 
\end{prop}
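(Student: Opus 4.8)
The plan is to obtain Proposition \ref{prop:partial-K*-param} as the exact dual counterpart of Proposition \ref{prop:partial-K-param}, using the duality machinery of Subsection \ref{subsec:CA-duality}. First I would invoke the duality identities established there, which for the parametrization $M = \partial K^*$ read $g^{\partial K^*}_K = g^{\partial K^*}_{K^*}$, $\nabla^{\partial K^*}_K = (\nabla^{\partial K^*}_{K^*})^*$, $(\nabla^{\partial K^*}_K)^* = \nabla^{\partial K^*}_{K^*}$, $\nu^{\partial K^*}_K = (\nu^{\partial K^*}_{K^*})^*$ and $(\nu^{\partial K^*}_K)^* = \nu^{\partial K^*}_{K^*}$, together with the tautological identification $O^{\partial K^*}_{K^*} = i_* O^{\partial K^{\circ}}_{K^{\circ}}$ for $O \in \{g,\nabla,\nabla^*,\nu,\nu^*\}$ (note $i^{-1}(\partial K^*) = \partial K^{\circ}$). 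Combining these two reductions expresses each asserted formula in terms of the corresponding one in Proposition \ref{prop:partial-K-param} applied to $K^{\circ}$ in place of $K$, upon using $(K^{\circ})^{\circ} = K$ so that $h_{(K^{\circ})^{\circ}} = h_K$. The last step of this route is to push the resulting expressions forward via $i$, under which the generic point of $\partial K^{\circ}$ goes to $x^* \in \partial K^*$, its unit outer normal goes to $\theta \in \S$ (the outer normal to $\partial K^*$ at $x^*$, which points towards $x$), and $h_{K^{\circ}}$ evaluated at that normal becomes $h_{K^*}(\theta)$.

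As a self-contained cross-check I would also carry out the direct computation, which mirrors the proof of Proposition \ref{prop:S*-param}. Since $(x^{\partial K^*}_K)^* = x^{\partial K^*}_{K^*}$ is the centro-affine normal of $\partial K^*$ viewed inside $E^*$, it is simply the inclusion $x^* \mapsto x^*$, while $x^{\partial K^*}_K : \partial K^* \ni x^* \mapsto \bar D h_K(x^*) = x \in \partial K$. Then at $x^* \in \partial K^*$ one has $d(x^{\partial K^*}_K)(U) = \bar D_U \bar D h_K$ and $d((x^{\partial K^*}_K)^*)(V) = V$ for $U,V \in T_{x^*}\partial K^*$, so $g^{\partial K^*}_K(U,V) = \scalar{V,\bar D_U \bar D h_K} = \bar D^2 h_K(U,V)$, which equals $P_{\theta^{\perp}}\bar D^2_{x^*}h_K P_{\theta^{\perp}}$ because $\bar D^2 h_K \cdot x^* = 0$ by $1$-homogeneity while $U,V \perp \theta$. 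The two connections are extracted from the Gauss equation (\ref{eq:Gauss}): for the conormal $x^* = \Id$ it gives the first-order expression for $(\nabla^{\partial K^*}_K)^*$, and for $x^{\partial K^*}_K = \bar D h_K$, after the same projection-onto-$\theta^{\perp}$ manipulation used for $\S^*$, it gives the third-order expression for $\nabla^{\partial K^*}_K$; the two volume forms follow from their definitions evaluated on a Euclidean orthonormal frame, with $\Det^*$ replacing $\Det$ and the Gauss map $\partial K^* \to \S$.

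To get the remaining forms of the metric and the cone-volume identifications I would dualize the Lemma preceding Proposition \ref{prop:partial-K-param}: the centro-affine conormalization of $\partial K^*$ by $x^*$ agrees with its Euclidean conormalization by $\theta$ up to the positive scalar $|x|$, whence $g^{\partial K^*}_K = |x|\,\II^{\partial K^*}_{x^*}$, and dually $\nu^{\partial K^*}_K$ and $(\nu^{\partial K^*}_K)^*$ acquire the factors $\kappa^{\partial K^*}_{x^*}/\scalar{x^*,\theta}^n$ and $\scalar{x^*,\theta}$ against $\H^{n-1}|_{\partial K^*}$, where $\kappa^{\partial K^*}_{x^*} = \det \II^{\partial K^*}_{x^*} = 1/\det(D^2_\theta h_{K^*})$; passing between the three displayed expressions for $g^{\partial K^*}_K$ then uses $|x|\scalar{x^*,\theta} = \scalar{x^*,x} = 1$, i.e. $|x| = 1/h_{K^*}(\theta)$.

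I expect no conceptual difficulty: all of the required tools are already in place, so the only real work is the bookkeeping — keeping straight which of $x,x^*,\theta,\theta^*$ fills each slot after dualizing, and which connection and which volume form is the conjugate of its partner. This is the step where sign and normalization slips are most likely, but it is mechanical rather than deep, which is why the statement is recorded here without a detailed proof.
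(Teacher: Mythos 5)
Your argument is correct and is essentially the paper's intended proof: the paper states this proposition without a detailed derivation, noting only that it follows from the tools already developed (conjugation/duality, push-forward — i.e.\ Proposition \ref{prop:partial-K-param} applied to $K^{\circ}$ and transported via $i$ — or direct computation as for $\S^*$), which are exactly your two routes, and your bookkeeping of $x,x^*,\theta,\theta^*$ and of the factors $\kappa^{\partial K^*}_{x^*}/\scalar{x^*,\theta}^n$ and $\scalar{x^*,\theta}$ checks out. The only cosmetic slip is in the dualized lemma: the comparison is between the centro-affine conormal $x$ of $\partial K^*$ and its Euclidean conormal $\theta$ (not ``by $x^*$''), but your factor $|x| = 1/h_{K^*}(\theta) = 1/\scalar{x^*,\theta}$ and the resulting formulas are correct.
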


\subsection{Differential calculus and the Hilbert-Brunn-Minkowski operator} \label{subsec:HBM}

We denote the centro-affine divergence and Hessian operators by:
\begin{align*}
\div^M_K := \div^{\nabla^M_K} & ~,~ (\div^M_K)^* = \div^{(\nabla^M_K)^*} \\
 \Hess^M_K := \Hess^{\nabla^M_K} & ~,~ (\Hess^M_K)^* = \Hess^{(\nabla^M_K)^*},
\end{align*}
 omitting the superscript $M$ when the context is clear.
 
 \begin{lemma} \label{lem:Hess-loc}
 In a local frame on $\S^*$ we have for any $f \in C^2(\S^*)$:
 \[
 (\Hess_K^*)_{ij}  f= {}^{\S^*} \nabla^2_{ij} f + (\log h_K)_i f_j  + (\log h_K)_j f_i . 
 \]
 \end{lemma}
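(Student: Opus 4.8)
The plan is to compute $(\Hess^*_K)_{ij} f = (\nabla^{\S^*}_K)^*_i f_j$ directly from the local expression for the conjugate connection recorded in Proposition~\ref{prop:S*-param}. Recall that we have the formula
\[
(\nabla^{\S^*}_K)^*_U V = {}^{\S^*} \nabla_U V - U(\log h_K) V - V(\log h_K) U
\]
for vector fields, and the equivalent statement on the Christoffel symbols (\ref{eq:Christoffel-on-S*}),
\[
{}^{\nabla^*_K} \Gamma_{ij}^k = {}^{\S^*} \Gamma_{ij}^k - \delta_i^k (\log h_K)_j - \delta_j^k (\log h_K)_i .
\]
The Hessian of $f$ with respect to any torsion-free connection $\nabla$ in a local frame is $(\Hess^{\nabla} f)_{ij} = \partial^2_{ij} f - {}^{\nabla}\Gamma_{ij}^k \partial_k f$, which is the defining identity recorded just before (\ref{eq:II-2nd-deriv}) in Section~\ref{sec:prelim} (applied there to ${}^{\S^*}\nabla$, but valid for any torsion-free connection).

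First I would substitute the formula for ${}^{\nabla^*_K}\Gamma_{ij}^k$ into $\partial^2_{ij} f - {}^{\nabla^*_K}\Gamma_{ij}^k f_k$:
\[
(\Hess^*_K)_{ij} f = \partial^2_{ij} f - {}^{\S^*}\Gamma_{ij}^k f_k + \delta_i^k (\log h_K)_j f_k + \delta_j^k (\log h_K)_i f_k .
\]
The first two terms combine to ${}^{\S^*}\nabla^2_{ij} f$ by the same identity, and contracting the Kronecker deltas in the remaining two terms gives $(\log h_K)_j f_i + (\log h_K)_i f_j$. This yields exactly
\[
(\Hess^*_K)_{ij} f = {}^{\S^*}\nabla^2_{ij} f + (\log h_K)_i f_j + (\log h_K)_j f_i,
\]
as claimed. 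As a consistency check I would also note that tracing this against $g_K^{ij} = (D^2 h_K / h_K)^{ij}$ reproduces $\Delta_K f$ as given in (\ref{eq:HBM}), since by the discussion in Subsection~\ref{subsec:ADG-calculus} one has $\Delta^{\nabla,g} f = \tr_g \Hess^{\nabla^*} f$, i.e. $\Delta_K f = g_K^{ij}(\Hess^*_K)_{ij} f$; this matches (\ref{eq:HBM}) term by term.

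There is essentially no obstacle here: the lemma is a one-line unwinding of the conjugate-connection Christoffel symbols already derived in Proposition~\ref{prop:S*-param}. The only point requiring a modicum of care is bookkeeping the frame-dependent (non-tensorial) Christoffel terms correctly — making sure that the ${}^{\S^*}\Gamma$ contributions are precisely those that convert $\partial^2_{ij} f$ into the covariant Hessian ${}^{\S^*}\nabla^2_{ij} f$, with no leftover terms — but this is immediate from the fact that both $\nabla^{\S^*}_K{}^*$ and ${}^{\S^*}\nabla$ are torsion-free connections on the same manifold and their Christoffel symbols differ by the tensor $-\delta_i^k (\log h_K)_j - \delta_j^k (\log h_K)_i$.
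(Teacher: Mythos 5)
Your proof is correct and follows exactly the paper's own argument: the paper likewise writes $(\Hess_K^*)_{ij} f = \partial^2_{ij} f - {}^{\nabla_K^*}\Gamma_{ij}^k \partial_k f$ and plugs in the Christoffel symbols from (\ref{eq:Christoffel-on-S*}), which you have simply spelled out (together with a harmless consistency check against (\ref{eq:HBM})). No gaps.
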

 \begin{proof}
 Recall that:  \[
 (\Hess_K^*)_{ij} f = \partial^2_{ij} f -  {}^{\nabla_K^*} \Gamma_{ij}^k \partial_k f .
 \]
 Plugging the expression for the Christoffel symbols on $\S^*$ derived in Proposition \ref{prop:S*-param} and recorded in (\ref{eq:Christoffel-on-S*}), the assertion immediately follows. 

 \end{proof}

  Recalling the discussion in Subsection \ref{subsec:ADG-calculus}, we denote the corresponding centro-affine Laplacian operators by:
 \[
 \Delta^M_K := \div^M_K \; \grad_{g^M_K} = \tr_{g^M_K} (\Hess^M_K)^* ~,~ (\Delta^M_K)^* := (\div^M_K)^* \; \grad_{g^M_K} = \tr_{g^M_K} \Hess^M_K . 
 \]
 
 Recall that the Hilbert--Brunn--Minkowski operator on $\S^*$ was introduced in Section \ref{sec:prelim} as the weighted Laplacian on $(\S^*,g^{\S^*}_K, V^{\S^*}_K)$, and was denoted by $\Delta_K^{\S^*}$. While the reader may be concerned that there will be some ambiguity due to our identical notation
 for the Hilbert--Brunn--Minkowski and centro-affine Laplacian operators,  
  our most important observation in this section is that there is no ambiguity. We omit the particular parametrization $M$, as it is irrelevant in the statement below. 

\begin{thm} \label{thm:DeltaK}
The centro-affine Laplacian $\Delta_K$ coincides with the Hilbert-Brunn-Minkowski operator. 
\end{thm}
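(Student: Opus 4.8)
The strategy is to reduce everything to the most convenient parametrization $M = \S^*$, where both operators have been given explicit local coordinate formulas, and then simply match the expressions. On one hand, from Section~\ref{sec:prelim}, the Hilbert--Brunn--Minkowski operator is
\[
\Delta_K z = g_K^{ij}\bigl({}^{\S^*}\nabla^2_{ij} z + (\log h_K)_i z_j + (\log h_K)_j z_i\bigr),
\]
with $g_K = D^2 h_K / h_K$. On the other hand, from Subsection~\ref{subsec:ADG-calculus}, the centro-affine Laplacian is $\Delta_K^{\S^*} = \tr_{g_K^{\S^*}}(\Hess_K^{\S^*})^*$, i.e.\ $\Delta_K f = g_K^{ij}(\Hess_K^*)_{ij} f$. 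So the whole statement comes down to identifying $(\Hess_K^*)_{ij} f$, which is exactly the content of Lemma~\ref{lem:Hess-loc}: plugging the Christoffel symbols ${}^{\nabla_K^*}\Gamma_{ij}^k$ of the conjugate centro-affine connection from \eqref{eq:Christoffel-on-S*} into the definition $(\Hess_K^*)_{ij} f = \partial_{ij}^2 f - {}^{\nabla_K^*}\Gamma_{ij}^k \partial_k f$ and comparing with ${}^{\S^*}\nabla^2_{ij} f = \partial_{ij}^2 f - {}^{\S^*}\Gamma_{ij}^k \partial_k f$ yields
\[
(\Hess_K^*)_{ij} f = {}^{\S^*}\nabla^2_{ij} f + (\log h_K)_i f_j + (\log h_K)_j f_i.
\]
Contracting with $g_K^{ij}$ then reproduces the displayed formula for $\Delta_K z$ verbatim, which is the assertion on $M=\S^*$.

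First I would recall that, by the construction in Subsection~5.1, for any other parametrization $M \in \M_K$ the object $\Delta_K^M$ is defined as the push-forward of $\Delta_K^{\S^*}$ under the diffeomorphism $T_K^{\S^* \to M}$, and likewise the weighted-Laplacian (Hilbert--Brunn--Minkowski) interpretation on $(M, g_K^M, V_K^M)$ is the push-forward of the one on $(\S^*, g_K^{\S^*}, V_K^{\S^*})$, since $g_K$ and $V_K$ are themselves defined by push-forward and the weighted Laplacian depends only on the metric-measure structure. Hence it genuinely suffices to check the identity on a single parametrization, and the superscript $M$ may be dropped. This reduction should be stated explicitly but needs no computation.

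The only remaining point requiring care is to confirm that the two integration-by-parts structures agree, i.e.\ that $\nu_K^{\S^*}$ (the centro-affine volume measure) is, up to the same constant, the cone-volume measure $V_K^{\S^*}$ against which the Hilbert--Brunn--Minkowski operator is self-adjoint. This is already recorded in Proposition~\ref{prop:S*-param}: $\nu_K^{\S^*} = h_K \det(D^2 h_K)\,\Leb^{\S^*} = n V_K^{\S^*}$, and since $\nabla_K \nu_K = 0$ the connection divergence $\div_K^*$ integrates by parts against $\nu_K$ exactly as the weighted divergence does against $V_K$. Therefore $\Delta_K = \div_K^* \grad_{g_K}$ is the $V_K$-weighted Laplacian of $(\S^*, g_K)$, which is the Hilbert--Brunn--Minkowski operator by definition.

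I do not expect a genuine obstacle here: the theorem is essentially a bookkeeping identity whose two sides have both been computed in closed form earlier in the paper. The one place to be attentive is the normalization convention --- the paper uses $\Delta_K = (n-1) L_K$ relative to \cite{KolesnikovEMilman-LocalLpBM} and the factor $n$ between $\nu_K$ and $V_K$ --- so I would make sure that the ``vanishing zeroth-order term'' property ($\Delta_K 1 = 0$, equivalently the $-(n-1)z$ correction in the $zh_K$ formulation) is consistently accounted for when passing between the $((D^2h_K)^{-1})^{ij}D^2_{ij}(zh_K) - (n-1)z$ form and the $g_K^{ij}(\cdots)$ form; but this is precisely the elementary identity already displayed in Section~\ref{sec:prelim}, so no new work is needed.
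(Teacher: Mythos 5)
Your proposal is correct and follows essentially the same route as the paper: verify the identity on $M=\S^*$ by contracting the expression for $(\Hess_K^*)_{ij}f$ from Lemma \ref{lem:Hess-loc} with $g_K^{ij}$ and matching with the formula (\ref{eq:HBM}), using that $g_K^{\S^*}$ coincides with the metric (\ref{eq:prelim-metric}). The additional remarks on parametrization-independence and on $\nu_K^{\S^*}=nV_K^{\S^*}$ are consistent with the paper's surrounding discussion but are not needed for the proof itself.
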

\begin{proof}
We verify the claim on $\S^*$. By Lemma \ref{lem:Hess-loc}:
\[
\Delta_K^{\S^*} f =  g_K^{ij}(\Hess_K^* f)_{ij} = g_K^{ij} \brac{ {}^{\S^*} \nabla^2_{ij} f + (\log h_K)_i f_j + (\log h_K)_j f_i } . 
\]
Recalling that centro-affine metric $g_K = g^{\S^*}_K$ coincides with the metric (\ref{eq:prelim-metric}), we confirm that the right-hand-side coincides with the Hilbert--Brunn--Minkowski operator (\ref{eq:HBM}). 
\end{proof}

Theorem \ref{thm:DeltaK} finally gives a satisfactory explanation for the centro-affine equivariance property of the Hilbert--Brunn--Minkowski operator, originally observed in \cite[Section 5.2]{KolesnikovEMilman-LocalLpBM} following a lengthy computation, but now an immediate consequence of Proposition \ref{prop:CA-Invariance}. 

Recall from Subsection \ref{subsec:ADG-calculus} that (regardless of the parametrization $M$):
\begin{equation} \label{eq:CA-Dirichlet}
\int (-\Delta_K f) h \; d\nu_K = \int g_K(\grad_{g_K} f, \grad_{g_K} h) \; d\nu_K = \int f (-\Delta_K h) \; d\nu_K ,
\end{equation}
for all $f,h \in C^2(M)$. Recall from Proposition \ref{prop:S*-param} that on $\S^*$, $\nu^{\S^*}_K$ coincides (up to normalization) with the cone-volume measure $V^{\S^*}_K$. In \cite[Section 5.1]{KolesnikovEMilman-LocalLpBM}, we had originally (implicitly) identified the metric $g_K^{\S^*}$ by performing the intergration-by-parts in (\ref{eq:CA-Dirichlet}) with respect to $V_K$ and computing the Dirichlet form, thereby interpreting the Hilbert--Brunn--Minkowski operator as the weighted Laplacian on $(\S^* , g^{\S^*}_K , V^{\S^*}_K)$. However, it was not entirely clear whether the choice of measure $V^{\S^*}_K$ and thus construction of the metric $g^{\S^*}_K$ are canonical, or what is the direct relation between these two objects (as in general $V_K$ is not the Riemannian volume measure for $g_K$); we now finally have a satisfactory answer coming from the centro-affine normalization.

Consequently, regardless of the parametrization $M$, $-\Delta_K$ uniquely extends to a self-adjoint positive semi-definite operator on $L^2(\nu_K)$ with domain $H^2$ (the Sobolev space on $M$), as explained in \cite[Section 5.1]{KolesnikovEMilman-LocalLpBM}. Its spectrum $\sigma(-\Delta_K)$ is thus inherently centro-affine invariant, and may be studied regardless of parametrization. The spectrum is discrete, consisting of a countable sequence of eigenvalues of finite multiplicity starting at $0$ and tending to $\infty$. The first (trivial) eigenvalue $\lambda_0(-\Delta_K)$ is zero, corresponding to the constant eigenfunctions. As shown by Hilbert \cite{BonnesenFenchelBook,KolesnikovEMilman-LocalLpBM}, the next eigenvalue $\lambda_1(-\Delta_K)$ is $n-1$, and this fact is equivalent to the classical Brunn--Minkowski inequality; moreover, Hilbert showed that the multiplicity of the eigenvalue $n-1$ is precisely $n$. We will give a new proof of both of these statements 
 in the next section using Lichnerowicz's method and Bochner's formula, utilizing the fact that $\partial K$ is a centro-affine sphere having constant centro-affine Ricci curvature equal to $n-2$. 

\medskip

In Hilbert's original definition of his differential operator, the eigenfunctions corresponding to the first non-trivial eigenvalue $\lambda_1 = n-1$ on $\S^*$ were (the restriction to $\S^*$ of) linear functionals on $E^*$. However, with our definition of $\Delta_K$, originating in \cite{KolesnikovEMilman-LocalLpBM} and further studied in \cite{EMilman-Isospectral-HBM}, the corresponding eigenfunctions are the $K$-adapted linear functions:
\[
\lin^{\S^*}_{K,\xi} := \frac{\scalar{\cdot, \xi}}{h_K}  ~,~ \xi \in E . 
\]
When $K$ is a centered Euclidean ball (or ellipsoid), these coincide with the usual linear functionals, but not in general. While this originally appeared to us to be a caveat of our definition (compared to the one used by Hilbert), we now observe that this is in fact very natural. Indeed, the natural extension from a Euclidean ball to a general $K$ should be to restrict the linear functionals on $E^*$ to $\partial K^*$ instead of $\S^*$. In a parametrization-free language this means using the conormal $x^*_K$ (which on $\partial K^*$ is just the identity map, and so $\scalar{x^*_K,\xi}$ are just linear functions on $\partial K^*$):

\begin{prop} \label{prop:first-eigenfuncs}
\begin{equation} \label{eq:lin-is-x*}
\lin^{\S^*}_{K,\xi} = \scalar{(x^{\S^*}_K)^*,\xi} \;\;\; \forall \xi \in E ,
\end{equation}
and regardless of parametrization:
\begin{align}
 \label{eq:Hess-of-x*}
\Hess^*_K x^*_K & = - x^*_K  g_K, \\
\label{eq:first-eigenfunctions}
-\Delta_K \scalar{x^*_K,\xi} & = (n-1) \scalar{x^*_K,\xi} \;\;\; \forall \xi \in E .
\end{align}
\end{prop}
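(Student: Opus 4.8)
The plan is to work in the $M = \partial K^*$ parametrization, where the conormal field becomes the identity map, and then use the structure equations established in Section~\ref{sec:ADG}. First I would prove \eqref{eq:lin-is-x*}: by definition $(x^{\S^*}_K)^* = x^{\S^*}_{K^*}(\theta^*) = \theta^*/h_K(\theta^*)$, so $\scalar{(x^{\S^*}_K)^*,\xi} = \scalar{\theta^*,\xi}/h_K(\theta^*)$, which is exactly $\lin^{\S^*}_{K,\xi}$. This is immediate from the explicit formula for $x^*$ on $\S^*$ recorded before Proposition~\ref{prop:S*-param}. Since all the objects $\Delta_K$, $\Hess^*_K$, $g_K$, $\nu_K$ are transported consistently between the parametrizations in $\M_K$ via the diffeomorphisms $T^{M_1 \to M_2}_K$, it suffices to verify \eqref{eq:Hess-of-x*} and \eqref{eq:first-eigenfunctions} in whichever parametrization is most convenient, and $\partial K^*$ is the natural one.

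The key step is \eqref{eq:Hess-of-x*}. The dual centro-affine normalization of $\partial K$ by the conormal $x^*_K$ is precisely the centro-affine (primal) normalization of the hypersurface $x^*_K : M \to E^*$ — this is the content of \eqref{eq:CA-dual-conjugate} and the surrounding discussion, which gives $\nabla^x_K = (\nabla^{x^*}_{K^*})^*$ and hence $(\nabla^x_K)^* = \nabla^{x^*}_{K^*}$. So the Hessian with respect to the conjugate connection $(\nabla^x_K)^*$ of the $E^*$-valued function $x^*_K$ is nothing but the Hessian $\Hess^{\nabla^{x^*}_{K^*}} x^*_{K^*}$ of the hypersurface $x^*_{K^*} = x^*_K$ with respect to its own primal centro-affine connection. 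But the Gauss structure equation for the centro-affine normalization \eqref{eq:CA-structure} (applied with the roles of $E$ and $E^*$ swapped, i.e. to the hypersurface $x^* : M \to E^*$ whose centro-affine normal is $x^*$ itself) states exactly
\[
\Hess^{\nabla^{x^*}}_{ij} x^* = - g^{x^*}_{ij}\, x^* .
\]
Combined with $g^{x^*} = g^x = g_K$ (self-duality of the centro-affine metric), this yields $\Hess^*_K x^*_K = - x^*_K\, g_K$, establishing \eqref{eq:Hess-of-x*}. I would alternatively give the $\S^*$-coordinate check via Lemma~\ref{lem:Hess-loc}: for $f = \lin^{\S^*}_{K,\xi} = \scalar{\theta^*,\xi}/h_K$, one computes $(\Hess^*_K)_{ij} f = {}^{\S^*}\nabla^2_{ij} f + (\log h_K)_i f_j + (\log h_K)_j f_i$ and verifies this equals $-f\, (g_K)_{ij} = -f\, {}^{\S^*}\nabla^2_{ij}(h_K)/h_K - f\,\delta^{\S^*}_{ij}$, using $1$-homogeneity of $h_K$ and $\scalar{\theta^*,\xi}$ and the identity $D^2 h_K = {}^{\S^*}\nabla^2 h_K + h_K \delta^{\S^*}$; this is a routine but slightly tedious expansion, so I would lead with the coordinate-free argument and relegate the coordinate verification to a remark.

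Finally, \eqref{eq:first-eigenfunctions} follows by tracing \eqref{eq:Hess-of-x*} against $g_K$: applying $\tr_{g_K}$ and the definition $\Delta_K = \tr_{g_K} \Hess^*_K$ (from Subsection~\ref{subsec:HBM}) gives $\Delta_K \scalar{x^*_K,\xi} = \scalar{\Hess^*_K x^*_K, \xi}_{g_K\text{-trace}} = -\scalar{x^*_K,\xi}\, \tr_{g_K} g_K = -(n-1)\scalar{x^*_K,\xi}$, since $\tr_{g_K} g_K = \dim M = n-1$. I do not anticipate a serious obstacle here; the only point requiring care is bookkeeping the duality dictionary \eqref{eq:CA-dual-conjugate} correctly so that ``$\Hess^*$ of the conormal'' is identified with ``$\Hess$ of the primal hypersurface in the dual space,'' and confirming the metric is genuinely self-dual so that no stray factor appears when taking the trace. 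Everything else is a direct substitution into already-established structure equations.
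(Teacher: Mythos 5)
Your proposal is correct and follows essentially the same route as the paper: \eqref{eq:lin-is-x*} read off from the explicit formula $(x^{\S^*}_K)^*(\theta^*)=\theta^*/h_K(\theta^*)$, \eqref{eq:Hess-of-x*} obtained from the Gauss structure equation for $x^*$ in \eqref{eq:CA-structure} (your careful unpacking via the conjugation dictionary \eqref{eq:CA-dual-conjugate} and self-duality $g^{x^*}=g^x$ is exactly what the paper's terse ``direct consequence of the structure equations'' means), and \eqref{eq:first-eigenfunctions} by tracing against $g_K$ with $\tr_{g_K} g_K = n-1$. The optional coordinate check on $\S^*$ and the choice of the $\partial K^*$ parametrization are harmless embellishments, not a different argument.
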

\begin{proof}
Recalling that $(x^{\S^*}_K)^*(\theta^*) = \frac{\theta^*}{h_K(\theta^*)}$ and the definition of $\lin^{\S^*}_{K,\xi}$, (\ref{eq:lin-is-x*}) is immediate. 
The property (\ref{eq:Hess-of-x*}) is a direct consequence of our centro-affine structure equations (\ref{eq:CA-structure}).
Tracing with respect to $g_K$ immediately yields:
\[
\Delta_K x^*_K = \tr_{g_K} \Hess^*_K  x^*_K = -g^{ij} g_{ij} x^*_K = -(n-1) x^*_K . 
\]
\end{proof}

We will see in Theorem \ref{thm:eigenspace} that (\ref{eq:first-eigenfunctions}) is in fact equivalent to the a-priori stronger property (\ref{eq:Hess-of-x*}). 
One can also easily calculate expressions such as $\Delta_K (x^*_K)^p$ and $|\grad_{g_K} (x^*_K)^p|^2$ using the above calculus, which is originally what led us to the key calculation in \cite{EMilman-Isospectral-HBM} (the calculation there was rederived without referring to the centro-affine normalization).

\subsection{Self-Duality} \label{subsec:CA-SelfDuality}

We conclude this section by reflecting a bit more on the remarkable self-duality property of the centro-affine metric $g_K$:
\[
 g^{i(M^*)}_{K^\circ} =  i_* g^{M^*}_{K^*} =  i_* g^{M^*}_{K} = (i \circ \D_K)_* g^M_{K} \;\;\; \forall M \in \M_K . 
\]
Since $i(M^*) = M$ for $M \in \{\S , \S^*\}$, we see that $i \circ \D_K$ is an isometry between $(\S^* , g^{\S^*}_K)$ and $(\S^* , g^{\S^*}_{K^{\circ}})$. Consequently, any geometric quantity which is encoded by the centro-affine metric is the same for $K$ and $K^{\circ}$. Below are a few examples and many questions; we omit the reference to the parametrization $M$ when it is irrelevant.  
\begin{itemize}
\item The Riemannian volume measure $\nu^{\S^*}_{g_K}$ is (up to a constant) the well-known centro-affine surface-area measure $\Omega^{\S^*}_{n,K}$ \cite{Hug-AffineSurfaceAreaOfPolar,Laugwitz-DifferentialGeometryBook}, which coincides with the $L^p$-affine surface-area measure of Lutwak \cite{Lutwak-Firey-Sums-II} for $p=n$:
\[
\frac{d\nu^{\S^*}_{g_K}}{d \Leb^{\S^*}} = \sqrt{\det g_K}  = \sqrt{\det (D^2 h_K / h_K)} = \frac{\sqrt{\det D^2 h_K}}{h_K^{\frac{n-1}{2}}} =: n \frac{d \Omega^{\S^*}_{n,K}}{d \Leb^{\S^*}} . 
\]
This can also be seen from (\ref{eq:conjugate-measures}):
\begin{align}
\label{eq:vol-product} \brac{\frac{ d\nu^{\S^*}_{g_K}}{d\Leb^{\S^*}}}^2 & = \frac{d \nu^{\S^*}_K}{d\Leb^{\S^*}} \frac{d (\nu^{\S^*}_K)^*}{d\Leb^{\S^*}} = \frac{n \; dV^{\S^*}_K}{d\Leb^{\S^*}}  \frac{n \; d \; i_*V^{\S}_{K^{\circ}}}{d \Leb^{\S^*}} \\
\nonumber & = \frac{ h_K dS_K}{d\Leb^{\S^*}} \frac{1}{h_K^{n}} = \frac{\det D^2 h_K}{h_K^{n-1}} . 
\end{align}
By Subsection \ref{subsec:CA-invariance}, the centro-affine surface-area measure is indeed centro-affine invariant, i.e. $A_* \nu^{\partial K}_{g_K} = \nu^{\partial A(K)}_{g_{A(K)}}$ for all $A \in GL(E)$. 
Since $i \circ \D_K : \S^* \rightarrow \S^*$ pushes forward $\nu^{\S^*}_{g_K}$ onto $\nu^{\S^*}_{g_{K^{\circ}}}$, both measures have the same total mass, which up to normalization is the centro-affine surface-area $\Omega_n$:
\[
\Omega_n(K) = \frac{1}{n} \snorm{\nu_{g_K}} = \frac{1}{n} \snorm{\nu_{g_{K^{\circ}}}} = \Omega_n(K^{\circ}) . 
\]
Note that (\ref{eq:vol-product}) and the Cauchy-Schwarz inequality immediately yield:
\begin{equation} \label{eq:vol-product-lower-bound}
\Omega_n(K)^2 \leq V(K) V(K^{\circ}) . 
\end{equation}
Unfortunately, $\Omega_n(K_i)$ converges to $0$ whenever $K_i$ converge in the Hausdorff metric to a polytope, and so this cannot be used to effectively lower bound the volume-product on the right-hand side. All of this is of course well-known.  
\item Let $d_{g_K}$ be the induced geodesic distance on $(\S^*,g_K)$. Given $p > -n+1$, define the quantity:
\[
W_p(K) := \brac{\int_{\S^*} \int_{\S^*} d^p_{g_K}(x,y) \nu_{g_K}(dx) \nu_{g_K}(dy)}^{1/p} . 
\]
Then $W_p(K) = W_p(K^{\circ})$ for all $p > -n+1$. 
\item 
$W_\infty(K) := \text{diam}(\S^*, d_{g_K})$ also satisfies $W_\infty(K) = W_\infty(K^{\circ})$. What is the geometric meaning of this quantity? Is this related to the resolution of Sch\"affer's conjecture by \'Alvarez-Paiva \cite{AlvarezPaiva-SelfDualGirth} (see also Faifman \cite{Faifman-DualGirthOnGrassmanians}), identifying between the girths of $K$ and $K^{\circ}$? This seems unlikely, since the girth is a Finslerian notion. Note that it is not hard to show that $W_\infty(K+L) \leq W_\infty(K) + W_\infty(L)$. 
\item Consider $\Sigma(K) := \sigma(-\Delta_{g_K})$, the spectrum of the Laplace-Beltrami operator on $(\S^*,g_K)$. Then $\Sigma(K) = \Sigma(K^{\circ})$. Is there a geometric meaning to (at least the first) eigenvalues? 
\item Let $F_{g_K}$ denote any non-trivial real-valued function of the Riemann curvature tensor $\RR_{g_K}$ and the metric tensor $g_K$ -- such as the scalar-curvature $\mathfrak{s}_{g_K}$, the Hilbert-Schmidt norm of the Ricci tensor, or some other function of the sectional curvatures. Define $\mathfrak{F}_p(K) = (\int |F_{g_K}|^p \nu_{g_K})^{1/p}$. Then $\mathfrak{F}_p(K) = \mathfrak{F}_p(K^{\circ})$. Of particular interest is the average scalar curvature:
\[
\mathfrak{S}(K) = \int \mathfrak{s}_{g_K} d\nu_{g_K} . 
\]
Does this quantity have a natural geometric meaning? 
One can show that $\mathfrak{s}_{g_K} \geq (n-1)(n-2)$ in the centro-affine normalization \cite[formula (39)]{Opozda-BochnerMethod}, and so $\mathfrak{S}(K) \geq (n-1) (n-2) \Omega_n(K)$. It would be interesting to try and lower bound the volume product $V(K) V(K^{\circ})$ by a function of $\mathfrak{S}(K)^2 = \mathfrak{S}(K) \mathfrak{S}(K^{\circ})$, in view of (\ref{eq:vol-product-lower-bound}). 
\item Is it true that if $(\S^*,g_{K_1})$ and $(\S^*, g_{K_2})$ are isometric then $K_1$ and $K_2$ are congruent up to a centro-affine transformation and polarity? It is not hard to show that if $g_{K_1} = g_{K_2}$ then necessarily $K_2 = c K_1$ for some $c > 0$. 
\end{itemize}

\section{Bochner formula} \label{sec:Bochner}

\subsection{Asymmetric Bochner formula}

The classical Bochner formula \cite[Chapter 9]{PetersenBook3rdEd} states that if $f$ is a smooth function on a Riemannian manifold $(M,g)$ then:
\[
\frac{1}{2} \Delta_g |\grad_g f|^2 = g(\grad_g f , \grad_g \Delta_g f) + \Ric_g (\grad_g f , \grad_g f) + \norm{\Hess_g f}_g^2 . 
\]
In this formula, all higher order differential objects are computed with respect to the Levi-Civita connection $\nabla^g$ (which, recall, is the unique torsion-free connection which is in addition metric, i.e. $\nabla^g g = 0$); namely, $\Delta_g = \div^{\nabla^g} \grad_g$, $\Hess_g f = \nabla^g df$ and $\Ric_g$ denotes the Ricci curvature of $\nabla^g$. We use the notation:
\[
 |X|^2 = g(X,X)  ~,~ \norm{A}_g^2 = g(A,A) = \scalar{A,A}_g =  g^{ij} A_{jk} g^{kl} A_{li},
\]
for the (squared) Riemannian length of a vector-field $X$ and Riemannian Hilbert-Schmidt norm of a symmetric $(0,2)$ tensor $A$, respectively. 

The Bochner formula is local, and applies to arbitrary vector-fields $X$ so that $\nabla^g X$ is a $g$-symmetric $(1,1)$ tensor:
\[
g( \nabla^g_Y X , Z) = g( \nabla^g_Z X , Y) \;\;\; \forall Y,Z \in T_p M \;\; \forall X \in \Gamma^1(T M) 
\]
 (and hence, locally, $X = \grad_g f$ for some function $f$). After polarization, it is equivalent to the following version, valid for any vector-fields $X,Y$ so that $\nabla^g X$ and $\nabla^g Y$ are $g$-symmetric $(1,1)$ tensors:
\begin{equation} \label{eq:classical-Bochner}
\Delta_g \; g(X,Y) = X(\div^{\nabla^g} Y) + Y(\div^{\nabla^g} X) + 2 \Ric_g(X,Y) + 2 \tr(\nabla^g X \circ \nabla^g Y) ,
\end{equation}
where $\nabla X \circ \nabla Y$ denotes the composition as $(1,1)$ tensors, so that in a local coordinates:
\[
\tr( \nabla X \circ \nabla Y) = \nabla_j X^i \nabla_i Y^j . 
\]
 An inspection of the proof of the Bochner formula confirms that it makes use of both the torsion-free and metric properties of the Levi-Civita connection. 

However, it is possible to give a version of Bochner's formula which does not rely on any of these properties, holds for all vector-fields, and in fact does not require a Riemannian metric $g$ at all. 
\begin{prop}[Asymmetric Bochner formula] \label{prop:Bochner}
For any affine connection $\nabla$ on $M$ and (smooth) vector-fields $X,Y$:
\begin{equation} \label{eq:Bochner}
\div^{\nabla}(\nabla_X Y) = X(\div^{\nabla} Y) + \Ric(Y,X) + \tr( \nabla X \circ \nabla Y) .
\end{equation}
\end{prop}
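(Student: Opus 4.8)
The plan is to verify the identity (\ref{eq:Bochner}) by a direct local computation in an arbitrary coordinate frame $\{e_1,\ldots,e_{n-1}\}$ on $M$, expanding both sides in terms of the Christoffel symbols $\Gamma^k_{ij}$ of $\nabla$ and the components $X^i, Y^i$ of the vector fields, and matching term by term. Since the statement is purely local and tensorial in $X$ and $Y$ (no metric, no torsion-free or metric hypothesis is used), it suffices to compare the coordinate expressions; no invariant manipulation is needed, and in fact an invariant argument would be awkward precisely because $\Ric$ need not be symmetric and $\nabla$ need not be torsion-free.

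First I would record the building blocks. For a vector field $Z$, $\div^\nabla Z = \nabla_i Z^i = \partial_i Z^i + \Gamma^i_{ik} Z^k$. The covariant derivative of $Y$ along $X$ has components $(\nabla_X Y)^j = X^i \nabla_i Y^j = X^i(\partial_i Y^j + \Gamma^j_{ik}Y^k)$. Then $\div^\nabla(\nabla_X Y) = \nabla_j\big((\nabla_X Y)^j\big)$, which I would expand using the Leibniz rule into a sum involving $\partial_j X^i$, $\partial_j\partial_i Y^j$, $\partial_j \Gamma^j_{ik}$, and products of two Christoffel symbols. On the right-hand side, $X(\div^\nabla Y) = X^j \partial_j(\nabla_i Y^i)$ supplies the second-derivative-of-$Y$ terms and the $\partial_j\Gamma$ terms; $\tr(\nabla X\circ\nabla Y) = \nabla_j X^i \nabla_i Y^j$ supplies the $\partial X \cdot \partial Y$ cross terms plus Christoffel-weighted corrections; and $\Ric(Y,X)$, which in coordinates is $\Ric_{kl}Y^k X^l$ with $\Ric_{kl} = \partial_i\Gamma^i_{lk} - \partial_l\Gamma^i_{ik} + \Gamma^i_{im}\Gamma^m_{lk} - \Gamma^i_{lm}\Gamma^m_{ik}$ (using the curvature/Ricci sign conventions fixed in the excerpt, $\RR(X,Y)Z = \nabla_X\nabla_Y Z - \nabla_Y\nabla_X Z - \nabla_{[X,Y]}Z$ and $\Ric(Y,Z) = \tr\{X\mapsto \RR(X,Y)Z\}$), supplies exactly the remaining combination of $\partial\Gamma$ and $\Gamma\Gamma$ terms. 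The key step is then the bookkeeping: collect on the left all terms, group them by type ($\partial^2 Y$, $\partial X\,\partial Y$, $(\partial\Gamma)\,Y$, $\Gamma\Gamma\,Y$, $\Gamma\,\partial Y$, $\Gamma\,\partial X$), and check that the non-$\Ric$ terms on the right account for the first-derivative-type terms while the curvature combination produced by the commutator-type cancellation matches $\Ric_{kl}Y^kX^l$ precisely.

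The main obstacle — really the only one — is the risk of a sign or index error in this expansion, since the Christoffel symbols are not symmetric in their lower indices (no torsion-free assumption) and the Ricci tensor is the asymmetric one. A cleaner way to organize the argument, which I would actually prefer to carry out, is to introduce the operator identity at the level of the curvature: write $\div^\nabla(\nabla_X Y) - X(\div^\nabla Y) - \tr(\nabla X\circ\nabla Y)$ and observe that this equals $\nabla_j\nabla_i$ acting appropriately minus $\nabla_i\nabla_j$ acting on the relevant slots of $X\otimes Y$, i.e. it is exactly the trace of a curvature operator applied to $Y$ in the direction $X$. Concretely, $\nabla_j(X^i\nabla_i Y^j) - X^i\nabla_i(\nabla_j Y^j) - (\nabla_j X^i)(\nabla_i Y^j) = X^i(\nabla_j\nabla_i Y^j - \nabla_i\nabla_j Y^j)$, and the bracket is, by definition of curvature, $\RR$ contracted so as to produce $\Ric_{ik}Y^k$ — giving $\Ric(Y,X)$ after noting $X^i\Ric_{ik}Y^k = \Ric(Y,X)$ in the excerpt's convention. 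This reduces the whole proposition to the defining formula for the Ricci tensor as a trace of the curvature, and the only remaining care is to confirm that the particular contraction of $\RR$ that appears is indeed the one the excerpt calls $\Ric$, including getting the two arguments in the correct order (which matters here since symmetry of $\Ric$ is not assumed). I would close by remarking that polarizing in the special case $X = Y = \grad_g f$, using a metric $g$ with Levi-Civita connection to symmetrize, recovers the classical Bochner formula (\ref{eq:classical-Bochner}), as a sanity check.
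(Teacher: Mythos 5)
Your preferred ``cleaner'' route is exactly the paper's proof: work in a local frame, expand $\nabla_j(X^i\nabla_i Y^j)$ by the Leibniz rule, commute the two covariant derivatives on $Y$, and recognize the resulting contraction of the curvature tensor as a Ricci term. In substance you have reproduced the paper's three-line computation (your first, brute-force Christoffel expansion is just a less efficient version of the same bookkeeping), so the approach is the right one.

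Two cautions, however. First, your explicit claim that no torsion-free hypothesis is used is not accurate: the step identifying $\nabla_j\nabla_i Y^j-\nabla_i\nabla_j Y^j$ with a pure curvature contraction is the Ricci identity, and for a connection with torsion $T$ it picks up an extra term of the form $-T^k{}_{ji}\nabla_k Y^j$ (equivalently, your coordinate matching needs $\Gamma^k_{ij}=\Gamma^k_{ji}$ to close); the argument, like the paper's, silently uses torsion-freeness, which is harmless in context since every connection induced by a relative normalization is torsion-free. Second, on the point you yourself single out as ``the only remaining care'' but then settle by assertion: carrying out the trace, the curvature contribution is $\tr\{Z\mapsto \RR(Z,X)Y\}$, which in the paper's convention $\Ric(Y,Z)=\tr\{X\mapsto \RR(X,Y)Z\}$ equals $\Ric(X,Y)=\Ric_{ik}X^iY^k$; your asserted identity $X^i\Ric_{ik}Y^k=\Ric(Y,X)$ transposes the arguments. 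The two orderings agree precisely when $\Ric$ is symmetric, which holds for every connection to which the formula is applied in the paper (relative normalizations satisfy $\nabla\nu=0$, and the centro-affine Ricci tensor is $(n-2)g_K$), so nothing downstream is affected --- but since symmetry of $\Ric$ is not assumed in the proposition, this argument-order check is exactly the step that should be written out rather than claimed.
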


A proof may be found in \cite[Lemma 2.2]{Opozda-BochnerMethod}. For completeness, we provide a simple proof using local coordinates.

\begin{proof}
Denoting as customary the components of the curvature tensors $\RR$  and $\Ric$ by: \[
\RR(\partial_k , \partial_l) \partial_j = \RR_{jkl}^i \partial_i ~,~ \Ric_{jk} = \Ric(\partial_j , \partial _k) ,
\]
so that $\Ric_{jk} = \RR^{i}_{k i j}$, we have in local coordinates: \begin{align*}
\nabla_j (X^i \nabla_i Y^j)  & = \nabla_j X^i \nabla_i Y^j + X^i \nabla_j \nabla_i Y^ j \\
& = \nabla_j X^i \nabla_i Y^j + X^i \nabla_i \nabla_j Y^ j + X^i \RR^j_{i j l} Y^l \\
& = \nabla_j X^i \nabla_i Y^j + X^i \nabla_i \nabla_j Y^ j + \Ric_{l i} X^i  Y^l .
\end{align*}
\end{proof}

Our ``asymmetric" nomenclature stems from the fact that the formula is no longer symmetric in $X,Y$, and moreover, does not assume any symmetry -- not from $\nabla X,\nabla Y$ (these are mixed tensors so in any case symmetry does not make any sense), nor from the affine connection $\nabla$. The classical Bochner formula (\ref{eq:classical-Bochner}) is obtained from Proposition \ref{prop:Bochner} by applying it to the Levi-Civita connection and symmetrizing (i.e. interchanging the roles of $X,Y$ and summing). Indeed, note that by the metric property of $\nabla^g$ and as $\nabla^g X$ and $\nabla^g Y$ in (\ref{eq:classical-Bochner}) are assumed $g$-symmetric:
\[
\nabla^g_Z  \; g(X,Y) = g(\nabla^g_Z X , Y) + g(X , \nabla^g_Z Y) = g(\nabla^g_Y X,Z) + g(\nabla^g_X Y , Z) ,
\]
and hence
\[
 \grad_g \; g(X,Y) = \nabla^g_X Y + \nabla^g_Y X.
\]
 The torsion-free property assures that $\Ric_g$ is symmetric, and (\ref{eq:classical-Bochner}) immediately follows.

\subsection{Centro-affine Bochner formula}

Just as with the classical Bochner formula, integrating the asymmetric Bochner formula (\ref{eq:Bochner}) with respect to an invariant measure $\nu$ yields an integrated Bochner identity \cite[Theorem 9.9]{Opozda-BochnerMethod}. Applying this to our centro-affine differential structures, we immediately deduce:

\begin{thm}[Centro-affine Bochner formula] \label{thm:magic}
Let $K \in \K^{\infty}_+$ be a smooth convex body with strictly positive curvature which contains the origin in its interior. Let $K$ be equipped with the centro-affine normalization. Then, regardless of parametrization $M \in \M_K$, for any function $f \in C^2(M)$:
\begin{equation} \label{eq:magic}
\int (\Delta_K f)^2 d\nu_K - \int \norm{\Hess^*_K f}_{g_K}^2 d\nu_K = (n-2) \int |\grad_{g_K} f|^2 d\nu_K . 
\end{equation}
\end{thm}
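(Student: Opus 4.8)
The plan is to obtain the centro-affine Bochner formula as a direct consequence of the Asymmetric Bochner formula (Proposition \ref{prop:Bochner}), the self-duality/conjugacy structure of the centro-affine normalization, and the integration-by-parts facts $\nabla_K \nu_K = 0$ and $(\nabla_K)^* \nu_K^* = 0$. First I would apply Proposition \ref{prop:Bochner} with the affine connection taken to be the conjugate connection $\nabla_K^*$ and with $X = Y = \grad_{g_K} f$; since the parametrization is irrelevant (all objects are pushed forward isomorphically between elements of $\M_K$ and the claimed identity is parametrization-free), I may fix any convenient $M$, say $M = \S^*$. This yields the pointwise identity
\[
\div^{\nabla_K^*}\brac{\nabla_{\grad_{g_K}f}^* \grad_{g_K} f} = (\grad_{g_K} f)\brac{\div^{\nabla_K^*} \grad_{g_K} f} + \Ric^*_K(\grad_{g_K} f, \grad_{g_K} f) + \tr\brac{\nabla^* \grad_{g_K} f \circ \nabla^* \grad_{g_K} f}.
\]

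Next I would identify each of the three terms on the right and the left with the quantities appearing in \eqref{eq:magic}. By the discussion in Subsection \ref{subsec:ADG-calculus}, $\div^{\nabla_K^*} \grad_{g_K} f = \tr_{g_K} \Hess_K f = (\Delta_K)^* f$; here one must be slightly careful about which Laplacian is which. Note, however, that $\Delta_K = \tr_{g_K} \Hess_K^*$ is the Hilbert--Brunn--Minkowski operator (Theorem \ref{thm:DeltaK}), so to land on $\int (\Delta_K f)^2 d\nu_K$ and $\int \norm{\Hess_K^* f}_{g_K}^2 d\nu_K$ I should run Proposition \ref{prop:Bochner} with the \emph{primal} connection $\nabla_K$ and the measure $\nu_K$ (which is $\nabla_K$-parallel), so that $\div^{\nabla_K} \grad_{g_K} f = \tr_{g_K}\Hess_K^* f = \Delta_K f$, and the composition term becomes $\tr(\nabla_K \grad_{g_K}f \circ \nabla_K \grad_{g_K} f)$. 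Using the conjugation relation \eqref{eq:conjugation} to trade $\nabla_K$ for $\nabla_K^*$ inside the trace together with the metric identity $g_K(\grad_{g_K} f, X) = df(X)$, this trace equals $g_K^{ij}(\Hess_K^* f)_{jk} g_K^{kl}(\Hess_K^* f)_{li} = \norm{\Hess_K^* f}_{g_K}^2$; this is the one genuinely computational step, and it is the place where the self-duality of $g_K$ and the conjugacy $\nabla_K = (\nabla_K^*)^*$ (from Subsection \ref{subsec:ADG-CA}, \eqref{eq:CA-dual-conjugate}) are essential. Finally, the centro-affine Ricci curvature satisfies $\Ric_K = \Ric_K^* = (n-2) g_K$ because $\partial K$ is a centro-affine unit-sphere (Subsection \ref{subsec:ADG-CA}), so $\Ric_K(\grad_{g_K} f, \grad_{g_K}f) = (n-2)|\grad_{g_K} f|^2_{g_K}$.

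To conclude, I would integrate the pointwise identity against $d\nu_K$ over the closed manifold $M$. The left-hand side $\div^{\nabla_K}(\nabla_{\grad f}\grad f)$ integrates to zero since $\nabla_K \nu_K = 0$ and $M$ is compact without boundary (the divergence theorem from Subsection \ref{subsec:ADG-calculus}). The term $\int (\grad_{g_K} f)(\Delta_K f)\, d\nu_K$ is integrated by parts, again using $\nabla_K \nu_K = 0$, to give $-\int (\Delta_K f)^2 d\nu_K$: indeed $(\grad_{g_K}f)(\Delta_K f) = g_K(\grad_{g_K} f, \grad_{g_K}(\Delta_K f))$ integrates against $\nu_K$ to $-\int (\Delta_K f)(\div^{\nabla_K}\grad_{g_K} f) d\nu_K = -\int(\Delta_K f)^2 d\nu_K$. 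Rearranging the resulting identity $0 = -\int(\Delta_K f)^2 d\nu_K + (n-2)\int|\grad_{g_K}f|^2_{g_K}d\nu_K + \int\norm{\Hess_K^* f}_{g_K}^2 d\nu_K$ gives exactly \eqref{eq:magic}. The main obstacle is purely bookkeeping: keeping straight which connection ($\nabla_K$ versus $\nabla_K^*$) and which measure ($\nu_K$ versus $\nu_K^*$) pair up, and verifying via \eqref{eq:conjugation} that the asymmetric trace $\nabla_j X^i \nabla_i Y^j$ coming out of Proposition \ref{prop:Bochner} really collapses to the symmetric Hilbert--Schmidt norm $\norm{\Hess_K^* f}_{g_K}^2$ — everything else is an immediate application of already-established facts.
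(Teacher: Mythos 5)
Your proposal is correct and follows essentially the same route as the paper: apply the asymmetric Bochner formula (Proposition \ref{prop:Bochner}) with the primal connection $\nabla_K$ to $X=Y=\grad_{g_K}f$, use the conjugation relation (\ref{eq:conjugation}) to see that $\nabla_K\grad_{g_K}f = g_K^{-1}\Hess^*_K f$ so the composition trace collapses to $\norm{\Hess^*_K f}_{g_K}^2$, invoke $\Ric_K=(n-2)g_K$ since $\partial K$ is a centro-affine unit-sphere, and integrate against the $\nabla_K$-parallel measure $\nu_K$, with the divergence term vanishing and the first term integrating by parts to $-\int(\Delta_K f)^2 d\nu_K$. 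The only cosmetic differences are your harmless, self-corrected detour through $\nabla_K^*$ and the omission of the paper's one-line reduction by approximation from $f\in C^2$ to smooth $f$ (needed because the pointwise identity involves third derivatives of $f$, while both sides of the integrated identity are continuous in the $C^2$ topology).
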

\begin{proof}
By approximation, it is enough to prove the identity for smooth functions $f$. We abbreviate $\nabla = \nabla_K$.
Recall that:
\[
\nabla_i (\grad_{g_K} f)^j = \nabla_i (g^{jk}_K f_k) = g^{j k}_K \nabla^*_i f_k = g^{jk}_K (\Hess^*_K)_{ki} f ,
\]
and hence:
\[
\tr(\nabla \grad_{g_K} f \circ \nabla \grad_{g_K} f) = \norm{\Hess^*_K f}_{g_K}^2 . 
\]
Let $\Ric_K$ denote the Ricci curvature of the centro-affine connection. Recall that $\partial K$ is always a centro-affine ($(n-1)$-dimensional) unit-sphere and thus:
\[
\Ric_K  = (n-2) g_K .
\]
Setting $X = Y = \grad_{g_K} f$ and applying the asymmetric Bochner formula (\ref{eq:Bochner}) for $\nabla = \nabla_K$, we deduce that:
\[
\div^{\nabla_K} (\nabla_X X) = \grad_{g_K} f (\Delta_K f) + (n-2) |\grad_{g_K} f|^2 + \norm{\Hess^*_K f}_{g_K}^2 .
\]
Finally, recall that $\nabla_K \nu_K = 0$ (since the centro-affine normalization is a relative normalization). Consequently, integrating with respect to $\nu_K$ as in Subsection \ref{subsec:ADG-calculus}, the integral on left-hand-side above vanishes, and the first term on the right integrates by parts, yielding the asserted:
\[
0 = - \int (\Delta_K f)^2 d\nu_K + (n-2) \int |\grad_{g_K} f|^2 d\nu_K + \int \norm{\Hess^*_K f}_{g_K}^2 d\nu_K .
\]
\end{proof}

It may also be insightful to give an alternative proof of the centro-affine Bochner formula, not relying on centro-affine differential geometry, which is how we originally discovered it (thus realizing that there must be some underlying Bochner formula and constant Ricci curvature). We were informed by Bo'az Klartag and by Ramon van Handel that they also observed (unpublished argument) a similar, ultimately equivalent, formula, without any explicit reference to $g_K$, $V_K$, $\Hess_K^*$, $\Ric_K$, nor their centro-affine geometric interpretation. 

\begin{proof}[Alternative proof of Theorem \ref{thm:magic}]
Given a smooth $f$ on $\S^*$, extend it as a $0$-homogeneous function on $E^* \setminus \{0\}$ and define the following $(1,1)$ tensor in a local frame on $\S^*$:
\[
A^i_k  := ((D^2 h_K)^{-1})^{ij} D^2_{jk} (f h_K) = g_K^{ij} \frac{D^2_{jk} (f h_K)}{h_K} . 
\]
By the Leibniz rule:
\begin{equation} \label{eq:D2fh}
\frac{D^2_{jk} (f h_K)}{h_K}  = D^2_{jk} f + (\log h_K)_j f_k + (\log h_K)_k f_j + f (g_{K})_{jk} , 
\end{equation}
and since $D^2_{jk} f = {}^{\S^*} \nabla^2_{jk} f$ as $f$ is $0$-homogeneous, it follows by Lemma \ref{lem:Hess-loc} that:
\begin{equation} \label{eq:magic-1}
A^i_k = g_K^{ij} (\Hess^*_K f)_{jk}  +  f \delta^i_k .
\end{equation}
From here on, we can use (\ref{eq:magic-1}) as the definition of $\Hess^*_K f$ on $\S^*$.

Set $m = n-1$. Denoting by $\lambda = (\lambda_1,\ldots,\lambda_{m})$ the eigenvalues of the symmetric matrix:
\[
B := (D^2 h_K)^{-1/2} D^2 (f h_K) (D^2 h_K)^{-1/2} 
\]
at a fixed point $\theta^* \in \S^*$, we clearly have:
\begin{equation} \label{eq:magic-2}
\tr(A^2) = \tr(B^2) =  \sum_{i=1}^m \lambda_i^2 =  m^2 \mathfrak{e}_1^2(\lambda) - m (m-1) \mathfrak{e}_2(\lambda) , 
\end{equation}
where $\mathfrak{e}_1(\lambda) := \frac{1}{m} \sum_{i=1}^m \lambda_i$ and $\mathfrak{e}_2(\lambda) := \frac{1}{m (m-1)} \sum_{1 \leq i \neq j \leq m} \lambda_i \lambda_j$ are the first two normalized symmetric polynomials in $\lambda$. 
Note that:
\[
\mathfrak{e}_1(\lambda) = \frac{1}{m} \tr(B) = \frac{1}{m} \tr(A) = \frac{1}{m} (\Delta_K f +  m f) = : \tilde L_K f . 
\]
In addition (see e.g. \cite[formula (6)]{AFO-MixedDiscriminants}):
\[
\mathfrak{e}_2(\lambda) = D_m(B,B,\Id,\ldots,\Id) = \frac{D_m( D^2 (f h_K) , D^2 (f h_K)  ,  D^2 h_K, \ldots,  D^2 h_K)}{\det  D^2 h_K} =: \frac{S_K(f,f)}{S_K} ;
\]
here $D_m$ denotes the mixed discriminant of an $m$-tuple of $m$ by $m$ matrices, and we use the notation $S_K(f,f) = S_K(f;2)$, $S_K(f) = S_K(f;1)$ and $S_K = S_K(f;0) = \det D^2 h_K$, where:
\[
S_K(f;p) = D_m(\underbrace{D^2 (f h_K) , \ldots, D^2 (f h_K)}_{\text{$p$ times}}, \underbrace{ D^2 h_K , \ldots, D^2 h_K}_{\text{$m-p$ times}}) .
\]
By \cite[Subsection 5.1]{KolesnikovEMilman-LocalLpBM}, $\tilde L_K f = \frac{S_K(f)}{S_K}$, and by \cite[Lemma 4.1]{KolesnikovEMilman-LocalLpBM} we can integrate by parts:
\[
\int_{\S^*} \frac{S_K(f,f)}{S_K} dV_K = \frac{1}{n} \int_{\S^*} h_K S_K(f,f) d\Leb = \frac{1}{n} \int_{\S^*} f h_K S_K(f) d\Leb = \int_{\S^*} f \frac{S_K(f)}{S_K} dV_K = \int_{\S^*} f (\tilde L_K f) dV_K . 
\]

\medskip

We now compute $\int_{\S^*} \tr(A^2) dV_K$ in two different manners. On one hand, by (\ref{eq:magic-1}):
\[
\int_{\S^*} \tr(A^2) dV_K = \int_{\S^*} \norm{\Hess^*_K f}_{g_K}^2  dV_K + 2 \int_{\S^*} f (\Delta_K f) dV_K + m \int_{\S^*} f^2 dV_K . 
\]
On the other hand, by (\ref{eq:magic-2}) and the subsequent calculations:
\begin{align*}
& \int_{\S^*} \tr(A^2) dV_K = \int_{\S^*} (\Delta_K f + m f)^2 dV_K - m (m-1) \int_{\S^*} f (\tilde L_K f) dV_K \\
& = \int_{\S^*} (\Delta_K f)^2 dV_K + 2 m \int_{\S^*} f (\Delta_K f) dV_K + m^2 \int_{\S^*} f^2 dV_K - (m-1) \int_{\S^*} (f \Delta_K f + m f^2) dV_K . 
\end{align*}
Comparing both expressions, we obtain:
\[
\int_{\S^*} (\Delta_K f)^2 dV_K - \int_{\S^*}  \norm{\Hess^*_K f}_{g_K}^2 dV_K = -(m-1) \int_{\S^*} f (\Delta_K f) dV_K = (m-1) \int_{\S^*} |\grad_{g_K} f|^2 dV_K . 
\]
\end{proof}

\subsection{Proof of the Brunn-Minkowski inequality \`a-la Lichnerowicz} \label{subsec:BM}

A classical theorem of Lichnerowicz \cite{LichnerowiczBook} (see also \cite{KolesnikovEMilmanReillyPart1}) states that the first non-zero eigenvalue $\lambda_1(-\Delta_g)$ of the Laplace-Beltrami operator $-\Delta_g$ on a closed connected $(n-1)$-dimensional Riemannian manifold $(M^{n-1},g)$ with $\Ric_g \geq \kappa g$ (as $(0,2)$-tensors) and $\kappa > 0$ satisfies:
\[
\lambda_1(-\Delta_g) \geq \frac{n-1}{n-2} \kappa . 
\]
The proof is immediate after applying the integrated Bochner formula to the first eigenfunction $\varphi_1$, and using Cauchy-Schwarz to relate between $\norm{\Hess_g f}_g^2$ and $(\Delta_g f)^2$. 

Repeating the exact same argument verbatim for the centro-affine normalization on $\partial K$ immediately gives a new proof of the Brunn--Minkowski inequality (note that in our case $\kappa = n-2$). By approximation and translation, it is enough to consider smooth strictly convex bodies in $\R^n$ having their origin in the interior $K \in \K^\infty_+$, and as observed by Minkowski and Hilbert, it is enough to derive the inequality in its infinitesimal (or local) form (see \cite[Section 5]{KolesnikovEMilman-LocalLpBM}):
\begin{equation} \label{eq:local-BM}
 \int z \; d\nu_K = 0 \;\; \Rightarrow \;\; \int |\grad_{g_K} z|^2 d\nu_K = \int (-\Delta_K z) z \; d\nu_K \geq  (n-1) \int z^2 d\nu_K \;\;\; \forall z \in C^2  ,
\end{equation}
which exactly means that:
\begin{equation} \label{eq:lambda1-BM}
\lambda_1(-\Delta_K) \geq n-1 . 
\end{equation}
It is well-known that (\ref{eq:local-BM}) is equivalent to its dual form (via $z = \Delta_K f$):
\begin{equation} \label{eq:local-BM-dual}
\int (\Delta_K f)^2 d\nu_K \geq (n-1) \int |\grad_{g_K} f|^2 d\nu_K = (n-1) \int (-\Delta_K f) f \; d\nu_K \;\;\; \forall f \in C^2 . 
\end{equation}
This may be seen by either testing the first eigenfunction $\varphi_1$ and using $-\Delta_K \varphi_1 = \lambda_1 \varphi_1$, or by using the spectral theorem for  $-\Delta_K \geq 0$ (which is self-adjoint on $L^2(\nu_K)$) and noting that (\ref{eq:local-BM-dual}) is equivalent to $P_2(-\Delta_K) \geq 0$ for the polynomial $P_2(t) = t^2 - (n-1) t$. 

\begin{thm}[Local Brunn--Minkowski inequality] \label{thm:BM}
For all $K \in \K^{\infty}_+$, the local Brunn--Minkowski inequality (\ref{eq:local-BM-dual}) holds. 
\end{thm}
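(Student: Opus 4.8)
The plan is to derive the local Brunn--Minkowski inequality (\ref{eq:local-BM-dual}) directly from the centro-affine Bochner formula of Theorem \ref{thm:magic}, following Lichnerowicz's classical argument verbatim. First I would invoke the centro-affine Bochner formula for $K \in \K^\infty_+$, which states that for any $f \in C^2$,
\[
\int (\Delta_K f)^2 d\nu_K - \int \norm{\Hess^*_K f}_{g_K}^2 d\nu_K = (n-2) \int |\grad_{g_K} f|^2 d\nu_K .
\]
The crucial algebraic input is the pointwise Cauchy--Schwarz estimate relating the Hilbert--Schmidt norm of the conjugate Hessian $\Hess^*_K f$ to its $g_K$-trace. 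Since $\Delta_K f = \tr_{g_K} \Hess^*_K f$ is the trace of an $(n-1)$-dimensional $g_K$-symmetric tensor (diagonalizing $\Hess^*_K f$ with respect to $g_K$ at a point, its trace is the sum of the $n-1$ eigenvalues), the inequality between the quadratic mean and the arithmetic mean gives $\norm{\Hess^*_K f}_{g_K}^2 \geq \frac{1}{n-1} (\Delta_K f)^2$ pointwise on $M$.

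Substituting this pointwise bound into the Bochner identity yields
\[
\int (\Delta_K f)^2 d\nu_K - \frac{1}{n-1} \int (\Delta_K f)^2 d\nu_K \geq (n-2) \int |\grad_{g_K} f|^2 d\nu_K ,
\]
which simplifies to $\frac{n-2}{n-1} \int (\Delta_K f)^2 d\nu_K \geq (n-2) \int |\grad_{g_K} f|^2 d\nu_K$. When $n \geq 3$ we may divide by the positive factor $\frac{n-2}{n-1}$ to obtain exactly (\ref{eq:local-BM-dual}); the case $n = 2$ is trivial since (\ref{eq:local-BM-dual}) then reads $\int (\Delta_K f)^2 d\nu_K \geq \int |\grad_{g_K} f|^2 d\nu_K$, which also follows from the $n=2$ Bochner identity together with the same pointwise estimate $\norm{\Hess^*_K f}_{g_K}^2 \geq (\Delta_K f)^2$ (in dimension one the Hessian is scalar, so equality even holds). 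The final step is to recall from the discussion preceding the statement that (\ref{eq:local-BM-dual}) is equivalent to (\ref{eq:local-BM}), hence to $\lambda_1(-\Delta_K) \geq n-1$, and that by the classical reduction of Minkowski and Hilbert this infinitesimal inequality, together with the density of $\K^\infty_+$ and translation invariance of $V$, yields the full Brunn--Minkowski inequality (\ref{eq:BM}).

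I do not anticipate a genuine obstacle here: the entire content has been front-loaded into the centro-affine Bochner formula (Theorem \ref{thm:magic}) and the identification of the centro-affine Ricci curvature as the constant $n-2$ (which is what makes Lichnerowicz's method apply with a \emph{positive} lower bound, in contrast to the weighted Ricci curvature of $(\S^*, g_K, V_K)$). The only point requiring a word of care is the pointwise Cauchy--Schwarz step: one must use that $\Hess^*_K f$ is symmetric with respect to $g_K$ (equivalently, that $\Delta_K = \tr_{g_K} \Hess^*_K$ is a genuine trace of a $g_K$-self-adjoint operator, so its eigenvalues are real and the quadratic-arithmetic mean inequality applies with the correct dimension $n-1$). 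This symmetry is exactly the conjugacy relation $\nabla_i(g_K^{jk} f_k) = g_K^{jk}(\Hess^*_K f)_{ki}$ recorded in the proof of Theorem \ref{thm:magic}, so no new work is needed. Everything else is the standard Lichnerowicz bookkeeping.
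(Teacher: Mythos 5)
Your proposal is correct and follows essentially the same route as the paper: the centro-affine Bochner formula plus the pointwise Cauchy--Schwarz bound $\norm{\Hess^*_K f}_{g_K}^2 \geq \frac{1}{n-1}(\Delta_K f)^2$, exactly as in the paper's Lichnerowicz-style proof. The only caveat is your aside on $n=2$: there the Bochner identity degenerates to a triviality (in dimension one $\norm{\Hess^*_K f}_{g_K}^2 = (\Delta_K f)^2$), so it does not yield (\ref{eq:local-BM-dual}) as you claim — but the paper's own argument is equally vacuous in that case, so this does not distinguish your approach from theirs.
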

\begin{proof}[Proof \`a-la Lichnerowicz]
Given $f \in C^2$, we have by Cauchy-Schwarz:
\begin{equation} \label{eq:Lich-CS}
\norm{\Hess_K^* f}_{g_K}^2 \geq \frac{1}{n-1} (\tr_{g_K} \Hess_K^* f)^2 = \frac{1}{n-1} (\Delta_K f)^2 . 
\end{equation}
Plugging this into the centro-affine Bochner formula (\ref{eq:magic}), we obtain:
\[
\frac{n-2}{n-1} \int (\Delta_K f)^2 d\nu_K \geq  \int (\Delta_K f)^2 d\nu_K - \int \norm{\Hess_K^* f}_{g_K}^2 d\nu_K = (n-2) \int |\grad_{g_K} f|^2 d\nu_K  ,
\]
yielding (\ref{eq:local-BM-dual}). 
\end{proof}

The above proof also immediately reveals the corresponding equality conditions, yielding  
a characterization of the eigenspace of $-\Delta_K$ corresponding to $\lambda_1 = n-1$, a result due to Hilbert. 

\begin{thm}[Equality in local Brunn-Minkowski inequality] \label{thm:eigenspace}
For all $K \in \K^{\infty}_+$, the eigenspace of $-\Delta_K$ corresponding to the first non-zero eigenvalue $\lambda_1 = n-1$ is precisely $n$-dimensional, spanned by $\{ \scalar{x^*_K , \xi} \; ;\; \xi \in E \}$. 
\end{thm}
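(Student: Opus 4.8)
The plan is to extract the equality characterization directly from the Lichnerowicz-style argument used to prove Theorem~\ref{thm:BM}. The proof of the local Brunn--Minkowski inequality proceeded by combining the centro-affine Bochner formula (\ref{eq:magic}) with the Cauchy--Schwarz bound (\ref{eq:Lich-CS}), $\norm{\Hess_K^* f}_{g_K}^2 \geq \frac{1}{n-1}(\Delta_K f)^2$. Equality in (\ref{eq:local-BM-dual}) for a given $f$ therefore forces equality in (\ref{eq:Lich-CS}) $\nu_K$-almost everywhere, which happens exactly when $\Hess_K^* f$ is a pointwise multiple of the metric $g_K$, i.e. $\Hess_K^* f = \mu(\theta^*) g_K$ for some scalar function $\mu$. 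Tracing with respect to $g_K$ gives $\mu = \frac{1}{n-1}\Delta_K f$, so the equality case is characterized by the tensorial equation $(n-1)\Hess_K^* f = (\Delta_K f) g_K$.

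First I would record that if $-\Delta_K f = (n-1) f$ (an eigenfunction for $\lambda_1 = n-1$, after subtracting its $\nu_K$-mean, which is harmless since constants are in the kernel), then $f$ achieves equality in (\ref{eq:local-BM-dual}), so the equality-case analysis applies and yields $(n-1)\Hess_K^* f = -(n-1) f\, g_K$, that is, $\Hess_K^* f = - f g_K$. This is precisely the equation (\ref{eq:Hess-of-x*}) satisfied by each coordinate $\scalar{x^*_K,\xi}$ of the conormal, and Proposition~\ref{prop:first-eigenfuncs} already shows these functions are genuine eigenfunctions with eigenvalue $n-1$. So the $n$-dimensional space $\{\scalar{x^*_K,\xi} : \xi \in E\}$ is contained in the eigenspace (linear independence of these $n$ functions follows since $x^*_K$ parametrizes the hypersurface $\partial K^*$ which spans $E^*$, or equivalently because $x^*_K(\theta^*) = \theta^*/h_K(\theta^*)$ and $\theta^* \mapsto \theta^*$ is injective on $\S^*$). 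It remains to show the reverse inclusion: any eigenfunction $f$ with $-\Delta_K f = (n-1) f$ lies in this span.

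The core of the argument is thus to solve the overdetermined PDE $\Hess_K^* f = -f g_K$ on $\partial K$ (or on $\S^*$ via Lemma~\ref{lem:Hess-loc}). The natural route is to lift $f$ to a function on the cone: I would work on $\S^*$, where by Lemma~\ref{lem:Hess-loc} and formula (\ref{eq:D2fh}) the equation $\Hess_K^* f = -f g_K$ is equivalent to $D^2(f h_K) = 0$ as a $(0,2)$-tensor on $\S^*$, i.e. the $1$-homogeneous extension of $f h_K$ to $E^*$ has vanishing Euclidean Hessian restricted to $T\S^*$. Combined with the fact that $f h_K$ is $1$-homogeneous and Euler's identity $\bar D^2 (f h_K)\cdot \theta^* = 0$, this forces $\bar D^2(f h_K) \equiv 0$ on $E^*$, so $f h_K$ is the restriction to $\S^*$ of a linear functional $\scalar{\cdot,\xi}$ for some $\xi \in E$; hence $f = \scalar{\cdot,\xi}/h_K = \scalar{x^*_K,\xi} = \lin^{\S^*}_{K,\xi}$ by (\ref{eq:lin-is-x*}), completing the proof.

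The main obstacle is the passage from ``$f$ is an $\lambda_1$-eigenfunction'' to ``$f$ achieves equality in the dual inequality (\ref{eq:local-BM-dual}) with $\norm{\Hess_K^* f}_{g_K}^2 = \frac{1}{n-1}(\Delta_K f)^2$ pointwise.'' One must be slightly careful that equality in the \emph{integrated} Bochner inequality genuinely forces pointwise equality in Cauchy--Schwarz: this is standard (the integrand $\norm{\Hess_K^* f}_{g_K}^2 - \frac{1}{n-1}(\Delta_K f)^2$ is pointwise nonnegative, and its integral vanishing forces it to vanish a.e., then everywhere by continuity for smooth $f$), but it should be stated. A secondary subtlety is regularity: a priori an eigenfunction in the Sobolev domain $H^2(\S^*)$ need only be $H^2$, but elliptic regularity for the smooth operator $\Delta_K$ (with $K \in \K^\infty_+$) upgrades it to $C^\infty$, so the pointwise manipulations and the homogeneity/Euler argument on the cone are justified. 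Everything else is routine: the identification of the equality tensorial equation with (\ref{eq:Hess-of-x*}), and the dimension count for $\{\scalar{x^*_K,\xi}\}$.
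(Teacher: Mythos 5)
Your proposal is correct and follows essentially the same route as the paper: equality in the integrated Lichnerowicz/Bochner argument forces pointwise equality in the Cauchy--Schwarz step, hence $\Hess^*_K f = -f\,g_K$, which via Lemma \ref{lem:Hess-loc} and (\ref{eq:D2fh}) gives $\bar D^2(f h_K) \equiv 0$ on the cone and thus $f h_K = \scalar{\cdot,\xi}$, i.e. $f = \scalar{x^*_K,\xi}$. Your added remarks (pointwise equality from the vanishing nonnegative integrand, elliptic regularity of the eigenfunction, Euler's identity to pass from the restricted to the full Hessian) only make explicit points the paper treats implicitly.
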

\begin{proof}We've already seen in Proposition \ref{prop:first-eigenfuncs} that $\scalar{x^*_K,\xi}$ are eigenfunctions corresponding to $\lambda_1 = n-1$. To show the converse, let $-\Delta_K \varphi_1 = (n-1) \varphi_1$, and hence equality holds in (\ref{eq:local-BM-dual}). Inspecting the proof of Theorem \ref{thm:BM}, it follows that we must have equality in the Cauchy-Schwarz inequality (\ref{eq:Lich-CS}) $\nu_K$-a.e., and hence by continuity (as $\nu_K$ is of full support), at every point. It follows that at every point $p$, $\Hess^*_K \varphi_1$ must be a multiple of $g_K$:
\[
\Hess^*_K \varphi_1 = c(p) g_K = \frac{\tr_{g_K} \Hess^*_K \varphi_1}{n-1} g_K = \frac{\Delta_K \varphi_1}{n-1} g_K = - \varphi_1 g_K . 
\]
It is now convenient to use the parametrization on $M = \S^*$. It follows by (\ref{eq:D2fh}) and Lemma \ref{lem:Hess-loc} that if we extend $\varphi_1$ as a $0$-homogeneous function on $E^* \setminus \{0\}$, we have:
\[
\frac{\bar D^2 (\varphi_1 h_K)}{h_K}  = \Hess^*_K \varphi_1 + \varphi_1 g_K = 0 . 
\]
Consequently, $\varphi_1 h_K$ must be an affine function on $E^*$. But since $h_K$ is $1$-homogeneous it vanishes at the origin, and we deduce that $\varphi_1 h_K$ must be a linear function $\scalar{\cdot,\xi}$ for some $\xi \in E$. Hence $\varphi_1 = \lin_{K,\xi} = \scalar{x_K^*,\xi}$, as asserted. 
\end{proof}

\subsection{Equivalent formulations of the $L^p$-Brunn--Minkowski conjecture}

Armed with the centro-affine Bochner formula, we can now give several equivalent formulations of the even local $L^p$-Brunn--Minkowski inequality, which is conjectured to hold for all $K \in \K^\infty_{+,e}$ and $p \in [0,1)$ (and the validity of which for all $K \in \K^\infty_{+,e}$ is equivalent to the validity of the global $L^p$-Brunn--Minkowski inequality (\ref{eq:Lp-BM})  by \cite{Putterman-LocalToGlobalForLpBM}). 

\begin{thm} \label{thm:equivalences}
The following statements are equivalent for a given $K \in \K^\infty_{+,e}$ and $p \leq 1$: 
\begin{enumerate}
\item \label{it:formulation1}
The even local $L^p$-Brunn--Minkowski conjectured inequality:
\begin{equation} \label{eq:local-Lp-BM}
 \!\!\!\!\!\!\!\!\!\! \int f \; d\nu_K = 0 \;\; \Rightarrow \;\; \int |\grad_{g_K} f|^2 d\nu_K = \int (-\Delta_K f) f \; d\nu_K  \geq (n-p) \int f^2 d\nu_K  \;\;\; \forall f  \in C^2_e  . 
\end{equation}
\item \label{it:formulation2}
\[
\int (\Delta_K f)^2 d\nu_K  \geq (n-p) \int |\grad_{g_K} f|^2 d\nu_K = (n-p) \int (-\Delta_K f) f d\nu_K  \;\;\; \forall f  \in C^2_e .
\]
\item \label{it:formulation3}
\[
\int \norm{\Hess_K^* f}_{g_K}^2 d\nu_K \geq \frac{2-p}{n-p} \int  (\Delta_K f)^2 d\nu_K  \;\;\; \forall f \in C^2_e . 
\]
\item  \label{it:formulation4}
\[
\int \norm{\Hess_K^* f}_{g_K}^2 d\nu_K \geq (2-p) \int |\grad_{g_K} f|^2 d\nu_K \;\;\; \forall f \in C^2_e . 
\]
\end{enumerate}
When $p=0$, these are equivalent to:
\begin{enumerate}
\setcounter{enumi}{4}
\item \label{it:formulation5}
\[
\int \norm{\Hess_K^* f + f g_K}_{g_K}^2 d\nu_K \geq (n-1) \int f^2 d\nu_K \;\;\; \forall f \in C^2_e . 
\]
\end{enumerate} 
\end{thm}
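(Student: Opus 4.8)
The plan is to derive the entire chain of equivalences from three ingredients already in hand: the centro-affine Bochner formula of Theorem~\ref{thm:magic},
\[
\int\norm{\Hess_K^* f}_{g_K}^2\,d\nu_K=\int(\Delta_K f)^2\,d\nu_K-(n-2)\int|\grad_{g_K}f|^2\,d\nu_K,
\]
valid for every $f\in C^2$ (in particular every $f\in C^2_e$); the integration-by-parts identity $\int(-\Delta_K f)\,f\,d\nu_K=\int|\grad_{g_K}f|^2\,d\nu_K$, a special case of~(\ref{eq:CA-Dirichlet}); and the fact, established in Section~\ref{sec:CA}, that $-\Delta_K$ is a nonnegative self-adjoint operator on $L^2(\nu_K)$ with discrete spectrum and $C^2$-functions forming a form core. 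Modulo the Bochner identity, statements (\ref{it:formulation2}), (\ref{it:formulation3}), (\ref{it:formulation4}) are three ways of writing one and the same linear inequality among the quadratic functionals $\int(\Delta_K f)^2\,d\nu_K$, $\int\norm{\Hess_K^* f}_{g_K}^2\,d\nu_K$, $\int|\grad_{g_K}f|^2\,d\nu_K$; statement (\ref{it:formulation1}) is the Rayleigh-quotient form of (\ref{it:formulation2}); and (\ref{it:formulation5}) is (\ref{it:formulation4}) at $p=0$ after expanding a perfect square. We assume throughout that $n\ge3$.

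First I would handle the Bochner-driven equivalences. Adding $(n-2)\int|\grad_{g_K}f|^2\,d\nu_K$ to both sides of (\ref{it:formulation4}) and applying the Bochner identity to the left-hand side turns (\ref{it:formulation4}) into $\int(\Delta_K f)^2\,d\nu_K\ge(n-p)\int|\grad_{g_K}f|^2\,d\nu_K$, which (using the integration-by-parts identity) is precisely (\ref{it:formulation2}). Substituting the Bochner identity into (\ref{it:formulation3}) and collecting the $\int(\Delta_K f)^2\,d\nu_K$ terms via $1-\tfrac{2-p}{n-p}=\tfrac{n-2}{n-p}$, then dividing by $n-2>0$, again produces (\ref{it:formulation2}); this is the only place the assumption $n\ge3$ is used. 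Finally, for $p=0$, expanding $\norm{\Hess_K^* f+f\,g_K}_{g_K}^2$ pointwise, using $\scalar{\Hess_K^* f,g_K}_{g_K}=\tr_{g_K}\Hess_K^* f=\Delta_K f$ and $\norm{g_K}_{g_K}^2=n-1$, integrating against $\nu_K$, and invoking $\int f\,\Delta_K f\,d\nu_K=-\int|\grad_{g_K}f|^2\,d\nu_K$, gives
\[
\int\norm{\Hess_K^* f+f\,g_K}_{g_K}^2\,d\nu_K=\int\norm{\Hess_K^* f}_{g_K}^2\,d\nu_K-2\int|\grad_{g_K}f|^2\,d\nu_K+(n-1)\int f^2\,d\nu_K,
\]
so (\ref{it:formulation5}) is exactly $\int\norm{\Hess_K^* f}_{g_K}^2\,d\nu_K\ge2\int|\grad_{g_K}f|^2\,d\nu_K$, i.e. (\ref{it:formulation4}) with $p=0$.

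Next I would prove $(\ref{it:formulation1})\Leftrightarrow(\ref{it:formulation2})$. Since $K=-K$, $h_K$ is even, hence $g_K$ and $\nu_K$ are invariant under the antipodal map, $\Delta_K$ commutes with it, and $-\Delta_K$ restricts to a nonnegative self-adjoint operator on $L^2_e(\nu_K)$ with discrete spectrum $0=\mu_0<\mu_1\le\mu_2\le\cdots$, where $\mu_1=\lambda_{1,e}(-\Delta_K)$, and with an $L^2(\nu_K)$-orthonormal basis of even eigenfunctions $\psi_0\equiv\mathrm{const},\psi_1,\psi_2,\dots$; $C^2_e$ is dense in the form domain. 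Writing an even $f=\sum_i c_i\psi_i$ one has $\int(\Delta_K f)^2\,d\nu_K=\sum_i\mu_i^2c_i^2$ and $\int(-\Delta_K f)\,f\,d\nu_K=\sum_i\mu_ic_i^2$, so (\ref{it:formulation2}) amounts to $\sum_{i\ge1}\mu_i(\mu_i-(n-p))c_i^2\ge0$ for all coefficient sequences, hence to $\mu_i\ge n-p$ for every $i\ge1$, i.e. to $\lambda_{1,e}(-\Delta_K)\ge n-p$; by the variational formula~(\ref{eq:RR}) this is exactly (\ref{it:formulation1}). (Equivalently: $(\ref{it:formulation1})\Rightarrow(\ref{it:formulation2})$ follows from $\sum_i\mu_i^2c_i^2\ge\mu_1\sum_i\mu_ic_i^2$ applied to $f$ minus its $\nu_K$-average, and the converse by testing (\ref{it:formulation2}) on $\psi_1$.)

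The genuinely non-mechanical step will be this last one, $(\ref{it:formulation1})\Leftrightarrow(\ref{it:formulation2})$: it is the only place that leaves the level of pointwise algebra with the Bochner identity and appeals to the spectral structure of $-\Delta_K$ (self-adjointness on $L^2(\nu_K)$, discreteness of the even spectrum, density of $C^2_e$ in the form domain), all of which are available from~\cite{KolesnikovEMilman-LocalLpBM} and Section~\ref{sec:CA}. Everything else is routine bookkeeping with the Bochner formula, the one point requiring care being the division by $n-2$ in $(\ref{it:formulation2})\Leftrightarrow(\ref{it:formulation3})$, which is why one records the standing assumption $n\ge3$.
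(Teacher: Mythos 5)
Your proposal is correct and follows essentially the same route as the paper: the centro-affine Bochner formula handles (\ref{it:formulation2})$\Leftrightarrow$(\ref{it:formulation3})$\Leftrightarrow$(\ref{it:formulation4}), the spectral theorem for the self-adjoint extension of $-\Delta_K$ on $L^2(\nu_K)$ (restricted to even functions) gives the duality (\ref{it:formulation1})$\Leftrightarrow$(\ref{it:formulation2}), and expanding the square together with $\int f\,\Delta_K f\,d\nu_K=-\int|\grad_{g_K}f|^2\,d\nu_K$ gives (\ref{it:formulation4})$\Leftrightarrow$(\ref{it:formulation5}) at $p=0$. Your explicit remark that the division by $n-2$ in passing between (\ref{it:formulation2}) and (\ref{it:formulation3}) requires $n\geq 3$ is a legitimate point of care that the paper leaves implicit, but it does not change the argument.
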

\begin{proof}
The duality (say, via the spectral theorem) between (\ref{it:formulation1}) and (\ref{it:formulation2}) was already explained. The equivalence between (\ref{it:formulation2}), (\ref{it:formulation3}) and (\ref{it:formulation4}) is then immediate by using the centro-affine Bochner formula (\ref{eq:magic}). When $p=0$, (\ref{it:formulation4}) is readily seen to be equivalent to (\ref{it:formulation5}) after noting that:
\[
 \int \scalar{\Hess_K^* f , f g_K}_{g_K} d\nu_K = \int f \Delta_K(f) d\nu_K = - \int |\grad_{g_K} f|^2 d\nu_K . 
 \]
\end{proof}

\begin{rem} \label{rem:one-is-trivial}
As we've already seen in (\ref{eq:Lich-CS}), formulation (\ref{it:formulation3}) holds trivially for any $K \in \K^\infty_+$ with $p=1$ by an application of the Cauchy--Schwarz inequality, regardless of evenness of $f$, and is actually equivalent to the local formulations of the Brunn--Minkowski inequality (\ref{eq:local-BM}) and (\ref{eq:local-BM-dual}). It follows by the centro-affine Bochner formula as in Theorem \ref{thm:equivalences} that the same applies to formulation (\ref{it:formulation4}).
\end{rem}

Formulation (\ref{it:formulation5}) is rather appealing since by (\ref{eq:Hess-of-x*}), we have $\Hess_K^* x^*_K = -x^*_K g_K$, and by Theorem \ref{thm:eigenspace}, the left-hand-side vanishes if and only if  $f = \scalar{x^*_K , \xi}$ for some $\xi \in E$, the (odd) eigenfunction of $\Delta_K$ corresponding to $\lambda_1(-\Delta_K) = n-1$; unfortunately, we do not know how to exploit this. 
Arranging the other conjectured inequalities above as follows:
\[
\int \norm{\Hess_K^* f}_{g_K}^2 d\nu_K \geq_? \frac{2-p}{n-p}  \int  (\Delta_K f)^2 d\nu_K \geq_? (2-p) \int |\grad_{g_K} f|^2 d\nu_K \;\;\; \forall f \in C^2_e ,
\]
we see that formulation (\ref{it:formulation4}) relating between the left-most and right-most terms is formally the weakest, and so we will concentrate our efforts on establishing it, at least for some rich class of convex bodies $K$. 

\subsection{A possible strategy} \label{subsec:strategy}

In view of Remark \ref{rem:one-is-trivial}, we see that the remaining challenge is to exploit the evenness of $f$ and origin-symmetry of $K$ to turn the trivial constant $1$ in (\ref{it:formulation4}) into the conjectured constant $2$, corresponding to $p=0$. This is very reminiscent of the challenge in the resolution of the B-conjecture in \cite{CFM-BConjecture}. As in \cite{CFM-BConjecture}, a natural idea for deriving (\ref{it:formulation4}) is to try and apply (\ref{it:formulation1}) to some partial derivative $z$ of $f$ (which will necessarily be odd and hence integrate to zero as required in (\ref{it:formulation1})), and then sum over all partial derivatives, thereby gaining one order of differentiation. However, in order to obtain $|\grad_{g_K} f|^2 = g_K^{ij} f_i f_j$ on the right-hand-side of (\ref{it:formulation4}), we would need to incorporate the square-root of the metric $g_K$ into the definition of $z$ -- an inherent complication in our non-Euclidean setting. 
Despite having the elegant representation $g_K = \scalar{dx^*, dx}$, we do not know how to effectively take the square-root of $g_K$ (without introducing additional parameters we have no control over). Although we will not use this here, we mention the following additional equivalent formulation, which gives us some more flexibility:

\begin{thm}
Given $K \in \K^\infty_{+,e}$ and $p \leq 1$, the even local $L^p$-Brunn--Minkowski conjectured inequality (\ref{eq:local-Lp-BM}) holds if and only if for all $f \in C^2_e$, there exists  $h \in C^2$ so that:
\begin{equation} \label{eq:equivalent-polarized}
\int f h \; d\nu_K > 0 \text{ and } \int \scalar{\Hess^*_K f,\Hess^*_K h}_{g_K} d\nu_K \geq (2-p) \int g_K(\grad_{g_K} f,\grad_{g_K} h) d\nu_K . 
\end{equation}
\end{thm}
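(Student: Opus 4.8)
The plan is to reduce the ``if and only if'' claim to the equivalence of formulations (\ref{it:formulation1}) and (\ref{it:formulation4}) from Theorem \ref{thm:equivalences}, using the spectral theory of the self-adjoint operator $-\Delta_K$ on $L^2(\nu_K)$. The forward direction (the even local $L^p$-Brunn--Minkowski inequality implies the existence of $h$) is easy: given $f \in C^2_e$, simply take $h = f$. Indeed, by formulation (\ref{it:formulation4}) of Theorem \ref{thm:equivalences} we have $\int \norm{\Hess_K^* f}_{g_K}^2 d\nu_K \geq (2-p) \int |\grad_{g_K} f|^2 d\nu_K$, which is exactly (\ref{eq:equivalent-polarized}) with $h=f$, while $\int f^2 d\nu_K > 0$ unless $f$ is constant (in which case both sides of (\ref{eq:equivalent-polarized}) vanish and one may take $h \equiv 1$, noting $\int f \cdot 1 \, d\nu_K > 0$ after replacing $f$ by $f + 1$ --- or more simply, the constant function $f$ already satisfies (\ref{eq:equivalent-polarized}) trivially with $h=1$ provided we first observe $\int f \, d\nu_K \neq 0$; one can always arrange this by adding a constant, which changes neither $\Hess_K^* f$ nor $\grad_{g_K} f$). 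So the forward implication is immediate from Theorem \ref{thm:equivalences}.

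The substantive direction is the converse: assuming that for \emph{every} $f \in C^2_e$ there exists some (a priori $f$-dependent, and not necessarily even) $h \in C^2$ satisfying (\ref{eq:equivalent-polarized}), we must deduce formulation (\ref{it:formulation4}), i.e. the inequality with $h$ replaced by $f$ itself. The idea is to rewrite both bilinear forms appearing in (\ref{eq:equivalent-polarized}) in terms of the operator $-\Delta_K$. Using the centro-affine Bochner formula (\ref{eq:magic}) in its polarized form together with the integration-by-parts identity (\ref{eq:CA-Dirichlet}), one gets
\[
\int \scalar{\Hess^*_K f,\Hess^*_K h}_{g_K} d\nu_K = \int (\Delta_K f)(\Delta_K h) \, d\nu_K - (n-2) \int g_K(\grad_{g_K} f, \grad_{g_K} h)\, d\nu_K,
\]
so that, after substituting and using $\int g_K(\grad_{g_K} f,\grad_{g_K} h) d\nu_K = \int (-\Delta_K f) h \, d\nu_K$, condition (\ref{eq:equivalent-polarized}) becomes: there exists $h$ with $\scalar{f,h}_{L^2(\nu_K)} > 0$ and
\[
\scalar{\Delta_K f, \Delta_K h}_{L^2(\nu_K)} \geq (n-p)\scalar{-\Delta_K f, h}_{L^2(\nu_K)},
\]
equivalently $\scalar{(-\Delta_K)(-\Delta_K - (n-p)\Id) f, h}_{L^2(\nu_K)} \geq 0$ while $\scalar{f,h} > 0$. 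Write $A := -\Delta_K$, a non-negative self-adjoint operator, and $P(A) := A(A - (n-p)\Id)$. The hypothesis says: for every even $f$ there is an $h$ with $\scalar{f,h} > 0$ and $\scalar{P(A)f, h} \geq 0$. We want $\scalar{P(A)f,f} \geq 0$ for every even $f$ (this is precisely formulation (\ref{it:formulation4}) after undoing the Bochner substitution with $h=f$).

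The key step is a spectral/separation argument. Suppose for contradiction that $\scalar{P(A) f_0, f_0} < 0$ for some even $f_0$. Decompose $f_0$ using the spectral resolution of $A$ restricted to the invariant subspace $H^2_e(\S^*)$ of even functions (note $A$ preserves parity since $\Delta_K$ does for origin-symmetric $K$). The polynomial $P(t) = t(t-(n-p))$ is negative exactly on the open interval $(0, n-p)$. The assumption $\scalar{P(A)f_0,f_0}<0$ means the even spectral mass of $f_0$ on $(0,n-p)$ is substantial. The plan is then to show that any candidate $h$ for $f_0$ must fail one of the two required properties: roughly, $P(A) f_0$ points ``into the cone'' dual to where $f_0$ lives, and the only vectors $h$ with $\scalar{P(A)f_0, h} \geq 0$ must have $\scalar{f_0, h} \leq 0$. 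Concretely, I would try to produce the contradiction by testing with $h = g(A) f_0$ for a well-chosen scalar function $g \geq 0$ (e.g. $g$ supported near the spectral interval $(0,n-p)$): then $\scalar{f_0, g(A) f_0} = \int g \, d\mu_{f_0} \geq 0$ with equality only if $f_0$ has no spectral mass where $g > 0$, while $\scalar{P(A) f_0, g(A) f_0} = \int P g \, d\mu_{f_0}$, where $\mu_{f_0}$ is the spectral measure of $f_0$; choosing $g$ concentrated on $(0,n-p)$ where $P < 0$ forces this to be $< 0$, contradicting (\ref{eq:equivalent-polarized}) --- \emph{provided} we can also guarantee $\scalar{f_0,h}>0$ strictly, which needs $f_0$ to genuinely charge $(0,n-p)$, exactly what $\scalar{P(A)f_0,f_0}<0$ gives.

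The main obstacle, and the place requiring genuine care, is the interplay between the \emph{strict} inequality $\scalar{f,h} > 0$ and the parity constraint: the hypothesis only furnishes $h$ for even $f$, but $h$ itself need not be even, so one cannot directly iterate or symmetrize; moreover the strict positivity in (\ref{eq:equivalent-polarized}) is essential (with $\geq 0$ allowed, $h=0$ trivially works). The resolution is that since $\Delta_K$ preserves parity, $P(A)f_0$ is even when $f_0$ is even, so in the pairing $\scalar{P(A)f_0, h}$ and $\scalar{f_0, h}$ only the even part $h_e := (h(x) + h(-x))/2$ of $h$ contributes; hence one may replace $h$ by $h_e \in C^2_e$ without changing either side of (\ref{eq:equivalent-polarized}), reducing to the even subspace where the spectral argument above applies cleanly. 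One must double-check that $h \in C^2$ implies $h_e \in C^2_e$ (immediate) and that the spectral-measure manipulations are legitimate for $f_0 \in C^2_e$ (which lies in the form domain, indeed operator domain, of $A$), so that $P(A)f_0$, $g(A)f_0$ etc. are well-defined $L^2$-elements and the scalar integrals converge --- this is routine given the compact resolvent of $-\Delta_K$ recorded earlier in the excerpt. A cleaner packaging avoiding ad hoc test functions: the hypothesis says $f_0 \notin \{P(A)f_0\}^{\perp_{>}} := \{h : \scalar{P(A)f_0,h}\geq 0, \scalar{f_0,h} > 0 = \emptyset\}$ is impossible, i.e. the two closed half-spaces $\{h: \scalar{P(A)f_0,h}\geq 0\}$ and $\{h: \scalar{f_0,h}>0\}$ always intersect; by a Hahn--Banach / separating-hyperplane argument in $L^2_e(\nu_K)$ this forces $P(A)f_0$ to be a non-negative multiple of $f_0$ whenever $\scalar{f_0,f_0} \ne 0$... which is false in general, so instead the correct conclusion one extracts is that the intersection condition, holding for \emph{all} even $f_0$, is equivalent to $P(A)\big|_{H^2_e} \geq 0$, which is formulation (\ref{it:formulation4}). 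I expect the write-up to hinge on getting this last separation argument exactly right, including the degenerate case $\scalar{f_0,f_0}$ small or $P(A)f_0 = 0$.
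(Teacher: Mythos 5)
Your forward direction is fine and is the paper's: once formulation (\ref{it:formulation4}) of Theorem \ref{thm:equivalences} is available, $h=f$ witnesses (\ref{eq:equivalent-polarized}) for every non-zero $f\in C^2_e$ (the fuss about constants is unnecessary, since $\int f^2 d\nu_K>0$ whenever $f\not\equiv 0$ and $\nu_K$ has full support). Your rewriting of the second condition in (\ref{eq:equivalent-polarized}) via the polarized Bochner formula as $\scalar{P(A)f,h}_{L^2(\nu_K)}\geq 0$ with $A=-\Delta_K$, $P(t)=t(t-(n-p))$, and the reduction to the even part of $h$ (legitimate because $\Delta_K$ preserves parity), are also correct. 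The problem is the converse.

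The contradiction scheme at the core of your converse does not work: you pick an arbitrary even $f_0$ with $\scalar{P(A)f_0,f_0}<0$ and try to refute the hypothesis \emph{for that same $f_0$} by exhibiting a test function $h=g(A)f_0$ violating the inequality. But the hypothesis is existential in $h$, so producing one bad $h$ contradicts nothing; worse, for a generic such $f_0$ (say a combination of an even eigenfunction with eigenvalue in $(0,n-p)$ and one with large eigenvalue) an admissible $h$ genuinely exists -- the half-spaces $\{h:\scalar{P(A)f_0,h}\geq 0\}$ and $\{h:\scalar{f_0,h}>0\}$ fail to meet only when $P(A)f_0$ is a strictly negative multiple of $f_0$, i.e.\ only when $f_0$ is an eigenfunction with negative $P$-eigenvalue. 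You half-acknowledge this, and your closing sentence simply asserts, without proof, that the intersection condition quantified over all even $f$ is equivalent to $P(A)|_{H^2_e(\S^*)}\geq 0$ -- but that assertion is precisely what remains to be shown, and its nontrivial direction forces you to test the hypothesis on an even eigenfunction, which you never do. This eigenfunction specialization is the entire content of the paper's proof: take $f=\varphi$ the first nontrivial even eigenfunction of $-\Delta_K$ (smooth by elliptic regularity, hence admissible), note $\int g_K(\grad_{g_K}\varphi,\grad_{g_K}h)\,d\nu_K=-\int(\Delta_K\varphi)h\,d\nu_K=\lambda_{1,e}\int \varphi h\,d\nu_K>0$, and combine (\ref{eq:equivalent-polarized}) with the polarized Bochner identity to get $\lambda_{1,e}\int g_K(\grad_{g_K}\varphi,\grad_{g_K}h)\,d\nu_K\geq (n-p)\int g_K(\grad_{g_K}\varphi,\grad_{g_K}h)\,d\nu_K$, whence $\lambda_{1,e}(-\Delta_K)\geq n-p$, which is (\ref{eq:local-Lp-BM}). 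Without this step your converse does not close.
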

\begin{proof}
The ``only if" direction follows by using $h=f$ in the implication (\ref{it:formulation4}) $\Rightarrow$ (\ref{it:formulation1}) in Theorem \ref{thm:equivalences}. 
For the ``if" direction, simply polarize the centro-affine Bochner formula (\ref{eq:magic}):
\begin{equation} \label{eq:magic-polarized}
\int (\Delta_K f) (\Delta_K h) d\nu_K - \int \scalar{\Hess^*_K f,\Hess^*_K h}_{g_K} d\nu_K = (n-2) \int g_K(\grad_{g_K} f,\grad_{g_K} h) d\nu_K ,
\end{equation}
for all $f,h \in C^2$. By elliptic regularity, the first non-constant even eigenfunction $f$ of $-\Delta_K$ corresponding to the eigenvalue $\lambda_{1,e}$ satisfies $f \in C^\infty_e$. 
Let $h \in C^2$ be the function from our assumption. Note that:
\begin{equation} \label{eq:positive}
\int g_K(\grad_{g_K} f,\grad_{g_K} h) d\nu_K = - \int (\Delta_K f) h \; d\nu_K = \lambda_{1,e} \int f h \; d\nu_K > 0 . 
\end{equation}
Combining (\ref{eq:equivalent-polarized}) and (\ref{eq:magic-polarized}) and integrating by parts, we obtain:
\[
\lambda_{1,e} \int g_K(\grad_{g_K} f,\grad_{g_K} h)  d\nu_K \geq (n-p) \int g_K(\grad_{g_K} f,\grad_{g_K} h) d\nu_K . 
\]
Using the positivity of (\ref{eq:positive}), we deduce $\lambda_{1,e} \geq n-p$, which is equivalent to (\ref{eq:local-Lp-BM}). 
\end{proof}

\section{Curvature pinching implies even $L^p$-Minkowski uniqueness} \label{sec:curvature}

We are now ready to reap the fruits of the insight gained from recasting our problem in the centro-affine differential language and establish the main results of this paper. First, it will be useful to record:
\begin{lemma} \label{lem:curvature}
Let $K \in \K^2_+$. Then for all $0 < A \leq B$:
\begin{enumerate}
\item $\frac{1}{B} \delta^{\partial K} \leq \II^{\partial K} \leq \frac{1}{A} \delta^{\partial K}$ if and only if $A \delta^{\S^*} \leq D^2 h_{K} \leq B \delta^{\S^*}$. 
\item $\frac{1}{B} \delta^{\partial K} \leq g_K^{\partial K} \leq \frac{1}{A} \delta^{\partial K}$ if and only if $A \delta^{\S^*} \leq h_K D^2 h_{K} \leq B \delta^{\S^*}$. 
\end{enumerate}
\end{lemma}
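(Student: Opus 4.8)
The plan is to reduce the whole statement to pointwise linear algebra, using two facts already recorded in the excerpt. Under the Gauss map $\n^{\partial K}\colon \partial K \to \S^*$, which is a diffeomorphism since $K \in \K^2_+$, and under the natural identification of $T_x \partial K$ with $T_{\theta^*}\S^*$ (where $\theta^* = \n^{\partial K}(x)$; after the tautological identification $E \cong E^*$ via $i$, both spaces are literally the hyperplane $\theta^{*\perp}$), one has $\II^{\partial K}_x = (D^2_{\theta^*} h_K)^{-1}$ (this is \cite[(2.48)]{Schneider-Book-2ndEd}, as used in the proof of the Lemma preceding Proposition~\ref{prop:partial-K-param}) and $g^{\partial K}_K(x) = \II^{\partial K}_x / h_K(\theta^*) = (h_K(\theta^*)\, D^2_{\theta^*} h_K)^{-1}$ (Proposition~\ref{prop:partial-K-param}), while the induced Euclidean metrics $\delta^{\partial K}$ at $x$ and $\delta^{\S^*}$ at $\theta^*$ are one and the same, namely the restriction of $\scalar{\cdot,\cdot}$ to $\theta^{*\perp}$.

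First I would record the elementary observation that for a positive-definite symmetric form $P$ on a Euclidean space $(V,\scalar{\cdot,\cdot})$ and scalars $0 < A \le B$, the condition $A\scalar{\cdot,\cdot} \le P \le B\scalar{\cdot,\cdot}$ (as quadratic forms on $V$) is equivalent to $\tfrac1B \scalar{\cdot,\cdot} \le P^{-1} \le \tfrac1A\scalar{\cdot,\cdot}$; both simply assert that the spectrum of $P$ (relative to $\scalar{\cdot,\cdot}$) lies in $[A,B]$, as one sees by diagonalizing $P$ in a $\scalar{\cdot,\cdot}$-orthonormal basis, and inverting a positive-definite form reverses such inequalities. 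Applying this at a fixed $x \in \partial K$, with $V = \theta^{*\perp}$ and $P = D^2_{\theta^*} h_K > 0$ (positivity because $K \in \K^2_+$), gives: $\tfrac1B\delta^{\partial K} \le \II^{\partial K} \le \tfrac1A\delta^{\partial K}$ at $x$ if and only if $A\delta^{\S^*} \le D^2 h_K \le B\delta^{\S^*}$ at $\theta^*$. Since $\n^{\partial K}$ is a bijection of $\partial K$ onto $\S^*$, quantifying over all $x \in \partial K$ is the same as quantifying over all $\theta^* \in \S^*$, and part~(1) follows. Part~(2) is the verbatim same argument with $P = h_K(\theta^*)\, D^2_{\theta^*} h_K$ (still positive definite, as $h_K > 0$ off the origin), using $g^{\partial K}_K(x) = P^{-1}$ in place of $\II^{\partial K}_x = (D^2_{\theta^*} h_K)^{-1}$.

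I do not anticipate any real obstacle; the statement is essentially a dictionary entry. The only point deserving care is verifying that the identification of $T_x\partial K$ with $T_{\theta^*}\S^*$ implicit in the identities $\II^{\partial K}_x = (D^2_{\theta^*} h_K)^{-1}$ and $g^{\partial K}_K = \II^{\partial K}/h_K(\theta^*)$ is exactly the one under which $\delta^{\partial K}$ and $\delta^{\S^*}$ coincide --- and this holds because both tangent spaces are the same linear subspace $\theta^{*\perp}$, with the same induced inner product. Beyond that, the proof is just the spectral reformulation of two-sided operator inequalities together with the order-reversing behaviour of inversion on positive-definite forms.
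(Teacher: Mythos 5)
Your proof is correct and follows essentially the same route as the paper's: both reduce the claim to pointwise linear algebra under the Gauss-map identification of $T_x\partial K$ with $T_{\theta^*}\S^*$, using the identity $\II^{\partial K}_x = (D^2_{\theta^*} h_K)^{-1}$ (and $g^{\partial K}_K = \II^{\partial K}/h_K(\theta^*)$ for part (2)). The only cosmetic difference is that the paper pulls $\delta^{\partial K}$ and $\II^{\partial K}$ back to $\S^*$ via the inverse Gauss map, obtaining $(D^2 h_K)^2$ and $D^2 h_K$ and comparing those, whereas you identify the two tangent hyperplanes $(\theta^*)^{\perp}$ directly and invoke the order-reversing property of inversion on positive-definite forms --- in both cases the same spectral statement that the eigenvalues of $D^2 h_K$ (resp.\ $h_K D^2 h_K$) relative to the Euclidean inner product lie in $[A,B]$.
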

Recall that $\delta^{\partial K}$ denotes the induced Euclidean structure on $\partial K$ (i.e. the first fundamental form), and that
 $g_K^{\partial K}(x) = \II^{\partial K}_x  / h_K(\theta^*)$ by Proposition \ref{prop:partial-K-param}. 
\begin{proof}
Let us pull-back the assertions onto $\S^*$ using the inverse Gauss map $x : \S^* \ni \theta^* \mapsto \bar D h_K(\theta^*) \in \partial K$. Note that $d x(\theta^*) = D^2 h_K(\theta^*)$, and so the pull-back of $\delta^{\partial K}_x$ is $(D^2 h_K(\theta^*))^2$. Recalling that $\II^{\partial K}_x = (D^2 h_K(\theta^*))^{-1}$ under the natural identification between $T_x \partial K$ and $T_{\theta^*} \S^*$, the pull-back of $\II^{\partial K}_x$ is therefore $D^2 h_K(\theta^*)$. Consequently:
\begin{enumerate}
\item $\frac{1}{B} \delta^{\partial K} \leq \II^{\partial K} \leq \frac{1}{A} \delta^{\partial K}$ if and only if $\frac{1}{B} (D^2 h_K(\theta^*))^2 \leq  D^2 h_K(\theta^*) \leq \frac{1}{A} (D^2 h_K(\theta^*))^2$. 
\item $\frac{1}{B} \delta^{\partial K}_x \leq \II^{\partial K}_x / h_K(\theta^*) \leq \frac{1}{A} \delta^{\partial K}_x$ if and only if $\frac{1}{B} (D^2 h_K(\theta^*))^2 \leq D^2 h_K(\theta^*) / h_K(\theta^*) \leq \frac{1}{A} (D^2 h_K(\theta^*))^2$.
\end{enumerate}
\end{proof}

\begin{lemma} \label{lem:strong-curvature}
Let $K \in \K^2_{+,e}$ and assume that:
\begin{equation} \label{eq:strong-curvature-bounds}
\frac{1}{R} \delta^{\partial K} \leq \II^{\partial  K} \leq \frac{1}{r} \delta^{\partial K} .
\end{equation}
Then:
\begin{equation} \label{eq:ball-inclusions}
r B_2^n \subset K \subset R B_2^n ,
\end{equation}
and consequently:
\[
\frac{1}{R^2} \delta^{\partial K} \leq g_K^{\partial K} \leq \frac{1}{r^2} \delta^{\partial K} .
\]
\end{lemma}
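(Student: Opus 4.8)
\textbf{Plan of proof for Lemma \ref{lem:strong-curvature}.}

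The strategy is to first translate the hypothesis \eqref{eq:strong-curvature-bounds} into a statement about the support function $h_K$ on $\S^*$, then use the pinching to bound $h_K$ pointwise between $r$ and $R$, and finally combine these two facts to deduce the bounds on $g_K^{\partial K}$ via Lemma \ref{lem:curvature}. By Lemma \ref{lem:curvature}(1), the hypothesis \eqref{eq:strong-curvature-bounds} is equivalent to $r \, \delta^{\S^*} \leq D^2 h_K \leq R \, \delta^{\S^*}$ on $\S^*$. The heart of the argument is to extract \eqref{eq:ball-inclusions} from this: I would argue that the inclusion $K \subset R B_2^n$ is equivalent to $h_K(\theta^*) \leq R$ for all $\theta^* \in \S^*$, and $r B_2^n \subset K$ is equivalent to $h_K(\theta^*) \geq r$ for all $\theta^*$, so it suffices to prove $r \leq h_K \leq R$ on $\S^*$.

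To get these pointwise bounds on $h_K$ from the Hessian bounds, I would use the origin-symmetry of $K$ together with the fact that $D^2 h_K = {}^{\S^*}\nabla^2 h_K + h_K \delta^{\S^*}$ controls $h_K$ along geodesics. Concretely, fix $\theta^* \in \S^*$ where $h_K$ attains its maximum $M = \max h_K$; at such a point the spherical Hessian ${}^{\S^*}\nabla^2 h_K$ is negative semi-definite, so $D^2 h_K(\theta^*) \leq h_K(\theta^*) \, \delta^{\S^*} = M \, \delta^{\S^*}$, and comparing with the lower bound $D^2 h_K \geq r\, \delta^{\S^*}$ gives $M \geq r$. Dually, at a point where $h_K$ attains its minimum $m$, the spherical Hessian is positive semi-definite, so $D^2 h_K \geq m \, \delta^{\S^*}$, and comparing with $D^2 h_K \leq R\, \delta^{\S^*}$ gives $m \leq R$. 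This only yields $m \leq R$ and $M \geq r$, which is not yet enough. To close the gap I would instead integrate: along any great circle parametrized by arclength $t$, the restriction $\phi(t) := h_K(\gamma(t))$ satisfies $\phi'' + \phi = D^2 h_K(\dot\gamma,\dot\gamma) \in [r,R]$, an ODE of the form $\phi'' + \phi = \psi(t)$ with $\psi \in [r,R]$ and $\phi$ periodic (of period $2\pi$ by evenness, in fact $\pi$-antiperiodic-type symmetry from $K = -K$). Solving this ODE and using periodicity forces $\phi(t) = \int$ against the Green's function of $\partial_t^2 + 1$ on the circle, which is a pointwise average of $\psi$ with a nonnegative kernel of total mass $1$; hence $r \leq \phi(t) \leq R$ for all $t$, and in particular $r \leq h_K(\theta^*) \leq R$ for every $\theta^*$. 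This establishes \eqref{eq:ball-inclusions}.

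Finally, with $r \leq h_K \leq R$ in hand, I multiply the Hessian pinching $r\, \delta^{\S^*} \leq D^2 h_K \leq R\, \delta^{\S^*}$ by $h_K$ to obtain $r^2\, \delta^{\S^*} \leq r \, h_K \, \delta^{\S^*} \leq h_K D^2 h_K \leq R\, h_K \, \delta^{\S^*} \leq R^2\, \delta^{\S^*}$; here I use $h_K > 0$ so multiplication preserves the matrix inequalities. By Lemma \ref{lem:curvature}(2) this is exactly equivalent to $\frac{1}{R^2}\delta^{\partial K} \leq g_K^{\partial K} \leq \frac{1}{r^2}\delta^{\partial K}$, as asserted.

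\textbf{Main obstacle.} The delicate point is the passage from the Hessian bounds to the pointwise bounds $r \leq h_K \leq R$, i.e.\ the ODE/Green's-function argument on great circles; one must be careful that the periodic Green's function for $\partial_t^2 + 1$ has the right sign (it does not on the full period $2\pi$ without the evenness constraint, which is why origin-symmetry of $K$ is genuinely used) and integrates to $1$. The remaining steps --- the two applications of Lemma \ref{lem:curvature} and the monotonicity of matrix inequalities under multiplication by the positive scalar $h_K$ --- are routine.
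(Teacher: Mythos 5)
Your proof is correct, but the key step is argued along a genuinely different route from the paper. The paper disposes of the inclusion $r B_2^n \subset K \subset R B_2^n$ by citing Blaschke's Rolling Ball theorem (\cite[Theorem 2.5.4/3.2.12]{Schneider-Book-2ndEd}): the pinching (\ref{eq:strong-curvature-bounds}) lets $r B_2^n$ roll freely inside $K$ and $K$ slide freely inside $R B_2^n$, and then origin-symmetry upgrades the touching balls at antipodal boundary points to the centered inclusions via $B_2^n \subset \mathrm{conv}\brac{(v+B_2^n)\cup(-v+B_2^n)}$ and $B_2^n \supset (v+B_2^n)\cap(-v+B_2^n)$. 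You instead prove the equivalent pointwise bound $r \leq h_K \leq R$ on $\S^*$ directly: after converting the hypothesis into $r\,\delta^{\S^*} \leq D^2 h_K \leq R\,\delta^{\S^*}$ via Lemma \ref{lem:curvature}(1), you restrict to great circles, where Euler's identity for the $1$-homogeneous extension gives $\phi'' + \phi = D^2 h_K(\dot\gamma,\dot\gamma) \in [r,R]$, and you invert $\partial_t^2+1$ on $\pi$-periodic functions (evenness of $h_K$ is exactly what makes $\phi$ $\pi$-periodic rather than merely $2\pi$-periodic) using the Green's function $\tfrac12\sin t$ on $[0,\pi)$, which is nonnegative with unit mass, so $\phi$ is an average of the right-hand side. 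This is a sound, self-contained argument — note only that on the full period $2\pi$ the operator $\partial_t^2+1$ is not invertible at all (kernel spanned by $\cos t,\sin t$), so the issue there is existence of a Green's function rather than its sign, and your parenthetical ``$\pi$-antiperiodic-type symmetry'' should read $\pi$-periodicity; both approaches use origin-symmetry in an essential way, and both conclude identically by dividing the curvature bounds by $h_K \in [r,R]$ (your Lemma \ref{lem:curvature}(2) step is the same computation on $\S^*$ as the paper's use of $g_K^{\partial K} = \II^{\partial K}/h_K(\n^{\partial K})$). What your route buys is an elementary, quantitative proof avoiding the rolling-ball citation; what the paper's buys is brevity.
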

\begin{proof} By Blaschke's Rolling Ball theorem \cite[Theorem 2.5.4/3.2.12]{Schneider-Book-2ndEd}, the assumption (\ref{eq:strong-curvature-bounds}) implies that the Euclidean ball $r B_2^n$ rolls freely inside $K$ and that $K$ moves freely inside the Euclidean ball $R B_2^n$. Convexity and origin-symmetry of $K$ then imply (\ref{eq:ball-inclusions}) since $B_2^n \subset \text{conv}((v + B_2^n) \cup (-v + B_2^n))$ and $B_2^n \supset (v+B_2^n) \cap (-v + B_2^n)$ for all $v \in E$. Recalling that $g_K^{\partial K}(x) = \II^{\partial K}_x  / h_K(\theta^*)$, the final assertion follows. 
\end{proof}

\subsection{Curvature pinching implies local even $L^p$-Brunn--Minkowski inequality}

\begin{thm} \label{thm:curvature-implies-local-BM}
Let $K \in \K^\infty_{+,e}$ have a centro-affine image $\tilde K$ so that:
\begin{equation} \label{eq:curvature-bounds}
\frac{1}{B} \delta^{\partial \tilde K} \leq g^{\partial \tilde K}_{\tilde K} \leq \frac{1}{A} \delta^{\partial \tilde K} ~,~ \tilde K \subset R B_2^n ,
\end{equation}
for some $A,B,R > 0$. Then $\lambda_{1,e}(-\Delta_{K}) \geq n-p$, i.e. $K$ satisfies the local even $L^p$-Brunn--Minkowski inequality (\ref{eq:local-Lp-BM}), with:
\begin{equation} \label{eq:curvature-p}
p = 2 - \frac{\frac{n-1}{2} A - R^2}{B} .  
\end{equation}
In particular, if $K$ has a centro-affine image for which (\ref{eq:strong-curvature-bounds}) holds, $K$ satisfies the local even $L^p$-Brunn--Minkowski inequality (\ref{eq:local-Lp-BM}) with:
\[
p = 3 - \frac{n-1}{2} \frac{r^2}{R^2} . 
\]
In particular, whenever $\brac{\frac{R}{r}}^2 \leq \frac{n-1}{6}$, $K$ satisfies the local even log-Brunn--Minkowski inequality. 
\end{thm}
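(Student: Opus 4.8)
The plan is to establish formulation (\ref{it:formulation4}) of Theorem \ref{thm:equivalences}, namely the bound $\int \norm{\Hess_K^* f}_{g_K}^2 d\nu_K \geq (2-p) \int |\grad_{g_K} f|^2 d\nu_K$ for all even $f \in C^2_e$, under the stated curvature pinching. By Theorem \ref{thm:equivalences} this is equivalent to $\lambda_{1,e}(-\Delta_K) \geq n-p$, i.e. to the local even $L^p$-Brunn--Minkowski inequality (\ref{eq:local-Lp-BM}). The centro-affine equivariance of $\Delta_K$ (Proposition \ref{prop:CA-Invariance}, Theorem \ref{thm:DeltaK}) means $\lambda_{1,e}(-\Delta_K) = \lambda_{1,e}(-\Delta_{\tilde K})$ for any centro-affine image $\tilde K$, so we may work directly with $\tilde K$ satisfying (\ref{eq:curvature-bounds}); I will drop the tilde and write $K$ for $\tilde K$.

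First I would work in the $M = \partial K$ parametrization, where by Proposition \ref{prop:partial-K-param} the centro-affine metric satisfies $g_K^{\partial K} = \II^{\partial K}/h_K(\n^{\partial K})$, and by hypothesis $\frac{1}{B}\delta^{\partial K} \leq g_K^{\partial K} \leq \frac{1}{A}\delta^{\partial K}$. The key idea, following the strategy sketched in Subsection \ref{subsec:strategy} and the B-conjecture argument of \cite{CFM-BConjecture}, is to feed suitable \emph{odd} test functions into the known Brunn--Minkowski inequality (\ref{eq:local-BM}) (equivalently $\lambda_1(-\Delta_K) \geq n-1$) and average. Given an even $f$, the functions $z_v := df(dx(v))$ for $v \in \S \subset E$ — i.e. directional derivatives of $f$ along ambient directions, suitably interpreted as functions on $\partial K$ via the centro-affine structure — are odd, hence lie in $\mathbf{1}^\perp$, so (\ref{eq:local-BM}) applies to each. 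Integrating the resulting inequalities over $v \in \S$ against Lebesgue measure and using the centro-affine structure equations (\ref{eq:CA-structure}) together with $\Hess^*_K x^*_K = -x^*_K g_K$ (formula (\ref{eq:Hess-of-x*})), the left-hand side produces, up to the metric mismatch, a multiple of $\int |\grad_{g_K} f|^2 d\nu_K$, while the right-hand side produces a multiple of $\int \norm{\Hess^*_K f}_{g_K}^2 d\nu_K$ plus a curvature term controlled by $R^2$ (since $K \subset R B_2^n$ forces $|x| \leq R$). Carefully tracking constants — where the $\frac{1}{A}$ and $\frac{1}{B}$ bounds enter because one must compare $g_K$-traces and $g_K$-norms against the Euclidean $\delta^{\partial K}$-traces and norms that arise from averaging over the \emph{Euclidean} sphere $\S$ — should yield exactly the claimed value $p = 2 - \frac{\frac{n-1}{2}A - R^2}{B}$ in (\ref{eq:curvature-p}).

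For the specialization: if $K$ has a centro-affine image with all radii of curvature pinched between $r$ and $R$, i.e. $\frac{1}{R}\delta^{\partial K} \leq \II^{\partial K} \leq \frac{1}{r}\delta^{\partial K}$, then Lemma \ref{lem:strong-curvature} gives $r B_2^n \subset K \subset R B_2^n$ and hence $\frac{1}{R^2}\delta^{\partial K} \leq g_K^{\partial K} \leq \frac{1}{r^2}\delta^{\partial K}$; so we may take $A = r^2$, $B = R^2$ in (\ref{eq:curvature-p}), giving $p = 2 - \frac{\frac{n-1}{2}r^2 - R^2}{R^2} = 3 - \frac{n-1}{2}\frac{r^2}{R^2}$. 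The final claim is then pure arithmetic: the local even log-Brunn--Minkowski inequality is the case $p = 0$ (equivalently $\lambda_{1,e}(-\Delta_K) \geq n$), and the monotone dependence of (\ref{eq:local-Lp-BM}) on $p$ (smaller $p$ is stronger) means it suffices to have $3 - \frac{n-1}{2}\frac{r^2}{R^2} \leq 0$, i.e. $\frac{R^2}{r^2} \leq \frac{n-1}{6}$, as asserted.

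The main obstacle I anticipate is precisely the ``metric incompatibility'' flagged in Subsection \ref{subsec:strategy}: averaging directional derivatives over the Euclidean sphere $\S$ naturally produces expressions built from $\delta^{\partial K}$ (its trace $n-1$, its Hilbert--Schmidt contractions), not from $g_K$, so one must insert the pinching inequalities $A\,\delta^{\partial K} \leq g_K^{-1}$-type bounds at the right places and in the right direction — over-estimating on one side and under-estimating on the other — without losing the factor-of-two improvement that evenness is supposed to buy. Getting the bookkeeping of these comparisons to land exactly on the constant in (\ref{eq:curvature-p}), rather than something weaker, is the delicate point; the rest (the identities from the centro-affine structure equations, the integration by parts against $\nu_K$ using $\nabla_K \nu_K = 0$, and the final arithmetic) is routine given the machinery already developed in Sections \ref{sec:CA} and \ref{sec:Bochner}.
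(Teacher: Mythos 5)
Your proposal follows essentially the same route as the paper: reduce by centro-affine equivariance, target formulation (\ref{it:formulation4}) of Theorem \ref{thm:equivalences}, apply the known $\lambda_1(-\Delta_K)=n-1$ inequality (\ref{eq:local-BM}) to odd first-derivative test functions indexed by ambient directions, sum/average, and use the structure equations together with the bounds $A\,\delta \leq$ (metric comparison) $\leq B\,\delta$ and $|x|\le R$ to absorb the metric mismatch, then conclude the specializations via Lemma \ref{lem:strong-curvature} exactly as you do. The only refinement worth noting is that the precise test functions should be $F^k=(\grad_{g_K}f)(x^k)=dx^k(\grad_{g_K}f)$ for an orthonormal basis $\ee^k$ (i.e.\ the ambient coordinates contracted with the centro-affine gradient, not Euclidean directional derivatives), for which the Gauss equation $\nabla_j x_i=-g_{ij}x$ makes the bookkeeping land exactly on the constant in (\ref{eq:curvature-p}).
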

\begin{proof}
As the spectrum $\sigma(-\Delta_{K})$ is centro-affine invariant, it is enough to prove the claim for $\tilde K = K$.

Let $\ee^1,\ldots,\ee^n$ denote a fixed orthonormal basis of $(E,\scalar{\cdot,\cdot})$. Recall that $x_K : M \rightarrow \partial K$ is our centro-affine parametrization of $\partial K$ on $M$.  We abbreviate $g = g_K$, $x = x_K$, $\nabla = \nabla_K$, $\nabla^* = \nabla^*_K$ and $\Hess^* = \Hess^*_K$. 
We also denote:
\[
 x^k := \sscalar{\ee^k, x } : M \rightarrow \R ~,~ x^k_i :=(x^k)_i = dx^k(e_i)
\]
 (for some local frame $e_1,\ldots,e_{n-1}$ on $M$). Define the following $(0,2)$ tensor on $M$:
\begin{equation} \label{eq:delta}
\delta_{ij} = \sscalar{dx(e_i), dx(e_j)}  = \sum_{k=1}^n x^k_i x^k_j  . 
\end{equation}
Note that when $M = \partial K$, as $dx_K^{\partial K} = \Id$, $\delta$ is precisely the induced Euclidean metric $\delta^{\partial K}$ on $\partial K$, ensuring that our notation is consistent. We deduce that, regardless of parametrization, we are given that: 
\begin{equation} \label{eq:metric-bounds}
\frac{1}{B} \delta \leq g \leq \frac{1}{A} \delta .
\end{equation}

Given $f \in C^2_e(M)$ and $k=1,\ldots,n$, introduce the function $F^k \in C^1(M)$ given by: 
\[
F^k := (\grad_{g} f)(x^k) = x^k_i  f^i ~,~  f^i := g^{ip} f_p .
\]
 Since $K$ is origin-symmetric and $f$ is even, $F^k$ is clearly odd, and in particular $\int F^k d\nu_K = 0$. Consequently, applying the local ($L^1$-)Brunn-Minkowski inequality (\ref{eq:local-BM}) to each $F^k$ and summing (it is clearly enough that $F^k \in C^1$), we obtain:
\[
\sum_{k=1}^n \int |\grad_g F^k|^2 d\nu_K \geq (n-1) \sum_{k=1}^n \int  (F^k)^2 d\nu_K . 
\]

Using (\ref{eq:metric-bounds}), we first estimate from below:
\begin{align*}
& \sum_{k=1}^n \int  (F^k)^2 d\nu_K = \int  \sum_{k=1}^n x^k_i x^k_j f^i f^j d\nu_K = \int  \delta_{ij} f^i f^j d\nu_K \\
& \geq A \int g_{ij} f^i f^j d\nu_K = A \int |\grad_{g} f|^2 d\nu_K . 
\end{align*}
Next, note that by (\ref{eq:conjugation}) and (\ref{eq:CA-structure}):
\begin{align}
\nonumber \partial_a (x^k_i f^i) & = \partial_a (x^k_i g^{ip} f_p)  = (\nabla_{a} x^k_i) f^i + x^k_i g^{ip} \nabla^*_{a} f_p  \\
\label{eq:AB} & = -g_{ai} x^k f^i + x^k_i g^{ip} \Hess^*_{a p} f = -x^k f_a + x^k_i g^{ip} \Hess^*_{a p} f =: P^k_a + Q^k_a .
\end{align}
Using (\ref{eq:AB}), (\ref{eq:delta}), (\ref{eq:metric-bounds}) and (\ref{eq:ball-inclusions}), we now estimate from above:
\begin{align*}
& \sum_{k=1}^n \int |\grad_g F^k|^2 d\nu_K \\
& = \sum_{k=1}^n \int g^{ab} \partial_a (x^k_i f^i) \partial_b(x^k_j f^j) d\nu_K \\
& = \sum_{k=1}^n \int g^{ab} (P^k_a + Q^k_a) (P^k_b + Q^k_b) d\nu_K \\
& \leq 2 \sum_{k=1}^n \int g^{ab} (P^k_a P^k_b + Q^k_a Q^k_b) d\nu_K \\
& = 2 \int \brac{ \sum_{k=1}^n (x^k)^2 g^{ab} f_a f_b + (\sum_{k=1}^n x^k_i x^k_j)  g^{ab} g^{ip} g^{jq} \Hess^*_{a p} f \Hess^*_{b q} f } d\nu_K \\
& = 2 \int \brac{ |x|^2 |\grad_{g} f|^2 + \delta_{ij}  g^{ab} g^{ip} g^{jq} \Hess^*_{a p} f \Hess^*_{b q} f } d\nu_K \\
& \leq 2 \int \brac{ R^2 |\grad_{g} f|^2  + B \norm{\Hess^* f}_{g}^2} d\nu_K ,  \end{align*}
where in the last inequality we used the well known fact that $S_{ij} T^{ij} \geq 0$ whenever $S,T \geq 0$. 

Combing the three inequalities derived above, we obtain:
\[
2 \int \brac{R^2 |\grad_{g} f|^2 + B \norm{\Hess^* f}_{g}^2} d\nu_K \geq (n-1) A \int |\grad_{g} f|^2 d\nu_K  . 
\]
Rearranging, we deduce for every $f \in C^2_e$:
\[
\int \norm{\Hess^* f}_{g}^2 d\nu_K \geq \frac{\frac{n-1}{2} A - R^2}{B} \int |\grad_{g} f|^2 d\nu_K .
\]
By Theorem \ref{thm:equivalences}, this is equivalent to the local $L^p$-Brunn--Minkowski inequality (\ref{eq:local-Lp-BM}) with $p$ given by (\ref{eq:curvature-p}). 

Lastly, the ``in particular" part of Theorem \ref{thm:curvature-implies-local-BM} follows immediately by Lemma \ref{lem:strong-curvature}. 
\end{proof}

\subsection{Uniqueness in the even $L^p$-Minkowski problem} \label{subsec:UniquenessIngredients}

We can now translate Theorem \ref{thm:curvature-implies-local-BM} into a global uniqueness result for the even $L^p$-Minkowski problem and an even $L^p$-Minkowski inequality. We obtain the following strengthened version of Theorem \ref{thm:main-Lp-Minkowski}, which we will require for the isomorphic results of the next section.

\begin{thm} \label{thm:Lp-Minkowski}
Let $K \in \K^{2,\alpha}_{+,e}$ have a centro-affine image $\tilde K \subset R B_2^n$ so that the following curvature pinching bounds hold:
\begin{equation} \label{eq:curvature-bounds}
\frac{1}{B} \delta^{\partial \tilde K} \leq \frac{\II^{\partial \tilde K}}{h_{\tilde K}(\n^{\partial \tilde K})} \leq \frac{1}{A} \delta^{\partial \tilde K}  ,
\end{equation}
for some $B \geq A > 0$ and $R > 0$. Then for any:
\begin{equation} \label{eq:AB-cond}
2 - \frac{\frac{n-1}{2} A - R^2}{B} < p < 1 ,
\end{equation}
the even $L^p$-Minkowski problem for $K$ has a unique solution:
\begin{equation} \label{eq:Lp-Minkowski-Uniqueness}
L \in \K_e \;\;\; S_p L = S_p K  \;\; \Rightarrow \;\; L = K ,
\end{equation}
and the even $L^p$-Minkowski inequality holds:
\begin{equation} \label{eq:Lp-Minkowski-Inq}
\forall L \in \K_e  \;\;\; \frac{ \frac{1}{p} \int h_L^{p} dS_p K }{ V(L)^{\frac{p}{n}} } \geq  \frac{ \frac{1}{p} \int h_K^{p} dS_p K }{ V(K)^{\frac{p}{n}} } = \frac{n}{p} V(K)^{1-\frac{p}{n}}  ,
\end{equation}
with equality if and only if $L = c K$ for some $c > 0$. 
\end{thm}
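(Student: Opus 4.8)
The plan is to deduce Theorem~\ref{thm:Lp-Minkowski} from Theorem~\ref{thm:curvature-implies-local-BM} together with the equivalence package assembled in Theorem~\ref{thm:equiv-full}, by a density-plus-path-connectedness argument. First I would observe that the hypothesis $\frac{1}{B}\delta^{\partial\tilde K}\le \II^{\partial\tilde K}/h_{\tilde K}(\n^{\partial\tilde K})\le\frac{1}{A}\delta^{\partial\tilde K}$ is exactly the bound $\frac1B\delta^{\partial\tilde K}\le g^{\partial\tilde K}_{\tilde K}\le\frac1A\delta^{\partial\tilde K}$, since $g^{\partial K}_K(x)=\II^{\partial K}_x/h_K(\n^{\partial K})$ by Proposition~\ref{prop:partial-K-param}; combined with $\tilde K\subset RB_2^n$ this is precisely the input~\eqref{eq:curvature-bounds} needed for Theorem~\ref{thm:curvature-implies-local-BM}. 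That theorem then gives $\lambda_{1,e}(-\Delta_K)\ge n-p$ for every $p$ in the range~\eqref{eq:AB-cond}, i.e. statement~(\ref{it:main4}) of Theorem~\ref{thm:equiv-full} holds for this particular $K$. Since the spectrum of $-\Delta_K$ is a centro-affine invariant, the same holds for any centro-affine image; in particular there is no loss in assuming $\tilde K=K$.

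The remaining issue is purely one of regularity: Theorem~\ref{thm:curvature-implies-local-BM} as stated requires $K\in\K^\infty_{+,e}$, whereas here $K\in\K^{2,\alpha}_{+,e}$. I would handle this by a mild approximation. Given $K\in\K^{2,\alpha}_{+,e}$ satisfying the pinching~\eqref{eq:curvature-bounds} with constants $A<B$ and radius $R$, approximate it in the $C^2$ topology by bodies $K_i\in\K^\infty_{+,e}$; since the pinching bounds and the inclusion $K\subset RB_2^n$ are $C^2$-open conditions (up to an arbitrarily small loss in $A,B,R$), each $K_i$ satisfies~\eqref{eq:curvature-bounds} with slightly worse constants, hence $\lambda_{1,e}(-\Delta_{K_i})\ge n-p_i$ with $p_i\to p$. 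Alternatively, and more cleanly, one checks that the proof of Theorem~\ref{thm:curvature-implies-local-BM} only uses $C^2$ regularity of $h_K$ (the function $F^k$ lies in $C^1$, which is all that is needed to invoke the local Brunn--Minkowski inequality~\eqref{eq:local-BM}), so it applies verbatim to $K\in\K^{2,\alpha}_{+,e}$; I would simply remark this. Either way we conclude $\lambda_{1,e}(-\Delta_K)\ge n-p$ for all $p$ in the open range~\eqref{eq:AB-cond}.

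Now I invoke Theorem~\ref{thm:equiv-full}. Fix any $p_0$ in the range~\eqref{eq:AB-cond} and apply the theorem with the family $\F=\{B_2^n\}\cup\{\text{centro-affine images of }K\text{ and of interpolants}\}$; more simply, take $\F$ to be a path in $\K^{2,\alpha}_{+,e}$ from $B_2^n$ to $K$ that stays within the curvature-pinching region. Such a path exists: interpolate support functions, $h_t=(1-t)\cdot 1+t\,h_K$ after rescaling $K$ so that $B_2^n\subset K$, or more robustly dilate $K$ to lie in the relevant pinching class together with $B_2^n$ and connect by the segment of support functions, noting that the pinching~\eqref{eq:curvature-bounds} and the bound $\lambda_{1,e}\ge n-p_0$ hold along the whole segment by the same argument applied to each $K_t$ (convex combinations of metrics satisfying two-sided bounds still satisfy them). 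Then the implication (\ref{it:main4})$\Rightarrow$(\ref{it:main1})$\Rightarrow$(\ref{it:main3b}) of Theorem~\ref{thm:equiv-full}, valid for path-connected families containing $B_2^n$, yields uniqueness in the even $L^q$-Minkowski problem and the even $L^q$-Minkowski inequality with equality characterization for every $q\in(p_0,1)$; taking $q=p$ and letting $p_0\searrow$ the lower endpoint of~\eqref{eq:AB-cond} covers the full asserted range. This gives exactly~\eqref{eq:Lp-Minkowski-Uniqueness} and~\eqref{eq:Lp-Minkowski-Inq} (the displayed identity $\frac1p\int h_K^p\,dS_pK/V(K)^{p/n}=\frac np V(K)^{1-p/n}$ is immediate from $S_pK=h_K^{1-p}S_K$ and $\int h_K\,dS_K=nV(K)$), with equality iff $L=cK$.

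The main obstacle I anticipate is the path-connectedness bookkeeping: one must exhibit a continuous path in $C^{2,\alpha}$ from $B_2^n$ to (a centro-affine image of) $K$ along which the quantitative input of Theorem~\ref{thm:curvature-implies-local-BM} — the two-sided bound $\frac1B\delta\le g\le\frac1A\delta$ plus an outer radius bound — holds uniformly enough to keep $\lambda_{1,e}\ge n-p_0$ throughout. Since $B_2^n$ itself has $g_{B_2^n}=\delta^{\S^*}$ (so $A=B=1$) and $\lambda_{1,e}(-\Delta_{B_2^n})=2n\ge n-p_0$ trivially, and since the segment of support functions between two bodies with comparable metrics again has comparable metric (the relevant tensors depend affinely on $h$ after the substitution, and ratios of positive-definite forms are preserved under convex combination up to the worse of the two bounds), this is routine but needs to be spelled out; everything else is a direct citation of the results already established in the excerpt.
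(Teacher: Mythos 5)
Your overall architecture is the same as the paper's: translate the hypothesis into the bound on $g^{\partial\tilde K}_{\tilde K}$, invoke Theorem \ref{thm:curvature-implies-local-BM} to get $\lambda_{1,e}(-\Delta)\ge n-p$, connect to $B_2^n$ by interpolating support functions, and feed the path into the implications (\ref{it:main4})$\Rightarrow$(\ref{it:main1})$\Rightarrow$(\ref{it:main3b}) of Theorem \ref{thm:equiv-full}, with a $C^2$-approximation to go from $\K^\infty_{+,e}$ to $\K^{2,\alpha}_{+,e}$. However, the step you dismiss as "routine bookkeeping" is exactly where the paper has to work, and your stated justification for it is wrong. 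By Lemma \ref{lem:curvature}(2), the hypothesis (\ref{eq:curvature-bounds}) for $K_t$ is the two-sided bound $A_t\,\delta^{\S^*}\le h_{K_t}D^2h_{K_t}\le B_t\,\delta^{\S^*}$, and along $h_t=(1-t)+t\,h_{\tilde K}$ the tensor $h_tD^2h_t=\bigl((1-t)+t h_{\tilde K}\bigr)\bigl((1-t)\delta^{\S^*}+tD^2h_{\tilde K}\bigr)$ is \emph{quadratic} in $t$, not affine, and it is compared with the fixed form $\delta^{\S^*}$, not with an interpolated one; so "convex combinations of metrics satisfying two-sided bounds still satisfy them" and "the relevant tensors depend affinely on $h$" do not apply. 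The paper proves $A_t\ge A$ and $B_t\le B$ by diagonalizing pointwise, showing the quadratic $t\mapsto((1-t)+tp)((1-t)+tq)$ is monotone on $[0,1]$ (its vertex lies outside $(0,1)$), and crucially normalizing the dilation so that $rR=1$, which via the rolling-ball Lemma \ref{lem:strong-curvature} forces $A\le 1\le B$ and hence keeps the interpolated values inside $[A,B]$; it also notes $R_t\le R$ and $p_0>-n$. Without this normalization the endpoint argument only gives constants $\min(A,1)$, $\max(B,1)$ and the claimed range (\ref{eq:AB-cond}) can shrink. Your parenthetical "dilate $K$ appropriately" gestures at this but does not identify the needed scale-invariance of $(\tfrac{n-1}{2}A-R^2)/B$ nor supply the monotonicity argument, so as written this step fails.

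Two smaller points also need repair. First, "no loss in assuming $\tilde K=K$" is justified by you only through the centro-affine invariance of the spectrum, which covers statement (\ref{it:main4}) but not the global conclusions (\ref{eq:Lp-Minkowski-Uniqueness})--(\ref{eq:Lp-Minkowski-Inq}); since $S_p$ is not $GL_n$-equivariant for $p\ne -n$, you must either prove the (true, but not stated in the paper) equivariance of the global uniqueness/inequality under $(K,L)\mapsto(TK,TL)$, or do as the paper does and build the path $K_t=T_t(\tilde K_t)$ with $T_t=(1-t)\Id+tT$ so that it ends at $K$ itself. Second, your "cleaner" regularity remark — that the proof of Theorem \ref{thm:curvature-implies-local-BM} applies verbatim for $h_K\in C^{2,\alpha}$ — is not accurate: that proof differentiates $F^k=x^k_if^i$, and $x^k_i$ is only $C^{0,\alpha}$ when $h_K\in C^{2,\alpha}$; the third derivatives of $h_K$ cancel only after invoking the Gauss structure equation, so one still needs an approximation, and your first route (approximating $K$ in $C^2$) needs the continuity of $\lambda_{1,e}$ under $C^2$-perturbations (cited by the paper from \cite{KolesnikovEMilman-LocalLpBM}) to transfer the eigenvalue bound to $K$ and to every $K_t$ along the path, which you leave implicit.
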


As usual, the case $p=0$ above is interpreted in the limiting sense as in (\ref{eq:main-log-Minkowski-Inq}). Note that Theorem \ref{thm:Lp-Minkowski} immediately implies Theorem \ref{thm:main-Lp-Minkowski}, since the assumption (\ref{eq:main-strong-curvature-bounds}) implies (\ref{eq:curvature-bounds}) with $A = r^2$ and $B = R^2$ by Lemma \ref{lem:strong-curvature}. 

\begin{proof}[Proof of Theorem \ref{thm:Lp-Minkowski}]
As Theorem \ref{thm:curvature-implies-local-BM} was formulated for $K \in \K^{\infty}_{+,e}$, let us first assume that this is the case. 
Denote by $T \in GL_n$ the linear map so that $K = T(\tilde K)$ with
\begin{equation} \label{eq:sandwich}
 r B_2^n \subset \tilde K \subset R B_2^n
 \end{equation}
satisfying (\ref{eq:curvature-bounds}). Without loss of generality, we may assume that $r,R$ are best possible in (\ref{eq:sandwich}), and by scaling, we may assume that $r R = 1$, so that in particular $r \leq 1 \leq R$. By Lemma \ref{lem:strong-curvature}, since $\frac{r}{B} \delta^{\partial \tilde K} \leq \II^{\partial \tilde K} \leq \frac{R}{A} \delta^{\partial \tilde K}$, we have $R \leq B/r$ and $r \geq A/R$, and hence $A \leq 1 \leq B$.

Define for $t \in [0,1]$:
\[
r_t := (1-t) + t r ~,~ R_t := (1-t) + t R ,
\]
and let $\tilde K_t,K_t \in \K^{\infty}_{+,e}$ be defined via:
\[
h_{\tilde K_t} := (1-t) + t  h_{\tilde K} ~,~  T_t := (1-t) \Id + t T ~,~ K_t := T_t(\tilde K_t) ,
\]
so that $K_0 = B_2^n$ and $K_1 = K$.  

 By Lemma \ref{lem:curvature}, the assumption (\ref{eq:curvature-bounds}) is equivalent to:
\[
A \delta^{\S^*} \leq h_{\tilde K} D^2 h_{\tilde K} \leq B \delta^{\S^*}  \text{ on $\S^*$.}
\]
Denote by $A_t,B_t > 0$ the best corresponding constants for $\tilde K_t$:
\[
A_t \delta^{\S^*} \leq h_{\tilde K_t} D^2 h_{\tilde K_t} = \brac{(1-t) + t h_{\tilde K}}\brac{ (1-t) \delta^{\S^*} + t D^2 h_{\tilde K} } \leq B_t \delta^{\S^*}  \text{ on $\S^*$.}
\]
We claim that:
\begin{equation} \label{eq:headache}
A_t \geq A \text{ and } B_t \leq B \;\;\; \forall t \in [0,1] .
\end{equation}
Indeed, by fixing $\theta^*$ and diagonalizing $D^2 h_{\tilde K}(\theta^*) > 0$, it is enough to understand the extremal values of the quadratic polynomial $Q(t) := ((1-t) + t p) ((1-t) + t q)$ on the interval $[0,1]$ when $p,q > 0$. A simple calculation verifies that this polynomial is always monotone (either increasing or decreasing) on $[0,1]$, or equivalently, that the mid-point between its two roots lies outside the interval $(0,1)$:
\[
\frac{-\frac{1}{q-1} - \frac{1}{p-1}}{2} = \frac{1 - \frac{p+q}{2}} {1 + pq - (p+q)} \begin{cases} \geq \\ \leq \end{cases} 
\frac{1 -  \frac{p+q}{2}}{ 1 + (\frac{p+q}{2})^2 -  (p+q)} = \frac{1}{1 -  \frac{p+q}{2}} \begin{cases} \geq 1 &  0 < \frac{p+q}{2} < 1 \\ \leq 0 & \frac{p+q}{2} \geq 1 \end{cases} .
\]
Consequently, the minimum and maximum of $Q$ on $[0,1]$ are attained at the end points. Since $A \leq 1 \leq B$, (\ref{eq:headache}) immediately follows. 

Now, since clearly $R_t \leq R$, if follows that for all $t \in [0,1]$:
\[
2 - \frac{\frac{n-1}{2} A_t - R_t^2}{B_t} \leq 2 - \frac{\frac{n-1}{2} A - R^2}{B} =: p_0 . 
\]
We may assume that $p_0 < 1$, otherwise there is nothing to prove. It follows by Theorem \ref{thm:curvature-implies-local-BM} that for all $t \in [0,1]$, $K_t$ satisfies the local $L^{p_0}$-Brunn--Minkowski inequality (\ref{eq:local-Lp-BM}). 

Note that it always holds that $p_0 > -n$ because $A \leq B$. 

We now claim that the exact same conclusion also holds whenever $K \in \K^{2}_{+,e}$. Indeed, recall that $K_t$ satisfies (\ref{eq:local-Lp-BM}) if and only if $\lambda_{1,e}(-\Delta_{K_t}) \geq n-p_0$. It was shown in \cite[Theorem 5.3]{KolesnikovEMilman-LocalLpBM} that the eigenvalues of $-\Delta_{L}$ are continuous in $L$ with respect to the $C^2$ topology. Consequently, if we approximate $K$ by $\{K^i\} \subset \K^{\infty}_{+,e}$ in $C^{2}$, since $K^i_t$ obviously tend to $K_t$ in $C^{2}$ as $i \rightarrow \infty$ as well, it follows that $\lambda_{1,e}(-\Delta_{K^i_t}) \rightarrow \lambda_{1,e}(-\Delta_{K_t})$. We thus deduce that $\lambda_{1,e}(-\Delta_{K_t}) \geq n-p_0$ for all $t \in [0,1]$ whenever $K \in \K^{2}_{+,e}$. 

We now assume that $K \in \K^{2,\alpha}_{+,e}$, to ensure that $[0,1] \ni t \mapsto K_t \in \K^{2,\alpha}_{+,e}$ is a continuous deformation in the $C^{2,\alpha}$ topology. The implications (\ref{it:main4}) $\Rightarrow$ (\ref{it:main1}) $\Rightarrow$ (\ref{it:main3b}) of Theorem \ref{thm:equiv-full} for $\F := \{ K_t \}_{t \in [0,1]} \subset \K^{2,\alpha}_{+,e}$ now immediately establish the asserted uniqueness  in the even $L^p$-Minkowski problem for $K$  (\ref{eq:Lp-Minkowski-Uniqueness}), as well as the corresponding $L^p$-Minkowski inequality (\ref{eq:Lp-Minkowski-Inq}) along with its equality case, for any $p \in (p_0 , 1)$. 
\end{proof}

\section{Isomorphic and isometric $L^p$-Minkowski problem} \label{sec:iso}

We conclude this work by deducing Theorems \ref{thm:main-iso-Lp} and \ref{thm:main-isometric}. To this end, we require the following:

\begin{proposition} \label{prop:iso}
Let $K \in \K_e$, and set $D = d_{BM}(K,B_2^n)$. Then for any $\alpha, \beta > 0$, there exists $\tilde K \in \K^\infty_{+,e}$ so that:
\begin{equation} \label{eq:iso-BM-distance}
d_{BM}(K,\tilde K) \leq  (1+\beta)  \sqrt{1+\alpha^2} ,
\end{equation}
\begin{equation} \label{eq:iso-radii}
r B_2^n \subset \tilde K \subset R B_2^n ,
\end{equation}
and:
\begin{equation} \label{eq:iso-curvature-distance}
\frac{1}{B} \delta^{\partial \tilde K}\leq \frac{\II^{\partial \tilde K}}{h_{\tilde K}(\n^{\partial \tilde K})} \leq \frac{1}{A} \delta^{\partial \tilde K} ,
\end{equation}
with $A,B,r , R > 0$ given by:
\begin{equation} \label{eq:iso-rR-defs}
r : = \beta + \frac{1}{\sqrt{1 + \alpha^2 / D^2}} ~,~ R := \frac{D}{\sqrt{1+\alpha^2}} + \beta  ,
\end{equation}
\begin{equation} \label{eq:iso-AB-defs}
A := \beta r ~,~ B:= \frac{D^2}{\alpha^2} \brac{1 + \beta \sqrt{1 + \alpha^2/D^2}} + \beta R . 
\end{equation}
\end{proposition}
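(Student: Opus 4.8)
The plan is to construct $\tilde K$ by a two-step regularization of the John-ellipsoid position of $K$: first replace $K$ by an inscribed ellipsoid that we then round off, and then add a small multiple of $B_2^n$ as a Minkowski summand to guarantee strict positivity of curvature and a two-sided bound on the radii of curvature. Concretely, after applying a suitable $T\in GL_n$ we may assume $B_2^n\subset K\subset D\cdot B_2^n$ (John's theorem in the form $d_{BM}(K,B_2^n)=D$, realized up to the $GL_n$ action we are allowed to absorb into the $d_{BM}$ estimate). I would then take a smooth origin-symmetric convex body $K'$ that approximates $K$ from inside well enough that $\frac{1}{\sqrt{1+\alpha^2/D^2}}B_2^n \subset K' \subset \frac{D}{\sqrt{1+\alpha^2}}B_2^n$ while keeping $d_{BM}(K,K')\le \sqrt{1+\alpha^2}$ — the factor $\sqrt{1+\alpha^2}$ is exactly the slack one loses by shrinking the circumradius from $D$ to $D/\sqrt{1+\alpha^2}$ and correspondingly enlarging the inradius, balanced so that the product (the geometric distance) is minimized. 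Crucially $K'$ can be taken with bounded curvature from above, i.e. $\II^{\partial K'}\le C(\alpha,D)\,\delta^{\partial K'}$ for an appropriate $C$; here the quantitative John-type bound $\alpha^2$-term in $B$ enters.

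Next I would set $\tilde K := K' +_1 \beta B_2^n$ (ordinary Minkowski sum), i.e. $h_{\tilde K} = h_{K'} + \beta$. Adding a ball to a convex body with $h_{K'}\in C^\infty$ keeps $\tilde K \in \K^\infty_{+,e}$ and, by the standard formula $D^2 h_{\tilde K} = D^2 h_{K'} + \beta\,\delta^{\S^*}$, forces $D^2 h_{\tilde K} \ge \beta\,\delta^{\S^*}$, hence a clean lower bound on curvature. This is where $r = \beta + (\text{inradius of }K')$ and $R = \beta + (\text{circumradius of }K')$ come from: Minkowski-summing with $\beta B_2^n$ shifts both radii by $\beta$ (support functions add), giving exactly \eqref{eq:iso-radii} with $r,R$ as in \eqref{eq:iso-rR-defs}. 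For the Banach–Mazur distance, $d_{BM}(K,\tilde K)\le d_{BM}(K,K')\cdot d_{G}(K',\tilde K)$, and $d_G(K',\tilde K)\le 1+\beta/(\text{inradius of }K') \le 1+\beta$ by monotonicity of Minkowski addition under inclusion of $\beta B_2^n$ between $0$ and (inradius)$\cdot B_2^n$ scaled appropriately; combined with $d_{BM}(K,K')\le\sqrt{1+\alpha^2}$ this gives \eqref{eq:iso-BM-distance}. The bound $d_{BM}(K,K')\le\sqrt{1+\alpha^2}$ itself follows from the John-position sandwich: an ellipsoid squeezed between radii $a$ and $b$ has $d_{BM}$ to the ball at most $\sqrt{b/a}$ after the optimal rescaling, and one chooses the rescaling so that $a=1/\sqrt{1+\alpha^2/D^2}$, $b=D/\sqrt{1+\alpha^2}$, whose ratio is $D/\sqrt{1+\alpha^2}\cdot\sqrt{1+\alpha^2/D^2}=\sqrt{(D^2+\alpha^2)/(1+\alpha^2)}\le D$... actually one must be careful: the correct bookkeeping gives the product $d_G=ab$-type quantity, and I would track it so that the final statement reads exactly $(1+\beta)\sqrt{1+\alpha^2}$.

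For \eqref{eq:iso-curvature-distance}, I translate the conclusion via Lemma \ref{lem:curvature}(2): it is equivalent to $A\,\delta^{\S^*}\le h_{\tilde K}D^2 h_{\tilde K}\le B\,\delta^{\S^*}$ on $\S^*$. The lower bound: $h_{\tilde K}\ge r$ (the inradius) and $D^2 h_{\tilde K}\ge \beta\,\delta^{\S^*}$ (from the added ball), so $h_{\tilde K}D^2 h_{\tilde K}\ge \beta r\,\delta^{\S^*}=A\,\delta^{\S^*}$ — this is clean and gives $A=\beta r$ as in \eqref{eq:iso-AB-defs}. The upper bound: $h_{\tilde K}\le R$ and $D^2 h_{\tilde K}=D^2 h_{K'}+\beta\,\delta^{\S^*}$; here I need $D^2 h_{K'}\le \frac{D^2}{\alpha^2}\frac{1}{h_{\tilde K}}\,\delta^{\S^*}$-type control, which is precisely the quantitative curvature bound on the rounded ellipsoid $K'$ built in Step 1 — an ellipsoid with semiaxes in $[a,b]$ has $D^2 h \le (b^2/a)\,\delta^{\S^*}$-type bounds, and feeding in $a,b$ above produces the $D^2/\alpha^2$ factor. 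Multiplying by $h_{\tilde K}\le R$ and adding the $\beta h_{\tilde K}\le \beta R$ term from the $\beta\,\delta^{\S^*}$ piece yields $B=\frac{D^2}{\alpha^2}(1+\beta\sqrt{1+\alpha^2/D^2})+\beta R$.

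The main obstacle I anticipate is Step 1: producing a \emph{smooth} origin-symmetric $K'$ that simultaneously (a) lies in the prescribed annulus $[a,b]B_2^n$, (b) has the sharp quantitative upper curvature bound $h_{K'}D^2 h_{K'}\le \frac{D^2}{\alpha^2}\delta^{\S^*}$ (so that the final $B$ comes out with the stated constant and not a worse one), and (c) is $d_{BM}$-close to $K$ with the claimed factor. The natural candidate is an inscribed ellipsoid $\mathcal E$ with $\mathcal E\subset K$ and semiaxes chosen so that $b/a$ is minimized under the constraint that $\mathcal E$ still circumscribes (after rescaling) a ball comparable to $K$'s inradius; but an ellipsoid is not strictly positively curved in the needed uniform sense unless its eccentricity is controlled, which is exactly the role of $\alpha$. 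One must verify that the optimal choice of rescaling parameter interpolating between "$K'$ close to $K$" ($\alpha\to\infty$) and "$K'$ a round ball" ($\alpha\to 0$) yields \eqref{eq:iso-rR-defs}--\eqref{eq:iso-AB-defs} after a short but fiddly optimization, and that the smoothing (mollifying the support function, or Minkowski-summing a tiny ball first) does not degrade the constants — this is where I expect the bulk of the technical care to be needed, though none of it is conceptually deep.
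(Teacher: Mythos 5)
Your second step (Minkowski-adding $\beta B_2^n$, translating via Lemma \ref{lem:curvature}, getting the lower bound $h_{\tilde K} D^2 h_{\tilde K} \geq \beta r\,\delta^{\S^*}$ and assembling $B$ from an upper curvature bound on the inner body plus the $\beta$-terms) is exactly what the paper does. But the heart of the proposition is your ``Step 1'', and there you have a genuine gap: you need a smooth origin-symmetric body $K'$ that is within Banach--Mazur distance $\sqrt{1+\alpha^2}$ of $K$ \emph{and} satisfies a quantitative upper bound $D^2 h_{K'} \leq \frac{D^2}{\alpha^2}\frac{1}{h_{K'}}\,\delta^{\S^*}$ (equivalently, a lower bound on $\II^{\partial K'}$ relative to $h_{K'}\,\delta^{\partial K'}$, i.e.\ no flat spots in a quantified sense). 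Mollifying $h_K$ and adding a tiny ball only controls the \emph{other} side of the curvature pinching; it gives no upper bound on $D^2 h_{K'}$ when $K$ has facets. And your ``natural candidate'', an inscribed ellipsoid, cannot work at all in the regime of interest: every ellipsoid $\EE$ satisfies $d_{BM}(K,\EE) = d_{BM}(K,B_2^n) = D$, so it is never within $\sqrt{1+\alpha^2}$ of $K$ unless $\sqrt{1+\alpha^2}\geq D$, which defeats the purpose (e.g.\ the isomorphic case has $\gamma=8$, $D$ up to $\sqrt n$). No interpolation ``between an ellipsoid and $K$'' is specified, so the constant $D^2/\alpha^2$ in $B$ is not obtained by your argument.

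The paper's device, which is the one idea your proposal is missing, is to define $L := \brac{K^{\circ} +_2 \frac{\alpha}{D} B_2^n}^{\circ}$, i.e.\ $\norm{x}_L^2 = \norm{x}_K^2 + \frac{\alpha^2}{D^2}|x|^2$, and then set $\tilde K := L + \beta B_2^n$. The squared gauge of $L$ is uniformly convex with modulus $2\alpha^2/D^2$, and since $\partial L = \{f=1\}$ with $f := \norm{\cdot}_K^2 + \frac{\alpha^2}{D^2}|\cdot|^2$, Euler's identity ($\scalar{\bar D f(x),x} = 2f(x) = 2$ on $\partial L$) converts this into $\II^{\partial L}_x \geq \frac{\alpha^2}{D^2} h_L(\n^{\partial L}(x))\,\delta^{\partial L}_x$, hence $D^2 h_L \leq \frac{D^2}{\alpha^2}\frac{1}{h_L}\,\delta^{\S^*}$ -- precisely the bound you postulate for $K'$ -- while the sandwich $\frac{1}{\sqrt{1+\alpha^2}} K \subset L \subset \frac{1}{\sqrt{1+\alpha^2/D^2}} K$ gives the distance and radii bounds with the stated constants. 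Without this (or an equivalent) construction, your proof does not go through; the rest of your bookkeeping would then match the paper's.
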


\begin{rem} \label{rem:isomorphic-conjecture}
It is natural to conjecture that with the above assumptions, for any $\gamma \in [1,D]$, there should exist $\tilde K \in \K^\infty_{+,e}$ so that:
\[
d_{BM}(K, \tilde K) \leq C \gamma ,
\]
and:
\[
\delta^{\partial \tilde K} \leq \II^{\partial \tilde K} \leq \frac{C D}{F(\gamma)} \delta^{\partial \tilde K}  ,
\]
for some universal constant $C > 1$ and some increasing function $F : [1,D] \rightarrow [1,D]$ (probably $F(\gamma) = \gamma$). This would enable us to deduce the isomorphic results of this section directly from Theorem \ref{thm:main-Lp-Minkowski}, without going through the stronger Theorem \ref{thm:Lp-Minkowski}. Unfortunately, we do not see a simple argument for showing the above. The crux of the problem is that $\frac{1}{B} \delta^{\partial \tilde K} \leq \II^{\partial \tilde K} \leq \frac{1}{A} \delta^{\partial \tilde K}$ does not imply (in general)  $A \delta^{\partial \tilde K^{\circ}} \leq \II^{\partial \tilde K^{\circ}} \leq B \delta^{\partial \tilde K^{\circ}}$, as the curvature of the polar body picks up additional factors depending on $D$.
\end{rem}

\begin{proof}[Proof of Proposition \ref{prop:iso}]
By applying a linear transformation, we may assume that $K$ is in John's position, so that:
\begin{equation} \label{eq:D-bounds}
B_2^n \subset K \subset D B_2^n . 
\end{equation}
Furthermore, by standard arguments (such as mollifying $h_K$, Minkowski adding a small Euclidean ball, and rescaling \cite[pp. 184-185]{Schneider-Book-2ndEd}), we may assume that $K \in \K^{\infty}_{+,e}$ without altering (\ref{eq:D-bounds}), and only incurring an extra (arbitrarily small) $\eps > 0$ in the final estimate for $d_{BM}(K,\tilde K)$, as described below. 

We now construct the required body $\tilde K \in \K^{\infty}_{+,e}$ by defining:
\[
\tilde K := L + \beta B_2^n ~,~ L:= \brac{K^{\circ} +_2 (\frac{\alpha}{D} B_2^n)}^{\circ} .
\]
Here $A+_2 B$ denotes the $2$-Firey sum of two convex bodies $A,B \in \K$, defined via:
\[
h_{A+_2 B}^2 := h_A^2 + h_B^2 . 
\]
In other words:
\[
\norm{x}_L^2 = \norm{x}_K^2 + \frac{\alpha^2}{D^2} |x|^2  . 
\]

As:
\[
\frac{1}{D} |x| \leq \norm{x}_K \leq |x| ,
\]
we have:
\[
 \brac{1 + \frac{\alpha^2}{D^2}} \norm{x}_K^2  , \frac{1+\alpha^2}{D^2} |x|^2  \leq \norm{x}_L^2 \leq (1+\alpha^2) \norm{x}_K^2 , \brac{1 + \frac{\alpha^2}{D^2}} |x|^2 , 
\]
or equivalently:
\begin{equation} \label{eq:L-bounds}
\frac{1}{\sqrt{1+\alpha^2}} K , \frac{1}{\sqrt{1 + \alpha^2 / D^2}} B_2^n \subset L \subset \frac{1}{\sqrt{1 + \alpha^2 / D^2}} K , \frac{D}{\sqrt{1+\alpha^2}} B_2^n . 
\end{equation}
Consequently:
\begin{equation} \label{eq:tildeK-bounds}
\brac{\frac{1}{\sqrt{1+\alpha^2}} + \frac{\beta}{D}} K , r  B_2^n \subset \tilde K \subset 
\brac{\frac{1}{\sqrt{1 + \alpha^2 / D^2}} + \beta} K , R B_2^n ,
\end{equation}
with $r,R > 0$ given by (\ref{eq:iso-rR-defs}). 
Hence, reinserting the extra $\eps$-error due to the smoothing of the original $K$, by choosing $\eps > 0$ sufficiently small we may ensure:
\[
d_{BM}(K,\tilde K) \leq \frac{\frac{1}{\sqrt{1 + \alpha^2 / D^2}} + \beta}{\frac{1}{\sqrt{1+\alpha^2}} + \frac{\beta}{D}} + \eps \leq (1+\beta) \sqrt{1+\alpha^2} . 
\]

Next, defining $f(x) := \norm{x}_K^2 + \frac{\alpha^2}{D^2} |x|^2$, note that $\partial L = \{ x \; ; \; \norm{x}_L = 1 \} = \{ f = 1 \}$. Consequently, the unit outer normal to $\partial L$ is $\n^{\partial L}(x) = \frac{\bar D f(x)}{|\bar Df(x)|}$. A standard computation then verifies that:
\[
\II^{\partial L}_x = \frac{P_{(\n^{\partial L})^{\perp}} \bar D^2 f \; P_{(\n^{\partial L})^{\perp}}}{|\bar Df(x)|} . 
\]
As $\norm{x}_K^2$ is convex on $E$, and as:
\[
|\bar D f(x)| h_L(\n^{\partial L}(x)) = \scalar{\bar D f(x), x} = 2 f(x) = 2 \;\;\; \forall x \in \partial L ,
\]
by Euler's identity and the $2$-homogeneity of $f$, it follows that:
\[
\II^{\partial L}_x  \geq \frac{2 \frac{\alpha^2}{D^2} \delta^{\partial L}_x}{2 / h_L(\n^{\partial L}(x))} = \frac{\alpha^2}{D^2} h_L(\n^{\partial L}(x)) \delta^{\partial L}_x .
\]
Recalling that $D^2 h_L(\n^{\partial L}(x)) = (\II^{\partial L}_x)^{-1}$ and applying the same argument as in Lemma \ref{lem:curvature}, we deduce:
\[
D^2 h_{L} \leq \frac{D^2}{\alpha^2} \frac{1}{h_L}  \delta^{\S^*} . 
\]
As $h_{\tilde K} = h_L + \beta$ on $\S^*$, it follows that:
\[
\beta \delta^{\S^*} \leq D^2 h_{\tilde K} \leq \brac{\frac{D^2}{\alpha^2} \frac{1}{h_L} + \beta} \delta^{\S^*}  ,
\]
and hence:
\[
\beta h_{\tilde K} \delta^{\S^*} \leq h_{\tilde K} D^2 h_{\tilde K} \leq \brac{\frac{D^2}{\alpha^2} \brac{1 + \frac{\beta}{h_L}} + \beta h_{\tilde K}} \delta^{\S^*} .
\]
Recalling (\ref{eq:L-bounds}) and (\ref{eq:tildeK-bounds}), we deduce that:
\[
A \delta^{\S^*} \leq h_{\tilde K} D^2 h_{\tilde K} \leq B \delta^{\S^*} ,
\]
with $A,B > 0$ given by (\ref{eq:iso-AB-defs}). A final application of Lemma \ref{lem:curvature} concludes the proof. 
\end{proof}

We are now ready to complete the proofs of Theorems \ref{thm:main-iso-Lp} and \ref{thm:main-isometric}. 

\begin{proof}[Proof of Theorems \ref{thm:main-iso-Lp} and \ref{thm:main-isometric}]

Given $\bar K \in \K_e$ and $\gamma > 0$ where $D := d_{BM}(\bar K,B_2^n)$, we construct $\tilde K \in \K^{\infty}_{+,e}$ as in Proposition \ref{prop:iso} with a certain choice of $\alpha,\beta > 0$ to be determined later. Recall that $d_{BM}(\bar K,\tilde K) \leq (1+\beta) \sqrt{1+\alpha^2}$ and that $\tilde K$ satisfies (\ref{eq:iso-radii}) and (\ref{eq:iso-curvature-distance}), where $r,R,A,B > 0$ are parameters depending on $\alpha,\beta,D$ which are given by (\ref{eq:iso-rR-defs}) and (\ref{eq:iso-AB-defs}). By Theorem \ref{thm:Lp-Minkowski}, for any $p$ in the range:
\[
p_{\alpha,\beta,D} := 2 - \frac{\frac{n-1}{2} A - R^2}{B} < p < 1 ,
\]
$\tilde K$ satisfies the conclusion of Theorem \ref{thm:main-iso-Lp}. Consequently, our task is to show that $p_{\alpha,\beta,D} \leq p_{\gamma,D}$ for some choice of $\alpha,\beta > 0$ so that $(1+\beta) \sqrt{1+\alpha^2} = \gamma$, where $p_{\gamma,D}$ is given by (\ref{eq:main-p-gammaD}). 

Denote:
\[
b := \frac{\beta}{1+\beta} .
\]
We will ensure that the parameters $\alpha,\beta,\gamma$ satisfy:
\begin{equation} \label{eq:guarantee}
b \gamma = \beta \sqrt{1+\alpha^2} \leq D (\sqrt{2} - 1) ~,~ \alpha \leq D .
\end{equation}
Plugging the expressions for $r,R,A,B$, we have:
\[
r > \beta ~,~ R \leq \frac{\sqrt{2} D}{\sqrt{1+\alpha^2}} ~,~ \sqrt{1 + \alpha^2 / D^2} \leq \sqrt{2} .
\]
We now estimate:
\begin{align}
\nonumber
\frac{\frac{n-1}{2} A - R^2}{B} & = \frac{ \frac{n-1}{2} \beta r - R^2}{\beta R + \frac{D^2}{\alpha^2} (1 + \beta \sqrt{1 + \alpha^2/D^2})} 
> \frac{\frac{n-1}{2} \beta^2 - \frac{2 D^2}{1+\alpha^2}}{\beta \frac{\sqrt{2} D}{\sqrt{1+\alpha^2}} + \frac{D^2}{\alpha^2}  (1 + \sqrt{2} \beta)} \\
\label{eq:nicer-expression} & = \frac{ \frac{n-1}{2} b^2 \gamma^2 - 2 D^2}{\sqrt{2} b \gamma D + D^2 \frac{1+\alpha^2}{\alpha^2} (1 + \sqrt{2} \beta) } . 
\end{align}

\bigskip

Let us start with the isomorphic result of Theorem \ref{thm:main-iso-Lp}. It is apparent that in order to maximize the latter expression given $\gamma$ and $D$, it is preferable to choose $\beta$ of the order of a constant. To obtain an aesthetically pleasing expression, we set:
\[
\beta = 1 + \sqrt{2} \;\; \Leftrightarrow \;\; b = \frac{1}{\sqrt{2}} \; ,
\]
yielding:
\[
\frac{\frac{n-1}{2} A - R^2}{B} \geq \frac{ \frac{n-1}{4} \gamma^2 - 2 D^2}{ \gamma D + D^2 \frac{1+\alpha^2}{\alpha^2} (3 + \sqrt{2}) } . 
\]
Recalling our guarantee that:
\[
\gamma \leq \frac{\sqrt{2}-1}{b} D = (2 - \sqrt{2}) D ,
\]
it follows by also ensuring that $\alpha^2 \geq 3 + \sqrt{2}$ that:
\[
p_{\alpha,\beta,D} = 2 - \frac{\frac{n-1}{2} A - R^2}{B} <  2 - \frac{ \frac{n-1}{4} \gamma^2 - 2 D^2}{ 6 D^2} = \frac{7}{3} - \frac{n-1}{24} \frac{\gamma^2}{D^2} = p_{\gamma,D} .
\]
It remains to conveniently summarize our restrictions on $\gamma= (1+\beta) \sqrt{1+\alpha^2}$:
\[
\gamma \geq (2+\sqrt{2}) \sqrt{4 + \sqrt{2}} \simeq 7.95 ~,~ \gamma \leq (2-\sqrt{2}) D \simeq 0.58 D . 
\]
This concludes the proof of Theorem \ref{thm:main-iso-Lp}. 

\bigskip 

As for the isometric result of Theorem \ref{thm:main-isometric}, one just notes that in order to have $p_{\gamma,D} < 0$, it is enough to have the expression in (\ref{eq:nicer-expression}) greater or equal to $2$, i.e.:
\[
 \frac{n-1}{2} \beta^2 (1+\alpha^2) \geq \sqrt{2} 2 \beta \sqrt{1+\alpha^2}  D + 2 D^2 \frac{1+\alpha^2}{\alpha^2} (1 + \sqrt{2} \beta)  + 2 D^2. 
\]
We set $\alpha = \frac{\sqrt{D}}{\sqrt[4]{n}}$ and $\beta = C_\beta \alpha$ for an appropriate universal constant $C_\beta > 1$ to be determined, and recall that $D \leq \sqrt{n}$ and hence $\alpha \leq 1$ and $\beta \leq C_\beta$. It is therefore enough to have:
\[
\frac{n-1}{2} C_\beta^2 \frac{D}{\sqrt{n}} \geq 4 C_\beta D + 4 \sqrt{n} D (1 + \sqrt{2} \beta)   + 2 D^2 .
\]
A sufficient condition for that is:
\[
\sqrt{n} D (\frac{1}{4} C_\beta^2 - 4 \sqrt{2} C_\beta - 4 \frac{C_\beta}{\sqrt{n}}) \geq 4 \sqrt{n} D + 2 D^2 ,
\]
and in turn, a sufficient condition for that is:
\[
\frac{1}{4} C_\beta^2 - 4 \sqrt{2} C_\beta - 4 \frac{C_\beta}{\sqrt{n}} \geq 6 . 
\]
Clearly, this holds for a sufficiently large constant $C_\beta > 1$. Finally, it follows that:
\begin{equation} \label{eq:adjust-constant}
\gamma = (1+\beta) \sqrt{1 + \alpha^2} \leq 1 + C \frac{\sqrt{D}}{\sqrt[4]{n}} ,
\end{equation}
for another universal constant $C > 1$. 

There is still one last point to take care of -- we need to guarantee that the assumptions (\ref{eq:guarantee}) hold. Since $\alpha \leq 1 \leq D$, we just need to make sure that $\sqrt{2} C_\beta  \leq D (\sqrt{2} - 1)$. Since we only ever used $D$ as an upper bound on $d_{BM}(\bar K,B_2^n)$, it follows that we should use $\max(D, \frac{\sqrt{2} C_\beta}{\sqrt{2}-1})$ in place of $D$, which simply translates into using a different value for the constant $C > 1$ in (\ref{eq:adjust-constant}). This concludes the proof of Theorem \ref{thm:main-isometric}. 
\end{proof}

\bibliographystyle{plain}
\bibliography{../../../ConvexBib}

\end{document}